\documentclass[11pt,oneside,english,reqno]{amsart}
\usepackage[T1]{fontenc}
\usepackage[utf8]{inputenc}
\usepackage[a4paper]{geometry}
\usepackage{amstext}
\usepackage{amsthm}
\usepackage{amssymb}
\usepackage{stmaryrd}   %

\usepackage{graphicx}
\usepackage{caption}
\usepackage{subcaption}
\usepackage{algorithm}
\usepackage{algorithmic}

\usepackage[dvipsnames]{xcolor}
\usepackage{enumitem}
\usepackage{verbatim}   %
\usepackage{shuffle}    %

\usepackage[colorlinks,linkcolor=blue,urlcolor=blue,citecolor=blue,
            pagebackref,hypertexnames=true,breaklinks]{hyperref}
\usepackage{aliascnt}
\usepackage{cleveref}

\numberwithin{equation}{section}

\theoremstyle{plain}
\newtheorem{thm}{Theorem}[section]
\newaliascnt{lem}{thm}  \newtheorem{lem}[lem]{Lemma}        \aliascntresetthe{lem}
\newaliascnt{prop}{thm} \newtheorem{prop}[prop]{Proposition} \aliascntresetthe{prop}
\newaliascnt{cor}{thm}  \newtheorem{cor}[cor]{Corollary}     \aliascntresetthe{cor}
\newaliascnt{example}{thm} \newtheorem{example}[example]{Example} \aliascntresetthe{example}
\newaliascnt{hyp}{thm}  \newtheorem{hyp}[hyp]{Hypothesis}   \aliascntresetthe{hyp}

\theoremstyle{definition}
\newaliascnt{defn}{thm} \newtheorem{defn}[defn]{Definition}  \aliascntresetthe{defn}
\newaliascnt{nota}{thm} \newtheorem{nota}[nota]{Notation}    \aliascntresetthe{nota}

\theoremstyle{remark}
\newaliascnt{rem}{thm}  \newtheorem{rem}[rem]{Remark}        \aliascntresetthe{rem}

\crefname{thm}{theorem}{theorems}         \Crefname{thm}{Theorem}{Theorems}
\crefname{lem}{lemma}{lemmas}             \Crefname{lem}{Lemma}{Lemmas}
\crefname{prop}{proposition}{propositions}\Crefname{prop}{Proposition}{Propositions}
\crefname{cor}{corollary}{corollaries}    \Crefname{cor}{Corollary}{Corollaries}
\crefname{example}{example}{examples}     \Crefname{example}{Example}{Examples}
\crefname{hyp}{hypothesis}{hypotheses}    \Crefname{hyp}{Hypothesis}{Hypotheses}
\crefname{defn}{definition}{definitions}  \Crefname{defn}{Definition}{Definitions}
\crefname{nota}{notation}{notations}      \Crefname{nota}{Notation}{Notations}
\crefname{rem}{remark}{remarks}           \Crefname{rem}{Remark}{Remarks}

\makeatletter
\renewcommand{\l@section}{\@tocline{1}{0pt}{0em}{1.5em}{}}
\renewcommand{\l@subsection}{\@tocline{2}{0pt}{2em}{3em}{}}
\renewcommand{\l@subsubsection}{\@tocline{3}{0pt}{4em}{5em}{}}
\makeatother

\newcommand{\cA}{\mathcal{A}}  \newcommand{\cC}{\mathcal{C}}
\newcommand{\cD}{\mathcal{D}}  \newcommand{\cE}{\mathcal{E}}
\newcommand{\cF}{\mathcal{F}}  \newcommand{\cG}{\mathcal{G}}
\newcommand{\cH}{\mathcal{H}}  \newcommand{\cI}{\mathcal{I}}
\newcommand{\cJ}{\mathcal{J}}  \newcommand{\cK}{\mathcal{K}}
\newcommand{\cL}{\mathcal{L}}  \newcommand{\cM}{\mathcal{M}}
\newcommand{\cP}{\mathcal{P}}  \newcommand{\cW}{\mathcal{W}}

\newcommand{\CC}{\mathbb{C}}   \newcommand{\NN}{\mathbb{N}}   \newcommand{\RR}{\mathbb{R}}

   \newcommand{\bC}{\mathbf{C}}   \newcommand{\bF}{\mathbf{F}}
\newcommand{\bK}{\mathbf{K}}   \newcommand{\bM}{\mathbf{M}}   \newcommand{\bS}{\mathbf{S}}
\newcommand{\bZ}{\mathbf{Z}}   \newcommand{\bv}{\mathbf{v}}   \newcommand{\bx}{\mathbf{x}}
\newcommand{\by}{\mathbf{y}}   \newcommand{\bz}{\mathbf{z}}

\newcommand{\dd}{\mathop{}\!\mathrm{d}}
\newcommand{\la}{\langle}
\newcommand{\ra}{\rangle}

\newcommand{\Sig}[1]{\mathrm{Sig}(#1)}
\newcommand{\VSig}[2]{\mathrm{VSig}(#1;#2)}
\newcommand{\Lkernel}{L^{\infty,1}(\Delta^2; \mathcal{L}(\RR^d;\RR^m))}
\newcommand{\decoSet}{\mathcal{B}}
\newcommand{\decoNum}{|\decoSet|-1}
\newcommand{\decoNumPlus}{|\decoSet|}

\begin{document}

\title[Computational aspects of the Volterra Signature]{Computational aspects of the Volterra Signature}

\author{Paul P. Hager}
\address{Paul P. Hager, Department of Statistics and Operations Research, University of Vienna,
Kolingasse 14--16, 1090 Vienna, Austria}
\email{paul.peter.hager@univie.ac.at}

\author{Fabian N. Harang}
\address{Fabian N. Harang, Department of Economics, BI Norwegian Business School, Nydalsveien 37, 0484 Oslo, Norway}
\email{fabian.a.harang@bi.no}

\author{Luca Pelizzari}
\address{Luca Pelizzari, Department of Statistics and Operations Research, University of Vienna,
Kolingasse 14--16, 1090 Vienna, Austria}
\email{luca.pelizzari@univie.ac.at}

\author{Samy Tindel}
\address{Samy Tindel, Department of Mathematics, Purdue University,
150 N. University Street, West Lafayette, IN 47907--2067, USA}
\email{stindel@purdue.edu}
\date{\today}

\begin{abstract}
The Volterra signature extends the classical path signature by incorporating general
matrix-valued kernel into its iterated integral structure, yielding a flexible
notion of memory for time series. Its components can be viewed as successive Picard iterates of linear controlled
Volterra equations, making their exact computation of additional mathematical
interest.
However, the kernel
introduces substantial algorithmic challenges. We provide a resolution by first
decomposing the Chen-type convolution relation established in~\cite{i_part}
into analytic and arithmetic parts, and then introducing several efficient
algorithms:
a general approximative scheme with quadratic complexity $O(J^2)$ in the number of time steps $J$, an FFT-based acceleration with complexity $O(J\log J)$ for convolution kernels on uniform grids, and an exact recursion with complexity $O(JR^2)$ for kernels admitting a state-space representation of dimension $R$; retaining standard signature complexity in the path dimension and truncation level $N$.
We further show that the number of factors in matrix-valued kernels of the form $K(t,s)=\sum_p k_p(t-s)A_p$ do not increase the asymptotic complexity in $J$ and $N$.
Finally, we derive a finite-difference predictor--corrector scheme for the associated Volterra signature kernel.
All algorithms are implemented in the publicly available JAX-based package \texttt{tensordev}.
\end{abstract}

\keywords{Signatures, machine learning, memory effects, Volterra equations, kernel learning, rough analysis, numerical algorithms}
\subjclass[2020]{Primary: 60L10, 45D05; Secondary: 60L70, 65R20, 65T50.}
\thanks{FH was supported by the SURE-AI Centre grant 357482, Research Council
of Norway.}

\setcounter{tocdepth}{2}
\maketitle
\tableofcontents

\section{Introduction}
Iterated path integrals, first rigorously studied by Chen \cite{Chen1957} and later developed by Lyons \cite{Lyons1998} into a cornerstone of rough path analysis before being branded as the path {\em signature}, have been established as a highly efficient feature map for analysis of sequential data. 
As such, signatures yield a universal graded representation of path features with widespread use in machine learning across many data structures, from geometric trajectory data (e.g.\ pen strokes) through irregular event records (e.g.\ health care records) to high-frequency financial streams such as prices and volatility; see e.g.\ \cite{BayerDosReisHorvathOberhauser2025SignatureMethodsInFinance,McLeodLyons2025SignatureMethodsML,chevyrev2016primer} and references therein.

The signature is defined on an analytical level for a path $x:[0,T]\to \RR^d$ with continuous derivative $\dot x$ by the fundamental linear equation
\begin{equation}\label{eq:intro_sig_fundamental}
    \Sig{x}_{s,t} \;=\; 1+ \int_{s}^t \Sig{x}_{s,u} \otimes \dot{x}_u \dd{u},
\qquad 0\le s\le t\le T,
\end{equation}
where $\otimes$ denotes the tensor product in $T((\RR^d)) := \prod_{n=0}^\infty(\RR^d)^{\otimes k}$, hence understanding $1=(1,0,\dots)\in T((\RR^d))$.
This equation is only formally implicit; solving it tensor level-wise the signature directly expands into iterated integrals of $x$ with itself (to be presented further below).
The property that makes the computation of signatures particularly convenient is Chen's relation for joining signatures on adjacent path segments:
\begin{equation}\label{eq:sig_chen_intro}
    \Sig{x}_{s,t} = \Sig{x}_{s,u}\otimes \Sig{x}_{u,t}, \qquad 0 \le s \le u\le t \le T.
\end{equation}

While the signature keeps track of the entire history of the path, i.e.\ %
it separates paths up to so-called tree-like equivalence and time reparameterization \cite{HamLy10}, it does so without discriminating between recent and distant changes in $x$.
In applications one is often interested in introducing explicit control over memory effects.
A very elementary way of introducing such structure, at least going back to Volterra \cite{volterra1909sulle,volterra1913leccons}, is via a kernel $$K:\Delta^2 = \{ (t,s) \;\vert\; 0 \le s \le t \le T\} \to \mathcal{L}(\RR^d;\RR^m),$$
where memory effects act on different scales: the influence of distant history is governed by the decay of $K$ away from the diagonal (e.g.\ exponential versus power-law decay), whereas the influence of recent history is governed by the behavior of $K$ near the diagonal (e.g.\ fractional-type), and different effects such as periodic/oscillatory weighting can be reproduced as well.
The pressing question is therefore \emph{not why} but rather \emph{how} to incorporate a kernel into the signature \eqref{eq:intro_sig_fundamental}.

Recently, in \cite{i_part} a general approach was introduced based on earlier work on Volterra rough paths \cite{HT21} via the following fundamental Volterra equation
\begin{equation}\label{eq:foundamental_volterra_intro}
    \VSig{x}{K}^{\tau}_{s,t} = 1 + \int_s ^t \VSig{x}{K}^{u}_{s,u} \otimes K(\tau, u) \dot{x}_u \dd u,\qquad 0\le s\le t \le \tau \le T.
\end{equation}
While this may not be the only way to incorporate a kernel (cf.~\cite{jaber2025exponentially, bloch2026exponentially} for the exponential case), it is a natural approach for at least four reasons presented in \cite{i_part}. 
First, it allows for generic kernels under minimal integrability assumptions, including fractional kernels up to a certain singularity. 
Secondly, the path separation property is retained under suitable assumptions on the kernel, entailing a universal approximation theorem on path space. 
Thirdly, it satisfies a Chen-type relation
\begin{align}\label{eq:vsig_chen_comp}
    \VSig{x}{K}^{\tau}_{s,t}
    =
    [{\VSig{x}{K}_{s,u}}\oast{\VSig{x}{K}}]^{\tau}_{u,t}
    \qquad
    0\leq s\leq u\leq t\leq \tau\leq T,
\end{align}
where $\oast$ denotes a combined tensor product and integration operation, to be made explicit below.
Lastly, the corresponding \emph{signature kernel},
\[
\kappa^{K}(x,y)_{s,t}
=
\left\langle
\VSig{x}{K}^{s}_{0,s},
\VSig{y}{K}^{t}_{0,t}
\right\rangle,
\]
satisfies a closed two-parameter integral equation in $\mathbb{R}^m$, thereby allowing for the application of the kernel trick in typical learning tasks on path space.

In this paper, we treat the task of computing these \emph{Volterra signatures} and signature kernels on time series data.
For analytical purposes, the Chen relation \eqref{eq:vsig_chen_comp} serves a role similar to its classical counterpart \eqref{eq:sig_chen_intro} (see \cite{HT21} for its use in lifting Volterra rough paths). However, its computational consequences are far from evident.
Indeed, since the convolution $\oast$ operation is not purely arithmetic, even when $x$ is discretized a tractable computation without resorting to quadratures on the kernel requires additional structure and understanding.
We provide such structure for a large palette of kernels, including families of fractional and exponential type kernels.
Furthermore, even when the analytic component of $\oast$ is resolved, the defining equation \eqref{eq:foundamental_volterra_intro} suggests that computation is tied to quadratic complexity in the number of time steps.
We will show that this hurdle can be overcome for convolution kernels on uniform time grids using the fast Fourier transform (FFT) and for a large class of matrix exponential kernels by lifting the state.

To further illustrate the challenges that arise and the mathematical tools needed for the computation of Volterra signatures, we first compare the situation to the classical signature:

\textbf{Computation of classical signatures.}
Since the signature is defined for paths of a continuous time-parameter, when provided with a data stream $(x_0, \dots, x_J) \in (\RR^{d})^{J+1}$ we first need to interpolate to a continuous piecewise smooth path $x: [0,T]\to\RR^d$ with $x_{t_j} = x_j$ $(j=0, \dots, J)$.
Chen's identity then allows to use
\begin{equation}\label{eq:sig_chen_comp}
    \Sig{x}_{t_0, t_j} = \Sig{x}_{t_0, t_{j-1}} \otimes \Sig{x}_{t_{j-1}, t_j},
\end{equation}
as a dynamic computation step.
For piecewise linear interpolation, the signature on the interval $[t_{j-1}, t_j]$ then simply reduces to a tensor exponential
$$ \Sig{x}_{t_{j-1}, t_j} ~=~ \exp_{\otimes}(x_{j} - x_{{j-1}}) ~=~ 1+ \sum_{k=1}^N \frac{1}{k!}(x_{j} - x_{{j-1}})^{\otimes k},$$
allowing an algorithmic evaluation of \eqref{eq:sig_chen_comp} by iteratively applying tensor multiplications.
More precisely, the most efficient way of evaluating \eqref{eq:sig_chen_comp} is via a Horner-type scheme, which evaluates the tensor product with the previous signature and the exponential in a single run (introduced in this context in \cite{kidger2021signatory}).
The total costs of computing the signature are then of the order $J d^N$ asymptotically for $J$ and $N$ large, noting that $d^N$ matches the output dimension asymptotically.

While linear interpolation may be seen as a modeling choice with apparent computational advantages, piecewise smooth interpolation itself is crucial.
If we were to directly lift the data $(x_1, ..., x_J)$ to a feature vector $\bS$ in the tensor algebra by using an Euler discretization of the signature fundamental equation \eqref{eq:intro_sig_fundamental}
$$\bS_{j} = \bS_{{j-1}} + \bS_{{j-1}} \otimes (x_{j} - x_{{j-1}}), \qquad j=1, \dots, J,$$
then this feature does not share the same algebraic properties as the signature (the shuffle identity) and hence does not have the same expressiveness; that is, it does not enjoy the same universal approximation property.
Note that a minimal extension of $\bS$ to incorporate such algebraic structure is put forward in \cite{K2020a} as the \emph{iterated sums signature}.

The need for numerical precision to preserve expressivity also appears in the computation of signature kernels \cite{KiralyOberhauser2019,Salvi2021}. In practice, accuracy is often improved by replacing the observed paths with their linear interpolations on refined grids. Since this leads to significant computational bottlenecks, the accuracy of the numerical scheme plays an important role and remain an active topic in the literature \cite{lemercier2025log, cass2025numerical,tamayo2025scalable}.

\textbf{Computation of Volterra signatures.} 
Having advocated for exact computations and approximations for signatures, let us highlight the challenges we are facing in the Volterra case and the remedies proposed in this paper. Let us start by mentioning that for a given suitably integrable kernel $K:\Delta^2 \to \mathcal{L}(\RR^d;\RR^m)$, the Chen identity can be stated as
\begin{align}\label{eq:vsig_chen_comp_int}
    \VSig{x}{K}^{\tau}_{t_0, t_j} = \VSig{x}{K}_{t_0, t_{j-1}}^{\tau} - \int_{t_{j-1}}^{t_j} \VSig{x}{K}_{t_0, t_{j-1}}^{s} \otimes \frac{\dd}{\dd{s}}\VSig{x}{K}_{s, t_{j}}^\tau \dd{s}.
\end{align}
In contrast to \eqref{eq:sig_chen_comp} the evaluation of \eqref{eq:vsig_chen_comp_int} still requires the resolution of analytic operations. 
Turning this identity into a computationally tractable scheme is the topic of this paper.
The main results and observations of this paper are summarized as follows:
\begin{enumerate}
    \item \label{itm:compu_one} An exact resolution of the single Chen step for Volterra signatures \eqref{eq:vsig_chen_comp_int} in terms of iterated tensor products is generally not possible. 
    However, for piecewise linear interpolation we can 
    separate the computation of $\VSig{x}{K}_{s, t}^\tau$ with $(s, t)\in \Delta^2_{t_{j-1}, t_j}$ into an analytic (pre-computation) part depending only on the kernel $K$, and an arithmetic part depending only on the increment $(x_{t_j} - x_{t_{j-1}})$ (cf. \Cref{prop:sig term over piece segemtn}). 
    For a variety of common kernels, such as exponential and fractional-type, the analytic part of the computation results in closed-form and efficiently computable weight factors (see \Cref{sec:kernel_computations,sec:fssk_weights}).

    \item\label{item:p2} Breaking down further to the increment level with Chen's identity, a representation of the Volterra signature in terms of weighted sums of monomials of increments is possible (cf. \Cref{prop:chen_full_breakdown}), and once again the weight terms can be efficiently computed for a wide range of kernels.
    A direct implementation of this fully combinatorial expansion of the Volterra signature is generally not feasible, but it readily connects to two approaches for which an efficient dynamic scheme is possible.
    
    \item\label{item:approx_algo}
    The first method alluded to in point \eqref{item:p2} is based on the approximation
    \[
    \VSig{x}{K}_{t_0,t_{j-1}}^{s}
    \approx
    \VSig{x}{K}_{t_0,t_{j-1}}^{t_{j-1}}
    +
    \sum_{\alpha\in \decoSet}
    {\bf C}^{\alpha}_{t_{j-1},t_j}
    (s-t_{j-1})^{\alpha},
    \]
    inserted into \eqref{eq:vsig_chen_comp_int}.
    Provided that the Volterra signature on single intervals and its integrals against $(s-\cdot)^\alpha$ can be computed exactly, see point \eqref{itm:compu_one} above, this yields an algorithmic procedure with computational cost proportional to $J^2$, where $J$ denotes the length of the data stream.
    The choice of exponents $\decoSet$ depends on the regularity of the kernel and on the desired order of convergence. For smooth kernels, integer powers suffice, for instance $\decoSet=\{1,2\}$. For kernels with a fractional singularity of order $\beta-1$, one may instead choose $\decoSet=\{\beta,1\}$.
    This algorithm and its numerical accuracy are substantiated for general kernels in \Cref{thm:quadratic_aglo}, and specialized to fractional kernels in
\Cref{thm:frac_scheme}.

    \item Specializing to convolution kernels $K(t,s)=K(t-s)$, the inner iteration in the
    general Volterra scheme takes the form of a discrete
    convolution. This allows us to identify an algorithm that performs FFTs recursively over tensor levels\footnote{We note the  similarity to the type of FFT acceleration used for higher-order schemes for Volterra convolution equations; see, e.g., \cite{hairer1985fast}.}, thereby reducing the computational cost in the sequence length from quadratic to $O(J\log J)$ (cf. Section~\ref{sec:fft}).
    
    \item For kernels of exponential-polynomial and periodic type (and linear combinations thereof) we can make use of a \emph{state space lift} to provide an \emph{exact} scheme that likewise overcomes the quadratic costs.
    This algorithm is substantiated in \Cref{thm:algo_multiplicative}, and its costs are proportional to $J\times R^2$, where $R$ is the dimension of the state space.
    In this case, to compute the weight terms associated with the kernel, we make use of their relation to higher-order Fréchet derivatives of the matrix exponential map, which allows us to resort to efficient algorithms from  numerical linear algebra (cf. \Cref{sec:fssk_weights}).

    \item Additional complexity in all of the above mentioned results stems from the fact that we treat matrix valued kernels.
    We show that this becomes particularly tractable for general convolution kernels and piecewise constant kernels, where the number of matrix kernel components in $K(t,s) = k_1(t,s) A_1 + \dots + k_q(t,s)A_q$ with scalar kernels $k_p: \Delta^2\to\RR$ and matrices $A_p\in \mathcal{L}(\RR^d;\RR^m)$ does not influence the asymptotic order of computational costs in $J$ and $N$.

    \item Finally, for finite-state-space kernels we also treat the associated Volterra signature kernel. Using the kernel trick developed in \cite{i_part}, its computation reduces to a closed system of two-parameter augmented Goursat-type equations with values in $\RR^{R\times R}$. We propose an explicit predictor--corrector finite-difference scheme for this system, which can be evaluated cell-wise along anti-diagonals and also allows for static kernel lifts of the input paths (cf. \Cref{sec:sig-kernel}).
\end{enumerate}

All algorithms proposed in this paper have been thoroughly implemented and are publicly available at
\url{https://github.com/hagerpa/tensordev}.
The implementation is based on the JAX backend \cite{jax2018github}, which provides competitive computational efficiency and enables automatic differentiation with respect to all parameters.
A numerical validation of the implementation is presented in \Cref{sec:numerical_validation}.

\textbf{Outline.}
In \Cref{sec:pscw_linear} we derive the basic decomposition of Volterra signatures for piecewise linear paths.
\Cref{sec:algo_quadratic} develops the general approximative scheme, including the FFT acceleration for convolution kernels and explicit kernel computations.
Finite-state-space kernels and the corresponding exact recursion are treated in \Cref{sec:algo_multiplicative}.
In \Cref{sec:sig-kernel} we discuss the associated Volterra signature kernel and its finite-difference approximation.
The computational cost analysis is given in \Cref{sec:cost_analysis}.

\section{Volterra signatures of piecewise linear paths}\label{sec:pscw_linear}

In this section, we provide representations of the Volterra signature for piecewise linear paths that decompose to the increment level of the underlying path, while keeping the matrix kernels in a general analytic form.
This expansion reveals the computational intricacies of the Volterra signature and will be used to derive algorithms in \Cref{sec:algo_quadratic,sec:algo_multiplicative}.
We begin with the necessary algebraic preliminaries, and then briefly recall the definition and some elementary properties of the Volterra signature; we refer to~\cite[Section 2]{i_part} for a full exposition.

\subsection{Preliminaries}
We introduce some standard notation used in the study of signatures. Throughout this article, we focus on Euclidean spaces \(V=\RR^d\) for some \(d\in\NN\).  We begin with the notion of words over a given alphabet.

\begin{defn}\label{def:words} For any $d \in \mathbb{N}$, we call $\cA_d= \{1,\dots,d\}$ the alphabet of $d$ letters. Moreover, we denote by $\cW_d^n$ the set of words of length $n\in \NN$ over the alphabet $\cA_d$, that is \begin{equation}\label{eq:words}
    \cW_d^n:=\left \{w=i_1\cdots i_n: i_1,\dots,i_n \in \cA_d\right \}.
\end{equation} For \(n=0\) we write \(\cW_d^0=\{\varnothing\}\), where \(\varnothing\) denotes the empty word. Finally, we denote by \(\cW_d\) the set of all words of any length, and whenever \(d\) is clear from the context we write \(\cW=\cW_d\).
\end{defn}
Next, we introduce the extended tensor algebra --the ambient space for signatures --equipped with addition and a product, which turns it into an algebra.

\begin{defn}\label{def:extended_TA}
    Let $\{e_1, \dots, e_d\} \subset V$ be a basis.
    Set \(V^{\otimes 0}=\RR\) and for $n\in \NN_{\ge1}$ denote by \(V^{\otimes n}\) the \(n\)-fold tensor power of the vector space \(V\).
    For any word $w = i_1 \cdots i_n\in \cW_d$ we define $e_{w} = e_{i_1} \otimes \cdots \otimes e_{i_n} \in V^{\otimes n}$ and note that $\{ e_w \,\vert\, w\in \cW^n_d\}$ forms a basis of $V^{\otimes n}$.
    The \emph{extended tensor algebra} over $V$ is given by the direct product
    $$T((V)) := \prod_{n\ge0} V^{\otimes n},$$
    which forms a vector space by componentwise summation and scalar multiplication.
    For an element $\bx$ in $T((V))$ we will use the following equivalent notation as a sequence and as a formal series for the decomposition into its tensor \emph{levels}  $$\bx = (\bx^{(0)}, \bx^{(1)}, \dots) = \sum_{n=0}^\infty \bx^{(n)} \in T((V)),$$  so that $\bx^{(n)} \in V^{\otimes n}$ for all $n\in \NN$.
    We define the \emph{tensor product} $\otimes$ on $T((V))$ in terms of the concatenation product of basis elements, i.e.,
    $e_{w} \otimes e_{v} = e_{wv}$ for all $w,v\in\cW,$
    which extends to all of $T((V))$ by bilinearity.
    In particular, for ${\bf a}, \bx,\by\in T((V))$ we may write
$$\bz={\bf a} + \bx\otimes\by,\qquad \bz^{(n)}={\bf a}^{(n)} + \sum_{k=0}^{n}\bx^{(k)}\otimes \by^{(n-k)}\in V^{\otimes n},\qquad n\in\NN.$$
    \end{defn}
In order to work with finite truncations of signatures, we next introduce the canonical projection on \(T((\RR^d))\).
\begin{defn}\label{def:proj_and_trunc}
        We write \(\pi_n:T((V))\to V^{\otimes n}\) for the canonical projection, so that $\pi_n(\bx) = \bx^{(n)}$ for all $\bx\in T((V))$ and $n\in \NN$.
        Furthermore, for $N\in\NN$ we define the \emph{tensor truncation} $$\pi_{\le N} : T((V)) \to T((V)), \qquad \bx \mapsto (\bx^{(0)}, \bx^{(1)}, \dots, \bx^{(N)}, 0, \dots).$$
        The image of the truncation map $T^N(V) := \pi_{\le N} T((V))$ is called the \emph{truncated tensor algebra}, which indeed forms an algebra under the truncated tensor product
        $$\bx \otimes_N \by := \pi_{\le N}(\bx \otimes \by), \qquad \bx,\by\in T^N(V).$$
        We also define the tensor algebra by $T(V) = \cup_{N\ge 0}T^{N}(V)$.
    \end{defn}

   Finally, we introduce the \emph{shuffle product} on $T(V)$, a commutative product that will be of importance to us for the formulation of certain  algorithms in Section~\ref{sec:algo_quadratic}.

\begin{defn}\label{def:shuffle}
We define the shuffle product $\shuffle$ for words by setting
$$w\shuffle \varnothing=\varnothing\shuffle w=w,\qquad w\in\cW_d,$$
and then recursively
$$
(iw)\shuffle (jv):= i\big(w\shuffle (jv)\big)+j\big((iw)\shuffle v\big),\qquad i,j\in\cA_d,\; w,v\in\cW_d,
$$
where $iw$ denotes concatenation of the letter $i$ with the word $w$.
Note that the shuffle of a word $i_1\cdots i_n\in\cW$ and a single letter $j\in\cA_d$ simply reduces to
\begin{equation}\label{eq:shuffle_single_letter}
(i_1\cdots i_n)\shuffle j=\sum_{k=0}^{n} i_1\cdots i_k\, j\, i_{k+1}\cdots i_n.
\end{equation}
We extend $\shuffle$ to a bilinear and here symmetric map on $T(V)$ by the identification $e_i \leftrightarrow i \in \mathcal{A}_d$.
\end{defn}

\subsection{The  Volterra Signature}
 As motivated in the introduction, we focus here on Lipschitz continuous paths $x:[0,T] \rightarrow \RR^d$, writing $x \in \cC^{0,1}([0,T];\RR^d)$, and on the following class of matrix-valued kernels.
\begin{defn}\label{def:kernel_class}
Consider a matrix-valued kernel $K: \Delta^2 \to \mathcal{L}(\RR^d, \RR^m)$. In the sequel we assume $K\in \Lkernel$, where the latter space stands for the set of measurable functions on $\Delta^2$ which satisfy $$\sup_{t\in[0,T]}\int_0^t |K(t,s)| \dd{s} < \infty,$$
    with $|\cdot|$  denoting here any matrix norm on $\mathcal{L}(\RR^d,\RR^m)$.
\end{defn}
In the following definition, we introduce the Volterra signature as the collection of iterated integrals of $x$ weighted by the kernel $K$. As discussed in~\cite[Section~2.3]{i_part}, the resulting object is indeed the unique solution to the fundamental equation~\eqref{eq:foundamental_volterra_intro} motivated in the introduction.
\begin{defn}\label{def:vsig} Consider a signal $x\in\mathcal{C}^{0,1}([0,T]; \RR^d)$, as well as a kernel $K$ with values in $\cL(\RR^{d};\RR^{m})$ that satisfies \Cref{def:kernel_class}.
We define the \emph{Volterra signature} $\mathrm{VSig}$ component-wise for all $n\in\NN$, $i_1 \cdots i_n \in \mathcal{W}^n_m$ and $(s,t,\tau)\in\Delta^3$ by
\begin{equation}\label{eq:def_vsig_comp}\VSig{x}{K}^{i_1\cdots i_n,\tau}_{s,t} = \int_{\Delta^n_{s,t}} \prod_{l=1}^n K^{i_l}(r_{l+1},r_l)\dd{x_{r_l}},\end{equation}
with the convention $r_{n+1} = \tau$, and where we denote by $ K^{i_l}\dd x$ the component $i_l$ in the  $\RR^m$ valued vector $K(\tau,\cdot)\dd{x_\cdot}$. The full Volterra signature $\VSig{x}{K}$ is then defined by the formal tensor series
\begin{align*}
    \VSig{x}{K}^{\tau}_{s,t} := \sum_{n=0}^\infty \VSig{x}{K}^{(n),\tau}_{s,t}  := \sum_{n=0}^\infty \sum_{\substack{w\in\mathcal{W}\\|w| =n}}\VSig{x}{K}^{w,\tau}_{s,t}  e_w  \in T((\RR^m)),
\end{align*} recalling that $T((\RR^m))$ is introduced in \Cref{def:extended_TA}.
\end{defn}
Next, we introduce the aforementioned convolutional tensor product~$\oast$, which yields the Chen relation central to this work. We emphasize that the definition used here differs from the original one introduced in~\cite{HT21}, but it is shown to be equivalent in~\cite[Lemma~2.26]{i_part}.
\begin{defn}\label{def:convolution_product}
    Consider a pair $(x,K)$ as in \Cref{def:vsig} with Volterra signature denoted by $\bz = \VSig{x}{K}$. %
    For any bounded and measurable $\by: [0,T] \to T((\RR^m))$, we define
    \begin{equation}\label{eq:tensor_convol_prod}
        [\by \oast \bz]_{s,t}^\tau = \by_\tau-\int_s^t \by_r \otimes \Big ( \frac{\dd }{\dd r} \bz_{r,t}^{\tau} \Big ) \dd r \in T((\RR^m)), \qquad (s,t,\tau) \in \Delta^3, 
    \end{equation} recalling that $\otimes$ is introduced in \Cref{def:extended_TA}.
\end{defn}

We conclude this section with the fundamental Chen relation for Volterra signatures, we refer to \cite[Lemma 2.26 and Corollary 2.36]{i_part} for the proof.
\begin{prop}\label{prop:Chen_part_ii} For any pair $(x,K)$ as in \Cref{def:vsig}, we have 
\begin{equation}\label{eq:chen_tensor}
    \VSig{x}{K}^{\tau}_{s,t} = [{\VSig{x}{K}^{\cdot}_{s,u}}\oast{\VSig{x}{K}}]^{\tau}_{u,t}, \qquad (s,u,t,\tau) \in \Delta^4.
\end{equation}

\end{prop}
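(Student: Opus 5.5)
The plan is to verify the identity level by level, using the iterated-integral formula \eqref{eq:def_vsig_comp}. Fix $(s,u,t,\tau)\in\Delta^4$, a level $n$ and a word $w=i_1\cdots i_n\in\cW^n_m$. On the left-hand side, split the simplex $\Delta^n_{s,t}=\{s<r_1<\dots<r_n<t\}$ according to how many of the integration times lie below $u$. This gives
\begin{equation*}
\VSig{x}{K}^{w,\tau}_{s,t}=\sum_{k=0}^n \int_{s<r_1<\dots<r_k<u<r_{k+1}<\dots<r_n<t}\prod_{l=1}^n K^{i_l}(r_{l+1},r_l)\dd x_{r_l},
\end{equation*}
with $r_{n+1}=\tau$. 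The summand $k=n$ is $\VSig{x}{K}^{w,\tau}_{s,u}$, i.e.\ the term $\by_\tau$ in \eqref{eq:tensor_convol_prod} with $\by_r:=\VSig{x}{K}^{r}_{s,u}$. For $0\le k<n$, the inner integral over $r_1<\dots<r_k<u$ depends on the outer variables only through $r_{k+1}$. It equals exactly $\VSig{x}{K}^{i_1\cdots i_k,r_{k+1}}_{s,u}$, the lower word evaluated at upper time $r_{k+1}$. So the $k$-th summand reads
\begin{equation*}
\int_u^t \VSig{x}{K}^{i_1\cdots i_k,r}_{s,u}\,\Phi_k(r)\,\dd r,\qquad \Phi_k(r)=K^{i_{k+1}}(r_{k+2},r)\dot x_r\cdot(\text{outer integral over } r<r_{k+2}<\dots<r_n<t).
\end{equation*}

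The second step identifies $\Phi_k(r)$ with $-\frac{\dd}{\dd r}\VSig{x}{K}^{i_{k+1}\cdots i_n,\tau}_{r,t}$. Write $\bz^{v,\tau}_{r,t}=\int_{r<\rho_1<\dots<\rho_{n-k}<t}\prod K(\cdot)\dd x$ with the lower limit $r$ variable. Since $x$ is Lipschitz, $\dd x=\dot x\,\dd\rho$ with $\dot x\in L^\infty$. The map $r\mapsto \bz^{v,\tau}_{r,t}$ is then absolutely continuous, and its a.e.\ derivative is minus the integrand with $\rho_1$ frozen at $r$; that is exactly $-\Phi_k(r)$. Summing over $k$ and collecting words with the concatenation $e_{i_1\cdots i_k}\otimes e_{i_{k+1}\cdots i_n}$ reproduces $\by_\tau-\int_u^t\by_r\otimes\frac{\dd}{\dd r}\bz^\tau_{r,t}\dd r$ at level $n$. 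Here the level-$n$ component of the $\otimes$-product ranges over $k=0,\dots,n$. The term with right factor of level $0$ drops out, because $\bz^{(0)}\equiv1$ has zero derivative, and it is replaced by $\by_\tau$. This gives \eqref{eq:chen_tensor}.

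The main obstacle is justifying the Fubini rearrangement and the differentiation in $r$ under the weak assumption $K\in\Lkernel$, where $K$ may be singular on the diagonal. The absolute integrability of the iterated integrand follows by induction on $n$. At each stage one integrates $|K(r_{l+1},r_l)|$ in $r_l$, which is uniformly bounded by $\sup_t\int_0^t|K(t,s)|\dd s$, and this yields a bound $C^n\|\dot x\|_\infty^n$ (times a simplex volume factor). For the derivative, I would avoid pointwise differentiation. Instead I would express $\bz_{r,t}^\tau-\bz_{t,t}^\tau=\int_r^t\Phi(\rho)\dd\rho$ directly from the definition, which proves absolute continuity with $\Phi\in L^1$. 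The product $\by_r\otimes\frac{\dd}{\dd r}\bz$ is then integrable because $\by$ is bounded, by the same induction.
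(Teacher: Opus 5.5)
Your proposal is correct. It takes a different, more direct route than the one the paper relies on.

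The paper gives no proof of \Cref{prop:Chen_part_ii} itself. It defers to \cite[Lemma~2.26 and Corollary~2.36]{i_part}, where the identity is obtained alongside the equivalence between the derivative-based product $\oast$ of \Cref{def:convolution_product} and the original convolution product of \cite{HT21}.

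You instead verify \eqref{eq:chen_tensor} coefficientwise from \eqref{eq:def_vsig_comp}. You split $\Delta^n_{s,t}$ by how many times lie below $u$, and recognise the lower block as $\VSig{x}{K}^{i_1\cdots i_k,r_{k+1}}_{s,u}$. You then identify the upper block with $-\frac{\dd}{\dd r}\VSig{x}{K}^{i_{k+1}\cdots i_n,\tau}_{r,t}$ via $\bz^{v,\tau}_{r,t}=\int_r^t\Phi_k(\rho)\dd\rho$. Here $\Phi_k$ is independent of $r$ because the remaining simplex starts at $\rho$, not at $r$.

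Your route uses only Tonelli--Fubini, Lebesgue differentiation, and the uniform bound $C_K:=\sup_{t}\int_0^t|K(t,\rho)|\dd\rho<\infty$, applied from the innermost variable outward. It shows why $\oast$ is phrased through a lower-endpoint derivative. It also verifies the standing requirements of \Cref{def:convolution_product} under $K\in\Lkernel$ alone: boundedness of $\by$ (with measurability from the same Tonelli argument) and integrability of the derivative.

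The cited route, in exchange, places the identity inside the Volterra rough path framework of \cite{HT21}. There Chen's relation is the structural property used for drivers that are not Lipschitz, whose iterated integrals are not classically defined.

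Two cosmetic remarks:
\begin{itemize}
\item The absolute bound is simply $(C_K\|\dot x\|_\infty)^n$, with no extra simplex volume factor. Integrating $|K|$ in the lower variable already absorbs the interval lengths.
\item In $\Phi_k$, the factor $K^{i_{k+1}}(r_{k+2},r)\dot x_r$ depends on $r_{k+2}$, so it belongs inside the outer integral. For $k=n-1$ it reduces to $K^{i_n}(\tau,r)\dot x_r$.
\end{itemize}
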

Let us conclude with 
a remark on the analogy to the case of classical signatures.
\begin{rem}
     Suppose $m=d$ and let $K(t,s)= \mathrm{I}_d$. Then it is easily observed from \eqref{eq:def_vsig_comp} that $\VSig{x}{K}_{s,t}^\tau = \mathrm{Sig}(x)_{s,t}$ for all $(s,t,\tau) \in \Delta^3.$ Moreover, by definition \eqref{eq:tensor_convol_prod}, in this case the Chen relation \eqref{eq:chen_tensor} reads \begin{align*}
        \mathrm{Sig}(x)_{s,t} & = \mathrm{Sig}(x)_{s,u}-\mathrm{Sig}(x)_{s,u} \otimes \int_u^t \frac{\dd}{\dd r}\mathrm{Sig}(x)_{r,t} \dd r \\ & = \mathrm{Sig}(x)_{s,u}-\mathrm{Sig}(x)_{s,u} \otimes (1-\mathrm{Sig}(x)_{u,t}) \\ & = \mathrm{Sig}(x)_{s,u} \otimes \mathrm{Sig}(x)_{u,t},
    \end{align*} which corresponds to the familiar Chen relation stated in \eqref{eq:sig_chen_intro}.

\end{rem}

\subsection{General discretization procedure}\label{sec:general-discrete}
In this section we derive explicit expressions for the Volterra signature of a piecewise linear path.
The key observation is that, under the kernel decomposition \eqref{eq:decomposed_kernel}, the Volterra signature on a single interval separates into analytic weight factors depending only on the kernel and algebraic terms depending only on the path increments (see \Cref{prop:sig term over piece segemtn}).
Iterating this via the Chen relation \eqref{eq:chen_tensor} then yields a full breakdown of the Volterra signature over the entire partition in terms of weighted monomials of increments (see \Cref{prop:chen_full_breakdown}).
We first set the stage for our computations concerning piecewise linear paths with a couple of notation. \begin{nota}\label{not:comp_sec}
    We are given a kernel $K$ as in  \Cref{def:kernel_class} (that is $K$ lies in the space $\Lkernel$), as well as a piecewise linear path $x$. More specifically, $K$ and $x$ are of the following form: \begin{itemize}
        \item[(i)] For a partition $0 \le t_0 < t_1 < \cdots < t_J \le T$, we define $x$ as \begin{equation}\label{eq:pcsws_linear_x}
    x_{t} = x_{t_j} + v_j(t - t_j), \qquad v_j\in\RR^d,\quad t\in [t_j, t_{j+1}].
\end{equation}
Otherwise stated, we have $\dd x_t = v_j \dd{t}$ on $[t_j, t_{j+1})$.
\item[(ii)] As we are mainly interested in separating analytic from algebraic computations here, we assume that one can decompose the kernel $K$ as 
\begin{equation}\label{eq:decomposed_kernel}
    K(t,s) = \sum_{p=1}^q k_{p}(t,s) A_p, \qquad k_p\in L^{\infty,1}(\Delta^2; \RR), \quad A_p \in \mathcal{L}(\RR^d;\RR^m), \quad p=1, \dots, q.
\end{equation} 
This will make notation more tractable and reveal simplifications -- for instance in the scalar kernel case $K = k \mathrm{I}_d$ with $q=1$ -- more directly.
    \end{itemize}
\end{nota}

\noindent
In the setting of \Cref{not:comp_sec}, we will see how to get computationally tractable expressions for the Volterra signature. We will start with a general procedure and then move to concrete examples allowing for explicit computations.%

The following lemma shows that the constant velocity $v_{j}$ on each interval $[t_{j-1}, t_{j}]$ allows one to factor the path increments out of the iterated integrals, leaving scalar-valued Volterra signature weights $\cK$ that depend only on the kernel.

\begin{lem}\label{prop:sig term over piece segemtn}
Let $x$ be a piecewise linear path and $K$ be a kernel as in Notation \ref{not:comp_sec}, with respective decompositions \eqref{eq:pcsws_linear_x}-\eqref{eq:decomposed_kernel}. Then for the Volterra signature on a single interval of the partition it holds
\begin{equation}\label{eq:smpl_identity}
    \VSig{x}{K}^{(n),\tau}_{t_{j-1}, t_j} =  \, \sum_{p_1, \dots, p_n = 1}^q \mathcal{K}_{t_{j-1},t_j}^{p_1\cdots p_n,\tau}A_{p_1}v_j 
\otimes\cdots\otimes A_{p_n}v_j ,  \, \qquad \tau \in [t_j, T],
\end{equation}
for all $j=1, \dots, J$ and $n\in\NN_{\ge 1}$. Note that in~\eqref{eq:smpl_identity}, $\cK$ denotes the Volterra signature (as defined in \eqref{eq:def_vsig_comp}) above the path
\begin{equation}\label{eq:def_calK}
y^\tau_{s, t} = \bigg(\int_s^t k_1(\tau, u) \dd u, \dots, \int_s^t k_q(\tau, u)\dd u\bigg), \quad (s,t,\tau) \in \Delta^3.
\end{equation} 
\end{lem}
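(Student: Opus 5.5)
The plan is to compute directly from the componentwise definition \eqref{eq:def_vsig_comp}, written in tensor form. On the interval $[t_{j-1},t_j]$ the path \eqref{eq:pcsws_linear_x} has constant velocity, so $\dd x_{r} = v_j\,\dd r$ for $r\in[t_{j-1},t_j)$. The boundary points form a null set and can be ignored. In tensor form, the $n$-th level reads
\[
\VSig{x}{K}^{(n),\tau}_{t_{j-1},t_j} = \int_{\Delta^n_{t_{j-1},t_j}} K(r_2,r_1)\dd x_{r_1}\otimes K(r_3,r_2)\dd x_{r_2}\otimes\cdots\otimes K(\tau,r_n)\dd x_{r_n},
\]
with $r_{n+1}=\tau$. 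This is just the componentwise formula \eqref{eq:def_vsig_comp} summed against $e_{i_1\cdots i_n}$.

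First, substitute $\dd x_{r_l}=v_j\,\dd r_l$ and the decomposition \eqref{eq:decomposed_kernel}, so that $K(r_{l+1},r_l)v_j=\sum_{p_l=1}^q k_{p_l}(r_{l+1},r_l)A_{p_l}v_j$. Next, expand the $n$-fold tensor product multilinearly. Each term in the expansion factors as a scalar $\prod_l k_{p_l}(r_{l+1},r_l)$ times the constant tensor $A_{p_1}v_j\otimes\cdots\otimes A_{p_n}v_j$. The constant tensor can be pulled out of the integral, and the finite sum over $(p_1,\dots,p_n)$ can be exchanged with the integral.

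The remaining scalar integral is
\[
\int_{\Delta^n_{t_{j-1},t_j}}\prod_{l=1}^n k_{p_l}(r_{l+1},r_l)\dd r_l .
\]
I must identify this with $\mathcal{K}^{p_1\cdots p_n,\tau}_{t_{j-1},t_j}$, the Volterra signature over the path $y$ from \eqref{eq:def_calK} with kernel $\mathrm{I}_q$. The natural reading is that the $p$-th component of $y$ has the kernel-weighted differential $k_p(\tau,u)\dd u$. With the time argument of the kernel taken to be the next integration variable $r_{l+1}$, and $r_{n+1}=\tau$ for the last one, definition \eqref{eq:def_vsig_comp} with $K^{p}\dd y=k_p(r_{l+1},r_l)\dd r_l$ gives exactly the integral above.

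The main obstacle is this identification step: it requires interpreting \eqref{eq:def_calK} correctly. The object $y^\tau_{s,t}$ is a Volterra path whose dependence on $\tau$ must be replaced by $r_{l+1}$ at each level, in the same way that $K(\tau,\cdot)\dd x$ in \eqref{eq:def_vsig_comp} is read with $\tau$ replaced by $r_{l+1}$. I would state this convention explicitly, namely that $\mathcal{K}$ is the Volterra signature with scalar kernels $k_p$ and $\dd y^{p}_u=\dd u$. I would then check that it also satisfies the fundamental equation \eqref{eq:foundamental_volterra_intro}, so that the identification does not depend on notation.

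Finally, integrability: each $k_p\in L^{\infty,1}$, so iterating the bound $\sup_t\int_0^t|k_p(t,s)|\dd s<\infty$ shows that all the iterated integrals are absolutely convergent. This justifies the interchange of the finite sum with the integral and the pulling out of constants via Fubini.
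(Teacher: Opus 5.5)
Your proposal is correct and is essentially the paper's own argument: substitute $\dd x_r = v_j\,\dd r$ and the decomposition \eqref{eq:decomposed_kernel} into \eqref{eq:def_vsig_comp}, expand the tensor product multilinearly, pull out the constant tensors $A_{p_1}v_j\otimes\cdots\otimes A_{p_n}v_j$, and recognize the remaining scalar integral $\int_{\Delta^n_{t_{j-1},t_j}}\prod_{l=1}^n k_{p_l}(r_{l+1},r_l)\,\dd r_l$ as $\mathcal{K}^{p_1\cdots p_n,\tau}_{t_{j-1},t_j}$. The paper only says this last identification is ``readily checked'', so your two additions are sound and fill in what it leaves implicit: the explicit convention (kernels $k_p$ evaluated at $(r_{l+1},r_l)$ with $r_{n+1}=\tau$, and $\dd y^p_u=\dd u$) and the $L^{\infty,1}$/Tonelli integrability remark; the extra check against the fundamental equation is not needed.
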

\begin{proof}
We start from the Definition \ref{def:vsig} of the Volterra signature. Since equation \eqref{eq:pcsws_linear_x} specifies that $\dot{x}_t=v_j$ on $[t_j,t_{j+1})$, formula \eqref{eq:def_vsig_comp} reads \[
\VSig{x}{K}^{(n),\tau}_{t_{j-1}, t_j} = \int_{\Delta^n_{t_{j-1}, t_j}} \bigotimes_{l=1}^n K(r_{l+1},r_l)v_j \dd r_l.
\] 
Taking into account the decomposition \eqref{eq:decomposed_kernel} for $K$, the above can be recast as 
\begin{equation}\label{eq:recast_yVsig}
    \VSig{x}{K}^{(n),\tau}_{t_{j-1}, t_j}=\sum_{p_1 \ldots, p_n = 1}^q \cK_{t_{j-1},t_j}^{p_1\cdots p_n,\tau} A_{p_1}v_j \otimes\cdots\otimes A_{p_n}v_j,
\end{equation}
where we have set \begin{equation}
    \label{eq:recast_Y_2}
    \cK_{t_{j-1},t_j}^{p_1\cdots p_n,\tau} = \int_{\Delta^n_{t_{j-1}, t_j}} \prod_{l=1}^{n} k_{p_l}(r_{l+1},r_l) \dd r_l.
\end{equation} Furthermore, it is readily checked from \eqref{eq:recast_Y_2} and Definition \ref{def:vsig} that \begin{equation}\label{eq:recast_3}
    \cK_{t_{j-1},t_j}^{p_1\cdots p_n,\tau} = \VSig{t}{K}_{t_{j-1},t_j}^{p_1\cdots p_n,\tau} = \by_{t_{j-1},t_j}^{p_1\cdots p_n,\tau},
\end{equation} where $y$ is defined by \eqref{eq:def_calK}. Putting together \eqref{eq:recast_yVsig} and \eqref{eq:recast_3}, this proves our claim~\eqref{eq:smpl_identity}. Our proof is complete.
\end{proof}

\begin{rem}\label{rem:d}
    In \eqref{eq:smpl_identity} we have expanded our expression in terms of the coordinates $p_1,\dots,p_n$. A more compact expression, which will be used below, is 
    \begin{equation}\label{f1}
    \VSig{x}{K}_{t_{j-1},t_j}^{(n),\tau} = \sum_{w \in \cW_q^n }\cK_{t_{j-1},t_j}^{w,\tau} (Av_j)^w,
    \end{equation} 
    where we have set 
    \begin{equation} \label{eq:nota_matrix_tensor}
    (Av)^w =(Av)^{p_1\cdots p_n} = A_{p_1}v \otimes \cdots \otimes A_{p_n}v.
    \end{equation}
\end{rem}
The representation given in \eqref{eq:smpl_identity} or \eqref{f1} separates the convolution integrals in the Volterra signature fully on the side of the kernel; reducing the path contribution to monomials of the increment vector $v_i$.
The corollary below further simplifies this expression in case of a scalar kernel.

\begin{rem}\label{rem:wlog_ID}
    Note that in the case $q=1$, it follows from the definition of the Volterra signature (see Definition~\ref{def:vsig}), that $\VSig{x}{kA}=\VSig{Ax}{kI_m}$. Consequently, we may assume without loss of generality that $A=I_m$, at the price of replacing the driving path $x$ by $Ax$.
\end{rem}
\begin{cor}\label{cor:scalar_kernel}
    In the same context as for Lemma \ref{prop:sig term over piece segemtn}, assume that the kernel $K$ is such that $q=1$ in decomposition \eqref{eq:decomposed_kernel}, and in addition we have $K=kI_d$ for a scalar kernel $k \in L^{\infty,1}(\Delta^2;\RR)$. Then for all $j\geq 1$ and $\tau \in [t_j,T]$ we have 
    \begin{equation*}
        \VSig{x}{k}^{(n), \tau}_{t_{j-1}, t_j} = \kappa^{n, \tau}_{t_{j-1}, t_j} \, v_j^{\otimes n}, 
        \quad\text{with}\quad
        \kappa^{n, \tau}_{s,t}= \int_{\Delta^n_{s,t}} \prod_{i=1}^{n} k(r_{i+1},r_i)\dd r_i\in \RR,
    \end{equation*}
where we have used the convention $r_{n+1} = \tau$.
\end{cor}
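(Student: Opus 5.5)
This is a direct specialisation of \Cref{prop:sig term over piece segemtn}. We take $q=1$, $k_1=k$ and $A_1=\mathrm{I}_d$ in the decomposition \eqref{eq:decomposed_kernel}; here necessarily $m=d$. Then the alphabet $\cA_q$ has the single letter $1$, so $\cW_1^n$ consists of the single word $1\cdots 1$ of length $n$. The sum in \eqref{f1} therefore collapses to one term:
\[
\VSig{x}{k}^{(n),\tau}_{t_{j-1},t_j} = \cK^{1\cdots 1,\tau}_{t_{j-1},t_j}\,(\mathrm{I}_d v_j)^{\otimes n} = \cK^{1\cdots 1,\tau}_{t_{j-1},t_j}\, v_j^{\otimes n}.
\]
The tensor factor follows immediately from the notation \eqref{eq:nota_matrix_tensor}.

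It remains to identify the scalar weight. From the intermediate formula \eqref{eq:recast_Y_2} in the proof of \Cref{prop:sig term over piece segemtn}, with all $p_l=1$ and convention $r_{n+1}=\tau$,
\[
\cK^{1\cdots1,\tau}_{t_{j-1},t_j} = \int_{\Delta^n_{t_{j-1},t_j}}\prod_{l=1}^n k(r_{l+1},r_l)\dd r_l .
\]
This is exactly $\kappa^{n,\tau}_{t_{j-1},t_j}$. Equivalently, it is the level-$n$ component of the Volterra signature of the one-dimensional path $y^\tau$ from \eqref{eq:def_calK}.

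There is no real obstacle; the only care needed is bookkeeping. We should check that $\tau\in[t_j,T]$ keeps $(t_{j-1},t_j,\tau)\in\Delta^3$, so that the lemma applies. We should also check that $\kappa^{n,\tau}$ is finite. Finiteness follows from $k\in L^{\infty,1}(\Delta^2;\RR)$: integrating successively in $r_1,\dots,r_n$ bounds $|\kappa^{n,\tau}_{s,t}|$ by $\big(\sup_t\int_0^t|k(t,s)|\dd s\big)^n$. The case $n=0$ is trivial by convention, since both sides equal $1$.
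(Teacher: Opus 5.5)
Your proposal is correct and matches the paper's intended argument: the corollary is stated without separate proof as the specialization $q=1$, $k_1=k$, $A_1=\mathrm{I}_d$ of \Cref{prop:sig term over piece segemtn}, where $\cW_1^n$ consists of a single word and the weight \eqref{eq:recast_Y_2} is literally $\kappa^{n,\tau}_{t_{j-1},t_j}$. Your extra finiteness check $|\kappa^{n,\tau}_{s,t}|\le\big(\sup_{t}\int_0^t|k(t,s)|\,\dd s\big)^n$, obtained by integrating successively in $r_1,\dots,r_n$, is also correct.
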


\begin{rem}
    Let us particularize Corollary \ref{cor:scalar_kernel} to the classical signature case, i.e. when $k\equiv 1$. In this situation we have $\kappa^{n}_{t_{j-1},t_j} \equiv \frac1{n!}{(t_j-t_{j-1})^n}$, thus retrieving the tensor exponential $$\VSig{x}{1}_{t_{j-1}, t_j} \equiv \Sig{x}_{t_{j-1}, t_j} = \exp_{\otimes}(x_{t_j}-x_{t_{j-1}}).$$  
\end{rem}
Lemma \ref{prop:sig term over piece segemtn} gave an expression for the Volterra signature of a piecewise linear path on one interval $[t_j,t_{j+1}]$ of the time partition. 
The following proposition now demonstrates how Chen's relation allows to fully resolve the Volterra signature on an arbitrary interval $[t_0,t_j]$, thanks to weighted summations of monomials.
\begin{prop}\label{prop:chen_full_breakdown}
We work under the same conditions as in Lemma \ref{prop:sig term over piece segemtn}. 
That is $x$ is piecewise linear, $K$ verifies Hypothesis \ref{def:kernel_class}, and both can be decomposed as \eqref{eq:pcsws_linear_x}-\eqref{eq:decomposed_kernel}. Then for any $t_j$ in the partition $t_0,\dots, t_J$, any $l \geq j$ and any $n\in \mathbb{N}_{\geq 1}$, we have 
\begin{equation}\label{eq:weighted_sum_thm}
    \VSig{x}{K}^{(n),t_l}_{t_0, t_j} 
    = \sum_{k=1}^{j \wedge n}
    \;\sum_{0<i_1<\dots< i_k < j}
    \;\sum_{\substack{w_1,\dots, w_k \in \mathcal{W}_q\setminus\{\varnothing\} \\ |w_1| + \cdots + |w_k| = n}} \mathcal{K}^{(w_1, \dots, w_k)}_{i_1, \dots, i_k,l}(Av_{i_1})^{w_1} \otimes \cdots \otimes (Av_{i_k})^{w_k}
\end{equation} 
In equation \eqref{eq:weighted_sum_thm}, we used the following conventions: first for a word $w=p_1\cdots p_n \in \cW_q$ and $v \in \RR^d$, recall that $(Av)^w$ is defined by \eqref{eq:nota_matrix_tensor} and that the parameter $q$ comes from the decomposition \eqref{eq:decomposed_kernel}. 
Next for $w_1,\dots,w_k\in \cW_q$ and $0<i_1<\cdots < i_{k+1} \leq j$ we define 
\begin{equation}\label{eq:def_cK_words}\mathcal{K}^{w_1 \cdots w_k}_{i_1,\dots, i_{k+1}} := \int_{t_{i_1-1}}^{t_{i_1}} \cdots \int_{t_{i_{k}-1}}^{t_{i_{k}}} \prod_{l=1}^k \dot{\cK}^{w_l,r_{l+1}}_{r_l, t_{i_l}} \dd r_l\;\;\in \RR,\quad r_{k+1} = t_{i_{k+1}},\end{equation}
where the expression of $\dot{\cK}^{w,\tau}$ for a word $w=p_1\cdots p_n$ is 
\begin{equation}\label{eq:def_dot_cK}
    \dot{\cK}_{s,t}^{p_1\cdots p_n,\tau} :=  \int_{\Delta^{n-1}_{s,t}}  k_{p_1}(r_1,s)\prod_{a=1}^{n-1} k_{p_{a+1}}(r_{a+1},r_{a})\dd r_a, 
\end{equation}
with the usual convention $r_{n} = \tau$.

\end{prop}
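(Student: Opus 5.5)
Note first that the statement as printed has index glitches (sum over $i_k<j$, weight indexed by $l$ rather than $t_l$). I read it as: $0<i_1<\dots<i_k\le j$, weight $\mathcal{K}^{(w_1,\dots,w_k)}_{i_1,\dots,i_k,l}$ meaning \eqref{eq:def_cK_words} with final time $r_{k+1}=t_l$. I will prove this version.

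\textbf{Approach: direct expansion of the iterated integral.} The cleanest route is not to iterate Chen's relation~\eqref{eq:chen_tensor} abstractly, but to expand the defining formula \eqref{eq:def_vsig_comp} directly. Write the simplex $\Delta^n_{t_0,t_j}$ as a disjoint union, up to null sets, according to which partition interval each variable $r_1<\dots<r_n$ falls into. Grouping consecutive variables in the same interval, each configuration is given by occupied intervals $i_1<\dots<i_k\le j$ and block sizes $n_1,\dots,n_k\ge1$ with $\sum_a n_a=n$. On the $a$-th block, $\dd x_r=v_{i_a}\dd r$. Expanding $K=\sum_p k_pA_p$ at every factor produces words $w_a\in\cW_q$ with $|w_a|=n_a$ and the tensor $(Av_{i_1})^{w_1}\otimes\cdots\otimes(Av_{i_k})^{w_k}$, exactly as in the proof of \Cref{prop:sig term over piece segemtn}. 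This is why $k\le j\wedge n$. What remains is to identify the scalar coefficient with \eqref{eq:def_cK_words}.

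\textbf{Identifying the scalar weight.} Fix a block $a$ with word $w_a=p_1\cdots p_{n_a}$, lying in $[t_{i_a-1},t_{i_a}]$. Call its first (earliest) variable $\rho_a$; in the paper's convention this is the $r_a$ of \eqref{eq:def_cK_words}. The remaining variables of the block run over $\Delta^{n_a-1}_{\rho_a,t_{i_a}}$. The kernel factors inside the block are $k_{p_1}(\cdot,\rho_a)$ and the chain $k_{p_{b+1}}(\cdot,\cdot)$. The last variable of the block is linked to the first variable $\rho_{a+1}$ of the next block through the factor $k(\rho_{a+1},\cdot)$. For the final block this link is to $\tau=t_l$.

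Integrating out the inner variables of block $a$ with $\rho_a$ and $\rho_{a+1}$ held fixed gives precisely $\dot{\cK}^{w_a,\rho_{a+1}}_{\rho_a,t_{i_a}}$ from \eqref{eq:def_dot_cK}, with upper endpoint $\rho_{a+1}$ playing the role of $\tau$. Fubini then leaves the outer integrals over $\rho_a\in[t_{i_a-1},t_{i_a}]$, which is \eqref{eq:def_cK_words}. Fubini is justified because $K\in\Lkernel$ and the integrand is a finite product.

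\textbf{Consistency with Chen's relation.} As a cross-check one can induct on $j$ via \Cref{prop:Chen_part_ii} with $u=t_{j-1}$. In \eqref{eq:tensor_convol_prod}, differentiating $\bz_{r,t_j}^\tau$ in $r$ produces the $\dot{\cK}$-type derivative of the last block. Plugging in the induction hypothesis at upper time $r$ then reproduces \eqref{eq:def_cK_words}. This is needed only to match the paper's framing, not for the proof itself.

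\textbf{Main obstacle.} The delicate step is the bookkeeping of the time arguments. Specifically:
\begin{itemize}
\item The inner truncation of each block must be at $t_{i_a}$ and not at $\rho_{a+1}$. This is correct because the inner variables stay inside their own interval, so they are below $t_{i_a}\le\rho_{a+1}$.
\item The word ordering in $\dot{\cK}$ must match the tensor ordering, with the first letter attached to the earliest variable.
\end{itemize}
I would verify both on the cases $n=2$, $k\in\{1,2\}$ before writing out the general argument.
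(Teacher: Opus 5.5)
Your argument is correct, and it takes a genuinely different route from the paper's.

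The paper argues by induction on $j$. It applies the Chen relation of Proposition~\ref{prop:Chen_part_ii} with $u=t_{j-1}$, uses Lemma~\ref{prop:sig term over piece segemtn} to write $-\frac{\dd}{\dd t}\VSig{x}{K}^{(l),\tau}_{t,t_j}=\sum_{w\in\cW_q^l}\dot\cK^{w,\tau}_{t,t_j}(Av_j)^w$, and inserts the induction hypothesis into the resulting integral over $[t_{j-1},t_j]$. This is essentially your ``cross-check''.

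You instead decompose $\Delta^n_{t_0,t_j}$ according to the intervals occupied by the ordered variables, and integrate out the non-leading variables of each block by Fubini. Your bookkeeping reproduces \eqref{eq:def_dot_cK} and \eqref{eq:def_cK_words} exactly: the first letter of $w_a$ is attached to the earliest variable $\rho_a$, the inner variables are confined to $[\rho_a,t_{i_a}]$ with $t_{i_a}\le\rho_{a+1}$, and the last factor of each block links to $\rho_{a+1}$ (resp.\ to $\tau$). Your correction to $i_k\le j$ is also right, since for $j=n=1$ the printed sum is empty.

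What your route buys:
\begin{itemize}
\item It is self-contained and does not need the Chen relation imported from the companion work.
\item It requires no differentiation of $s\mapsto\cK^{w,\tau}_{s,t}$.
\item It gives the formula for every $\tau\ge t_j$ at once. The paper's induction actually needs this stronger form, because the factor $\VSig{x}{K}^{(n-l),t}_{t_0,t_{j-1}}$ is evaluated at off-grid upper times $t\in[t_{j-1},t_j]$, not only at grid times $t_l$.
\end{itemize}
The paper's route, in turn, exhibits the formula as the unrolling of the single-interval Chen step. That step is the primitive iterated by the schemes in Sections~\ref{sec:algo_quadratic} and~\ref{sec:algo_multiplicative}.

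One small correction: Fubini is not justified merely because the integrand is a finite product. The justification is Tonelli combined with the bound $\sup_t\int_0^t|k_p(t,s)|\dd s\le C$. Integrating the variables from the earliest one upward, this bounds the absolute integral by $C^n$.
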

\begin{proof}
    Clearly the statement holds for $j = 0$ as $\VSig{x}{K}^{(n),t_l}_{t_0, t_0}  = 0$ for all $n\ge 1$.
    Next note that for $j \ge 1$ it follows from Chen's relation \eqref{eq:chen_tensor} that
    \begin{equation*}
        \VSig{x}{K}^{(n),t_l}_{t_0, t_j} 
        =\VSig{x}{K}^{(n),t_l}_{t_0, t_{j-1}} - \sum_{l=1}^n \int_{t_{j-1}}^{t_j}\VSig{x}{K}^{(n-l), t}_{t_0, t_{j-1}}\otimes  \left(\frac{\dd}{\dd t}\VSig{x}{K}^{(l), t_l}_{t, t_j}\right) \dd{t}.
    \end{equation*}
    Now using \Cref{prop:sig term over piece segemtn} and Remark \ref{rem:d} we see that 
    $$-\frac{\dd}{\dd t}\VSig{x}{K}^{(l),t_l}_{t, t_j} = -\frac{\dd}{\dd t} \sum_{w\in \mathcal{W}_q^l}{\mathcal{K}}^{w,t_l}_{t, t_j}(Av_j)^w = \sum_{w\in \mathcal{W}_q^l}\dot{\mathcal{K}}^{w,t_l}_{t, t_j}(Av_j)^w \, ,
    $$
    and the claim \eqref{eq:weighted_sum_thm} follows by induction.
\end{proof}

We discuss the computational implications of the weighted monomial expansion \eqref{eq:weighted_sum_thm} in \Cref{sec:algo_quadratic,sec:algo_multiplicative}.
Like in Corollary \ref{cor:scalar_kernel}, we draw attention to the scalar case $K = k \mathrm{I}_d$, where the complexity of the inner summation reduces to the composition of the tensor level $n$ into $k$ parts.  

\begin{cor}\label{cor:scalar_expansion}
Let $x$ be a piecewise linear path as introduced in Notation \ref{not:comp_sec}. Similarly to Corollary \ref{cor:scalar_kernel}, assume that $K=kI_d$ for a scalar kernel $k\in L^{\infty,1}(\Delta^2;\RR)$. Then for all $0\leq j \leq l \leq J$ and $n\in\NN_{\ge 1}$ it holds
\begin{align}\label{eq:combinatorial_vsig}
    \VSig{x}{k}^{(n),t_l}_{t_0, t_j} 
    &= \sum_{k=1}^{j \wedge n}
    \;\sum_{\substack{0<i_1<\dots< i_k \le j\\n_1 + \cdots + n_k = n}}
    \mu^{n_1, \dots, n_k}_{i_1, \dots, i_k,l} \,v_{i_1}^{\otimes n_1}\otimes  \cdots\otimes  v_{i_k}^{\otimes n_k},
    \end{align}
    where $n_l > 0$ in the inner summation and the weights $\mu^{n_1, \dots, n_k}_{i_1, \dots, i_{k+1}}$ are defined by
    \begin{equation}\label{eq:weights_cor}
     \mu^{n_1, \dots, n_k}_{i_1, \dots, i_{k+1}} :=  \int_{t_{i_1-1}}^{t_{i_1}} \cdots \int_{t_{i_{k}-1}}^{t_{i_{k}}} \prod_{l=1}^k\dot{\kappa}^{n_l,r_{l+1}}_{r_l, t_{i_l}} \dd{r_l} \;\;\in\RR,
    \quad\text{where we set}\quad r_{k+1} = t_{i_{k+1}},
    \end{equation}
    with a kernel $\dot{\kappa}$ given by
\begin{equation}
\label{eq:kernel_cor}\dot{\kappa}^{n,\tau}_{s, t} 
=  
\int_{\Delta^{n-1}_{s,t}}  k(r_1,r_0)\prod_{a=1}^{n-1} k(r_{a+1}, r_{a}) \dd r_a, \quad\text{with the convention}\quad r_0 = s, \; r_{n} = \tau.
\end{equation}
\end{cor}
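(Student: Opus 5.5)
The plan is to specialize \Cref{prop:chen_full_breakdown} to $q=1$, $A_1=\mathrm{I}_d$, $k_1=k$, and then collapse the word summations. With $q=1$ the alphabet $\cA_1$ has a single letter, so for each length $n_l\ge1$ there is exactly one nonempty word $w_l=1\cdots1$ of length $n_l$. The inner sum over $(w_1,\dots,w_k)$ with $|w_1|+\cdots+|w_k|=n$ therefore becomes a sum over compositions $n_1+\cdots+n_k=n$ with all $n_l>0$, which is the inner summation in \eqref{eq:combinatorial_vsig}. By \eqref{eq:nota_matrix_tensor}, $(Av_{i_l})^{w_l}=(\mathrm{I}_d v_{i_l})^{\otimes n_l}=v_{i_l}^{\otimes n_l}$, which gives the monomials $v_{i_1}^{\otimes n_1}\otimes\cdots\otimes v_{i_k}^{\otimes n_k}$.

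Next I will identify the weights. For $w=1^{n}$, definition \eqref{eq:def_dot_cK} reads $\dot\cK^{1^n,\tau}_{s,t}=\int_{\Delta^{n-1}_{s,t}}k(r_1,s)\prod_{a=1}^{n-1}k(r_{a+1},r_a)\dd r_a$ with $r_n=\tau$. This is exactly $\dot\kappa^{n,\tau}_{s,t}$ in \eqref{eq:kernel_cor} once we set $r_0=s$. Substituting into \eqref{eq:def_cK_words} gives $\mathcal{K}^{w_1\cdots w_k}_{i_1,\dots,i_{k+1}}=\mu^{n_1,\dots,n_k}_{i_1,\dots,i_{k+1}}$ from \eqref{eq:weights_cor}, with the same convention $r_{k+1}=t_{i_{k+1}}$. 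Here the terminal index $i_{k+1}$ is the observation index $l$, which matches the subscript $\mu_{i_1,\dots,i_k,l}$.

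The one point needing care is bookkeeping rather than analysis: the ranges of the indices. The statement allows $i_k\le j$, while the displayed range in \Cref{prop:chen_full_breakdown} is written $i_k<j$. I will check this directly against the inductive Chen step in that proof. At step $j$, the new contributions carry the increment $v_j$, so $i_k=j$ must be allowed, and the corollary's range $0<i_1<\cdots<i_k\le j$ is the correct one.

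Similarly, the case $j=0$ is trivial, since both sides vanish for $n\ge1$. The bound $k\le j\wedge n$ is automatic, because $k$ distinct indices lie in $\{1,\dots,j\}$ and each $n_l\ge1$. If any doubt remains about the conventions, I would rerun the short induction from the proof of \Cref{prop:chen_full_breakdown} in the scalar setting, using \Cref{cor:scalar_kernel} in place of \Cref{prop:sig term over piece segemtn}. The scalar derivative $-\frac{\dd}{\dd t}\kappa^{n,\tau}_{t,t_j}=\dot\kappa^{n,\tau}_{t,t_j}$ follows by differentiating the lower limit of the innermost variable $r_1$ in the simplex integral.
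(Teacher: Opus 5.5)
Your proposal is correct and matches the paper, which states this corollary without a separate proof, as an immediate specialization of \Cref{prop:chen_full_breakdown} to $q=1$, $A_1=\mathrm{I}_d$. In that case each nonempty word over $\cA_1$ is determined by its length, so the word sums collapse to compositions $n_1+\cdots+n_k=n$ and $\dot\cK^{1\cdots 1,\tau}_{s,t}$ coincides with $\dot\kappa^{n,\tau}_{s,t}$. Your reading of the index range as $0<i_1<\cdots<i_k\le j$, rather than the $i_k<j$ printed in the proposition, is the right one; the case $j=1$ already forces it.
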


\begin{rem}
    The above form reveals the relation of the Volterra signature to the \emph{weighted iterated sum signature} \cite{diehl2024fruits}, although here the weights in general depend on the grading as well.
\end{rem}
\begin{rem}\label{rem:kappa_intro}
In the scalar kernel setting $K = k\,\mathrm{I}_{d}$, the kernel $\dot{\kappa}^{n}$ introduced in \eqref{eq:kernel_cor} naturally gives rise to the companion quantity
\begin{equation}\label{eq:kappa_general}
    \kappa^{n,\tau}_{s,t} := \int_{s}^{t} \dot{\kappa}^{n,\tau}_{u,t} \,\dd u, \qquad (s,t,\tau)\in \Delta^{3}.
\end{equation}
This is consistent with Corollary~\ref{cor:scalar_kernel}: a direct relabeling of the integration variables in the $n$-fold simplex integral therein shows that \eqref{eq:kappa_general} coincides with the expression for $\kappa^{n,\tau}_{s,t}$ given in that corollary. Both $\kappa^{n}$ and $\dot{\kappa}^{n}$ will be computed explicitly in the examples below.
\end{rem}
Remaining in the framework of scalar kernels $K = k\,\mathrm{I}_d$, we now turn to several concrete examples of~$k$ for which further explicit computations can be derived. We continue with various generalizations and additional examples in \Cref{sec:kernel_computations,sec:algo_multiplicative}.
\begin{example}[Constant kernel]\label{exmpl:constant_kernel}
We begin with the trivial scalar kernel $k\equiv 1$. %
In this case we are in the framework of Corollary \ref{cor:scalar_expansion} and the kernel $\dot{\kappa}$ in~\eqref{eq:kernel_cor} is such that \[
\dot{\kappa}_{s,t}^{n,\tau} = \int_{\Delta_{s,t}^{n-1}}\dd r_1 \cdots \dd r_{n-1} = \frac{1}{(n-1)!}(t-s)^{n-1}.
\]As a result, the factorized weights in  \eqref{eq:weights_cor} become $$\mu^{n_1, \dots, n_k}_{i_1, \dots, i_{k+1}} = \prod_{l=1}^k \frac{1}{n_l!}(t_{i_l}-t_{i_l-1}).$$ 
\end{example}
\begin{example}[Exponential kernel]\label{ex:exponential_case}
Let us now assume that the kernel $K$  can be written as \[
K(t,s)=\alpha e^{-\lambda(t-s)} \equiv k_{\alpha,\lambda}(t,s), \quad \text{with } \alpha,\lambda >0.
\]
Then for all $n\in \mathbb{N}_{\geq 1}$ the kernels $\dot{\kappa}^n$ and $\kappa^n$ (respectively defined by \eqref{eq:kernel_cor} and \eqref{eq:kappa_general}) can be computed thanks to elementary methods as 
\begin{equation}    \label{eq:kappas_exp}\dot{\kappa}^{n,\tau}_{s,t} = e^{-\lambda(\tau-s)}\frac{\alpha^n(t-s)^{n-1}}{(n-1)!}, \quad \text{and} \quad {\kappa}^{n,\tau}_{s,t} = e^{-\lambda(\tau -s)}\alpha^n (t-s)^{n}\varphi_n(\lambda(t-s)),
\end{equation}
 where for all $\delta >0$ the function $\varphi_n$ is defined by $\varphi_n(\delta) = \frac{1}{(n-1)!}\int_{0}^{1} e^{\delta(1-u)}u^{n-1} \dd{u}$. Moreover, one can compute the values of $\varphi_n$ through $\varphi_1(\delta)=\delta^{-1}(e^{\delta}-1)$ and the recurrence relation\footnote{See \cite[Section~10.7.4]{higham2008functions}, which also describes more stable evaluation techniques when $\delta$ is small.} $$\varphi_n(\delta) = \delta \varphi_{n+1}(\delta) + \frac{1}{n!}, \quad \delta >0, \; n\in\NN.$$
 Plugging \eqref{eq:kappas_exp} into \eqref{eq:weights_cor}, the resulting weights factorize as follows
\begin{align*}
    \mu^{n_1, \dots, n_k}_{i_1, \dots, i_{k+1}} &= \int_{t_{i_1-1}}^{t_{i_1}} \cdots \int_{t_{i_{k}-1}}^{t_{i_{k}}} e^{-\lambda(\tau - r_1)} \prod_{l=1}^k\frac{\alpha^{n_l}}{(n_l-1)!} (t_{i_l} - r_{l})^{n_l -1} \dd r_l \\
  &=  e^{-\lambda(t_{i_{k+1}} - t_{i_1-1})}\varphi_{n_1}(\lambda (t_{i_1}-t_{i_1-1}))\,n_1!\prod_{l=1}^k \frac{\alpha^{n_l}}{n_l!}(t_{i_l} - t_{i_l-1})^{n_l}.
\end{align*}
\end{example}
\begin{example}[Fractional kernel]\label{xmpl:fractional_kernel} 
We consider here a constant $\beta >0$, and a kernel $K$ given by
\begin{equation}\label{eq:1d_frac_def}
    K(t,s) = \Gamma(\beta)^{-1}(t-s)^{\beta-1} \equiv k_\beta(t, s).
  \end{equation}
Note that we will elaborate on this type of kernel in~\eqref{eq:multivariate_frac_def}.
 Then the expressions for $\dot{\kappa}^n$ and $\kappa^n$ in \eqref{eq:weights_cor} and \eqref{eq:kappa_pcwcst} are written in terms of the incomplete Beta functions $I_x(a,b)$, defined for $x\in [0,1]$ and $a,b>0$ by \begin{equation}\label{eq:regul_beta_def}
    I_x(a, b) := \frac{\Gamma(a+b)}{\Gamma(a)\Gamma(b)}\int_0^x u^{a-1} (1-u)^{b-1}\dd u.\end{equation} We obtain (see Section \ref{sec:apx_fractional} for details, taking $q=1$ and $\beta_1=\beta$ therein)
\begin{align*}
\dot{\kappa}^{n,\tau}_{s, t} = \frac{(\tau - s)^{n\beta -1}}{\Gamma(n\beta)} I_\frac{t-s}{\tau -s}((n-1)\beta, \beta), \qquad {\kappa}^{n,\tau}_{s, t} = \frac{(\tau - s)^{n\beta}}{\Gamma(n\beta +1)} I_\frac{t-s}{\tau -s}((n-1)\beta +1 , \beta),
\end{align*} For the weights $\mu$ in \eqref{eq:weights_cor} we then obtain
$$\mu^{n_1, \dots, n_k}_{i_1, \dots, i_{k+1}} =  \int_{t_{i_1-1}}^{t_{i_1}} \cdots \int_{t_{i_{k}-1}}^{t_{i_{k}}} \prod_{l=1}^k \frac{(r_{l+1} - r_{l})^{n_l\beta -1}}{\Gamma(n_l\beta)} I_\frac{{t_{i_l}}-r_l}{r_{l+1} - r_l}((n_l-1)\beta, \beta) \dd{r_l},\quad r_{k+1} = t_{i_{k+1}}.$$
This term seems to have no elementary closed form expression, nor satisfy any factorization properties as in the previous examples. Nevertheless, we can still make use of the explicit form of $\kappa$ to approximate the Volterra signature with the algorithm described in \Cref{sec:algo_quadratic}.
\end{example}

We conclude this section with a combinatorial symmetry that is observed among all kernels of convolutional form. 

\begin{lem}\label{lem:cK_symmetry}
We work under the framework of \Cref{prop:chen_full_breakdown}. That is we consider a piecewise linear signal $x$ and a kernel $K$ admitting the decompositions \eqref{eq:pcsws_linear_x}-\eqref{eq:decomposed_kernel}. In addition, assume $k_1,\dots,k_q$ are of convolution form, that is  $k_p(s,t)=k_p(t-s)$ for all $(s,t)\in\Delta^2$. Recall that the kernel $\dot{\cK}$ is defined in \eqref{eq:def_dot_cK} and for $w \in \cW_q$ we have \begin{equation}
    \label{eq:cK_again}\cK_{s,t}^{w,\tau}= \int_s^t\dot{\cK}_{u,t}^{w,\tau}\dd u.
\end{equation}
Then for all $n\in\mathbb N$, all permutations $\sigma\in\mathcal S_n$ of $\{1,\dots,n\}$, and all $(s,t,\tau)\in\Delta^3$ it holds
$$
\dot{\cK}_{s,t}^{wp,\tau}=\dot{\cK}_{s,t}^{\sigma(w)p,\tau}
\quad\text{and}\quad
\cK_{s,t}^{wp,\tau}=\cK_{s,t}^{\sigma(w)p,\tau},
\qquad w\in\mathcal W_q^n,\ p\in\mathcal A_q,
$$
where we set $\sigma(p_1\cdots p_n):=p_{\sigma(1)}\cdots p_{\sigma(n)}$ for $p_1\cdots p_n\in\mathcal W_q^n$.
Moreover, if $\tau=t$, then for all $(s,t)\in\Delta^2$ it holds
$$
\dot{\cK}_{s,t}^{w,t}=\dot{\cK}_{s,t}^{\sigma(w),t}
\quad\text{and}\quad
\cK_{s,t}^{w,t}=\cK_{s,t}^{\sigma(w),t},
\qquad w\in\mathcal W_q^n.
$$
\end{lem}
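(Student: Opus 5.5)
The plan is to rewrite $\dot{\cK}$ in gap coordinates. In these coordinates the convolution structure shows that the integrand is a product of one-variable functions of the gaps, integrated over a simplex that is symmetric under permutations of those gaps. The identities for $\cK$ then follow by integrating in $u$ via \eqref{eq:cK_again}, since the permutation acts only on the word and not on $(u,t,\tau)$.

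\textbf{Step 1 (change of variables).} Take a word $wp=p_1\cdots p_n p$ of length $n+1$ and use \eqref{eq:def_dot_cK} with the convolution form $k_p(r,r')=k_p(r-r')$. This gives
$$\dot{\cK}^{wp,\tau}_{s,t}=\int_{s<r_1<\dots<r_n<t} \prod_{a=1}^{n} k_{p_a}(r_a-r_{a-1})\; k_{p}(\tau-r_n)\,\dd r_1\cdots\dd r_n ,\qquad r_0=s.$$
I substitute the gaps $u_a=r_a-r_{a-1}$ for $a=1,\dots,n$. This is a linear map with unit Jacobian from the ordered simplex onto $D_{t-s}:=\{u\in[0,\infty)^n:\ u_1+\cdots+u_n\le t-s\}$. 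It also gives $\tau-r_n=\tau-s-\sum_a u_a$. Hence
$$\dot{\cK}^{wp,\tau}_{s,t}=\int_{D_{t-s}}\prod_{a=1}^n k_{p_a}(u_a)\,k_p\Big(\tau-s-\sum_{a=1}^n u_a\Big)\dd u .$$

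\textbf{Step 2 (symmetry).} The domain $D_{t-s}$ and the last factor are invariant under any coordinate permutation $u\mapsto u_\sigma$, and this permutation has Jacobian of modulus one. Applying it turns $\prod_a k_{p_a}(u_a)$ into $\prod_a k_{p_{\sigma(a)}}(u_a)$, which gives $\dot{\cK}^{wp,\tau}_{s,t}=\dot{\cK}^{\sigma(w)p,\tau}_{s,t}$. Integrating in $u\in[s,t]$ through \eqref{eq:cK_again} yields the statement for $\cK$.

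\textbf{Step 3 (the case $\tau=t$).} For a word $w=p_1\cdots p_n$ of length $n$, the last factor is $k_{p_n}(t-r_{n-1})$. I set $u_n:=t-r_{n-1}=(t-s)-\sum_{a<n}u_a$, so the integral becomes one over the standard simplex surface
$$\Big\{u\in[0,\infty)^n:\ \textstyle\sum_{a=1}^n u_a=t-s\Big\},$$
parametrized by $(u_1,\dots,u_{n-1})$, with integrand $\prod_{a=1}^n k_{p_a}(u_a)$. The main point to justify is that this measure is invariant under all permutations of the $n$ coordinates, including those that move $u_n$. Permutations fixing $u_n$ are handled as in Step 2. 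For a transposition $u_j\leftrightarrow u_n$, the induced map on the parameters $(u_1,\dots,u_{n-1})$ replaces $u_j$ by $(t-s)-\sum_{a<n}u_a$. This map is affine with determinant $-1$ and maps the parameter domain onto itself. Since transpositions of this kind together with permutations fixing $u_n$ generate $\mathcal S_n$, full symmetry follows. Integrating in $s$ once more gives the claim for $\cK^{w,t}$.

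\textbf{Technical point.} All manipulations use Fubini/Tonelli. To justify them, I would run the same changes of variables on absolute values and use $k_p\in L^{\infty,1}(\Delta^2;\RR)$ to bound the iterated integrals. This shows the integrands are absolutely integrable, so the rearrangements are legitimate. I expect this check and the Jacobian bookkeeping in Step 3 to be the only delicate parts.
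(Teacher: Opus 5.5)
Your proof is correct, but it runs in the opposite order from the paper's. The paper first treats $\tau=t$. After the gap change of variables, it writes $\dot{\cK}^{w,t}_{s,t}$ as the integral of $\prod_{i=1}^n k_{p_i}(u_i)$ over $\{u_i\ge0,\ \sum_{i=1}^n u_i=t-s\}$ against $\dd u_1\cdots\dd u_{n-1}$, and simply asserts that this is permutation invariant. It then deduces the off-diagonal claim from the recursion $\dot{\cK}^{wp,\tau}_{s,t}=\int_s^t\dot{\cK}^{w,r}_{s,r}\,k_p(\tau-r)\,\dd r$, applying the diagonal symmetry inside the integral.

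You instead prove the off-diagonal statement directly over the full-dimensional region $D_{t-s}$. There invariance is immediate: the domain is symmetric, the last factor depends only on $\sum_a u_a$, and coordinate permutations preserve volume. So that claim no longer depends on the diagonal case. In Step~3 you also supply the check the paper omits: the parametrized surface measure is invariant under permutations that move the eliminated coordinate $u_n$, via your affine involution with determinant $-1$.

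The paper's route is more economical and shows why the last letter must be excluded off the diagonal: the off-diagonal weight is the diagonal weight of the prefix integrated against the last kernel. Your route makes the two claims independent of each other and fully justifies the surface-measure symmetry.

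One correction to your technical remark. The $L^{\infty,1}$ assumption bounds $\cK$ uniformly, but it does not bound $\dot{\cK}$ pointwise, because a convolution of two $L^1$ kernels can be infinite at isolated points. For example, $k(u)=|u-a|^{-1/2}$ gives $\dot{\cK}^{11,t}_{s,t}=\infty$ when $t-s=2a$. This does not damage your argument. The gap permutation is measure preserving, so by Tonelli both sides are absolutely convergent simultaneously. Hence the $\dot{\cK}$-identities hold wherever these integrals converge, in particular for a.e.\ lower endpoint, and integrating in that endpoint gives the $\cK$-identities everywhere.
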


\begin{proof}
First we consider $\tau=t$.
Using the convolution form of the kernels, a change of variables in \eqref{eq:def_dot_cK} yields
$$
\dot{\cK}_{s,t}^{w,t}
=\int_{\{u_i\ge0,\ \sum_{i=1}^n u_i=t-s\}} \prod_{i=1}^n k_{p_i}(u_i)\,du_1\cdots du_{n-1},
$$
for all $w=p_1\cdots p_n\in\mathcal W_q^n$.
Since the integration domain and the integrand are invariant under permutations of $(u_1,\dots,u_n)$, we obtain
$\dot{\cK}_{s,t}^{w,t}=\dot{\cK}_{s,t}^{\sigma(w),t}$ for all $\sigma\in\mathcal S_n$.
Integrating in $s$ yields $\cK_{s,t}^{w,t}=\cK_{s,t}^{\sigma(w),t}$.
For general $\tau \geq t$, fix $p \in \mathcal{A}_{q}$, $w \in \mathcal{W}^{n}_{q}$, and $\sigma \in \mathcal{S}_{n}$. By definition~\eqref{eq:def_dot_cK} and the symmetry $\dot{\mathcal{K}}^{w,t}_{s,t} = \dot{\mathcal{K}}^{\sigma(w),t}_{s,t}$ just established, we have
$$
\dot{\cK}_{s,t}^{wp,\tau}
=\int_s^t\dot{\cK}_{s,r}^{w,r}
k_p(\tau-r)\,dr = \int_s^t\dot{\cK}_{s,r}^{\sigma(w),r}
k_p(\tau-r)\,dr= \dot{\cK}_{s,t}^{\sigma(w)p,\tau}.
$$
Finally, integration in $s$ yields once again $\cK_{s,t}^{wp,\tau}=\cK_{s,t}^{\sigma(w)p,\tau}$.
\end{proof}

\section{General approximative Volterra scheme}\label{sec:algo_quadratic}

In the last section we provided methods to reduce the computation of a Volterra signature to the evaluation of weights. In the setting of Notation~\ref{not:comp_sec}, our basic result was formula \eqref{eq:smpl_identity}, specialized to the scalar case in \eqref{eq:combinatorial_vsig}. 
The practical relevance of these combinatorial expansions hinges on the availability of efficient evaluation procedures for the corresponding iterated weighted sums. In \Cref{sec:kernel_computations}, we gather several central examples of kernels for which these weights can be computed---at least partially---in analytic form, thereby largely extending the three examples in \Crefrange{exmpl:constant_kernel}{xmpl:fractional_kernel} from the previous section.

Even when the weights in \eqref{eq:def_dot_cK} or \eqref{eq:kernel_cor} can be evaluated efficiently, the direct computation of the corresponding formulae is generally still infeasible.\footnote{Given precomputed weights and monomials, a heuristic computation in the case $q=1$ yields a cost of the order of $\binom{j+|w|-2}{|w|}$.} In this section we therefore present a higher-order dynamic algorithm which, when applicable, overcomes this computational bottleneck. As already motivated in the introduction, the starting point is Chen's relation (see Proposition~\ref{prop:Chen_part_ii})
\begin{equation*}
    \VSig{x}{K}^{\tau}_{t_0,t_j}
    -
    \VSig{x}{K}_{t_0,t_{j-1}}^{\tau}
    =
    - \int_{t_{j-1}}^{t_j}
    \VSig{x}{K}_{t_0,t_{j-1}}^{s}
    \otimes
    \frac{\dd}{\dd s}
    \VSig{x}{K}_{s,t_j}^{\tau}
    \,\dd s .
\end{equation*}
Let us write $\bv_j:=\VSig{x}{K}_{0,t_j}^{t_j}$ and suppose again that $x$ is piecewise linear as in \eqref{eq:pcsws_linear_x}. Applying Lemma~\ref{prop:sig term over piece segemtn} together with a telescoping argument gives
\begin{equation}\label{eq:exact_Chen_scheme}
    \bv_j
    =
    1-
    \sum_{i=1}^{j}
    \int_{t_{i-1}}^{t_i}
    \VSig{x}{K}_{0,t_{i-1}}^{s}
    \otimes
    \dot\cE_{s,t_i}^{t_j}
    \,\dd s,
\end{equation}
where $\cE$ denotes the right-hand side in \eqref{eq:smpl_identity}, that is \begin{equation}\label{eq:mathcalE}
    \mathcal{E}^{\tau}_{s,t}(y_1, \dots, y_q) = \sum_{n=1}^N \sum_{p_1\cdots p_n\in\cW_q^n}\frac{1}{(t - s )^n}\mathcal{K}_{s,t}^{p_1\cdots p_n,\tau}y_{p_1} \otimes \cdots \otimes y_{p_n}, \qquad (s,t,\tau) \in \Delta^3,
\end{equation} with $\cK_{s,t}^{p_1\cdots p_n,\tau}$ given by \eqref{eq:recast_Y_2}. Since the local weights entering $\cE$ are available in closed form for the kernel classes considered in \Cref{sec:kernel_computations}, a first natural numerical algorithm is obtained by applying a left-point approximation to the upper-parameter
$s\mapsto \VSig{x}{K}_{0,t_{i-1}}^{s}$. This yields the explicit weighted recursion
\begin{equation}\label{eq:naive_quadratic_stencil}
    \bv_j
    \approx
    1+
    \sum_{i=1}^{j}
    \bv_{i-1}
    \otimes
    \cE_{t_{i-1},t_i}^{t_j}.
\end{equation}
However, as we shall see below, for singular kernels $K$ this naive scheme may converge slowly as the mesh size tends to zero. For example, when $ 
k_p(t,s)=(t-s)^{\beta-1}$ with $
\beta\in(0,1)$ in \eqref{eq:decomposed_kernel}, the computations in \Cref{sec:kernel_computations} show that the relevant upper-parameter maps are only $\beta$-Hölder continuous near the diagonal. Consequently, a left-point approximation of this dependence cannot be expected to yield high-order convergence. In the following, we introduce a higher-order generalization of \eqref{eq:naive_quadratic_stencil} which is tailored to this fractional-type singular behavior.

\subsection{Higher-order fractional expansion scheme}\label{sec:algo_thm}In order to introduce a higher-order version of \eqref{eq:naive_quadratic_stencil}, we require more precise information on the upper-parameter regularity of the Volterra signature. More precisely, for each interval $[t_i,t_{i+1}]$ we consider the map
\[
F_i(u):=\VSig{x}{K}_{0,t_i}^{t_i+u},
\qquad
u\in[0,t_{i+1}-t_i].
\]
 The naive scheme \eqref{eq:naive_quadratic_stencil} amounts to replacing this map by its left endpoint value $F_i(0)$. The higher-order scheme below instead assumes that $F_i$ can be approximated locally by a finite expansion in fractional powers. This is the precise additional regularity property needed to improve the left-point approximation.
 
 Fix a truncation level $N\in \mathbb{N}$ and consider a finite set of exponents \[
    \mathcal B=\{\rho_0,\ldots,\rho_{|\mathcal B|-1}\}\subset[0,\infty),
    \qquad
    0=\rho_0<\rho_1<\cdots<\rho_{|\mathcal B|-1}.
\]
The higher-order scheme described below relies on the expansion
\begin{equation}\label{eq:frac_expansion_ass}
    \pi_{\leq N}(F_i(u))
    =
    \sum_{\rho\in\mathcal B}\mathbf a_{i,\rho}u^\rho
    +
    R_i(u),
    \qquad u\in[0,h_i],
\end{equation} where $h_i:=t_{i+1}-t_i$ and the coefficients $\mathbf a_{i,\rho}\in T^{(N)}(\mathbb R^m)$ are the interpolation
coefficients determined by
\[
    \sum_{\rho\in\mathcal B}
    \mathbf a_{i,\rho}(\theta_a h_i)^\rho
    =
    F_i(\theta_a h_i),
    \qquad a=0,\ldots,|\mathcal B|-1,
\] for some interpolation nodes $\theta_0,\dots,\theta_{|\decoSet|-1}$ with invertible Vandermonde matrix $(\theta_a^\rho)$.

Given the expansion~\eqref{eq:frac_expansion_ass}, we propose  the following natural higher-order version of \eqref{eq:naive_quadratic_stencil} \begin{equation}\label{eq:higher_order_scheme}
    \pi_{\leq N}(\bv_j) = 1 + \sum_{i=1}^{j}\sum_{\rho \in \decoSet} \bC_{i-1,\rho} \otimes_N \cE^{t_j;\rho}_{t_{i-1},t_{i}}, \qquad \cE^{\tau;\rho}_{u,v}:=-\int_u^v(s-u)^\rho \dot \cE_{s,v}^{\tau} \dd s,
\end{equation} where the coefficients \(\bC_{i,\rho}\) are determined recursively by the interpolation conditions
\begin{equation}\label{eq:coeffcicients_higher_order}
    \sum_{\rho\in \decoSet}(\theta_a h_i)^\rho \bC_{i,\rho}
    =
    1+
    \sum_{b=0}^{i-1}
    \sum_{\sigma\in \decoSet}
    \bC_{b,\sigma}
    \otimes_N
    \cE_{t_{b},t_{b+1}}^{\,t_i+\theta_a h_i;\sigma},
    \qquad a=0,\ldots,{\decoNum}.
\end{equation}

\begin{rem}\label{rem:generalized_cE}
	It is important to note that the generalized weights $\cE^{\tau;\rho}_{s,t}$ in
	\eqref{eq:higher_order_scheme} preserve the structure of $\cE^{\tau}_{s,t}$,
	up to the introduction of one additional kernel $k_\rho(t,s) := (t-s)^\rho .$
	More precisely, using the representation of $\dot\cK$ in
	\eqref{eq:def_cK_words}, we have
	\[
		\int_u^v (s-u)^\rho
		\dot\cK_{s,v}^{p_1\cdots p_n,\tau}\,\dd s
		=
		\dot\cK_{u,v}^{p_1\cdots p_n 0,\tau},
	\]
	where the appended symbol $0$ denotes the additional kernel $k_\rho$.
	Consequently, for all kernel classes considered in this paper, whenever
	closed-form expressions for the weights $\dot\cK$ in
	\eqref{eq:def_cK_words} are available, the generalized weights
	$\cE^{\tau;\rho}$ admit closed-form expressions as well. Concrete examples
	will be derived in Section~\ref{sec:kernel_computations}.
\end{rem}

\begin{example}[Naive scheme]\label{ex:naive_stencil} Choosing $\decoSet=\{0\}$, we have $\cE^{\tau;0}=\cE^\tau$, and the coefficient equation \eqref{eq:coeffcicients_higher_order} reads \[
\bC_{i,0}= 1+ \sum_{b=1}^{i}\bC_{b-1,0} \otimes_N \cE_{t_{b-1},t_{b}}^{t_i},
\] and we immediately recover the scheme  \eqref{eq:naive_quadratic_stencil}.
    
\end{example}
\begin{example}[Higher-order fractional scheme]\label{ex:fractional_scheme} We will see below that for fractional kernels $k_p(t,s)=(t-s)^{\beta-1}$, we may choose $\decoSet=\{0,\beta,1\}$ and interpolation nodes $\theta_0=0, \theta_1= \frac{1}{2}$ and $\theta_2 = 1$. On a cell $[t_i,t_{i+1}]$, define the local update 
\[
\widehat{\mathbf F}_i^\theta
:=
1+
\sum_{b=0}^{i-1}
\sum_{\rho\in\{0,\beta,1\}}
\mathbf C_{b,\rho}\otimes_N
\mathcal E_{t_b,t_{b+1}}^{t_i+\theta h_i;\rho}.
\] The system of for the coefficients \eqref{eq:coeffcicients_higher_order} is then solved by \[
\mathbf C_{i,0}
=
\widehat{\mathbf F}_i^0, \quad 
\mathbf C_{i,\beta}
=
\frac{
\widehat{\mathbf F}_i^{1/2}-\widehat{\mathbf F}_i^0
-\frac12\left(\widehat{\mathbf F}_i^1-\widehat{\mathbf F}_i^0\right)
}{
h_i^\beta(2^{-\beta}-1/2)
}, \quad 
\mathbf C_{i,1}
=
\frac{
2^{-\beta}\left(\widehat{\mathbf F}_i^1-\widehat{\mathbf F}_i^0\right)
-
\left(\widehat{\mathbf F}_i^{1/2}-\widehat{\mathbf F}_i^0\right)
}{
h_i(2^{-\beta}-1/2)
}.
\]
\end{example}
In order to quantify the convergence rate of the proposed scheme \eqref{eq:higher_order_scheme}-\eqref{eq:coeffcicients_higher_order}, we require precise assumptions on the local errors $R_i$ in \eqref{eq:frac_expansion_ass}, as well as convolution stability estimates for the weights $\cE$. These conditions can be verified for the kernels considered in this article, and we do so for fractional kernels (see Example~\ref{xmpl:fractional_kernel}) in Theoerem~\ref{thm:frac_scheme} below.

\begin{hyp}\label{hyp:fractional_expansion}
Given a set of exponents $\mathcal{B}$ and interpolation nodes $(\theta_a)$ as above, assume that there exist an  $\alpha>0$ and non-negative kernels
$\kappa_m:(0,T]\to[0,\infty)$ such that
\[
    \left|
        \dot{\cE}_{s,t}^{(m),\tau}
    \right|
    \le C_\xi \kappa_m(\tau-s),
    \qquad s<t\le \tau,\quad 1\le m\le N.
\]
Moreover, we assume that the following two conditions hold.
\begin{itemize}
    \item[(i)] The local error $R_i$ in \eqref{eq:frac_expansion_ass} satisfies,
\[
    \sum_{i=1}^{J}
    \sum_{l=0}^{n-1}
    \int_{t_{i-1}}^{t_i}
    \left|
        R_{i-1}^{(l)}(s-t_{i-1})
    \right|
    \kappa_{n-l}(T-s)
    \dd s
    \le C_H\delta^\alpha, \qquad 1\le n\le N.
\]

\item[(ii)] The kernels $(\kappa_m)_{m=1}^N$ satisfy for every $\theta_a$ and a.e. $s\in[t_{p-1},t_p]$
\[
    \sum_{i=p+1}^{J}
    \kappa_q(t_i+\theta_a h_i-s)
    \int_{t_{i-1}}^{t_i}
    \kappa_r(T-u)\dd u
    \le
    C_\kappa \kappa_{q+r}(T-s),
\]
for every $1\le p\le J-1$, every $q,r\ge1$ with $q+r\le N$.
\end{itemize}
Here
$\delta:=\max_{[u,v]\in\mathcal P}|v-u|$, and the constants $C_H,C_\xi,C_\kappa$
are independent of $\delta$.
\end{hyp}
\begin{thm}\label{thm:quadratic_aglo}
Let $x$ be a piecewise linear path on some grid $\cP=\{0=t_0<t_1<\dots <t_J=T\}$, and $K$ be a kernel as in Notation \ref{not:comp_sec}. Suppose  Hypothesis~\ref{hyp:fractional_expansion} holds with respect to some $\alpha>0$. Then, the explicit scheme determined by \eqref{eq:higher_order_scheme}-\eqref{eq:coeffcicients_higher_order} is of order $\alpha$, that is 
\begin{equation}\label{eq:rate_algo_thm}
        \max_{n=1, \dots, N}
        \left\vert\VSig{x}{K}^{(n)}_{0,T}-\bv^{(n)}_J\right\vert\leq C \delta^\alpha, \qquad \delta := \max_{[u,v]\in \cP} |v-u|,
\end{equation} where the constant $C$ depends on the constants $C_\kappa,C_H$ from Hypothesis~\ref{hyp:fractional_expansion}.
\end{thm}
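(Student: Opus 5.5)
We compare the exact Chen telescoping identity \eqref{eq:exact_Chen_scheme} with the scheme \eqref{eq:higher_order_scheme}--\eqref{eq:coeffcicients_higher_order} at every upper parameter needed, and close the argument by induction over the tensor level $n$. For $\tau\in\{T\}\cup\{t_i+\theta_a h_i\}$ set
\[
\mathbf e_i^{\tau}:=\pi_{\le N}\bigl(\VSig{x}{K}^{\tau}_{0,t_i}\bigr)-\widehat{\mathbf F}_i^{\tau},
\qquad
\widehat{\mathbf F}_i^{\tau}:=1+\sum_{b=0}^{i-1}\sum_{\rho\in\decoSet}\bC_{b,\rho}\otimes_N\cE^{\tau;\rho}_{t_b,t_{b+1}} .
\]
Here $\widehat{\mathbf F}_J^{T}=\bv_J$, and $\widehat{\mathbf F}_i^{t_i+\theta_a h_i}$ is the right-hand side of \eqref{eq:coeffcicients_higher_order}. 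By the same telescoping as in \eqref{eq:exact_Chen_scheme}, applied with general $\tau\ge t_i$ and using \Cref{prop:Chen_part_ii} together with \Cref{prop:sig term over piece segemtn}, we have
\[
\VSig{x}{K}^{\tau}_{0,t_i}=1-\sum_{b=1}^{i}\int_{t_{b-1}}^{t_b}F_{b-1}(s-t_{b-1})\otimes\dot\cE^{\tau}_{s,t_b}\dd s .
\]
We insert the expansion \eqref{eq:frac_expansion_ass} of $F_{b-1}$ and let $\widetilde{\mathbf C}_{b,\rho}:=\mathbf a_{b,\rho}$ denote the exact interpolation coefficients. By the definition of $\cE^{\tau;\rho}$, the error then splits level-wise into two parts:
\[
\mathbf e_i^{\tau,(n)}=\sum_{b=0}^{i-1}\sum_{\rho}\sum_{l<n}\bigl(\mathbf a_{b,\rho}-\bC_{b,\rho}\bigr)^{(l)}\otimes\bigl(\cE^{\tau;\rho}_{t_b,t_{b+1}}\bigr)^{(n-l)}
\;-\;\sum_{b=1}^{i}\sum_{l<n}\int_{t_{b-1}}^{t_b}R_{b-1}^{(l)}(s-t_{b-1})\otimes\dot\cE^{(n-l),\tau}_{s,t_b}\dd s .
\]
The level-$0$ components agree exactly, so only levels $l<n$ contribute.

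\textbf{Consistency term.} The second sum is bounded using the kernel domination $|\dot\cE^{(m),\tau}_{s,t}|\le C_\xi\kappa_m(\tau-s)$ and Hypothesis~\ref{hyp:fractional_expansion}(i), giving a bound of order $C_\xi C_H\delta^\alpha$. The hypothesis is stated with $\kappa_{n-l}(T-s)$, whereas the intermediate nodes $\tau=t_i+\theta_a h_i$ carry $\kappa(\tau-s)$. We therefore handle the two cases differently. At $\tau=T$ we use the hypothesis directly. At the intermediate nodes we never need a uniform bound; these errors only enter after being propagated against a further kernel factor, which is exactly what (ii) controls.

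\textbf{Propagation term.} Since the interpolation map is linear with invertible Vandermonde matrix $(\theta_a^\rho)$, we get
\[
|\mathbf a_{b,\rho}-\bC_{b,\rho}|\,h_b^\rho\le C_V\max_a|\mathbf e_b^{t_b+\theta_a h_b}| .
\]
We also need $|\cE^{\tau;\rho,(m)}_{t_b,t_{b+1}}|\le h_b^\rho\int_{t_b}^{t_{b+1}}C_\xi\kappa_m(\tau-s)\dd s$. Combining these, the factors $h^\rho$ cancel, and the propagation term is bounded by
\[
\sum_b\sum_{l<n}\max_a|\mathbf e_b^{(l),t_b+\theta_a h_b}|\int_{t_b}^{t_{b+1}}\kappa_{n-l}(\tau-s)\dd s .
\]

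\textbf{Induction.} We run the induction on $n$ with the weighted statement
\[
E_l:=\sum_{b}\max_a|\mathbf e_b^{(l),t_b+\theta_a h_b}|\int_{t_b}^{t_{b+1}}\kappa_r(T-s)\dd s\le C\delta^\alpha\quad\text{for all } r\le N-l .
\]
Unrolling the error identity once expresses $\mathbf e_b^{(l)}$ through the consistency terms and lower-level errors, each integrated against $\kappa_q(t_b+\theta_a h_b-s)$. Exchanging the order of summation and applying Hypothesis~\ref{hyp:fractional_expansion}(ii) collapses
\[
\sum_b\kappa_q(t_b+\theta_a h_b-s)\int_{t_{b-1}}^{t_b}\kappa_r(T-u)\dd u\le C_\kappa\kappa_{q+r}(T-s) .
\]
This reduces the estimate to Hypothesis~(i) at level $q+r$ and to the induction hypothesis at strictly lower levels. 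The level-$0$ error vanishes, and iterating at most $N$ times gives \eqref{eq:rate_algo_thm} with a constant depending on $C_\kappa$, $C_H$, $C_\xi$, $C_V$ and $N$.

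The main obstacle is formulating this weighted induction hypothesis correctly. A naive sup-norm Gronwall argument fails for singular kernels because $\kappa$ is not bounded, and the intermediate-node errors must always be measured against $\kappa_r(T-\cdot)$ so that (ii) applies. The diagonal cell $b=i$, where $s$ lies in the same cell as $\tau$, also needs separate care. I would first try to absorb it into (ii) by noting that the sum in (ii) starts at $p+1$. Failing that, I would treat it through a smallness argument $\int_{t_b}^{t_{b+1}}\kappa\to0$, which yields a discrete Volterra–Gronwall inequality.
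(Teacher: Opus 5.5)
Your proposal is essentially the paper's proof. It uses the same splitting of the level-$n$ error into a remainder part $\Sigma_1$ controlled by (i) and an interpolation part $\Sigma_2$, and the same Vandermonde stability bound $\sum_{\rho}h_i^\rho|\mathbf a_{i,\rho}^{(l)}-\bC_{i,\rho}^{(l)}|\lesssim e_i^{(l)}$. It also uses the same one-step unrolling of the node errors followed by Fubini and (ii), and the same induction over the lower levels $l<n$; your $E_l$ with $r=n-l$ is the paper's $B^{(l)}_n$.

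On the adjacent (diagonal) cell, the paper gives no separate treatment and no smallness or Gronwall argument. It applies (ii) to the whole exchanged sum over $i\ge p+1$, i.e.\ it takes your first option. Your concern is nonetheless fair: the paper writes $e_{i-1}^{(l)}$ against $\kappa(t_i+\theta_a h_i-s)$ instead of $\kappa(t_{i-1}+\theta_a h_{i-1}-s)$. That index choice is exactly how the possibly singular term $i=p+1$ (for $\theta_a=0$, $s\uparrow t_p$) is absorbed into (ii) as literally stated, so under the correct indexing (ii) must be read as covering that term.
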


\begin{proof}
First let us note that in our notation for piecewise linear paths $x_t=x_{t_j}+ v_j(t-t_j)$ in \eqref{eq:pcsws_linear_x}, we may equivalently write $v_j= \frac{x_{t_{j+1}}-x_{t_j}}{t_{j+1}-t_j}$. In particular, $A_{p}(x_{t_i}-x_{t_{i-1}}) = (t_i-t_{i-1})A_{p}v_i$, so the factor $(t_i-t_{i-1})^n$ arising from the $n$-fold tensor product cancels the $(t-s)^{-n}$ in \eqref{eq:mathcalE}. By definition of $\cE$ in \eqref{eq:mathcalE}, we have \[
\mathcal{E}^{t_j}_{t_{i-1}, t_{i}}\big(A_1(x_{t_i} - x_{t_{i-1}}), \dots, A_q (x_{t_i} - x_{t_{i-1}})\big) = \sum_{n=1}^N \sum_{p_1\cdots p_n\in\cW_q^n}\cK_{t_{i-1},t_i}^{p_1\cdots p_n,t_j} (A_{p_1}v_i) \otimes \cdots \otimes (A_{p_n}v_i).
\] From the identity   \eqref{eq:smpl_identity} in Lemma \ref{prop:sig term over piece segemtn}, it follows therefore that \begin{equation*}
    \mathcal{E}^{t_j}_{t_{i-1}, t_{i}}\big(A_1(x_{t_i} - x_{t_{i-1}}), \dots, A_q (x_{t_i} - x_{t_{i-1}})\big) = \pi_N \left (\VSig{x}{K}_{t_{i-1},t_i}^{t_j}-1 \right ).
\end{equation*}
Writing $\bz = \VSig{x}{K}$, and projecting the exact identity \eqref{eq:exact_Chen_scheme} to levels $1\leq n \leq N$ reads \begin{equation}\label{eq:projected_eq_proof}
{\bz}_{0,t_j}^{(n),t_j}= -\sum_{i=1}^{j} \sum_{l=0}^{n-1} \int_{t_{i-1}}^{t_i}\bz_{0,t_{i-1}}^{(l),s}\otimes \dot \cE_{s,t_i}^{(n-l),t_j}\dd s.
\end{equation}

Let us now prove the claimed convergence rate in \eqref{eq:rate_algo_thm}.
For $n=0$, we have $\bz^{(0)}_{0,T}=\bv^{(0)}_J$ by definition, and for $1\leq n \leq N$, we use~\eqref{eq:projected_eq_proof} to find  \begin{equation}\label{eq:error_levels_proof}
\big |\bz^{(n),T}_{0,T}-\bv_J \big | \leq \sum_{i=1}^J\sum_{l=0}^{n-1} \left |  \int_{t_{i-1}}^{t_i}\left (\bz_{0,t_{i-1}}^{(l),s}  - \sum_{\rho \in \decoSet}\bC_{i-1,\rho}^{(l)}(s-{t_{i-1}})^\rho \right ) \otimes \dot{\cE}_{s,t_i}^{(n-l),T}\dd s\right | 
\end{equation}
Next, adding and subtracting the expansion in \eqref{eq:frac_expansion_ass}, we have
\begin{align}\label{eq:estimator_proof_algo}
    |\bz^{(n),T}_{0,T}-\bv_J \big | 
    & \leq \sum_{i=1}^J\sum_{l=0}^{n-1}   \int_{t_{i-1}}^{t_i}\Big  |\bz_{0,t_{i-1}}^{(l),s}  - \sum_{\rho \in \decoSet}\mathbf{a}_{i-1,\rho}^{(l)}(s-{t_{i-1}})^\rho \Big | \cdot  |\dot{\cE}_{s,t_i}^{(n-l),T}|\dd s 
    \nonumber \\ 
    & \quad +   \sum_{i=1}^J\sum_{l=0}^{n-1} \sum_{\rho \in \decoSet} h_{i-1}^\rho\big  |\mathbf{a}_{i-1,\rho}^{(l)}  - \bC_{i-1,\rho}^{(l)}\big |
    \int_{t_{i-1}}^{t_i}    |\dot{\cE}_{s,t_i}^{(n-l),T}|\dd s\nonumber \\ 
   & =: \Sigma_1+\Sigma_2.
\end{align} Using Hypothesis~\ref{hyp:fractional_expansion} (i), we have \[
\Sigma_1 \leq \sum_{i=1}^J \sum_{l=0}^{n-1} \int_{t_{i-1}}^{t_i}|R^{(l)}_{i-1}(s-t_{i-1})| |\dot \cE_{s,t_i}^{(n-l),T}| \dd s \leq C_HC_\xi\delta^\alpha.
\] 

On the other hand, we know $\bC$ solves the interpolation system \eqref{eq:coeffcicients_higher_order} and that the matrix $(\theta^\rho_a)$ is invertible, so that we have \[
\sum_{\rho \in \decoSet}h_i^\rho |\mathbf{a}_{i,\rho}^{(l)}-\mathbf{C}_{i,\rho}^{(l)}| \lesssim \max_{a=0,\dots,{\decoNum}} |\bz_{0,t_i}^{(l),t_i+\theta_ah_i}-\hat{\bF}^{(l),\theta_a}_i|=:e_i^{(l)},
\] where $\hat{\bF}$ is defined by the right-hand side of \eqref{eq:coeffcicients_higher_order}. 
Let us introduce the notation $\Omega_{l,i}^{\tau}:=\int_{t_{i-1}}^{t_i}\kappa_{l}(\tau-s)\dd s $. Using  Hypothesis~\ref{hyp:fractional_expansion}, we have \[
\Sigma_2 \lesssim \sum_{i=1}^{J} \sum_{l=0}^{n-1}e_{i-1}^{(l)} \Omega_{n-l,i}^T=: \sum_{l=0}^{n-1}B_n^{(l)}. 
\]
Note we can do the same error decomposition \eqref{eq:estimator_proof_algo} again for $e_i^{(l)}$, but with upper-variable $t_i+\theta_ah_i$, which reads \begin{align}\label{eq:scheme_prooof_estimate.}
e_{i-1}^{(l)} & \lesssim \sum_{a=0}^{|\decoSet|-1}\sum_{p=1}^{i-1}\sum_{r=0}^{l-1} \int_{t_{p-1}}^{t_p} (|R^{(r)}_{p-1}(s-t_{p-1})|+e_{p-1}^{(r)})\kappa_{l-r}(t_i+\theta_ah_i-s) \dd s \nonumber \\ & :=d_{i-1}^{(l)}+f_{i-1}^{(l)}
\end{align}

Next, multiplying   $f_{i-1}^{(l)}$ by $\omega_{n-l,i}^T$ and summing over $i$ gives \begin{align*}
    \sum_{a=0}^{|\decoSet|-1}\sum_{i=1}^J \sum_{p=1}^{i-1}\sum_{r=0}^{l-1}e_{p-1}^{(r)}\Omega_{l-r,p-1}^{t_{i}+\theta_ah_i}\Omega_{n-l,i}^{T} & = \sum_{a=0}^{|\decoSet|-1}\sum_{r=0}^{l-1} \sum_{p=1}^{J}e_{p-1}^{(r)} \sum_{i=p+1}^{J}\omega_{l-r,p-1}^{t_{i}+\theta_ah_i}\Omega_{n-l,i}^{T}\\ & \leq C_\kappa \sum_{r=0}^{l-1} \sum_{p=1}^{J}e_{p-1}^{(r)} \Omega_{n-r,p-1}^{T} = C_\kappa \sum_{r=0}^{l-1}B_n^{(r)},
\end{align*} where we used Fubini for the first equality, and Hypothesis~\ref{hyp:fractional_expansion} (ii) for the inequality.
 Doing the same Fubini-trick with $d_{i-1}^{(l)}$ in \eqref{eq:scheme_prooof_estimate.}, leads to \begin{align*}
 & \sum_{a=0}^{|\decoSet|-1}\sum_{r=0}^{l-1} \sum_{p=1}^{J}\int_{t_{p-1}}^{t_p}|R_{p-1}(s-t_{p-1}) \sum_{i=p+1}^{J}   \kappa_{l-r}({t_i+\theta_ah_i}-s)\ \Omega_{n-l,i-1}^{T} \dd s   \\ & \qquad  \leq C_\kappa \sum_{r=0}^{l-1}\sum_{p=1}^{J}\int_{t_{p-1}}^{t_p}|R_{p-1}(s-t_{p-1}) |  \dot \kappa_{l-r,p-1}^{T} \dd s  \leq C_\kappa C_H \delta^{\alpha},
 \end{align*} where we first used (ii) and then (i) from Hypothesis~\ref{hyp:fractional_expansion}.
 We can therefore see that $1\leq l \leq n$, it holds that \[
 B_n^{(l)}  \lesssim \delta^{\alpha}+\sum_{r=0}^{l-1}B_n^{(r)}.
 \] An induction over $1\leq l \leq n$ then shows $$\Sigma_2 \lesssim \sum_{l=0}^{n-1}B_n^{(l)} \lesssim \delta^{\alpha}.$$ Combined with the estimate for $\Sigma_1$, we can conclude the proof.
\end{proof}
Let us now apply our main result to quantify convergence rates for fractional kernels.

\begin{thm}\label{thm:frac_scheme}
    Suppose we are in the setting of Example~\ref{xmpl:fractional_kernel}, that is $q=1$ with fractional kernel $k(t,s)=\Gamma(\beta)^{-1}(t-s)^{\beta-1}$ for some $\beta \in (0,1)$.  Then the naive scheme in Example~\ref{ex:naive_stencil} is of order $\alpha=\beta$, while the higher-order scheme in  Example~\ref{ex:fractional_scheme} is of order $\alpha=1+\beta$.
\end{thm}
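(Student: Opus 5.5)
The plan is to apply \Cref{thm:quadratic_aglo}. This means verifying \Cref{hyp:fractional_expansion} for $k(t,s)=\Gamma(\beta)^{-1}(t-s)^{\beta-1}$ with the natural choice $\kappa_m(u):=u^{m\beta-1}$, for $\decoSet=\{0\}$ (target $\alpha=\beta$) and for $\decoSet=\{0,\beta,1\}$ with nodes $\{0,\tfrac12,1\}$ (target $\alpha=1+\beta$). By \Cref{rem:wlog_ID} we may take $A=I$, and we use that the velocities $v_j$ are bounded by the Lipschitz constant $L$ of $x$.

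\textbf{Step 1 (kernel bound).} Since $\dot\cE^{(m),\tau}_{s,t}=\dot\kappa^{m,\tau}_{s,t}\,(t-s)^{-m}(x_t-x_s)^{\otimes m}$, the explicit formula of \Cref{xmpl:fractional_kernel}, together with $I_x(a,b)\le 1$, gives $|\dot\cE^{(m),\tau}_{s,t}|\le L^m\Gamma(m\beta)^{-1}(\tau-s)^{m\beta-1}$. This yields the first bound in the hypothesis with $C_\xi$ depending only on $L,N,\beta$.

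\textbf{Step 2 (condition (ii)).} The sum
\[
\sum_{i>p}\kappa_q(t_i+\theta_a h_i-s)\int_{t_{i-1}}^{t_i}\kappa_r(T-u)\,\dd u
\]
is a Riemann--Stieltjes-type discretization of
\[
\int_s^T (u-s)^{q\beta-1}(T-u)^{r\beta-1}\,\dd u=B(q\beta,r\beta)\,(T-s)^{(q+r)\beta-1}.
\]
\begin{itemize}
\item For $i\ge p+2$, the evaluation point satisfies $t_i+\theta_ah_i-s\ge u-s$ for every $u\in[t_{i-1},t_i]$. When $q\beta\le1$, $\kappa_q$ is nonincreasing, so each term is bounded by $\int_{t_{i-1}}^{t_i}\kappa_q(u-s)\kappa_r(T-u)\,\dd u$. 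When $q\beta>1$, $\kappa_q$ is increasing, and we instead use $t_i+\theta_ah_i-s\le 2(u-s)$ on that cell.
\item The boundary term $i=p+1$ is handled in the same way, since $s\le t_p$ gives $t_{p+1}-s\ge u-s$ on $[t_p,t_{p+1}]$.
\end{itemize}
This gives (ii) with $C_\kappa$ depending only on $\beta,N$.

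\textbf{Step 3 (condition (i), local regularity).} This is the main step and the expected obstacle. From \Cref{def:vsig}, only the outermost kernel depends on the upper parameter, so
\[
F_i^{(l)}(u)=\int_0^{t_i} g_l(r)\,k(t_i+u-r)\,\dd r,\qquad g_l(r):=\bz^{(l-1),r}_{0,r}\otimes \dot x_r,
\]
where $g_l$ is bounded and piecewise $\beta$-Hölder, by induction on $l$ using the same representation.

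For the naive scheme, the standard Hölder estimate for fractional integrals gives $|F_i(u)-F_i(0)|\lesssim u^\beta\le h_i^\beta$. Inserting this into (i) and using that $\kappa_{n-l}(T-\cdot)$ is integrable yields the bound $C\delta^\beta$.

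For the higher-order scheme, I would split the $r$-integral into two parts.
\begin{itemize}
\item On the far region $r\le t_i-h_i$, the map $u\mapsto k(t_i+u-r)$ is smooth. A second-order Taylor expansion leaves a remainder $h_i^2(t_i-r)^{\beta-3}$; integrating this against bounded $g_l$ gives $O(h_i^{1+\beta})$.
\item On the near cell $[t_i-h_i,t_i]$, write $g_l(r)=g_l(t_i^-)+O(h_i^\beta)$. Since $g_l$ is $\beta$-Hölder, the remainder is $O(h_i^\beta)$ in sup-norm on that cell, and its $u$-increment over $[0,h_i]$ costs a further $h_i^{\beta}$ factor by the fractional Hölder estimate. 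The frozen part is explicit:
\[
g_l(t_i^-)\,\frac{(h_i+u)^\beta-u^\beta}{\Gamma(\beta+1)}.
\]
Here the $u^\beta$ term lies exactly in the span of $\{1,u^\beta,u\}$, and $(h_i+u)^\beta$ is smooth in $u$ with quadratic remainder $O(h_i^{\beta})$ after scaling.
\end{itemize}
Collecting these contributions shows that $F_i$ agrees with an element of $\mathrm{span}\{1,u^\beta,u\}$ up to $O(h_i^{1+\beta})$ (possibly with a logarithmic factor to be controlled). Interpolation at the three nodes is stable because the Vandermonde matrix $(\theta_a^\rho)$ is invertible and $h_i$-scaling invariant, so $R_i=O(h_i^{1+\beta})$.

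Summing over cells against the integrable weight $\kappa_{n-l}(T-s)$ then gives (i) with $\alpha=1+\beta$. I expect the delicate points to be the $O(h^{\beta})\cdot O(h^\beta)$ accounting on the near cell, and the first cell $i=0$, where $g$ may have lower regularity. If a crude bound fails there, I would bound that cell's contribution separately as $h_0\cdot h_0^\beta$.
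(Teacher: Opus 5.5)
Your Steps 1 and 2 and the naive-scheme estimate match the paper: the same $\kappa_m$ and the same bound $|F_i(u)-F_i(0)|\lesssim u^\beta$. The paper merely asserts (ii) as a discrete Beta-convolution estimate. One small slip in Step 2: for $q\beta>1$ the comparison $t_i+\theta_ah_i-s\le 2(u-s)$ is false as stated. For $i=p+2$ the ratio can reach $3$, and for $i=p+1$ one can have $u-s\to0$. This is repairable on quasi-uniform grids, using boundedness of $\kappa_q$ and $T-s\gtrsim h$. The genuine gap is Step 3 for $\decoSet=\{0,\beta,1\}$.

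\textbf{The far region is miscounted.} You have $h_i^2\int_0^{t_i-h_i}(t_i-r)^{\beta-3}\dd r\approx h_i^{\beta}/(2-\beta)$, not $O(h_i^{1+\beta})$, because the mass sits at $t_i-r\approx h_i$. Likewise, as you note yourself, the frozen near-cell term $(h_i+u)^\beta$ is only $O(h_i^\beta)$-close to $\mathrm{span}\{1,u\}$. So your ``collecting'' step does not follow. These two $O(h_i^\beta)$ pieces are artefacts of cutting at $t_i-h_i$, and they cancel only if you freeze $g_l$ at $t_i$ on all of $[0,t_i]$, which is what the paper does. The constant part is then exactly $G_l(t_i)((t_i+u)^\beta-u^\beta)/\Gamma(\beta+1)$. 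Here $(t_i+u)^\beta$ has Taylor remainder $\lesssim u^2t_i^{\beta-2}$, and $u^\beta$ lies in the span.

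\textbf{Hölder regularity of $g_l$ is too weak.} More fundamentally, piecewise $\beta$-Hölder regularity of $g_l$ together with a Lipschitz bound on $x$ cannot give order $1+\beta$. With only that input, the fluctuation $\int_0^{t_i}(g_l(r)-g_l(t_i))k(t_i+u-r)\dd r$ is generically only $C^{2\beta}$ in $u$; your near-cell $h_i^\beta\cdot h_i^\beta$ is exactly this. That error occurs in every cell, and Hypothesis (i) integrates it against the integrable weight $\kappa_{n-l}(T-\cdot)$. So you would get only order $2\beta<1+\beta$.

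\textbf{The missing ingredient.} What you need is the paper's Lipschitz-type endpoint estimate $|G_l(r)-G_l(t_i)|\le L_{l,i}(t_i-r)$, with $L_{1,i}\lesssim 1$ and $L_{l,i}\lesssim t_i^{(l-1)\beta-1}$. It uses smoothness of $x$ and of $t\mapsto \bz^{(l-1),t}_{0,t}$ away from $0$, not merely a Lipschitz bound. With it, the second $u$-derivative of the fluctuation is $\lesssim L_{l,i}u^{\beta-1}$. Hence
$F_i^{(l)}(u)=A+Bu^\beta+Cu+O(u^2t_i^{l\beta-2}+u^{1+\beta}\Lambda_l(t_i))$, with $\Lambda_1=1$ and $\Lambda_l(t)=t^{(l-1)\beta-1}$. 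These constants blow up as $t_i\to0$, so many early cells are affected, not only $i=0$. The paper therefore closes the argument with the weighted sums $\sum_i i^{l\beta-2}(J-i+1)^{(n-l)\beta-1}$ and $\sum_i i^{(l-1)\beta-1}(J-i+1)^{(n-l)\beta-1}$.
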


    \begin{proof}
We verify Hypothesis~\ref{hyp:fractional_expansion}. From our computations in Example~\ref{xmpl:fractional_kernel}, we see that the choice
\[
    \kappa_m(t):=\frac{t^{m\beta-1}}{\Gamma(m\beta)}, \qquad 1\le m\le N,
\] 
leads to the required control in Hypothesis~\ref{hyp:fractional_expansion}
\[
    \left|\dot{\cE}_{s,t}^{(m),\tau}\right|
    \lesssim
    \kappa_m(\tau-s),
    \qquad s<t\le \tau .
\]
Moreover, since $\kappa_q*\kappa_r=\kappa_{q+r}$, a standard discrete
Beta-convolution estimate gives, uniformly in the interpolation node
$\theta_a$
\[
    \sum_{i=p+1}^{J}
    \kappa_q(t_i+\theta_a h_i-s)
    \int_{t_{i-1}}^{t_i}\kappa_r(T-u)\dd u
    \lesssim
    \kappa_{q+r}(T-s),
\]
for $s\in[t_{p-1},t_p]$ and $q+r\le N$. Thus Hypothesis~\ref{hyp:fractional_expansion} (ii)
holds.

It remains to check that (i) holds true. For the naive scheme we chose 
$\mathcal B=\{0\}$ and the local interpolant is constant, so that
$R_i(u)=F_i(u)-F_i(0)$. The fractional upper-variable regularity gives
$|R_i(u)|\lesssim u^\beta$, uniformly up to level $N$. Therefore, for every
$1\le n\le N$,
\[
    \sum_{i=1}^{J}\sum_{l=0}^{n-1}
    \int_{t_{i-1}}^{t_i}
    |R_{i-1}^{(l)}(s-t_{i-1})|
    \kappa_{n-l}(T-s)\dd s
    \lesssim
    \delta^\beta
    \int_0^T (T-s)^{\beta-1}\dd s
    \lesssim
    \delta^\beta .
\]
Thus Hypothesis~\ref{hyp:fractional_expansion} (i) holds with $\alpha=\beta$,
and Theorem~\ref{thm:quadratic_aglo} yields the claimed order of the naive
scheme.

We now consider the higher-order scheme with $\mathcal B=\{0,\beta,1\}$. For
$l\ge1$, write the local level-$l$ map as
\[
    F_i^{(l)}(u)
    =
    \mathbf z_{0,t_i}^{(l),t_i+u}
    =
    \int_0^{t_i}
    \frac{(t_i+u-r)^{\beta-1}}{\Gamma(\beta)}
    G_l(r)\dd r,
    \qquad
    G_l(r):=\mathbf z_{0,r}^{(l-1),r}\otimes \dot x_r .
\]

We now split $G_l(r)=G_l(t_i)+(G_l(r)-G_l(t_i))$. The constant part gives
\[
    \frac{G_l(t_i)}{\Gamma(\beta+1)}
    \bigl((t_i+u)^\beta-u^\beta\bigr).
\]
Expanding only the regular term $(t_i+u)^\beta$ around $u=0$ yields the
fractional term $-G_l(t_i)u^\beta/\Gamma(\beta+1)$. Thus
\[
    F_i^{(l)}(u)
    =
    A_i^{(l)}
    +
    B_i^{(l)}u^\beta
    +
    C_i^{(l)}u
    +
    \overline R_i^{(l)}(u),
\]
where
\[
    A_i^{(l)}=F_i^{(l)}(0),
    \qquad
    B_i^{(l)}=-\frac{G_l(t_i)}{\Gamma(\beta+1)},
\]
and
\[
    C_i^{(l)}
    =
    \frac{G_l(t_i)t_i^{\beta-1}}{\Gamma(\beta)}
    +
    \frac{\beta-1}{\Gamma(\beta)}
    \int_0^{t_i}
    (t_i-r)^{\beta-2}
    \bigl(G_l(r)-G_l(t_i)\bigr)\dd r .
\]
The last integral is finite because we use the following endpoint estimate:
\[
    |G_l(r)-G_l(t_i)|
    \le L_{l,i}(t_i-r),
    \qquad 0\le r\le t_i,
\]
where
\[
    L_{1,i}\lesssim 1,
    \qquad
    L_{l,i}\lesssim t_i^{(l-1)\beta-1},\quad l\ge2.
\]
Indeed, $G_l(r)=\bz_{0,r}^{(l-1),r}\otimes \dot x_r$ and since $x$ is
smooth
\[
    |\bz_{0,t}^{(l-1),t}|\lesssim t^{(l-1)\beta},
    \qquad
    |\bz_{0,t}^{(l-1),t}-\bz_{0,r}^{(l-1),r}|
    \lesssim t^{(l-1)\beta-1}(t-r),
\]
we obtain the bound for $G_l$. Hence
\[
    |\overline R_i^{(l)}(u)|
    \lesssim
    u^2 t_i^{l\beta-2}
    +
    u^{1+\beta}\Lambda_l(t_i),
    \qquad
    \Lambda_1(t_i)=1,\quad
    \Lambda_l(t_i)=t_i^{(l-1)\beta-1}\text{ for }l\ge2.
\]

By stability of the interpolation operator in the basis $\{1,u^\beta,u\}$, the
actual interpolation error satisfies the same bound as the analytic remainder.
Thus, still denoting the interpolation error by $R_i^{(l)}$, we have, for
$0\le u\le h_i$,
\[
    |R_i^{(l)}(u)|
    \lesssim
    h_i^2t_i^{l\beta-2}
    +
    h_i^{1+\beta}\Lambda_l(t_i),
    \qquad
    \Lambda_1(t_i)=1,\quad
    \Lambda_l(t_i)=t_i^{(l-1)\beta-1}\quad(l\ge2).
\]
Finally, we now insert this estimate into Hypothesis~\ref{hyp:fractional_expansion}. It remains to show that, for every $1\le l\le n-1$,
\[
    \sum_i
    \int_{t_i}^{t_{i+1}}
    |R_i^{(l)}(s-t_i)|\kappa_{n-l}(T-s)\dd s
    \lesssim \delta^{1+\beta}.
\]
We split the estimate according to the two terms in the bound for $R_i^{(l)}$.
For the first part we have 
\[
\begin{aligned}
&\sum_i
\int_{t_i}^{t_{i+1}}
h_i^2t_i^{l\beta-2}\kappa_{n-l}(T-s)\dd s
\lesssim
\delta^{n\beta}
\sum_i
i^{l\beta-2}(J-i+1)^{(n-l)\beta-1}
\lesssim
\delta^{1+\beta}.
\end{aligned}
\]
In the singular case $l\beta\le1$, the sum
is controlled by the left edge and gives
$J^{(n-l)\beta-1}$; hence the contribution is
$\delta^{l\beta+1}\le \delta^{1+\beta}$. In the regular case $l\beta>1$,
the convolution gives the even smaller order $\delta^2$. For the second part, if $l=1$, then $\Lambda_1=1$ and
\[
\sum_i
\int_{t_i}^{t_{i+1}}
h_i^{1+\beta}\kappa_{n-1}(T-s)\dd s
\lesssim
\delta^{1+\beta}.
\]
If $l\ge2$, then
\[
\begin{aligned}
&\sum_i
\int_{t_i}^{t_{i+1}}
h_i^{1+\beta}t_i^{(l-1)\beta-1}\kappa_{n-l}(T-s)\dd s
\lesssim
\delta^{n\beta}
\sum_i
i^{(l-1)\beta-1}(J-i+1)^{(n-l)\beta-1}
\lesssim
\delta^{1+\beta}.
\end{aligned}
\]
Thus
\[
    \sum_i
    \int_{t_i}^{t_{i+1}}
    |R_i^{(l)}(s-t_i)|\kappa_{n-l}(T-s)\dd s
    \lesssim \delta^{1+\beta}.
\]
Summing over the finitely many levels $l=0,\ldots,n-1$ gives
\[
    \sum_i\sum_{l=0}^{n-1}
    \int_{t_i}^{t_{i+1}}
    |R_i^{(l)}(s-t_i)|\kappa_{n-l}(T-s)\dd s
    \lesssim \delta^{1+\beta}.
\]
Hence Hypothesis~\ref{hyp:fractional_expansion} (i) holds with
$\alpha=1+\beta$.
\end{proof}

\subsection{Algorithmic aspects}\label{sec:quad_algo_aspects}
In this section we discuss the algorithmic aspects of \Cref{thm:quadratic_aglo}. The recursion \eqref{eq:higher_order_scheme}-\eqref{eq:coeffcicients_higher_order} directly translates into the iterative scheme summarized in \Cref{alg:higher_order_vsig}. At step $j$ this scheme requires the summation of $j$ truncated tensors of the form $\sum_\rho{\bC}_{i-1,\rho}\otimes_N \mathcal{E}^{t_j;\rho}_{t_{i-1},t_i}(y_i^1,\dots,y_i^q)$, where $y_i^p=A_p(x_{t_i}-x_{t_{i-1}})$, $p=1,\dots,q$, given the coefficient system $\bC$ from the interpolation \eqref{eq:coeffcicients_higher_order}. Hence, the central computational task is the evaluation of $\mathcal{E}^\rho$, or rather, of the products ${\bC}_{i,\rho}\otimes_N\mathcal{E}_{t_{i-1},t_i}^{\tau;\rho}$, for many triples $(t_{i-1},t_i,\tau)$ on the time grid.

In general, no further simplification is available and one is led to a direct implementation of the definition \eqref{eq:mathcalE} (resp. $\cE^\rho$ in \eqref{eq:higher_order_scheme}), i.e.\ a brute-force summation over all words $p_1\cdots p_n\in\cW_q^n$.
However, for kernels whose weights satisfy the symmetry property \Cref{def:symmetric_cK}, one can exploit the resulting permutation invariance of the coefficients $\cK$ to obtain a shuffle-recursive evaluation scheme, which we discuss in \Cref{sec:quad_conv} and summarize in \Cref{alg:quad_evalVtE}. This symmetry holds, in particular, for convolution kernels (cf.\ \Cref{lem:cK_symmetry}) and for piecewise constant kernels (cf.\ \Cref{exmpl:pcsws_constant_matrix}).
Moreover, in the scalar case $q=1$ there is a Horner-type scheme for the direct evaluation of ${\bv}\otimes_N\mathcal{E}^{\rho}$, treated in \Cref{sec:quadq1} and given in \Cref{alg:quad_evalVtE_q1}.
We treat these cases separately below and discuss the associated computational costs.

\begin{algorithm}[p]
\caption{Higher-order approximative algorithm for Volterra signatures}
\label{alg:higher_order_vsig}
\begin{algorithmic}[1]
\STATE \textbf{Input:} grid $0=t_0<t_1<\cdots<t_J\le T$, path values
$\{x_{t_j}\}_{j=0}^J$, kernel data as in Notation~\ref{not:comp_sec}
$\{(k_1,\dots,k_q),(A_1,\dots,A_q)\}$, truncation level $N\in\NN$,
exponent set $\decoSet$, interpolation nodes $\theta_0,\dots,\theta_{\decoNum}$.
\STATE \textbf{Optional:} readout times $\tau_0,\dots,\tau_J$ with
$\tau_j\ge t_j$ \hfill \emph{// default: $\tau_j=t_j$}
\STATE \textbf{Output:} ${\bv}_j\approx
\pi_{\le N}\VSig{x}{K}^{t_j}_{t_0,t_j}$ and
$\widetilde{\bv}_j\approx
\pi_{\le N}\VSig{x}{K}^{\tau_j}_{t_0,t_j}$ for $j=0,\dots,J$.

\STATE \textbf{Precomputation:}
$\widehat{\cK}_{i,j}^{\rho}\approx \cK^{\,t_j;\rho}_{t_{i-1},t_i}$ and
$\widetilde{\cK}_{i,j}^{\rho}\approx \cK^{\,\tau_j;\rho}_{t_{i-1},t_i}$ for
$i\le j$, $\rho\in \decoSet$ \hfill \emph{// (*)}

\FOR{$i=1,\dots,J$}
    \FOR{$p=1,\dots,q$}
        \STATE $y_i^p\gets A_p(x_{t_i}-x_{t_{i-1}})\in\RR^m$
    \ENDFOR
\ENDFOR

\STATE Initialize the coefficient arrays $\mathbf C_{i,\rho}$.

\FOR{$i=1,\dots,J$}
    \FOR{$a=0,\dots,{\decoNum}$}
        \STATE Compute the interpolation value
        $\widehat{\mathbf F}_{i}^{\theta_a}$ by summing the previous-cell
        contributions
        \[
            \widehat{\mathbf F}_{i}^{\theta_a}
            \gets
            1+
            \sum_{b=1}^{i-1}
            \sum_{\rho\in \decoSet}
            \textsc{EvalVtE}
            \bigl(
                \mathbf C_{b,\rho},
                y_b^1,\dots,y_b^q,
                N,
                \cK_{b,i,a}^{\rho}
            \bigr).
        \]
    \ENDFOR
    \STATE Solve componentwise the local interpolation system from
    \eqref{eq:coeffcicients_higher_order},
    \[
        \sum_{\rho\in \decoSet}
        (\theta_a h_i)^\rho\,\mathbf C_{i,\rho}
        =
        \widehat{\mathbf F}_{i}^{\theta_a},
        \qquad a=0,\dots,{\decoNum},
    \]
    to obtain $\mathbf C_{i,\rho}$, $\rho\in \decoSet$.
\ENDFOR

\STATE ${\bv}_0,\widetilde{\bv}_0\gets 1$.

\FOR{$j=1,\dots,J$}
    \STATE ${\bv}_j,\widetilde{\bv}_j\gets 1$
    \FOR{$i=1,\dots,j$}
        \FOR{$\rho\in \decoSet$}
            \STATE ${\bv}_j\gets {\bv}_j+
            \textsc{EvalVtE}
            \bigl(
                \mathbf C_{i,\rho},
                y_i^1,\dots,y_i^q,
                N,
                \widehat{\cK}_{i,j}^{\rho}
            \bigr)$
            \IF{$\tau_j\neq t_j$}
                \STATE $\widetilde{\bv}_j\gets \widetilde{\bv}_j+
                \textsc{EvalVtE}
                \bigl(
                    \mathbf C_{i,\rho},
                    y_i^1,\dots,y_i^q,
                    N,
                    \widetilde{\cK}_{i,j}^{\rho}
                \bigr)$
            \ENDIF
        \ENDFOR
    \ENDFOR
    \IF{$\tau_j=t_j$}
        \STATE $\widetilde{\bv}_j\gets {\bv}_j$
    \ENDIF
\ENDFOR
\end{algorithmic}
\vspace{1mm}
\par\noindent\raggedright
\emph{(*)} The coeffcients $\widehat{\cK}_{i,j}, \widetilde{\cK}_{i,j}\in T^N(\RR^{q})$ depend only on the time grid (and the chosen readout times $\{\tau_j\}$) and can be precomputed whenever analytic formulas are available (cf.\ \Cref{sec:kernel_computations}). In the convolution case, symmetry implies that it suffices to compute these coefficients for a reduced subset of words (cf.\ \Cref{sec:quad_conv}).
The computation of the decorated coefficients $\widehat{\cK}^{\rho}_{i,j}, \widetilde{\cK}^{\rho}_{i,j}\in T^N(\RR^{q})$ follows analogously (cf. \Cref{rem:generalized_cE}).
\\
\emph{Parallelization.} The coefficient loop over $i$ is sequential. For fixed
$i$, the interpolation values for the different nodes $\theta_a$ can be
computed in parallel, and for fixed $(i,\theta_a)$ the previous-cell
contributions $b<i$ can be parallelized and summed. After the coefficients have
been computed, the final readout values $\mathbf v_j$ can be computed
independently for different $j$, again with parallelization over the
contributing cells $i<j$ and exponents $\rho\in \decoSet$.
\end{algorithm}

\subsubsection{Symmetric weights for $q > 1$}\label{sec:quad_conv}
We now specialize to the case $q>1$ under the symmetry assumption
\Cref{def:symmetric_cK}. There is no essential difference between the
undecorated and $\rho$-dependent coefficient tensors in this discussion:
the exponent $\rho$ only changes the scalar coefficients, while the
permutation symmetries and tensor operations are unchanged; see
\Cref{rem:generalized_cE}. We therefore discuss the undecorated
case without loss of generality.

In this setting, the permutation invariance of the coefficients $\cK$ yields a
shuffle-polynomial representation of $\mathcal{E}$ and, consequently, an
efficient recursive evaluation scheme for ${\bv}\otimes_N\mathcal{E}$. This
symmetry holds, in particular, for convolution kernels
(cf.\ \Cref{lem:cK_symmetry}) and for piecewise constant kernels
(cf.\ \Cref{exmpl:pcsws_constant_matrix}). Specifically, we work under the
following symmetry assumption.

\begin{hyp}\label{def:symmetric_cK}
The kernels $k_1, \dots, k_q$ are such that for all $n\in\mathbb N$, all permutations $\sigma\in\mathcal S_n$ of $\{1,\dots,n\}$ and all $(s,t,\tau)$ the coefficients $\cK$ in \eqref{eq:def_calK}-\eqref{eq:recast_Y_2} satisfy
$$
\cK_{s,t}^{wp,\tau}=\cK_{s,t}^{\sigma(w)p,\tau},
\qquad w\in\mathcal W_q^n,\ p\in\mathcal A_q.
$$
This implies that for each fixed $(s,t,\tau) \in \Delta^3$ and each $p\in\mathcal{A}_q$ we can represent $\mathcal{K}$ by a function $\alpha_p: \NN^{q} \to \RR$ such that
\begin{align}\label{def:alpha}
    \frac{\cK_{s,t}^{p_1 \cdots p_{n-1} p,\tau}}{(t-s)^{n+1}} = \alpha_p(\ell_1, \dots, \ell_q), \qquad \ell_{j} := \big\vert\{i = 1, \dots, n-1\;\vert\; p_i = j\}\big\vert,\quad (j\in \mathcal{A}_q).
\end{align}
\end{hyp}
In \Cref{lem:cK_symmetry} it is proven that the Hypothesis~\ref{def:symmetric_cK} holds for all kernels of convolutional type, but it also holds for the piecewise constant kernel in \Cref{exmpl:pcsws_constant_matrix} and for $q=1$ in general.
To reveal the form of $\mathcal{E}$ that allows a more efficient computation we will need the following
\begin{nota}\label{not:multii_and_shuffle}
    For multindices $\ell = (\ell_1, \dots, \ell_q) \in \NN^q$ we define $|\ell| = \ell_1 + \dots +\ell_q$ and $\ell! = \ell_1! \cdots \ell_q!$.
    Further, for any $\ell  \in \NN^q$ and $p\in\mathcal{A}_q$ we set
    $$\ell +  1_p := (\ell_1, \dots, \ell_p + 1, \dots, \ell_q),$$
    and similar for any $p\in\mathcal{A}_q$ with $\ell_p > 0$ we set
    $$\ell -  1_p := (\ell_1, \dots, \ell_p - 1, \dots, \ell_q).$$
\end{nota}
The following result shows that we can evaluate $\mathcal{E}(y_1, \dots, y_q)$ in terms of a shuffle polynomial over the variables $(y_1, \dots, y_q)$.
As discussed in \Cref{rem:mult_horner} below, and for the case $q=1$ in \Cref{sec:quadq1}, this allows for the employment of efficient Horner-type evaluation schemes.
\begin{prop}\label{prop:mathcalE}
Assume that \Cref{def:symmetric_cK} is satisfied and for fixed $(s,t,\tau)\in \Delta^3$ let
$\alpha_p:\NN^q \to \RR$ $(p\in \mathcal{A}_q)$ denote the corresponding representation of $\mathcal{K}$ as in \eqref{def:alpha}.
For any $y_1, \dots, y_q \in \RR^m$ the coefficient $\cE_{s,t}^\tau(y_1,\dots,y_n)$ in \eqref{eq:mathcalE} can be computed as
\begin{equation}\label{eq:cE_shuffle}
    \mathcal{E}^{\tau}_{s,t}(y_1, \dots, y_q)
    = \sum_{p\in\cA_q} \left(\sum_{|\ell| \le N - 1} \frac{\alpha_p(\ell)}{\ell !} y_{1}^{\shuffle \ell_1} \shuffle \cdots \shuffle y_{q}^{\shuffle \ell_q} \right)\otimes y_{p},
\end{equation}
where $\shuffle$ denotes the shuffle product introduced in \Cref{def:shuffle}.
\end{prop}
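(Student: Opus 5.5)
The plan is to expand the definition \eqref{eq:mathcalE} of $\cE^\tau_{s,t}$, peel off the last letter of each word, and regroup the remaining letters by their content (multiplicity) vector $\ell\in\NN^q$. By Hypothesis~\ref{def:symmetric_cK} the scalar weight depends only on $(\ell,p)$. The sum over all words with a given content is then recognised as a shuffle product.

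\textbf{Splitting off the last letter.} Every word of length $n\ge1$ in $\cW_q$ is uniquely $wp$ with $w\in\cW_q^{n-1}$ and $p\in\cA_q$. I would therefore rewrite \eqref{eq:mathcalE} as
\[
\cE^\tau_{s,t}(y_1,\dots,y_q)=\sum_{p\in\cA_q}\Big(\sum_{n=1}^{N}\sum_{w\in\cW_q^{n-1}}\frac{\cK^{wp,\tau}_{s,t}}{(t-s)^{n}}\,y_{w}\Big)\otimes y_p,
\qquad y_{p_1\cdots p_{n-1}}:=y_{p_1}\otimes\cdots\otimes y_{p_{n-1}},
\]
using bilinearity of $\otimes$.

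\textbf{Grouping by content.} For each $w$ let $\ell(w)\in\NN^q$ record how often each letter occurs in $w$, so that $|\ell(w)|=n-1\le N-1$. By Hypothesis~\ref{def:symmetric_cK}, $\cK^{wp,\tau}_{s,t}$ is invariant under permutations of $w$. Hence the normalised weight equals $\alpha_p(\ell(w))$ as in \eqref{def:alpha}. I take the normalising power of $(t-s)$ there to be the one matching \eqref{eq:mathcalE}, i.e.\ the length of the word $wp$; the indexing in \eqref{def:alpha} is slightly off by one in the statement. The inner double sum then becomes
\[
\sum_{|\ell|\le N-1}\alpha_p(\ell)\sum_{w:\,\ell(w)=\ell}y_w .
\]

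\textbf{Identifying the shuffle.} The remaining claim is the combinatorial identity
\[
\sum_{w:\,\ell(w)=\ell}y_w=\frac{1}{\ell!}\,y_1^{\shuffle\ell_1}\shuffle\cdots\shuffle y_q^{\shuffle\ell_q},
\]
which I expect to be the main (though elementary) obstacle. The argument has three steps.
\begin{enumerate}
\item Since $\shuffle$ is extended bilinearly via $e_i\leftrightarrow i$, for vectors $u_1,\dots,u_{a+b}\in\RR^m$ the shuffle $(u_1\otimes\cdots\otimes u_a)\shuffle(u_{a+1}\otimes\cdots\otimes u_{a+b})$ is the sum over all $(a,b)$-shuffle interleavings of the two ordered lists. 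This follows by multilinear expansion in the basis together with the recursive definition of $\shuffle$ on words, by induction on $a+b$.
\item Iterating this for a single vector, the $\binom{k}{1}$ interleavings at each step give $y^{\shuffle k}=k!\,y^{\otimes k}$.
\item Shuffling the blocks $y_1^{\otimes\ell_1},\dots,y_q^{\otimes\ell_q}$ together (using associativity of $\shuffle$) produces the sum over all interleavings that preserve the internal order of each block. Since all entries of a block are the same vector, these interleavings correspond bijectively to words $w$ with content $\ell$, namely the position of each letter $p$ in $w$. So $y_1^{\otimes\ell_1}\shuffle\cdots\shuffle y_q^{\otimes\ell_q}=\sum_{\ell(w)=\ell}y_w$.
\end{enumerate}
Combining steps 2 and 3 gives the $1/\ell!$ factor. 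Substituting back yields \eqref{eq:cE_shuffle}.
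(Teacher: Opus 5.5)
Your proposal is correct and follows the paper's proof: you split off the last letter, group the prefixes by their content $\ell$ using Hypothesis~\ref{def:symmetric_cK}, and identify $\sum_{\ell(w)=\ell}y_w$ with $\frac{1}{\ell!}\,y_1^{\shuffle\ell_1}\shuffle\cdots\shuffle y_q^{\shuffle\ell_q}$ via $y^{\shuffle k}=k!\,y^{\otimes k}$. The only difference is how the content identity is proved: you use the explicit interleaving description of $\shuffle$, whereas the paper checks that both sides satisfy the recursion $Y(\ell)=\sum_{p}1_{\ell_p>0}\,Y(\ell-1_p)\otimes y_p$; and your remark that the power $(t-s)^{n+1}$ in \eqref{def:alpha} should be $(t-s)^{n}$ to match \eqref{eq:mathcalE} is justified.
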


\begin{algorithm}[h]
\caption{Subroutine \textsc{EvalVtE}: convolution case}\label{alg:quad_evalVtE}
\begin{algorithmic}[1]
\STATE \textbf{Input:} ${\bv}\in T^N(\RR^m)$, $y_1,\dots,y_q\in\RR^m$, truncation level $N\in\NN$
\STATE \textbf{Input:} coefficients $\alpha_p(\ell)=\cK^{w(\ell)p,\tau}_{s,t}(t-s)^{-|\ell|-2}$ for $p=1,\dots,q$ and all $\ell\in\NN^q$ with $|\ell|\le N-1$
\STATE \textbf{Output:} ${\bv}' = {\bv}\otimes_N\mathcal{E}^{\tau}_{s,t}(y_1,\dots,y_q)\in T^N(\RR^m)$

\FOR{$n=N-1,\dots,0$}
    \FOR{each $\ell\in\NN^q$ with $|\ell|=n$}
        \FOR{$p=1,\dots,q$}
            \STATE $E_p(\ell)\gets \alpha_p(\ell)/\ell!$
        \ENDFOR
        \IF{$n\le N-2$}
            \FOR{$r=1,\dots,q$}
                \FOR{$p=1,\dots,q$}
                    \STATE $E_p(\ell)\gets E_p(\ell)+\big(E_p(\ell+1_r)\shuffle y_r\big)(\ell_r+1)/(|\ell|+1)$
                \ENDFOR
            \ENDFOR
        \ENDIF
    \ENDFOR
\ENDFOR

\STATE $\mathcal{E}\gets 0$
\FOR{$p=1,\dots,q$}
    \STATE $\mathcal{E}\gets \mathcal{E}+E_p(0,\dots,0)\otimes y_p$
\ENDFOR
\STATE ${\bv}'\gets {\bv}\otimes_N \mathcal{E}$
\end{algorithmic}
\vspace{1mm}
\par\noindent\raggedright
\emph{Parallelization.} The recursion in the total degree $n$ is sequential. For fixed $n$, the updates over all multi-indices $\ell$ with $|\ell|=n$ are independent and can be parallelized. For fixed $\ell$, the loops over $r$ and $p$ are also parallelizable.
\end{algorithm}
\begin{proof}[Proof of \Cref{prop:mathcalE}]
    As in the statement we let $(s,t,\tau) \in \Delta^3$ and $y_1, \dots, y_q$ be fixed.
    For each multi-index $\ell = (\ell_1, \dots, \ell_q) \in \NN^{q}$ define the subset of words that each have $\ell_p$ occurrences of the letter $p\in\mathcal{A}_q$: 
    \begin{equation}\label{eq:def_words_by_ell}
        \mathcal{W}^{\ell}_q = \Big\{ p_1\cdots p_{|\ell|} \in \mathcal{W}^{|\ell|}_q \; \Big\vert \; \big\vert\{i = 1, \dots, n\;\vert\; p_i = p\}\big\vert = \ell_p, \; p \in \mathcal{A}_q\Big\}. 
    \end{equation}
    We then notice that by \Cref{def:symmetric_cK} it holds the following.
    Starting from \eqref{eq:mathcalE} and separating the last letter of each word $p_{1}\cdots p_{n}\in\mathcal{W}_{q}^{n}$ by writing $p_{n}=p$, we sum over $p\in\mathcal{A}_{q}$ and the prefix $p_{1}\cdots p_{n-1}$.
    By \Cref{def:symmetric_cK}, specifically \eqref{def:alpha}, the coefficient associated to a prefix $p_{1}\cdots p_{n-1}$ depends on the prefix only through the multi-index $\ell$ recording how many times each letter appears.
    Grouping all prefixes with the same $\ell$ into the set $\mathcal{W}_{q}^{\ell}$ introduced in \eqref{eq:def_words_by_ell} and factoring out the common value $\alpha_{p}(\ell)$ therefore gives
    \begin{equation}\label{eq:proof_permutation}
    \pi_{n}\mathcal{E}^{\tau}_{s,t}(y_1, \dots, y_q) =
    \sum_{p=1}^q \sum_{|\ell| = n-1}\alpha_p(\ell)\sum_{p_1\cdots p_{n-1} \in \mathcal{W}^{\ell}_q}  y_{p_1} \otimes \cdots \otimes y_{p_{n-1}}\otimes y_{p}. 
    \end{equation}
    Next, for all $\ell\in\NN^q$ define 
    \begin{align*}
        Y(\ell) = \sum_{p_1\cdots p_{|\ell|} \in \mathcal{W}^{\ell}_q}  y_{p_1} \otimes \cdots \otimes y_{p_{|\ell|}},
    \end{align*}
    so that 
    \begin{equation}\label{eq:proof_permutation-2}
    \pi_{n}\mathcal{E}^{\tau}_{s,t}(y_1, \dots, y_q) =
    \sum_{p=1}^q \sum_{|\ell| = n-1}\alpha_p(\ell)\,Y(\ell)\otimes y_{p}. 
    \end{equation}
    Then by induction it follows that 
    \begin{equation}\label{eq:proof_permutation-3}
    Y(\ell) = y_{1}^{\otimes \ell_1} \shuffle \cdots \shuffle y_{q}^{\otimes \ell_q}.
    \end{equation}
    Indeed, one readily verifies that this is the case for $|\ell| \in \{0,1\}$.
    Then using the commutativity of the shuffle product we see that both expressions satisfy the recursion
    \begin{align*}
        Y(\ell) = \sum_{p=1}^q 1_{\ell_p > 0}Y(\ell - 1_p) \otimes y_p.
    \end{align*}
    The final expression then follows after noting that $y^{\shuffle k} = k! y^{\otimes k}$.
    Indeed, applying this identity to each factor in~\eqref{eq:proof_permutation-3} gives $y_{p}^{\otimes \ell_{p}} = \frac{1}{\ell_{p}!} y_{p}^{\shuffle \ell_{p}}$, so that
    \begin{equation*}
        Y(\ell) = y_{1}^{\otimes \ell_1} \shuffle \cdots \shuffle y_{q}^{\otimes \ell_q} = \frac{1}{\ell!}\, y_{1}^{\shuffle \ell_1} \shuffle \cdots \shuffle y_{q}^{\shuffle \ell_q}.
    \end{equation*}
    Substituting this into \eqref{eq:proof_permutation-2} yields \eqref{eq:cE_shuffle}.
    \end{proof}

\begin{rem}\label{rem:mult_horner}
The term in parentheses in \Cref{eq:cE_shuffle} is a shuffle polynomial of degree $N-1$ in the variables $y_1,\dots,y_q\in\RR^m$. A straightforward evaluation is implemented in \Cref{alg:quad_evalVtE}. This scheme already reduces the computational cost for evaluating $\mathcal{E}^{\tau}_{s,t}(y_1,\dots,y_q)$ from the brute-force order $m^Nq^N$ to the order $Nm^{N}$, asymptotically in $N$ for fixed $m$ and $q$ (see the discussion of computational costs in \Cref{sec:cost_quad}). More elaborate multivariate Horner-type factorizations are available in the literature (see, e.g., \cite{pena2000multivariate}), however, we find that for fixed $q$ and $m$ the asymptotic order of costs in terms of $N$ cannot be further improved in this way.

Moreover, for each $p\in \mathcal{A}_q$ the term in parentheses in \Cref{eq:cE_shuffle} is a linear combination of shuffle monomials
$y_{1}^{\shuffle \ell_1}\shuffle\cdots\shuffle y_{q}^{\shuffle \ell_q}$ and hence a symmetric tensor.
Consequently, one may evaluate and store these intermediate symmetric tensors in a compressed representation, which can reduce memory consumption and the cost of evaluating $\mathcal{E}^{\tau}_{s,t}(y_1,\dots,y_q)$.
However, when subsequently forming $\bv\otimes \mathcal{E}^{\tau}_{s,t}(y_1,\dots,y_q)$, the components are in general no longer symmetric, so that a passage to the full tensor basis is unavoidable.
Thus, while symmetry can be exploited to save memory during the evaluation of \Cref{eq:cE_shuffle}, it seems not to improve the overall asymptotic costs once the final product with $\bv$ is formed.
\end{rem}

\subsubsection{The case $q=1$}\label{sec:quadq1}
A Horner scheme for a fused exponentiation-and-multiplication step was introduced in the popular \emph{signatory} package \cite{kidger2021signatory} for the classical signature case. Here, for the case $q=1$, we can employ a similar Horner-type scheme for the direct evaluation of ${\bv}\otimes_N \mathcal{E}_{s,t}^\tau(y)$, which further improves overall efficiency. The resulting fused evaluation is summarized in \Cref{alg:quad_evalVtE_q1}.

\begin{algorithm}[h]
\caption{Subroutine \textsc{EvalVtE}: scalar case $q=1$ (Horner scheme)}\label{alg:quad_evalVtE_q1}
\begin{algorithmic}[1]
\STATE \textbf{Input:} ${\bv}\in T^N(\RR^m)$, $y\in\RR^m$, truncation level $N\in\NN$
\STATE \textbf{Input:} coefficients $\beta_n=\kappa^{n,\tau}_{s,t}(t-s)^{-n}$ for $n=1,\dots,N$
\STATE \textbf{Output:} ${\bv}'= {\bv}\otimes_N\mathcal{E}^{\tau}_{s,t}(y)\in T^N(\RR^m)$

\STATE ${\bv}'^{(0)}\gets 0$
\FOR{$n=1,\dots,N$}
    \STATE $W\gets {\bv}^{(0)}\beta_n$
    \FOR{$k=1,\dots,n-1$}
        \STATE $W\gets (W\otimes y)+{\bv}^{(k)}\beta_{n-k}$
    \ENDFOR
    \STATE ${\bv}'^{(n)}\gets W\otimes y$
\ENDFOR
\end{algorithmic}
\vspace{1mm}
\par\noindent\raggedright
\emph{Parallelization.} The loop over $n$ is independent and can be parallelized. For fixed $n$, the recursion over $k$ is sequential.
\end{algorithm}

\subsection{FFT acceleration for convolution kernels}\label{sec:fft}

We now explain how the quadratic summation in
\Cref{alg:higher_order_vsig} can be accelerated on uniform grids for
convolution kernels. We restrict the derivation to the scalar case $q=1$ and the diagonal readout $\tau=t_j$; the
multi-component case $q>1$ is analogous to previous derivations, using the symmetric-weight representation
from \Cref{sec:quad_conv}.

\begin{hyp}\label{hyp:fft_convolution}
Assume for a fixed $h>0$ that $t_j=t_0+jh$ $(j=1, \dots, J)$ and that the kernel $K\in\Lkernel$ is of convolution form, i.e., for some $\bar{K}: [0, \infty)\to \mathcal{L}(\RR^d, \RR^m)$ it holds
\[
    K(t,s)=\bar K(t-s), \qquad (s,t)\in \Delta^2.
\]
\end{hyp}

Consider a fixed exponent set
$\decoSet=\{\rho_0,\dots,\rho_{\decoNum}\}$ and interpolation nodes
$\{\theta_0,\dots,\theta_{\decoNum}\}$.
Then under \Cref{hyp:fft_convolution}, the local coefficient tensors appearing in
\eqref{eq:higher_order_scheme} and \eqref{eq:coeffcicients_higher_order} only depend the number of interval lags. More precisely, for $\rho\in\decoSet$, tensor level $n\in\{1, \dots, N\}$,
and interpolation node $\theta_a$ $(a = 0, \dots, \decoNum)$ it holds
\[
    \pi_n\cE^{t_j+\theta_a h;\rho}_{t_{i-1},t_{i}}(y)
    ~=~ \pi_n\cE^{(j-i + \theta_a)h;\rho}_{0, h}(y) ~=: 
    \omega^{a,\rho}_{n}(j-i)y^{\otimes n},
    \qquad 1\leq i<j\leq J,
\]
where $\cE^{\tau;\rho}_{s,t}$ was defined in \eqref{eq:mathcalE} and \eqref{eq:higher_order_scheme}. 

Let $y_i:=A(x_{t_{i+1}}-x_{t_i})$. At tensor level $n$, the interpolation
equation \eqref{eq:coeffcicients_higher_order}, \emph{written levelwise}, then takes
the form
\[
    \pi_n\widehat{\mathbf F}_{j}^{\theta_a}
    =
    \sum_{i=0}^{j-1}
    \sum_{\rho\in\decoSet}
    \sum_{k=1}^{n}
    \omega^{a,\rho}_{k}(j-i)
    \left(
        \pi_{n-k}\mathbf C_{i,\rho}
        \otimes
        y_i^{\otimes k}
    \right).
\]
Thus, for each fixed $(a,\rho,k,n)$, the sum over $i$ is a causal convolution
in the lag variable. After zero-padding, these causal convolutions are
evaluated by FFT \cite{cooley1965algorithm}. Once the values
$\{\widehat{\mathbf F}_{j}^{\theta_a}\}_{a=0}^{\decoNum}$ are known, the
coefficients $(\mathbf C_{j,\rho})_{\rho \in \decoSet}$ are recovered componentwise from the local interpolation
system \eqref{eq:coeffcicients_higher_order},
\[
    \sum_{\rho\in\decoSet}
    (\theta_a h)^\rho\mathbf C_{j,\rho}
    =
    \widehat{\mathbf F}_{j}^{\theta_a},
    \qquad a=0,\dots,\decoNum.
\]
The final diagonal readouts $\bv_j$ are obtained from the same convolution
formula, using the ${\theta =0}$ lag weights $(\omega^{0,\rho}_{n}(j))_{j=0, \dots, J-1}$.
The resulting procedure is summarized in \Cref{alg:fft}. 

\begin{algorithm}[h]
\caption{FFT-accelerated higher-order Volterra signature algorithm}
\label{alg:fft}
\begin{algorithmic}[1]
\STATE \textbf{Input:} uniform grid $t_j=t_0+jh$, increments $\Delta x_i$,
kernel data $(\bar K,A)$, truncation level $N$, exponent set
$\decoSet=\{\rho_0,\dots,\rho_{\decoNum}\}$, nodes
$\theta_0,\dots,\theta_{\decoNum}$.
\STATE \textbf{Output:} $\bv_j\approx
\pi_{\leq N}\VSig{x}{K}^{t_j}_{t_0,t_j}$, $j=0,\dots,J$.
\STATE Here $\operatorname{FFT}_L$ and $\operatorname{IFFT}_L$ denote the fast
Fourier transform subroutine of length $L=2^\ell$ and its inverse.

\STATE $y_i\gets A\Delta x_i$, $i=0,\dots,J-1$.
\STATE Choose $L\ge 2J$ and precompute the zero-padded FFTs
\[
    \widehat\omega^{a,\rho}_{r}
    :=
    \operatorname{FFT}_L
    \bigl(0,\omega^{a,\rho}_{r}(1),\dots,\omega^{a,\rho}_{r}(J)\bigr)
\]
for $a=0,\dots,\decoNum$, $\rho\in\decoSet$, $r=1,\dots,N$, and for the
readout weights corresponding to $\theta=0$.
\STATE $\pi_0\mathbf C_{i,0}\gets 1$, \quad
$\pi_0\mathbf C_{i,\rho}\gets 0$ for $\rho\neq0$.

\FOR{$n=1,\dots,N$}
    \FOR{$a=0,\dots,\decoNum$}
        \STATE $\pi_n\widehat{\mathbf F}_{j}^{\theta_a}\gets 0$,
        $j=0,\dots,J-1$.
        \FOR{$\rho\in\decoSet$, $r=1,\dots,n$}
            \STATE $G_i^{n,r,\rho}\gets
            \pi_{n-r}\mathbf C_{i,\rho}\otimes y_i^{\otimes r}$,
            $i=0,\dots,J-1$.
            \STATE $\widehat G^{n,r,\rho}\gets
            \operatorname{FFT}_L(G_0^{n,r,\rho},\dots,G_{J-1}^{n,r,\rho},0,\dots,0)$.
            \STATE $\pi_n\widehat{\mathbf F}_{j}^{\theta_a}
            \gets
            \pi_n\widehat{\mathbf F}_{j}^{\theta_a}
            +
            \left[
            \operatorname{IFFT}_L
            \bigl(
                \widehat\omega^{a,\rho}_{r}
                \widehat G^{n,r,\rho}
            \bigr)
            \right]_j$,
            $j=0,\dots,J-1$.
        \ENDFOR
    \ENDFOR
    \FOR{$j=0,\dots,J-1$}
        \STATE Solve componentwise
        \[
            \sum_{\rho\in\decoSet}
            (\theta_a h)^\rho\,\pi_n\mathbf C_{j,\rho}
            =
            \pi_n\widehat{\mathbf F}_{j}^{\theta_a},
            \qquad a=0,\dots,\decoNum.
        \]
    \ENDFOR
\ENDFOR

\STATE $\bv_0\gets 1$.
\FOR{$n=1,\dots,N$}
    \STATE Compute $(\pi_n\bv_j)_{j=1}^{J}$ by the same FFT multiplication
    and inverse FFT step, using the readout weights corresponding to
    $\theta=0$.
\ENDFOR

\STATE \textbf{return} $\bv_0,\dots,\bv_J$.
\end{algorithmic}
\vspace{1mm}
\par\noindent\raggedright
For $q>1$, the same algorithm applies after replacing the scalar source
$y_i^{\otimes r}$ by the shuffle-polynomial terms
$M_\ell(y_i)\otimes y_i^p$ from \Cref{sec:quad_conv}, where
\[
    M_\ell(y_i)
    =
    \frac{1}{\ell!}
    (y_i^1)^{\shuffle \ell_1}
    \shuffle\cdots\shuffle
    (y_i^q)^{\shuffle \ell_q},
\]
with $p=1,\dots,q$ and $|\ell|=r-1$. The tensor-level loop is sequential,
while the FFT convolutions over interpolation nodes, exponents, tensor orders,
and shuffle-polynomial terms are independent.
\end{algorithm}

\subsection{Kernel Computations}\label{sec:kernel_computations}

So far, we have treated the weights $\cK$ and $\dot{\cK}$ defined in~\eqref{eq:weights_cor} as given, and derived exact and approximate decompositions of Volterra signatures in terms of them. In this section, we turn to explicit computations of these weights for various classes of kernels~$K$.
\subsubsection{Piecewise constant kernels}
We begin with the class of piecewise constant kernels, that is kernels $K$ of the form \eqref{eq:decomposed_kernel}, where for all $1\leq r \leq q$ we have \begin{equation}\label{eq:piecewise_constant_general}
    k_r(t,s) = \sum_{i,j=1}^J(b_i^j)^r1_{[t_{i-1},t_i)}(s)1_{[t_{j-1},t_j)}(t),
\end{equation} where $0=t_0 < \cdots <t_J=T$ is a partition of $[0,T]$, and $\{(b_i^j)^r: 1\leq i,j \leq J, 1 \leq r \leq q \}$ a family of real valued coefficients. We treat the scalar and general cases separately. In both cases, the proof is straightforward and therefore omitted.
\begin{prop}[Scalar case]\label{exmpl:pcws_cnstnt}
Suppose $q=1$ and $K$ is a piecewise constant kernel with respect to the kernel $k=k_1$ given in \eqref{eq:piecewise_constant_general}, and coefficients $b_i^j=(b_i^j)^1$. Recall that the function $\dot{\kappa}^n$ is defined by \eqref{eq:kernel_cor} in Corollary \ref{cor:scalar_expansion}, and also recall that in~\eqref{eq:kappa_general} we have defined 
\begin{equation}\label{eq:kappa_pcwcst}\kappa_{s,t}^{n,\tau} = \int_s^t\dot{\kappa}_{u,t}^{n,\tau} \dd u.
    \end{equation} Then for any $(s,t,\tau) \in \Delta^3$ with $(s,t) \in [t_{i-1},t_i]$ and $\tau \in (t_j, t_{j+1}]$, we have
    \begin{align*}
        \dot{\kappa}^{n, \tau}_{s, t} = \frac{(t-s)^{n-1}}{(n-1)!}(b_{i}^i)^{n-1} b_{i}^j, \qquad\quad \kappa^{n, \tau}_{s, t} = \frac{(t-s)^{n}}{n!}(b_{i}^i)^{n-1} b_{i}^j.
    \end{align*}
   Moreover, we obtain the  factorization \eqref{eq:combinatorial_vsig}-\eqref{eq:weights_cor} with 
    \begin{align*}
        \mu^{n_1, \dots, n_k}_{i_1, \dots, i_{k}} = \kappa^{n_1, t_{i_{2}}}_{t_{i_1-1}, t_{i_1}}  \cdots \kappa^{n_k, t_{i_{k+1}}}_{t_{i_k-1}, t_{i_k}}.
    \end{align*}

\end{prop}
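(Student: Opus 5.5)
Since $(s,t)\subset[t_{i-1},t_i]$, every integration variable of the simplex integral in \eqref{eq:kernel_cor} lies in the cell $[t_{i-1},t_i)$, and we compute $\dot\kappa^{n,\tau}_{s,t}$ directly from its definition. With the convention $r_0=s$ and $r_n=\tau$, the variables $r_1,\dots,r_{n-1}$ range over $\Delta^{n-1}_{s,t}$, so $s\le r_1\le\dots\le r_{n-1}\le t$, and all of them lie in $[t_{i-1},t_i)$ (up to a null set at the endpoint $t_i$). We split the product into two parts.

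The first part consists of the $n-1$ factors $k(r_1,r_0),k(r_2,r_1),\dots,k(r_{n-1},r_{n-2})$. In each of these both arguments lie in cell $i$, so by \eqref{eq:piecewise_constant_general} each factor equals $b_i^i$.

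The second part is the single factor $k(r_n,r_{n-1})=k(\tau,r_{n-1})$. Its second argument lies in cell $i$, and $\tau$ lies in the $\tau$-cell, so this factor equals $b_i^j$. Here we read the index convention of the statement as "$\tau$ in the $j$-th cell" of \eqref{eq:piecewise_constant_general}, i.e. the half-open cell containing $\tau$; this matching of the half-open indexing is the only point requiring care.

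The integrand is therefore the constant $(b_i^i)^{n-1}b_i^j$, and the simplex has volume $|\Delta^{n-1}_{s,t}|=(t-s)^{n-1}/(n-1)!$. This gives the stated formula for $\dot\kappa^{n,\tau}_{s,t}$. Integrating in $u$ as in \eqref{eq:kappa_pcwcst},
\[
\kappa^{n,\tau}_{s,t}=(b_i^i)^{n-1}b_i^j\int_s^t\frac{(t-u)^{n-1}}{(n-1)!}\,\dd u=\frac{(t-s)^{n}}{n!}(b_i^i)^{n-1}b_i^j .
\]
When $n=1$ the simplex is empty and only the factor $k(\tau,s)$ remains, which is consistent with the formula.

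For the factorization we insert this into \eqref{eq:weights_cor}. The $l$-th factor is $\dot\kappa^{n_l,r_{l+1}}_{r_l,t_{i_l}}$ with $r_l\in[t_{i_l-1},t_{i_l}]$. Its upper parameter $r_{l+1}$ lies in the cell $i_{l+1}>i_l$, or is $r_{k+1}=t_{i_{k+1}}$ for the last factor. By the formula just proved, this factor equals $\frac{(t_{i_l}-r_l)^{n_l-1}}{(n_l-1)!}(b_{i_l}^{i_l})^{n_l-1}b_{i_l}^{j_l}$, where $j_l$ is the cell index of $r_{l+1}$.

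The important point is that $j_l$ depends only on which cell $r_{l+1}$ belongs to, not on its exact position. Each factor is therefore a function of $r_l$ alone on the integration domain, and the iterated integral splits into a product of one-dimensional integrals. Each of these evaluates, by the same computation as above, to $\kappa^{n_l,\tau_l}_{t_{i_l-1},t_{i_l}}$ with any $\tau_l$ in the cell of $r_{l+1}$. We may take $\tau_l=t_{i_{l+1}}$ as in the statement, again provided the half-open cell convention is read so that $t_{i_{l+1}}$ is assigned to the same cell as $r_{l+1}$.

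The main obstacle is this cell bookkeeping at the partition endpoints. It only affects null sets in the integrals and the labelling of which coefficient $b_i^j$ appears, and we resolve it by fixing the half-open convention once at the start.
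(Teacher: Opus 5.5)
Your computation is correct, and it is the direct argument the paper has in mind; the paper omits the proof as straightforward. The integrand is constant on the simplex, the simplex volume gives $(t-s)^{n-1}/(n-1)!$, and the iterated integral defining $\mu$ separates because each upper parameter $r_{l+1}$ enters only through its cell index.

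The one point to fix is your closing claim that the endpoint bookkeeping only affects null sets. The readout parameter $\tau$ is evaluated pointwise, and $t_{i_{l+1}}$ is a single point, so its cell assignment changes the value of $\kappa$, not a null set.

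\begin{itemize}
\item Take the cells $[t_{j-1},t_j)$ of \eqref{eq:piecewise_constant_general}, which is the reading you adopt for the first formula. Then $t_{i_{l+1}}$ lies in cell $i_{l+1}+1$, so $\kappa^{n_l,t_{i_{l+1}}}_{t_{i_l-1},t_{i_l}}$ carries the coefficient $b^{i_{l+1}+1}_{i_l}$.
\item For $l<k$, the $l$-th integral in $\mu$ carries $b^{i_{l+1}}_{i_l}$ for a.e.\ $r_{l+1}\in[t_{i_{l+1}-1},t_{i_{l+1}}]$.
\item Hence, under that convention, your proviso fails and the stated factorization is false for $k\ge 2$ unless these coefficients happen to agree.
\end{itemize}

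The proviso can only be met by taking the upper-variable cells right-closed, $(t_{j-1},t_j]$. This modifies $k$ on $\{t=t_j\}$, which is invisible to every integrated variable but not to the readout. With that convention the first formula holds for $\tau\in(t_{j-1},t_j]$. As literally written, $\tau\in(t_j,t_{j+1}]$ would produce $b_i^{j+1}$, so the statement's label is off by one anyway.

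So you should fix this right-closed convention explicitly, once, for both parts; with it your proof is complete. Alternatively, keep \eqref{eq:piecewise_constant_general} as is and state the factorization with $\kappa^{n_l,\tau_l}_{t_{i_l-1},t_{i_l}}$ for any $\tau_l\in[t_{i_{l+1}-1},t_{i_{l+1}})$ when $l<k$. The last factor, where $r_{k+1}=t_{i_{k+1}}$ is fixed, agrees on both sides under either convention.
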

\begin{rem}
    Notice that due to the exact factorization observed in the last identity, the algorithm presented in \Cref{sec:algo_thm} with $\decoSet = \{0\}$ provides an exact scheme to compute the Volterra signature for piecewise constant kernels and piecewise linear paths. This can be seen directly from the fact that the maps $F_i(u):=\bz_{0,t_i}^{t_i+u}$ are piecewise constant: \[ \forall \tau \in (t_{i-1},t_i] \quad   \bz^{\tau} \equiv \bz^{t_{i-1}} \quad \Longrightarrow \quad 
    [\bz_{0,t_{i-1}}^{(l)} \oast \bz^{(n-l)}]_{t_{i-1},t_i}^{t_j} = \bz_{0,t_{i-1}}^{(l),t_{i-1}} \otimes \bz_{t_{i-1},t_i}^{(n-l),t_j},
    \] so that \eqref{eq:naive_quadratic_stencil} is exact. Moreover, it is interesting to note that when the kernel is piecewise constant on a coarser grid than $x$, say $t_0 = t_{m_0} < t_{m_1} < \dots < t_{m_L} = t_J$ $(L\le J)$, then we have for any $1 \le k < j \le L$ and $\tau \in (t_{m_{j-1}}, t_{m_j}]$ that
    \begin{align*}
        \VSig{x}{k}^{\tau}_{t_{m_{k-1}},t_{m_{k}}} &=~  \sum_{k=1}^{j}
    \;\sum_{\substack{m_{k-1}<i_1<\dots< i_k \le m_k}}
    (b_{k}^k)^{n-1} b_{k}^j \frac{(t_{i}-t_{i-1})^{n}}{n!}(v_i)^{\otimes n}
    \\
    &=~ \Big.\mathcal{D}_{b_k^k, b_k^j}\Sig{x}_{t_{m_{k-1}},t_{m_{k}}},
    \end{align*}
    where for every $a,b \in \RR$, the mapping $\cD_{a,b}$ is defined from $T((\RR^m))$ to $T((\RR^m))$ as $$\mathcal{D}_{a, b}\,\bx= (\bx^{(0)},b\bx^{(1)},ba\bx^{(2)},\dots, ba^{n-1}\bx^{(n)},\dots).$$
\end{rem}
Let us now generalize the last corollary to the general case $q>1$, where the same remark applies, that is the algorithm in \Cref{sec:algo_thm} is exact.

\begin{prop}[General case]\label{exmpl:pcsws_constant_matrix}
    Suppose now that $K$ is a general piecewise constant kernel as displayed in~\eqref{eq:piecewise_constant_general}. Then the kernels $\cK$ and $\dot{\cK}$, respectively defined by \eqref{eq:cK_again} and \eqref{eq:def_dot_cK} are expressed in the following way for $(s,t,\tau) \in \Delta^3$ with $s,t \in [t_{i-1},t_i)$ and $\tau \in (t_j, t_{j+1}]$:
    \begin{align*}
        \dot{\mathcal{K}}^{p_1\cdots p_n, \tau}_{s, t} = \frac{(t-s)^{n-1}}{(n-1)!}(b_{i}^i)^{p_1}\cdots (b_{i}^i)^{p_{n-1}}(b_{i}^j)^{p_n}, \qquad\quad {\mathcal{K}}^{p_1\cdots p_n, \tau}_{s, t} = \frac{t-s}{n} \, \dot{\mathcal{K}}^{p_1\cdots p_n, \tau}_{s, t}.
    \end{align*} Moreover, the coefficients $\cK^w_{i_1\cdots i_n}$ in the decomposition \eqref{eq:weighted_sum_thm} are written as
    \begin{align*}
        \mathcal{K}^{(w_1, \dots, w_k)}_{i_1, \dots, i_{k+1}} = \prod_{l=1}^k  {\mathcal{K}}^{w_l, t_{i_{l+1}}}_{t_{i_l-1}, t_{i_l}}.
    \end{align*}
    \end{prop}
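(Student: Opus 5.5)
The proof is a direct computation from the definitions, in two parts: first the local weights $\dot{\cK}$ and $\cK$ on a single cell, then the factorization of the multi-cell coefficients.

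\emph{Local weights.} Fix $s,t\in[t_{i-1},t_i)$ with $s<t$ and $\tau\in(t_j,t_{j+1}]$. In the definition \eqref{eq:def_dot_cK} of $\dot{\cK}^{p_1\cdots p_n,\tau}_{s,t}$, all integration variables $r_1,\dots,r_{n-1}$ lie in $[s,t]\subset[t_{i-1},t_i)$. Every kernel factor except the last therefore has both arguments in the $i$-th cell. By \eqref{eq:piecewise_constant_general} these factors are the constants $(b_i^i)^{p_a}$ for $a=1,\dots,n-1$. The last factor has first argument $\tau$ and second argument in cell $i$, so it equals $(b_i^j)^{p_n}$, with the cell-indexing convention matching \Cref{exmpl:pcws_cnstnt}.

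\begin{itemize}
\item Pulling these constants out of the integral leaves the volume of $\Delta^{n-1}_{s,t}$, which is $(t-s)^{n-1}/(n-1)!$. This gives the stated formula for $\dot{\cK}$.
\item Integrating $\dot{\cK}^{w,\tau}_{u,t}$ in $u$ over $[s,t]$, as in \eqref{eq:cK_again}, gives $(t-s)^n/n!$ times the same constants. This equals $\frac{t-s}{n}\dot{\cK}^{w,\tau}_{s,t}$.
\end{itemize}

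For $s$ at a cell boundary, the half-open convention is a null set and changes nothing.

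\emph{Factorization.} Insert the closed form into \eqref{eq:def_cK_words}. For $r_l\in[t_{i_l-1},t_{i_l}]$, the factor $\dot{\cK}^{w_l,r_{l+1}}_{r_l,t_{i_l}}$ depends on the upper variable $r_{l+1}$ only through the cell containing it. That cell is the $i_{l+1}$-th, the same cell that contains $t_{i_{l+1}}$, up to the boundary convention. So we may replace $r_{l+1}$ by $t_{i_{l+1}}$ in each factor. The integrand then splits into a product of functions, each depending on a single variable $r_l$. By Fubini the multiple integral becomes $\prod_l\int_{t_{i_l-1}}^{t_{i_l}}\dot{\cK}^{w_l,t_{i_{l+1}}}_{r_l,t_{i_l}}\dd r_l$. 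Each of these one-dimensional integrals equals $\cK^{w_l,t_{i_{l+1}}}_{t_{i_l-1},t_{i_l}}$ by \eqref{eq:cK_again}.

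\emph{Main obstacle.} The only delicate point is the bookkeeping of the upper variable at cell endpoints: the claim that the cell of $r_{l+1}$ agrees with that of $t_{i_{l+1}}$. The right-continuous indicator convention makes this hold for almost every $r_{l+1}$. I would check it against the convention $\tau\in(t_j,t_{j+1}]$ from \Cref{exmpl:pcws_cnstnt}, and otherwise note that boundaries have measure zero. Everything else is routine.
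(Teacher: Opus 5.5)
Your approach is the straightforward computation the paper has in mind (it omits the proof): pull out the constants, use the simplex volume, then apply Fubini. The volume and Fubini steps are fine. However, the step you flagged as the main obstacle is resolved incorrectly, and the right-continuous convention you invoke is exactly what breaks it.

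\textbf{Where the substitution fails.} In \eqref{eq:piecewise_constant_general} the first argument of $k_r$ is bucketed by $1_{[t_{j-1},t_j)}$, so the grid point $t_{i_{l+1}}$ lies in the $(i_{l+1}+1)$-th cell, not the $i_{l+1}$-th. Take $l<k$ and $r$ in cell $i_l$. Then $k_p(r_{l+1},r)=(b_{i_l}^{i_{l+1}})^p$ for a.e.\ $r_{l+1}\in[t_{i_{l+1}-1},t_{i_{l+1}}]$. But $k_p(t_{i_{l+1}},r)=(b_{i_l}^{i_{l+1}+1})^p$.

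The measure-zero remark does not rescue this. On the right-hand side of the claimed identity the upper parameter sits at the boundary point for every value of the integration variables. So replacing $r_{l+1}$ by $t_{i_{l+1}}$ changes the integrand on a set of full measure.

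Concretely, take $k=2$ and $w_1=p$ a single letter. The left side equals $(t_{i_1}-t_{i_1-1})(b_{i_1}^{i_2})^p\,\cK^{w_2,t_{i_3}}_{t_{i_2-1},t_{i_2}}$. The right side is the same with $b_{i_1}^{i_2+1}$ in place of $b_{i_1}^{i_2}$.

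The same mismatch affects your first part. For $\tau\in(t_j,t_{j+1})$ the definition gives $k_{p_n}(\tau,r)=(b_i^{j+1})^{p_n}$, not $(b_i^{j})^{p_n}$. Deferring to the convention of \Cref{exmpl:pcws_cnstnt} does not settle this, since that proposition has the same mismatch.

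\textbf{How to fix it.} You need an explicit convention, not a null-set argument. The identities hold, up to relabeling, exactly when $k_r$ is left-continuous in its first argument at the grid points. That means the upper variable is bucketed into left-open cells $(t_{j-1},t_j]$, as the interval $(t_j,t_{j+1}]$ in the statement suggests. Under that convention:
\begin{itemize}
\item $t_{i_{l+1}}$ and a.e.\ $r_{l+1}\in[t_{i_{l+1}-1},t_{i_{l+1}}]$ lie in the same cell, so your Fubini argument goes through verbatim.
\item The last factor in the local formula is $(b_i^{j'})^{p_n}$ with $j'$ the label of the cell containing $\tau$. For $\tau\in(t_j,t_{j+1}]$ this is $j'=j+1$, so the statement's superscript is shifted by one.
\end{itemize}
If you prefer to keep the right-continuous convention, state the factorization with the upper parameter of each factor $l<k$ at a point of $[t_{i_{l+1}-1},t_{i_{l+1}})$, for instance $t_{i_{l+1}-1}$. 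The factor $l=k$ needs no change, since $r_{k+1}=t_{i_{k+1}}$ by definition.

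Finally, you apply the local formula with $t=t_{i_l}$, which is outside the range $[t_{i-1},t_i)$ you proved it for. This is harmless, because $t$ enters only as an integration limit, but you should say so.
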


\subsubsection{Multivariate fractional kernels}\label{sec:apx_fractional}
For our second class of kernels we generalize the scalar \Cref{xmpl:fractional_kernel} of fractional kernels to the multivariate setting. Specifically, we consider $K$ of the form \eqref{eq:decomposed_kernel}, where each $k_r$ is given by \eqref{eq:1d_frac_def}:
    \begin{equation}\label{eq:multivariate_frac_def}k_r(t,s) = k_{\beta_r}(t-s)= \Gamma(\beta_r)^{-1}(t-s)^{\beta_r-1},\end{equation} for a collection $\{\beta_r:r=1,\dots,q \}$ of coefficients $\beta_r>0$.
\begin{prop}\label{ex:multivariate_fractional}
     Consider a multivariate fractional kernel $K$ expressed as \eqref{eq:multivariate_frac_def}. For any word $p_1\cdots p_n \in \cW_q$, we have  %
\begin{equation}\label{eq:K_multivariate_frac}
    \cK^{p_1\cdots p_{n},\tau}_{s,t}  =\frac{I_{\frac{t-s}{\tau-s}}(\sum_{l=1}^{n-1}\beta_{p_l}+1,\beta_{p_{n}})(\tau-s)^{\sum_{l=1}^{n}\beta_{p_l}}}{\Gamma(\sum_{l=1}^{n}\beta_{p_l}+1)}
\end{equation} and consequently %
    \begin{equation}\label{eq:Kdot_multivariate_frac} \dot{\cK}_{s,t}^{p_1\cdots p_{n},\tau} = \frac{I_{\frac{t-s}{\tau-s}}(\sum_{l=1}^{n-1}\beta_{p_l},\beta_{p_{n}})(\tau-s)^{\sum_{l=1}^{n-1}\beta_{p_l}-1}}{\Gamma(\sum_{l=1}^{n}\beta_{p_l})},\end{equation} in terms of the regularized incomplete Beta function introduced in \eqref{eq:regul_beta_def} in Example \ref{xmpl:fractional_kernel}.
\end{prop}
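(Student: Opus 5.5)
We establish \eqref{eq:K_multivariate_frac} by induction on the word length $n$, using the convolution structure of the kernels $k_{\beta_r}$ and the semigroup identity $k_a * k_b = k_{a+b}$ for Riemann--Liouville kernels. Write $\gamma_n := \sum_{l=1}^{n}\beta_{p_l}$. The first step is the \emph{diagonal} case $\tau = t$. Here a change of variables, exactly as in the proof of \Cref{lem:cK_symmetry}, turns $\dot{\cK}^{p_1\cdots p_n,t}_{s,t}$ into the $n$-fold convolution $k_{\beta_{p_1}}*\cdots*k_{\beta_{p_n}}$ evaluated at $t-s$. The Beta integral then gives
\[
\dot{\cK}^{p_1\cdots p_n,t}_{s,t} = \frac{(t-s)^{\gamma_n-1}}{\Gamma(\gamma_n)}.
\]
Integrating in $s$ according to \eqref{eq:cK_again} yields $\cK^{p_1\cdots p_n,t}_{s,t} = (t-s)^{\gamma_n}/\Gamma(\gamma_n+1)$. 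This agrees with the claim, since $I_1 \equiv 1$.

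The second step handles general $\tau \ge t$ by splitting off the last kernel. From \eqref{eq:recast_Y_2}, the variable $r_n$ carries the factor $k_{\beta_{p_n}}(\tau - r_n)$, and the inner integral over $r_1,\dots,r_{n-1}$ is the diagonal quantity $\cK^{p_1\cdots p_{n-1},r_n}_{s,r_n}$. Hence
\[
\cK^{p_1\cdots p_n,\tau}_{s,t} = \int_s^t \frac{(r-s)^{\gamma_{n-1}}}{\Gamma(\gamma_{n-1}+1)}\,\frac{(\tau-r)^{\beta_{p_n}-1}}{\Gamma(\beta_{p_n})}\,\dd r .
\]
Substituting $r = s + u(\tau-s)$ turns this into $(\tau-s)^{\gamma_n}$ times an incomplete Beta integral $\int_0^{(t-s)/(\tau-s)} u^{\gamma_{n-1}}(1-u)^{\beta_{p_n}-1}\,\dd u$. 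Normalizing via \eqref{eq:regul_beta_def} with $a = \gamma_{n-1}+1$ and $b = \beta_{p_n}$, the Gamma factors combine as $\Gamma(a)\Gamma(b)/\Gamma(a+b)$ divided by $\Gamma(\gamma_{n-1}+1)\Gamma(\beta_{p_n})$. This produces exactly $1/\Gamma(\gamma_n+1)$ and gives \eqref{eq:K_multivariate_frac}. The case $n=1$ is direct: there the formula reduces to $\int_s^t k_{\beta}(\tau-r)\,\dd r$, which matches $I_x(1,\beta) = 1-(1-x)^\beta$.

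For \eqref{eq:Kdot_multivariate_frac}, I would repeat the same computation starting from \eqref{eq:def_dot_cK} rather than differentiate. Now the first variable is pinned at $s$, so the inner diagonal part is $\dot{\cK}^{p_1\cdots p_{n-1},r}_{s,r} = (r-s)^{\gamma_{n-1}-1}/\Gamma(\gamma_{n-1})$. The same substitution gives an incomplete Beta with parameters $(\gamma_{n-1}, \beta_{p_n})$ and the prefactor $(\tau-s)^{\gamma_n-1}/\Gamma(\gamma_n)$. As a consistency check, $-\partial_s$ of \eqref{eq:K_multivariate_frac} should match this.

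The main obstacle is bookkeeping rather than analysis. I must correctly identify which variable is pinned to $s$ and which to $\tau$ under the conventions of \eqref{eq:recast_Y_2} and \eqref{eq:def_dot_cK}. I must also reconcile the exponent of $(\tau-s)$ in \eqref{eq:Kdot_multivariate_frac}: my computation naturally gives $\gamma_n - 1$. If the statement's exponent $\sum_{l=1}^{n-1}\beta_{p_l}-1$ differs from this, I would track whether a convention of the paper absorbs the difference, and otherwise record the exponent that the computation gives. Integrability for $\beta_r < 1$ near the endpoints is harmless, since all the exponents exceed $-1$.
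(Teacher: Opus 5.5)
Your proof is correct and follows essentially the paper's route. The diagonal case comes from the convolution semigroup $k_a*k_b=k_{a+b}$ (the paper phrases this as the Riemann--Liouville semigroup applied to $1$). The off-diagonal case is obtained by splitting off the last kernel and substituting $r=s+u(\tau-s)$, which produces the incomplete Beta function. The one minor difference is that you read off $\dot{\cK}$ directly from \eqref{eq:def_dot_cK}, while the paper differentiates the $\cK$-representation in $s$ via the Leibniz rule; both arrive at the same integral $\int_s^t k_{\beta_{p_n}}(\tau-u)\,k_{\gamma_{n-1}}(u-s)\dd u$.

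Your exponent $\gamma_n-1=\sum_{l=1}^{n}\beta_{p_l}-1$ is the right one. The paper's own proof ends with exactly this exponent, and it matches $n\beta-1$ in \Cref{xmpl:fractional_kernel}. So the upper summation index $n-1$ in the exponent of \eqref{eq:Kdot_multivariate_frac} is a typo in the statement.
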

\begin{rem}\label{rem:multivariate_frac_weights}
     Similar to the scalar case, we could further write down the semi-explicit weights $\cK^{(w_1,\dots,w_k)}_{p_1,\dots,p_{k+1}}$, which, however, do not admit a factorization as seen in Example \ref{xmpl:fractional_kernel}. As emphasized before, the formulas for $\cK$ already allow the application of the algorithm in Section \ref{sec:algo_thm}, as the corresponding $\cE$ can be evaluated explicitly.
\end{rem}
\begin{proof}[Proof of Proposition~\ref{ex:multivariate_fractional}]
     
We recall that the fractional Riemann-Liouville integral operator of order $\beta > 0$ is defined by
$$I^\beta(f)_{s,t}=\int_s^t f(t) k_\beta(t-r)\dd r, \qquad 0 \le s \le t \le T, \quad f\in L^1([0,T]).$$
It is well known that this family of operators satisfy the the semi-group relation $$I^\alpha_{s,\cdot }\circ I^\beta_{s,\cdot} = I^{\alpha +\beta}_{s,\cdot },$$ see, e.g.,  \cite[Section~2.3]{samko1993fractional}. On the other hand, we have 
$$
    I^{\beta}(1)_{s,t} 
    = \frac{1}{\Gamma(\beta)}\int_s^t (t-r)^{\beta - 1}\dd r 
    = \frac{(t-s)^{\beta}}{\beta \Gamma(\beta)} 
    =  \frac{(t-s)^{\beta}}{\Gamma(\beta +1)} = k_{\beta+1}(t-s).
$$
Combining these observations, for any word $p_1\cdots p_n \in \cW_q$ we obtain the following relation on the diagonal $t=\tau$:
\[
\cK^{p_1\cdots p_n,t}_{s,t} = I^{\sum_{l=1}^n\beta_{p_l}}(1)_{s,t} =  \frac{(t-s)^{\sum_{l=1}^n\beta_{p_l}}}{\Gamma(\sum_{l=1}^n\beta_{p_l} + 1)} = k_{\sum_{l=1}^n\beta_{p_l} + 1}(t-s).
\]
We now move to the off-diagonal case $t < \tau$. By definition \eqref{eq:recast_Y_2} in Lemma~\ref{prop:sig term over piece segemtn} and the last identity, for any word $p_1\cdots p_{n+1} \in \cW_q$ we have 
\begin{equation}\label{eq:app_frac}
\cK^{p_1\cdots p_{n+1},\tau}_{s,t} = \int_s^tk_{\beta_{p_{n+1}}}(\tau-u) \cK_{s,u}^{p_1\cdots p_n,u} \dd u = \int_s^tk_{\beta_{p_{n+1}}}(\tau-u) k_{\sum_{l=1}^n\beta_{p_l}+1}(u-s) \dd u.
\end{equation}
Applying the change of variables $v = \frac{u-s}{\tau-s}$ and we find \begin{align}
    \cK^{p_1\cdots p_{n+1},\tau}_{s,t} & = \frac{(\tau-s)}{\Gamma(\beta_{p_{n+1}}) \Gamma(\sum_{l=1}^n\beta_{p_l}+1)} \int_0^{\frac{t-s}{\tau-s}} (\tau-s-v(\tau-s))^{\beta_{p_{n+1}}-1}(v(\tau-s))^{\sum_{l=1}^n\beta_{p_l}}\dd v \notag \\ & =\frac{(\tau-s)^{\sum_{l=1}^{n+1}\beta_{p_l}}}{\Gamma(\beta_{p_{n+1}}) \Gamma(\sum_{l=1}^n\beta_{p_l}+1)}\int_0^{\frac{t-s}{\tau-s}} (1-v)^{\beta_{p_{n+1}}-1}v^{\sum_{l=1}^n\beta_{p_l}}\dd v \notag \\ & =\frac{I_{\frac{t-s}{\tau-s}}(\sum_{l=1}^n\beta_{p_l}+1,\beta_{p_{n+1}})(\tau-s)^{\sum_{l=1}^{n+1}\beta_{p_l}}}{\Gamma(\sum_{l=1}^{n+1}\beta_{p_l}+1)},\label{eq:cK_frac_calc}
\end{align} where $I_{z}(a,b)$ denotes the regularized incomplete Beta function defined in \eqref{eq:regul_beta_def}. This proves our first claim~\eqref{eq:K_multivariate_frac}.

We now turn to the proof of \eqref{eq:Kdot_multivariate_frac}. Using the identities \[  \dot{\cK}_{s,t}^{p_1\cdots p_{n+1},\tau} = -\frac{\dd}{\dd s}\cK_{s,t}^{p_1\cdots p_{n+1},\tau}, \qquad -\frac{\dd }{\dd s}\cK_{s,t}^{p_1\cdots p_{n},t} = -\frac{\dd }{\dd s}k_{\sum_{l=1}^n\beta_{p_l}+1}(t-s) = k_{\sum_{l=1}^n\beta_{p_l}}(t-s),\] 
together with the Leibniz rule applied to differentiate \eqref{eq:app_frac} with respect to $s$ (the boundary term $k_{\beta_{p_{n+1}}}(\tau-s)\cK_{s,s}^{p_1\cdots p_n,s}$ vanishes since $\cK_{s,s}^{p_1\cdots p_n,s}=0$), we find \begin{eqnarray}\label{eq:dot_cK_frac}
\dot{\cK}_{s,t}^{p_1\cdots p_{n+1},\tau} &=& \int_s^t k_{\beta_{p_{n+1}}}(\tau-u) \left(-\frac{\dd}{\dd s}\cK_{s,u}^{p_1\cdots p_n,u}\right) \dd u \nonumber \\
&=& \int_s^t k_{\beta_{p_{n+1}}}(\tau-u)k_{\sum_{l=1}^n\beta_{p_l}}(u-s) \dd u.
\end{eqnarray} Repeating the same techniques as in \eqref{eq:cK_frac_calc}, we find \[
\dot{\cK}_{s,t}^{p_1\cdots p_{n+1},\tau} = \frac{I_{\frac{t-s}{\tau-s}}(\sum_{l=1}^n\beta_{p_l},\beta_{p_{n+1}})(\tau-s)^{\sum_{l=1}^{n+1}\beta_{p_l}-1}}{\Gamma(\sum_{l=1}^{n+1}\beta_{p_l})}.
\]
This proves \eqref{eq:Kdot_multivariate_frac}, and completes the proof of Proposition~\ref{ex:multivariate_fractional}.
\end{proof}

\subsubsection{Gamma kernel}\label{sec:apx_gamma}

We close this section by examining the case of a scalar Gamma kernel, that is $q=1$ and $K=k\,\mathrm{I}_d$, where \[
k(t,s)= \alpha e^{-\lambda (t-s)}k_\beta(t-s) \equiv k_{\lambda,\alpha,\beta}(t-s), \qquad \alpha,\beta,\gamma >0, 
\] and $k_\beta$ is the kernel given by \eqref{eq:1d_frac_def}.
\begin{prop}\label{ex:gamma_kernel}  For the scalar Gamma kernel $K$ and any $n\in \mathbb{N}$, we have (with $\dot{\kappa}^{n,\tau}_{s,t}$ as defined in \eqref{eq:kernel_cor})
\[
\dot{\kappa}_{s,t}^{n,\tau}=\alpha ^{n}e^{-\lambda(\tau-s)}(\tau-s)^{n\beta-1}\frac{I_{\frac{t-s}{\tau-s}}((n-1)\beta,\beta))}{\Gamma(n \beta)}, \qquad (s,t,\tau) \in \Delta^3,
\] where the incomplete Beta function $I$ is defined in \eqref{eq:regul_beta_def}. Moreover, the function $\kappa^{n}$ defined in \eqref{eq:kappa_pcwcst} becomes \[
\kappa_{s,t}^{n,\tau}= \frac{\lambda^{-(n-1)\beta}\alpha^{n-1}}{\Gamma((n-1)\beta)}\int_s^tk_{\lambda,\alpha,\beta}(\tau-u)\gamma((n-1)\beta,\lambda(u-s)) \dd u,
\] expressed in terms of the incomplete Gamma function $\gamma(a,b)= \int_0^b e^{-v}v^{a-1}\dd v$. Finally, we obtain a factorization \eqref{eq:combinatorial_vsig}-\eqref{eq:weights_cor} with weights $$\mu^{n_1, \dots, n_k}_{i_1, \dots, i_{k+1}} =  \alpha^{\sum_i n_i}e^{-\lambda \tau}\int_{t_{i_1-1}}^{t_{i_1}} \cdots \int_{t_{i_{k}-1}}^{t_{i_{k}}} e^{\lambda r_1}\prod_{l=1}^k \frac{(r_{l+1} - r_{l})^{n_l\beta -1}}{\Gamma(n_l\beta)} I_\frac{{t_{i_l}}-r_l}{r_{l+1} - r_l}((n_l-1)\beta, \beta) \dd{r_l},$$ where we use the convention $r_{k+1} = t_{i_{k+1}}$.
\end{prop}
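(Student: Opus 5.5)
The plan mirrors the proof of Proposition~\ref{ex:multivariate_fractional}, with one additional observation: the exponential factor in the Gamma kernel is multiplicative along chains of convolutions. Concretely, for any $s<r_1<\dots<r_{n-1}<\tau$ the exponents telescope,
\[
\prod_{a=0}^{n-1} e^{-\lambda(r_{a+1}-r_a)} = e^{-\lambda(\tau-s)}, \qquad r_0=s,\; r_n=\tau .
\]
So in the simplex integral \eqref{eq:kernel_cor} defining $\dot\kappa^{n,\tau}_{s,t}$, the factor $\alpha^n e^{-\lambda(\tau-s)}$ comes out of the integral. What remains is exactly the fractional integral $\dot\kappa^{n,\tau}_{s,t}$ for $k_\beta$ from Example~\ref{xmpl:fractional_kernel}, i.e.\ the case $q=1$, $\beta_1=\beta$ of \eqref{eq:Kdot_multivariate_frac}. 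This gives the claimed formula for $\dot\kappa$ directly.

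To double-check the inner identity independently, I will write $\dot\kappa^{n,\tau}_{s,t}=\int_s^t k(\tau-u)\,\dot\kappa^{n-1,u}_{s,u}\dd u$. On the diagonal, the semigroup property gives $\dot\kappa^{n-1,u}_{s,u}=\alpha^{n-1}e^{-\lambda(u-s)}k_{(n-1)\beta}(u-s)$. The substitution $v=(u-s)/(\tau-s)$ then produces the regularized incomplete Beta function $I_{(t-s)/(\tau-s)}((n-1)\beta,\beta)$ with prefactor $(\tau-s)^{n\beta-1}/\Gamma(n\beta)$, exactly as in \eqref{eq:cK_frac_calc}.

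For $\kappa^{n,\tau}_{s,t}$ I will use the recursive form analogous to \eqref{eq:app_frac}:
\[
\kappa^{n,\tau}_{s,t}=\int_s^t k(\tau-u)\,\kappa^{n-1,u}_{s,u}\dd u .
\]
Integrating the diagonal expression above in its lower variable gives
\[
\kappa^{n-1,u}_{s,u}=\int_s^u \alpha^{n-1}e^{-\lambda(u-r)}\frac{(u-r)^{(n-1)\beta-1}}{\Gamma((n-1)\beta)}\dd r .
\]
The substitution $v=\lambda(u-r)$ turns this into $\alpha^{n-1}\lambda^{-(n-1)\beta}\,\gamma((n-1)\beta,\lambda(u-s))/\Gamma((n-1)\beta)$. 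Inserting this into the recursion yields the stated integral formula with $k_{\lambda,\alpha,\beta}(\tau-u)$. No closed form is needed here, since the Beta/Gamma structure no longer combines once the exponential weight $e^{-\lambda(\tau-u)}$ multiplies the incomplete Gamma function.

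For the weights, I start from Corollary~\ref{cor:scalar_expansion}, \eqref{eq:weights_cor}, and substitute the formula just derived for each $\dot\kappa^{n_l,r_{l+1}}_{r_l,t_{i_l}}$. The exponential factors telescope over the chain $r_1<r_2<\dots<r_{k+1}=t_{i_{k+1}}$:
\[
\prod_{l=1}^k e^{-\lambda(r_{l+1}-r_l)}=e^{-\lambda(t_{i_{k+1}}-r_1)},
\]
which gives the factor $e^{-\lambda\tau}e^{\lambda r_1}$ with $\tau=t_{i_{k+1}}$. The powers of $\alpha$ collect into $\alpha^{\sum n_l}$, and the remaining integrand is the fractional one from Example~\ref{xmpl:fractional_kernel}.

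The main obstacle is bookkeeping rather than analysis. One must verify that the telescoping is valid because every factor has convolution form, and that the case $n=1$ is treated separately: for $n=1$ we have $I_x(0,\beta)\equiv 1$ by convention, $\dot\kappa^{1,\tau}_{s,t}=k(\tau-s)$, and $\gamma(0,\cdot)$ must be read as a limit.
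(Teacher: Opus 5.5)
Your argument is correct. Apart from one key step, it follows the same skeleton as the paper: diagonal convolution powers, a one-step recursion in the last integration variable, Beta and incomplete-Gamma substitutions, and then substitution into \eqref{eq:weights_cor}.

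The difference is how you handle the exponential factor. The paper first obtains the diagonal convolution powers $k^{\ast n}(t)=\alpha^n e^{-\lambda t}k_{n\beta}(t)$ via Laplace transforms, using $L_{k^{\ast n}}=\alpha^n(\lambda+\cdot)^{-n\beta}$ and injectivity of the transform. From these it computes the diagonal $\kappa^{n,t}_{s,t}$, and only then rebuilds the off-diagonal $\kappa$ and $\dot\kappa$ by a one-step recursion and a Beta integral. You instead note that $\prod_{a=0}^{n-1}e^{-\lambda(r_{a+1}-r_a)}=e^{-\lambda(\tau-s)}$ holds pointwise on the simplex. Hence the Gamma-kernel $\dot\kappa^{n,\tau}_{s,t}$ equals $\alpha^n e^{-\lambda(\tau-s)}$ times the fractional $\dot\kappa^{n,\tau}_{s,t}$, on and off the diagonal at once.

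Your route buys several things:
\begin{itemize}
\item it needs no transform theory;
\item it gives the $\dot\kappa$ formula with no new computation;
\item it makes Remark~\ref{rem:gamma_appendix} immediate;
\item it applies verbatim to any exponentially damped convolution kernel $e^{-\lambda(t-s)}g(t-s)$, whereas the Laplace argument only delivers the diagonal convolution powers.
\end{itemize}

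Two small points. First, if you cite \eqref{eq:Kdot_multivariate_frac} literally, note that its displayed power of $(\tau-s)$ has a typo. It should be $\sum_{l=1}^{n}\beta_{p_l}-1$, as the proof there shows. So rely on Example~\ref{xmpl:fractional_kernel} or on your own Beta computation, which correctly gives $n\beta-1$. Second, for $n=1$ the $\kappa$ formula should be read through the limit $\gamma(a,x)/\Gamma(a)\to 1$ as $a\to 0^+$ (for $x>0$). Reading $\gamma(0,\cdot)$ on its own does not work, since it is infinite.
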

\begin{rem}\label{rem:gamma_appendix}
    While here, as in Remark~\ref{rem:multivariate_frac_weights}, the weights $\kappa$ are only in semi-closed form, remarkably the $\dot{\kappa}$ remains closed and is of the same type as the weights we obtained in the fractional kernel case in \Cref{sec:apx_fractional}. Denoting the latter by $\eta$, we can in particular note that \[
\kappa_{s,t}^{(n+1),\tau} = \int_s^t \dot{\kappa}_{u,t}^{(n+1),\tau} \dd u = \alpha^{n+1}\int_s^te^{-\lambda(\tau-u)} \dot{\eta}_{u,t}^{(n+1),\tau} \dd u \approx \alpha^{n+1}e^{-\lambda(\tau-s)}\eta_{s,t}^{(n+1),\tau},
\]
where $\eta$ is given in closed-form (see \Cref{sec:apx_fractional}). Thanks to the smoothness of the exponential, on small time-scales $|t-s|$, the weights $\kappa$ are well-approximated by the exponentially averaged fractional weights. 
\end{rem}
\begin{proof}[Proof of Proposition~\ref{ex:gamma_kernel}]
    Starting with the diagonal case, it holds for \emph{any} convolutional kernel $k(t,s) = k(t-s)$ that 
    \[
\kappa_{s,t}^{(n),t} = \int_{\Delta_{s,t}^n}\prod_{l=1}^n k(r_{l+1}-r_l) \dd r_1 \cdots \dd r_n = \int_{\Delta_{0,t-s} ^n} \prod_{l=1}^n k(u_{l+1}-u_l) \dd u_1 \cdots \dd u_n = \kappa_{0,t-s}^{(n),t-s} ,
\] where $u_{n+1}:= t-s$. Moreover, an inductive argument shows that \begin{equation}\label{eq:kappa_convolution_power}
\kappa_{0,t}^{(n),t} = (k \ast \kappa^{(n-1)})(t) = \cdots =(k^{\ast (n)} \ast 1)(t).
\end{equation} Denoting by $L_f(t) = \int_0^\infty e^{-tx}f(x)\dd x$ the Laplace-transform, we have \[
L_{k^{\ast (n)}}(t) = L_k(t)^n = \frac{\alpha^n}{\Gamma(\beta)^n} \left (\int_0^\infty e^{-x(\lambda+t) }x^{\beta-1} \dd x \right )^n = \alpha^n(\lambda+t)^{-n \beta}.
\] On the other hand, from standard properties of the Laplace transform we know \[ L_{e^{-\lambda (\bullet)}\alpha^nk_{n\beta}}(t) = L_{\alpha^n k_{n\beta}}(\lambda+t) = \alpha^n (\lambda+t)^{-n\beta} .
\] By injectivity of the Laplace transform, we conclude that 
$$
k^{\ast (n)}(t)=\alpha^n e^{-\lambda t}\frac{t^{n\beta-1}}{\Gamma(n\beta)}=k_{\lambda,\alpha^n,n\beta}(t).
$$ 
Combining this with the identity $\kappa_{0,t}^{(n),t} = (k^{\ast (n)} \ast 1)(t)$ in \eqref{eq:kappa_convolution_power}, we find \[
\kappa_{0,t}^{(n),t} = \int_0^{t}k_{\lambda,\alpha^n,n\beta}(t-u) \dd u = \frac{\alpha^n}{\Gamma(n\beta)} \int_0^{t}e^{-\lambda u}u^{n\beta-1}\dd u =\frac{\lambda^{-n\beta }\alpha^n}{\Gamma(n\beta)} \gamma(n\beta,\lambda t),
\] where $\gamma$ is the incomplete gamma function $\gamma(a,b)= \int_0^b e^{-v}v^{a-1}\dd v$. In particular, on the diagonal $t = \tau$:
\begin{equation}\label{eq:kappa_diag_gamma}
\kappa_{s,t}^{(n),t}= \frac{\lambda^{-n\beta }\alpha^n}{\Gamma(n\beta)} \gamma(n\beta,\lambda (t-s)).
\end{equation}

We now move to the off-diagonal case $t < \tau$. Using \eqref{eq:kappa_diag_gamma} and arguing as in \eqref{eq:app_frac}, for any $(s,t,\tau)\in\Delta^3$ we find \[
\kappa_{s,t}^{(n+1),\tau}= \frac{\lambda^{-n\beta}\alpha^n}{\Gamma(n\beta)}\int_s^tk_{\lambda,\alpha,\beta}(\tau-u)\gamma(n\beta,\lambda(u-s)) \dd u.
\]
On the other-hand, we have \[ -\frac{\dd}{\dd s} \kappa_{0,t-s}^{(n),t-s} = \frac{\alpha^n}{\Gamma(n\beta)}e^{-\lambda(t-s)}(t-s)^{n\beta-1},
\] and thus in particular \begin{align*}
\dot{\kappa}_{s,t}^{(n+1),\tau}&= \frac{\alpha^{n+1}}{\Gamma(\beta)\Gamma(n\beta)}\int_s^t e^{-\lambda(\tau-u)}(\tau-u)^{\beta-1}(u-s)^{n\beta-1}e^{-\lambda(u-s)} \dd s  \\ & = \alpha ^{n+1}e^{-\lambda(\tau-s)} \int_s^tk_\beta(\tau-u)k_{n\beta}(u-s)\dd u 
\\ & =\alpha ^{n+1}e^{-\lambda(\tau-s)}(\tau-s)^{(n+1)\beta-1}\frac{I_{\frac{t-s}{\tau-s}}(n\beta,\beta))}{\Gamma((n+1) \beta)}.
\end{align*}
This establishes the formulas for $\dot{\kappa}^{n,\tau}_{s,t}$ and $\kappa^{n,\tau}_{s,t}$. The expression for the weights $\mu^{n_1,\dots,n_k}_{i_1,\dots,i_{k+1}}$ then follows by substituting the formula for $\dot{\kappa}$ into \eqref{eq:weights_cor}, completing the proof of Proposition~\ref{ex:gamma_kernel}.
\end{proof}

\section{Finite state space kernels}\label{sec:algo_multiplicative}
In this section, we specialize to kernels whose scalar components are of the general form
\begin{multline}\label{eq:prony_form_scalar}
k(t,s)
=
\sum_{r=1}^{Q}\sum_{\ell=1}^{m_r}
e^{-\lambda_r(t-s)}\frac{(t-s)^{\ell-1}}{(\ell-1)!}\,\alpha_{r,\ell}
\\
+\sum_{r=1}^{P}\sum_{\ell=1}^{n_r}
e^{-a_r(t-s)}\frac{(t-s)^{\ell-1}}{(\ell-1)!}
\Big(\beta_{r,\ell}\cos\big(\omega_r(t-s)\big)+\delta_{r,\ell}\sin\big(\omega_r(t-s)\big)\Big),
\end{multline}
for $(s,t)\in\Delta^2$, where $P,Q\in\NN_0$, the degrees satisfy $m_r,n_r\in\NN$, the real rates
$\lambda_1,\dots,\lambda_Q\in\RR$ are pairwise distinct, and the oscillatory parameters
$(a_1,\omega_1),\dots,(a_P,\omega_P)\in\RR\times(0,\infty)$ are pairwise distinct.
All coefficients $\alpha_{r,\ell},\beta_{r,\ell},\delta_{r,\ell}$ are real.

Such kernels are widely used in several contexts and, in particular, form a general approximation class for convolution (memory) kernels.
On the one hand, kernels of exponential--polynomial type are exactly the impulse responses of finite-dimensional continuous-time linear time-invariant state space models, making them canonical in systems and control---hence the terminology \emph{finite state space kernel}. On the other hand, sums of exponentials arise naturally as Laplace approximations of general memory kernels and underpin many fast numerical methods for Volterra and fractional-type models, including convolution quadrature and sum-of-exponentials compression. Moreover, such parametrizations are routinely fitted to data in system identification and signal processing via Prony-type and related spectral estimation methods. In stochastic settings, the same approximation principle yields Markovian state-augmentation (multi-factor) representations of non-Markovian Volterra dynamics, enabling efficient simulation and calibration.

Here we show that, for piecewise linear paths, an exact computation of the Volterra signature for finite state space kernels of the form \eqref{eq:prony_form_scalar} is possible via a dynamic algorithm that requires only a single iteration over the number of time steps.
This complements the finding in \cite[Section~2.4]{i_part} that the Volterra signature associated with such kernels solves a linear controlled differential equation in the tensor algebra.
To transition from this dynamical representation to an exact computation for a piecewise linear path, one may solve the system on each interval and then join the solutions across adjacent intervals.
Since such an increment-level decomposition has already been provided in \Cref{prop:chen_full_breakdown} in great generality, we use it as the starting point of our construction.

To this end, we first rewrite the kernel in matrix form in \Cref{sec:fssk_matrix_from}, which significantly simplifies the subsequent computations.
In \Cref{sec:fssk_weights} we prove that the weight coefficients $\mathcal{K}^{(w_1,\dots,w_k),\tau}$ (defined in \eqref{eq:def_dot_cK} and recalled below) factorize into a matrix product.
Moreover, in \Cref{sec:fssk_weights_numerics} we show that the individual matrix factors can be evaluated efficiently for arbitrary truncation levels using tools from numerical linear algebra.
The factorization of the weights, together with the semigroup property of the intermediate factors, then allows us to present in \Cref{sec:fssk_recursion} a linear recursion for Volterra signatures at the state-space level.
This recursion translates into an algorithm, which we describe in \Cref{sec:fssk_algorithmic}.
Similarly to \Cref{sec:quad_algo_aspects}, but now on the matrix level, we obtain efficient implementation schemes due to symmetry of the weight factors, as well as a Horner-type scheme for scalar kernels.

\subsection{Kernel in matrix form}\label{sec:fssk_matrix_from}
As a first step, we give a matrix representation for kernels of the form \eqref{eq:prony_form_scalar}, which will
drastically simplify the computations later on.

\begin{lem}\label{lem:prony_implies_real_jordan_scalar}
Let $k:\Delta^2\to\RR$ be a scalar kernel. Then $k$ is of the analytic form \eqref{eq:prony_form_scalar}
if and only if there exist $R\in\NN$, $\Lambda\in\RR^{R\times R}$ and $a,b\in\RR^{R}$ such that
\begin{equation}\label{g1}
k(t,s)= a^\top e^{-\Lambda (t-s)} b,\qquad (s,t)\in\Delta^2.
\end{equation}
Moreover, if $k$ is given by \eqref{eq:prony_form_scalar} with parameters
$\{m_r,\lambda_r,\alpha_{r,\ell}\}_{r=1}^Q$ and $\{n_r,a_r,\omega_r,\beta_{r,\ell},\delta_{r,\ell}\}_{r=1}^P$,
then one may choose
$
a={\bf 1}:=(1,\dots,1)^\top\in\RR^{R}
$
and $\Lambda$ so that $-\Lambda$ is in real Jordan normal form with diagonal blocks corresponding to
\eqref{eq:prony_form_scalar}, namely
$$
-\Lambda=\mathrm{diag}\Big(J_{m_1}(-\lambda_1),\dots,J_{m_Q}(-\lambda_Q),\,J_{n_1}(-a_1,\omega_1),\dots,J_{n_P}(-a_P,\omega_P)\Big),
$$
where
$$
J_m(\lambda):= \begin{pmatrix}
\lambda & 1 &  \\
&  \ddots & \ddots \\
&   & \lambda & 1\\
 &  &  &  \lambda
\end{pmatrix}\in\RR^{m\times m},
$$
and
$$
J_m(a, \omega) := \begin{pmatrix}
C(a,\omega) & I_2 & &  \\
&  \ddots & \ddots \\
&  & C(a,\omega) & I_2\\
  & & & C(a,\omega)
\end{pmatrix}\in\RR^{2m\times 2m},\qquad
C(a,\omega):=\begin{pmatrix} a & -\omega\\ \omega & a\end{pmatrix}.
$$
In particular $R=m_1+\cdots+m_Q+2(n_1+\cdots+n_P)$. In addition, the vector $b\in\RR^{R}$ in~\eqref{g1} is explicitly given by
$$
b=(d_1^\top,\dots,d_Q^\top,\,c_1^\top,\dots,c_P^\top)^\top,
\qquad d_r\in\RR^{m_r},\quad c_r\in\RR^{2n_r},
$$
where for $r=1,\dots,Q$ we set $\alpha_{r,m_r+1}:=0$, and define $d_r\in\RR^{m_r}$ by
$$
(d_r)_\ell := \alpha_{r,\ell}-\alpha_{r,\ell+1},\qquad \ell=1,\dots,m_r.
$$
In the same way for $r=1,\dots,P$ we set $\beta_{r,n_r+1}:=0$ and $\delta_{r,n_r+1}:=0$, and define $c_r\in\RR^{2n_r}$ by
$$
(c_r)_j := \frac12\Big(\beta_{r,\ell}-\beta_{r,\ell+1}+(-1)^j\big(\delta_{r,\ell}-\delta_{r,\ell+1}\big)\Big),
\qquad \ell:=\lceil j/2\rceil,\quad j=1,\dots,2n_r.
$$
\end{lem}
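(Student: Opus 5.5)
The plan is to prove the two implications separately. The direction from \eqref{g1} to \eqref{prony_form_scalar} is structural. The converse direction is proved by showing that the explicit data $(a,\Lambda,b)$ given in the statement reproduce \eqref{eq:prony_form_scalar}, via a direct computation of $e^{-\Lambda u}$ on real Jordan blocks. Throughout, write $u=t-s\ge0$; since both sides depend on $(t,s)$ only through $u$, everything reduces to an identity of functions of $u\in[0,T]$.

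\emph{From \eqref{g1} to \eqref{eq:prony_form_scalar}.} Given arbitrary $\Lambda\in\RR^{R\times R}$, take a real Jordan decomposition $-\Lambda=PJP^{-1}$ with $P$ real invertible and $J$ block diagonal with blocks $J_m(\mu)$ ($\mu\in\RR$) and $J_n(\mu,\omega)$ ($\omega>0$). Then $a^\top e^{-\Lambda u}b=(P^\top a)^\top e^{Ju}(P^{-1}b)$. Each entry of $e^{Ju}$ is of one of the forms $e^{\mu u}u^k/k!$ or $e^{\mu u}(u^k/k!)\cos(\omega u)$ or $e^{\mu u}(u^k/k!)\sin(\omega u)$, as computed below. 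Hence $k$ is a real linear combination of such functions. Setting $\lambda_r:=-\mu$, $a_r:=-\mu$, and merging terms coming from different Jordan blocks with the same eigenvalue (or the same pair $(a_r,\omega_r)$) gives the form \eqref{eq:prony_form_scalar} with pairwise distinct parameters. Here $m_r$ (resp.\ $n_r$) is the maximal block size, and the coefficients attached to missing powers are set to zero.

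\emph{From \eqref{eq:prony_form_scalar} to \eqref{g1}, with the explicit choice.} Since $-\Lambda$ is block diagonal and $a=\mathbf 1$, the quantity $\mathbf 1^\top e^{-\Lambda u}b$ splits as a sum over blocks of $\mathbf 1^\top e^{J u}d_r$ and $\mathbf 1^\top e^{J u}c_r$. The kernel \eqref{eq:prony_form_scalar} is likewise a sum of one term per $r$, so it suffices to match block by block.

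For a real block, $J_m(-\lambda)=-\lambda I+N$ with $N$ nilpotent and the two summands commuting. Hence $e^{J_m(-\lambda)u}=e^{-\lambda u}\sum_{k<m}u^kN^k/k!$, whose $(i,j)$ entry is $e^{-\lambda u}u^{j-i}/(j-i)!$ for $j\ge i$. Summing over rows gives
$$\mathbf 1^\top e^{J_m(-\lambda)u}d=e^{-\lambda u}\sum_{k=0}^{m-1}\frac{u^k}{k!}\sum_{j=k+1}^{m}d_j .$$
With $d_j=\alpha_{j}-\alpha_{j+1}$ and $\alpha_{m+1}=0$, the inner sum telescopes to $\alpha_{k+1}$. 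Setting $\ell=k+1$ recovers exactly the first sum in \eqref{eq:prony_form_scalar}.

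For an oscillatory block, write $C(-a,\omega)=-aI_2+\omega E$ with $E$ the rotation generator, and note that $C(-a,\omega)$ commutes with the nilpotent block-shift. Then $e^{J_n(-a,\omega)u}$ has $2\times2$ blocks $e^{-au}\,(u^{k}/k!)\,\mathrm{Rot}(\omega u)$ in block position $(i,i+k)$. Summing the two rows of a rotation gives $(1,1)\mathrm{Rot}(\omega u)=(\cos+\sin,\ \cos-\sin)$. The same telescoping over block indices then leaves, for each $\ell$, the combination
$$(c_{2\ell-1}+c_{2\ell})\cos(\omega u)+(c_{2\ell-1}-c_{2\ell})\sin(\omega u),$$
summed cumulatively. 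With the stated $c_r$, these combinations equal the differences $\beta_{r,\ell}-\beta_{r,\ell+1}$ and $\pm(\delta_{r,\ell}-\delta_{r,\ell+1})$, and telescoping produces $\beta_{r,\ell}\cos+\delta_{r,\ell}\sin$.

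The main obstacle is this last bookkeeping step. One has to fix the orientation convention of $C(a,\omega)$ (equivalently, the sign of $\sin$ in $\mathrm{Rot}$) and the parity convention $(-1)^j$ consistently, so that the sine coefficient comes out as $+\delta_{r,\ell}$ rather than $-\delta_{r,\ell}$. I would first carry out the computation for $n=1$ to pin down the sign. If the conventions clash, I would adjust by replacing $\omega$ with $-\omega$ in the rotation, which leaves the block structure and the rest of the argument unchanged. Finally, $R$ is read off as the sum of the block sizes.
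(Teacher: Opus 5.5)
Your route is the same as the paper's. For the direction from $a^\top e^{-\Lambda(t-s)}b$ to the Prony form you use a real Jordan decomposition; your merging of Jordan blocks that share an eigenvalue is slightly more careful than the paper, which tacitly assumes one block per eigenvalue. For the explicit converse you expand each block with its nilpotent part and telescope $d_r$ and $c_r$. The real blocks are handled correctly.

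The gap is exactly the sign you deferred, and it does not come out in your favour. With $C(a,\omega)$ as displayed,
\[
e^{uC(-a,\omega)}=e^{-au}\begin{pmatrix}\cos\omega u&-\sin\omega u\\ \sin\omega u&\cos\omega u\end{pmatrix}.
\]
So, as you correctly computed, the $2\times2$ block with index $\ell$ enters through $(c_{2\ell-1}+c_{2\ell})\cos(\omega u)+(c_{2\ell-1}-c_{2\ell})\sin(\omega u)$. But $(-1)^j=-1$ for $j=2\ell-1$, so the prescribed vector gives $c_{2\ell-1}-c_{2\ell}=-(\delta_{r,\ell}-\delta_{r,\ell+1})$. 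Telescoping then yields $\beta_{r,\ell}\cos(\omega_r u)-\delta_{r,\ell}\sin(\omega_r u)$.

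Concretely, take $Q=0$, $P=1$, $n_1=1$, $\beta_{1,1}=0$, $\delta_{1,1}=1$. The statement prescribes $b=(-\tfrac12,\tfrac12)^\top$, and then $\mathbf 1^\top e^{-\Lambda(t-s)}b=-e^{-a_1(t-s)}\sin(\omega_1(t-s))\neq k(t,s)$. So your sentence asserting that, with the stated $c_r$, telescoping produces $\beta_{r,\ell}\cos+\delta_{r,\ell}\sin$ is false. The explicit part of the lemma is false as written whenever some $\delta_{r,\ell}\neq0$.

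Your fallback of replacing $\omega$ by $-\omega$ in the rotation amounts to using the transposed block $\begin{pmatrix}a&\omega\\-\omega&a\end{pmatrix}$, for which the given $c_r$ is indeed correct. It does not prove the lemma as stated, because $-\Lambda$ is then no longer the displayed matrix; you would be proving a different statement. A correct write-up has to say explicitly that the lemma must be amended, in one of two ways:
\begin{itemize}
\item keep $C(a,\omega)$ and take $(c_r)_j=\tfrac12\bigl(\beta_{r,\ell}-\beta_{r,\ell+1}-(-1)^j(\delta_{r,\ell}-\delta_{r,\ell+1})\bigr)$, so that the tail sums are $\tfrac12(\beta_{r,\ell}+\delta_{r,\ell},\,\beta_{r,\ell}-\delta_{r,\ell})^\top$;
\item or keep $c_r$ and transpose $C$.
\end{itemize}
The ``if and only if'' part is unaffected.

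The paper's own proof contains the same slip. It computes the tail sum $\tfrac12(\beta_{r,\ell}-\delta_{r,\ell},\,\beta_{r,\ell}+\delta_{r,\ell})^\top$ and concludes ``readily''. Yet its own first-direction formula for $\delta_{r,\ell}$, specialised to $u_r=\mathbf 1$, reads $\delta_{r,\ell}=\sum_{j\ge\ell}(c_{r,j,1}-c_{r,j,2})$, and for the prescribed $c_r$ this equals $-\delta_{r,\ell}$.
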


\begin{proof}
We first show that if $k(t,s)=a^\top e^{-\Lambda(t-s)}b$ for some $\Lambda\in\RR^{R\times R}$ and $a,b\in\RR^R$, then $k$ is of the analytic form \eqref{eq:prony_form_scalar}.
By \cite[Theorem~3.4.1.5]{horn2013matrix} there exists an invertible matrix $S\in\RR^{R\times R}$ such that
$\Lambda^\circ:=S^{-1}(-\Lambda)S$ is in real Jordan normal form. Setting $\tilde b:=S^{-1}b$ and $\tilde a^\top:=a^\top S$ and using similarity invariance of the exponential we obtain
$$
k(t,s)=\tilde a^\top e^{(t-s)\Lambda^\circ}\tilde b.
$$
Writing $\Lambda^\circ$ blockwise as in the statement and decomposing
$$
\tilde b=(d_1^\top,\dots,d_Q^\top,c_1^\top,\dots,c_P^\top)^\top
\qquad\text{and}\qquad
\tilde a^\top=(v_1^\top,\dots,v_Q^\top,u_1^\top,\dots,u_P^\top)
$$
accordingly, we have
$$
k(t,s)
=\sum_{r=1}^Q v_r^\top e^{(t-s)J_{m_r}(-\lambda_r)}\,d_r
+\sum_{r=1}^P u_r^\top e^{(t-s)J_{n_r}(-a_r,\omega_r)}\,c_r.
$$

For $r\in\{1,\dots,Q\}$ and $\delta>0$ it follows from the standard exponentiation of matrices in Jordan normal form \cite[Method 16]{moler03} that
$$
v_r^\top e^{\delta J_{m_r}(-\lambda_r)} d_r
=e^{-\lambda_r\delta}\sum_{\ell=1}^{m_r}\frac{\delta^{\ell-1}}{(\ell-1)!}\big(v_r^\top N^{\ell-1}d_r\big),
$$
where $N\in\RR^{m_r\times m_r}$ is the nilpotent shift matrix (ones on the superdiagonal and zeros elsewhere), so that $N^{m_r}=0$.

Similarly, for $r\in\{1,\dots,P\}$ and $\delta>0$, exponentiation of the real block Jordan matrix
$J_{n_r}(-a_r,\omega_r)$ (see \cite{teschl2012ordinary}) yields
\begin{equation}\label{eq:block_jordan_exp}
u_r^\top e^{\delta J_{n_r}(-a_r,\omega_r)}c_r
=
\sum_{\ell=1}^{n_r}\frac{\delta^{\ell-1}}{(\ell-1)!}\,
u_r^\top \mathbf N^{\ell-1}\,
\mathrm{diag}\big(e^{\delta C(-a_r,\omega_r)},\dots,e^{\delta C(-a_r,\omega_r)}\big)\,c_r,
\end{equation}
where $\mathbf N\in\RR^{2n_r\times 2n_r}$ is the nilpotent block shift matrix (with $I_2$ on the first superdiagonal block and zeros elsewhere), so that $\mathbf N^{n_r}=0$.
Write the $2$-block decompositions
$$
u_r^\top=(u_{r,1}^\top,\dots,u_{r,n_r}^\top),\qquad
c_r=(c_{r,1}^\top,\dots,c_{r,n_r}^\top)^\top,
\qquad u_{r,j},c_{r,j}\in\RR^2.
$$
Then the block shift structure of $\mathbf N$ implies, for each $\ell=1,\dots,n_r$,
$$
u_r^\top \mathbf N^{\ell-1}\,
\mathrm{diag}\big(e^{\delta C(-a_r,\omega_r)},\dots,e^{\delta C(-a_r,\omega_r)}\big)\,c_r
=
\sum_{j=\ell}^{n_r}u_{r,j-\ell+1}^\top e^{\delta C(-a_r,\omega_r)}c_{r,j}.
$$
Moreover,
$$
e^{\delta C(-a_r,\omega_r)}
=
e^{-a_r\delta}
\begin{pmatrix}
\cos(\omega_r\delta) & -\sin(\omega_r\delta)\\
\sin(\omega_r\delta) & \cos(\omega_r\delta)
\end{pmatrix}.
$$
Writing $u_{r,k}=(u_{r,k,1},u_{r,k,2})^\top$ and $c_{r,j}=(c_{r,j,1},c_{r,j,2})^\top$, we obtain for each $\ell=1,\dots,n_r$,
$$
\sum_{j=\ell}^{n_r}u_{r,j-\ell+1}^\top e^{\delta C(-a_r,\omega_r)}c_{r,j}
=
e^{-a_r\delta}\Big(\beta_{r,\ell}\cos(\omega_r\delta)+\delta_{r,\ell}\sin(\omega_r\delta)\Big),
$$
where
$$
\beta_{r,\ell}:=\sum_{j=\ell}^{n_r}\big(u_{r,j-\ell+1,1}c_{r,j,1}+u_{r,j-\ell+1,2}c_{r,j,2}\big),
\quad
\delta_{r,\ell}:=\sum_{j=\ell}^{n_r}\big(u_{r,j-\ell+1,2}c_{r,j,1}-u_{r,j-\ell+1,1}c_{r,j,2}\big).
$$
Hence, plugging things together we obtain
\begin{equation}\label{eq:block_jordan_inner}
u_r^\top \mathbf N^{\ell-1}\,
\mathrm{diag}\big(e^{\delta C(-a_r,\omega_r)},\dots,e^{\delta C(-a_r,\omega_r)}\big)\,c_r
=
e^{-a_r\delta}\Big(\beta_{r,\ell}\cos(\omega_r\delta)+\delta_{r,\ell}\sin(\omega_r\delta)\Big).
\end{equation}
Substituting \eqref{eq:block_jordan_inner} back into the expansion \eqref{eq:block_jordan_exp} yields
$$
u_r^\top e^{\delta J_{n_r}(-a_r,\omega_r)}c_r
=
\sum_{\ell=1}^{n_r}
e^{-a_r\delta}\frac{\delta^{\ell-1}}{(\ell-1)!}
\Big(\beta_{r,\ell}\cos(\omega_r\delta)+\delta_{r,\ell}\sin(\omega_r\delta)\Big),
$$
which is the required oscillatory contribution in \eqref{eq:prony_form_scalar}. This concludes the proof of the first direction.

Conversely, assume that $k$ is given by \eqref{eq:prony_form_scalar} and let $\Lambda$, $b$, $c$ and $d$ be as in the second part of the statement.
We first note that, by definition of $(d_r)_j=\alpha_{r,j}-\alpha_{r,j+1}$ and $\alpha_{r,m_r+1}=0$, we obtain the telescoping identity
$$
{\bf 1}^\top N^{\ell-1}d_r
=\sum_{j=\ell}^{m_r}(d_r)_j
=\alpha_{r,\ell},
$$
for all $\ell=1,\dots,m_r$.
Similarly, writing $c_r=(c_{r,1}^\top,\dots,c_{r,n_r}^\top)^\top$ with $c_{r,j}\in\RR^2$ and ${\bf 1}_2^\top:=(1,1)$ we have
$$
{\bf 1}^\top \mathbf N^{\ell-1}\,
\mathrm{diag}\big(e^{(t-s)C(-a_r,\omega_r)},\dots,e^{(t-s)C(-a_r,\omega_r)}\big)\,c_r
=
{\bf 1}_2^\top e^{(t-s)C(-a_r,\omega_r)}\sum_{j=\ell}^{n_r}c_{r,j}.
$$
By the definition of $c_r$ (with $\beta_{r,n_r+1}=\delta_{r,n_r+1}=0$), the tail sums telescope to
$$
\sum_{j=\ell}^{n_r}c_{r,j}
=
\frac12
\begin{pmatrix}
\beta_{r,\ell}-\delta_{r,\ell}\\
\beta_{r,\ell}+\delta_{r,\ell}
\end{pmatrix}.
$$
Since $-\Lambda$ is already in real Jordan normal form and block diagonal, applying the block expansions established above with $a={\bf 1}$ and inserting the above expressions for the coefficients, we readily obtain that
$
k(t,s)={\bf 1}^\top e^{-\Lambda(t-s)}b.
$
\end{proof}
Motivated by \Cref{lem:prony_implies_real_jordan_scalar}, we now define the class of (matrix-valued) finite state space kernels in matrix form.

\begin{defn}\label{def:finite_state_space_kernels}
Let $q,R\in\NN$ and let $\Lambda\in\RR^{R\times R}$. For parameters $\{A_r,b_r:1\le r\le q\}$ with
$A_r\in\RR^{m\times d}$ and $b_r\in\RR^{R}$ we define the kernel $K_{A,b}^{\Lambda}\in\Lkernel$ by
\begin{equation}\label{eq:algo_exponential_kernel}
K_{A,b}^{\Lambda}(t,s)=\sum_{r=1}^q \big({\bf 1}^\top e^{-\Lambda(t-s)}b_r\big)\,A_r,\qquad (s,t)\in\Delta^2,
\end{equation}
where ${\bf 1}^\top = (1,\dots,1) \in \RR^{1\times R}$.
\end{defn}
\begin{rem}
    As follows from \Cref{lem:prony_implies_real_jordan_scalar}, $\Lambda$ can be chosen, without loss of generality, in real Jordan normal form with distinct eigenvalues across diagonal blocks.
\end{rem}

\subsection{Computation of weight factors}\label{sec:fssk_weights}

We now treat, for the case of finite state space kernels as defined in \Cref{def:finite_state_space_kernels}, the computation of the weight factors appearing in the discretization of the Volterra signature for piecewise linear paths in \Cref{prop:chen_full_breakdown}.
In particular, we will prove that these weights factorize into a matrix product and give precise instructions for their efficient numerical evaluation.

To this end, given kernel data $\{A_r,b_r:1\le r\le q\}$ with
$A_r\in\RR^{m\times d}$, $b_r\in \RR^R$, and $\Lambda \in \RR^{R\times R}$, fix $K_{A,b}^{\Lambda}$ according to \Cref{def:finite_state_space_kernels} and also define the corresponding scalar kernels as in \Cref{not:comp_sec} (ii) by
\begin{align}
	\label{eq:apx_fssk}
	k_r(s,t) = \mathbf{1}^{\top} e^{-\Lambda (t-s)} b_r, \qquad (s,t)\in \Delta^2, \quad r =1, \dots, q.
\end{align}
We further fix a partition $0 = t_0 < t_1 < \dots < t_J = T$ and recall that the weight factors of interest from \Cref{prop:chen_full_breakdown} are then given by
\begin{equation}
	\label{eq:apx_mu_tens}
	\mathcal{K}^{(w_1, \dots, w_k)}_{i_1,\dots, i_{k+1}} := \int_{t_{i_1-1}}^{t_{i_1}} \cdots \int_{t_{i_{k}-1}}^{t_{i_{k}}} \prod_{l=1}^k \dot{\cK}^{w_l,r_{l+1}}_{r_l, t_{i_l}} \dd r_l\;\;\in \RR,\quad r_{k+1} = t_{i_{k+1}},
\end{equation}
for $0< i_1 < \dots <i_{k+1}\le J$, $w_1, \dots, w_k \in \mathcal{W}_q$, where
\begin{align}
	\label{eq:dot_cK_local}
	\dot{\cK}_{s,t}^{p_1\cdots p_n,\tau} =&\;  \int_{\Delta^{n-1}_{s,t}}  k_{p_1}(r_{1},r_0)\prod_{l=1}^{n-1} k_{p_l}(r_{l+1},r_l)\dd r_l, \qquad r_0 = s, \;r_{n} = \tau,\\
	{\cK}_{s,t}^{p_1\cdots p_n,\tau} =&\; \int_s^t \dot{\cK}_{u,t}^{p_1\cdots p_n,\tau}  \dd{u}.\nonumber
\end{align}
for any $p_1\cdots p_n \in \mathcal{W}_q$ and $(s,t,\tau) \in \Delta^3$.
For notational convenience, set
\begin{equation}\label{eq:E_C_def}
E(\delta) = e^{-\Lambda \delta} \quad (\delta > 0), \qquad  C_{r} = b_r{\bf 1}^\top = (b_r, \dots, b_r) \in \RR^{R\times R} \quad (r = 1, \dots, q).
\end{equation}

We divide the computation into two parts. First, we prove alternative equivalent analytic characterizations of $\dot{\cK}$ and then prove the factorization property for $\mathcal{K}^{(w_1, \dots, w_k)}_{i_1,\dots, i_{k+1}}$.
We then discuss a concrete numerical scheme for the evaluation of these weight factors in a second subsection.

\subsubsection{Weight factorization}\label{sec:fssk_weight_factorization}

The key observation of this subsection is that the kernel increment $\dot{\mathcal{K}}^{p_1\cdots p_n,\tau}_{s,t}$ admits a factorization in which the dependence on the look-ahead variable $\tau-t$ and the dependence on the increment $t-s$ decouple. Using the semigroup property $E(t+\delta)=E(t)E(\delta)$, this is made precise in \Cref{lem:apx_exp_K}. Indeed, this lemma re-expresses $\dot{\mathcal{K}}$ in terms of a family of matrix-valued functions $\Phi^{w}(\delta)$ satisfying two equivalent convolution recursions. Resorting to this structure, \Cref{lem:exp_weight_factorization} then establishes that each weight factor $\mathcal{K}^{(w_1,\dots,w_k)}_{i_1,\dots,i_{k+1}}$ can be written as an ordered product of matrices $\Phi^{w}$ and $\Psi^{w}$ evaluated on the mesh sub-intervals. This will be the starting point for the numerical scheme developed in \Cref{sec:fssk_weights_numerics}. Throughout, we use the shorthand $E(\delta) = e^{-\Lambda\delta}$ introduced in \eqref{eq:E_C_def}.
\begin{lem}
	\label{lem:apx_exp_K} Let $k_1, \dots, k_q \in L^{\infty,1}(\Delta^2; \RR)$ be as in \eqref{eq:apx_fssk}.
	Then for all $(s,t,\tau)\in \Delta^3$, $p_1\cdots p_n\in\mathcal{W}_{q}$ and $\sigma\in\mathcal{S}_{n-1}$ (where $\sigma(p_1\cdots p_n):=p_{\sigma(1)}\cdots p_{\sigma(n)}$ denotes the permutation action on words, as in \Cref{lem:cK_symmetry}) we have
	\begin{align*}
		\dot{\mathcal{K}}^{p_1\cdots p_n,\tau}_{s, t} =  {\bf 1}^\top  \Phi^{\sigma(p_1 \cdots p_{n-1})}(t-s) E(\tau - t) b_{p_n},
	\end{align*}
	where $C_{r} = b_r{\bf 1}^\top \in \RR^{R\times R}$ is as in \eqref{eq:E_C_def}, and $\Phi^w(\delta) \in \RR^{R\times R}$ $(\delta > 0, w\in \mathcal{W}_{q})$ is given by  $\Phi^{\varnothing}(\delta) = E(\delta)$ and any of the following equivalent recursions
	\begin{enumerate}[label=(\alph*), itemsep=0.8em] %
		\item $\displaystyle \Phi^{p_1 \cdots p_n}(t) = \int_{0}^t \Phi^{p_1 \dots p_{n-1}}(u) C_{p_{n}} E(t - u) \dd{u} \quad (t\ge0),$
		\item $\displaystyle \Phi^{p_1 \cdots p_n}(t) = \int_{0}^t E(t-u) C_{p_1} \Phi^{p_2 \cdots p_n}(u)  \dd{u} \quad (t\ge0),$
	\end{enumerate}
	Furthermore, for all $(s,t,\tau)\in \Delta^3$, $p_1\cdots p_n\in\mathcal{W}_{q}$ and $\sigma\in\mathcal{S}_{n-1}$ it holds
	\begin{align*}
		\dot{\mathcal{K}}^{p_1\cdots p_n,\tau}_{s, t} =  {\bf 1}^\top \Psi^{\sigma(p_1\cdots p_{n-1})}(t-s) E(\tau - t) b_{p_n},
	\end{align*}
	where $$\Psi^{w}(t) = \int_0^t \Phi^{w}(u)\dd{u}.$$
\end{lem}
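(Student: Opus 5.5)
The plan is to substitute the matrix form of the scalar kernels directly into the simplex integral \eqref{eq:dot_cK_local} and to recognise the result as an iterated matrix convolution. By \eqref{eq:apx_fssk}, every factor is a scalar of the form $k_{p}(r',r)=\mathbf 1^\top E(r'-r)b_p$.

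\textbf{Step 1: matrix form of the integrand.} For a product of such scalars we rewrite each interior junction $b_{p_l}\mathbf 1^\top$ as $C_{p_l}$, using \eqref{eq:E_C_def}. With $r_0=s$ and $r_n=\tau$, the integrand in \eqref{eq:dot_cK_local} becomes
\[
\mathbf 1^\top E(r_1-s)\,C_{p_1}\,E(r_2-r_1)\,C_{p_2}\cdots C_{p_{n-1}}\,E(\tau-r_{n-1})\,b_{p_n}.
\]
In this product each kernel $k_{p_l}$ is evaluated across the step ending at $r_l$; this is the intended reading of the indices in \eqref{eq:dot_cK_local}. Since $r_{n-1}\le t\le\tau$, the semigroup property gives $E(\tau-r_{n-1})=E(t-r_{n-1})E(\tau-t)$. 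The factor $E(\tau-t)b_{p_n}$ therefore leaves the integral over $\Delta^{n-1}_{s,t}$. Shifting variables by $s$, what remains is
\[
\Phi^{p_1\cdots p_{n-1}}(\delta):=\int_{0<u_1<\dots<u_{n-1}<\delta}E(u_1)C_{p_1}E(u_2-u_1)\cdots C_{p_{n-1}}E(\delta-u_{n-1})\,\dd u,
\]
evaluated at $\delta=t-s$, with the convention $\Phi^{\varnothing}=E$. This yields the representation with $\sigma=\mathrm{id}$.

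\textbf{Step 2: the recursions (a) and (b).} Both follow from Fubini applied to this iterated integral. Freezing the last variable $u_{n-1}=u$ and integrating out the inner variables gives (a). Freezing $u_1$ instead and substituting $u_j\mapsto u_j-u_1$ (again using the semigroup property) gives (b). Hence (a) and (b) define the same family: each is equivalent to the closed simplex formula. Equivalently, $\Phi^{w}$ is the associative convolution $E*C_{p_1}E*\cdots*C_{p_{n-1}}E$, bracketed from the left in (a) and from the right in (b).

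\textbf{Step 3: permutation invariance.} The kernels $k_r$ are of convolution type, so \Cref{lem:cK_symmetry} gives $\dot\cK^{wp,\tau}_{s,t}=\dot\cK^{\sigma(w)p,\tau}_{s,t}$ for $\sigma\in\mathcal S_{n-1}$. Combined with Step 1 applied to the word $\sigma(w)p_n$, this shows that $\Phi^{w}$ may be replaced by $\Phi^{\sigma(w)}$ inside the sandwich $\mathbf 1^\top(\cdot)E(\tau-t)b_{p_n}$. I expect this to be the main subtle point. The matrices $\Phi^{w}$ and $\Phi^{\sigma(w)}$ need not coincide; only the sandwiched scalar is invariant, and only the scalar claim is asserted. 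One could alternatively argue via Laplace transforms, $\widehat{\Phi^{w}}(z)=(z+\Lambda)^{-1}\prod_l C_{p_l}(z+\Lambda)^{-1}$ together with $C_r=b_r\mathbf 1^\top$ (rank one), but invoking \Cref{lem:cK_symmetry} is shorter.

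\textbf{Step 4: the $\Psi$-identity.} We integrate in the lower variable: $\int_s^t\dot\cK^{w p_n,\tau}_{u,t}\,\dd u=\mathbf 1^\top\big(\int_0^{t-s}\Phi^{w}(v)\dd v\big)E(\tau-t)b_{p_n}$. Here the factor $E(\tau-t)b_{p_n}$ does not depend on $u$, and we substitute $v=t-u$. This gives $\mathbf 1^\top\Psi^{w}(t-s)E(\tau-t)b_{p_n}$, and Step 3 again permits $\sigma(w)$. This is the natural reading of the second identity in the statement, namely as the formula for $\cK_{s,t}^{p_1\cdots p_n,\tau}=\int_s^t\dot\cK^{p_1\cdots p_n,\tau}_{u,t}\,\dd u$.
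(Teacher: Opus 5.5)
Your proposal is correct and follows essentially the same route as the paper. You insert $k_p=\mathbf 1^\top E(\cdot)b_p$, rewrite the junctions as $C_{p_l}$, split off $E(\tau-t)b_{p_n}$ by the semigroup property, obtain the recursions by Fubini, take the permutation invariance from \Cref{lem:cK_symmetry}, and integrate in the lower variable for the $\Psi$-identity. That last identity is indeed the formula for $\cK$ rather than $\dot\cK$, as you read it and as the paper's proof confirms.

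The only difference is cosmetic. You derive (b) directly from the closed simplex integral by freezing $u_1$, whereas the paper proves (a)$\Leftrightarrow$(b) by induction on word length. The shift $u_j\mapsto u_j-u_1$ you use there needs no semigroup property, only the translation invariance of the differences.
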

\begin{proof}
	Plugging the expression $k_{p}(s,t) = \mathbf{1}^\top e^{-\Lambda(t-s)} b_{p}$ from \eqref{eq:apx_fssk} into the definition \eqref{eq:dot_cK_local} of $\dot{\mathcal{K}}^{p_1\cdots p_n,\tau}_{s,t}$, and applying the semigroup property $E(t+\delta) = E(t)E(\delta)$ (which follows from~\eqref{eq:E_C_def}), we directly observe that
	\begin{align*}
		\dot{\mathcal{K}}^{p_1\cdots p_n,\tau}_{s, t} =  {\bf 1}^\top\left(\int_{\Delta^{n-1}_{0, t-s}} E(r_{1} - r_0)\prod_{l=1}^{n-1} C_{p_{l}} E(r_{l+1} - r_l) \dd r_l\right)E(\tau - t)b_{p_n},
	\end{align*}
	with $r_0 = 0$ and $r_{n} = t-s$.
	One then easily verifies that the term in the parenthesis above satisfies the recursion (a).
	We next verify the equivalence between the recursions (a) and (b).
	Using a change of variable one easily verifies that this is the case for $n=1$.
	Now assume the claim holds for some $n\in\NN$ then using Fubini's theorem we obtain
	\begin{align*}
		\int_{0}^\delta  E(u)C_{p_1}&\Phi^{p_2\cdots p_n}(\delta - u)  \dd{u} \\
		&= \int_{0}^\delta E(\delta -u)C_{p_1} \left(\int_{0}^{u} \Phi^{p_2\cdots p_{n-1}}(r) C_{p_n} E(u-r) \dd{r}\right)\dd{u} \\
		&= \int_{0}^\delta  \left(\int_{r}^{\delta} E(\delta - u)C_{p_{1}}  \Phi^{p_2\cdots p_{n-1}}(u-r) \dd{u}\right) C_{p_n}E(r) \dd{r} \\
		&= \int_{0}^\delta  \left(\int_{0}^{\delta-r} E(u)C_{p_{1}}  \Phi^{p_2\cdots p_{n-1}}(\delta -r - u) \dd{u}\right) C_{p_n}E(r) \dd{r} \\
		&= \int_{0}^\delta  \Phi^{p_1\cdots p_{n-1}}(\delta -r) C_{p_n}E(r) \dd{r}.
	\end{align*}
	Well-posedness of these linear matrix differential equations is straightforward.
	The representation of  $\mathcal{K}$ follows directly by integrating the representation of $\dot{\mathcal{K}}$.
	The invariance under permutation of all but the last letter is a direct consequence of \Cref{lem:cK_symmetry}, since $(k_1,\dots,k_q)$ are of convolution form.
\end{proof}

As mentioned above, the main role of \Cref{lem:apx_exp_K} is to factor $\dot{\mathcal{K}}^{w,\tau}_{s, t}$ as a matrix depending only on $(t-s)$ times a matrix depending only on $(\tau-t)$. We exploit this to establish the following factorization of the full weight $\mathcal{K}^{(w_1,\dots,w_k)}_{i_1,\dots,i_{k+1}}$.
\begin{lem}
	\label{lem:exp_weight_factorization} Let $k_1, \dots, k_R \in L^{\infty,1}(\Delta^2; \RR)$ be as in \eqref{eq:apx_fssk}, and recall that $\mathcal{K}^{(w_1, \dots, w_k)}_{i_1,\dots, i_{k+1}}$ is given by~\eqref{eq:apx_mu_tens}.	Then for any $w_1, \dots, w_k \in \mathcal{W}_q$, $p_1, \dots, p_k \in \mathcal{A}_q$ and $\sigma_i\in\mathcal{S}_{|w_i|}$ $(i=1, \dots, k)$ it holds
	\begin{align*}
		\mathcal{K}^{(w_1p_1, \dots, w_kp_k)}_{i_1,\dots, i_{k+1}} = {\bf 1}^{\top}\Psi^{\sigma_1(w_1)}(\delta_{i_1}) \left(\prod_{l=2}^k E(\Delta_{i_{l-1},i_{l}})\Phi^{p_{l-1}\sigma_l(w_l)}(\delta_{i_l})\right)E(t_{i_{k+1}}-t_{i_k})b_{p_k},
	\end{align*}
	where $\Delta_{i,j} = t_{j-1} - t_i$ $(i< j)$, $\delta_i = t_{i} - t_{i-1}$ where $\Phi^{w}(\delta),\Psi^{w}(\delta) \subset \RR^{R\times R}$ $(\delta > 0, w\in \mathcal{W}_q)$ are defined as in \Cref{lem:apx_exp_K}.
\end{lem}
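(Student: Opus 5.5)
Apply \Cref{lem:apx_exp_K} to each factor $\dot{\cK}^{w_lp_l,r_{l+1}}_{r_l,t_{i_l}}$ in \eqref{eq:apx_mu_tens}. The $l$-th factor depends on the look-ahead variable $r_{l+1}$, which is the integration variable of the next factor. The ordered product of scalars therefore becomes an ordered product of matrices, and the $r_l$-integrals can be carried out one at a time.

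\emph{Step 1: rewrite each factor.} By \Cref{lem:apx_exp_K}, for $r_l\in[t_{i_l-1},t_{i_l}]$,
\[
\dot{\cK}^{w_lp_l,r_{l+1}}_{r_l,t_{i_l}}={\bf 1}^\top\Phi^{\sigma_l(w_l)}(t_{i_l}-r_l)\,E(r_{l+1}-t_{i_l})\,b_{p_l},
\]
with $r_{k+1}=t_{i_{k+1}}$. The permutation $\sigma_l$ is allowed because the lemma holds for any permutation of all but the last letter.

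\emph{Step 2: split the exponential and absorb the scalar link.} For $l<k$ we have $r_{l+1}\in[t_{i_{l+1}-1},t_{i_{l+1}}]$, so the semigroup property gives
\[
E(r_{l+1}-t_{i_l})=E(\Delta_{i_l,i_{l+1}})\,E(r_{l+1}-t_{i_{l+1}-1}).
\]
The scalar $\dot{\cK}^{(l)}$ ends with $b_{p_l}$, and the next factor begins with ${\bf 1}^\top$. In the product of scalars we group $b_{p_l}{\bf 1}^\top=C_{p_l}$. Writing $u=r_{l+1}-t_{i_{l+1}-1}$, the piece attached to $r_{l+1}$ is
\[
E(u)\,C_{p_l}\,\Phi^{\sigma_{l+1}(w_{l+1})}(\delta_{i_{l+1}}-u).
\]
Integrating over $u\in[0,\delta_{i_{l+1}}]$ and using recursion (b) of \Cref{lem:apx_exp_K} after the change of variable $u\mapsto\delta-u$ gives exactly $\Phi^{p_l\sigma_{l+1}(w_{l+1})}(\delta_{i_{l+1}})$. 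This uses the convolution symmetry $\int_0^\delta E(u)C\Phi(\delta-u)\,\dd u=\int_0^\delta E(\delta-u)C\Phi(u)\,\dd u$.

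The first factor is different: its variable $r_1$ appears only through $\Phi^{\sigma_1(w_1)}(t_{i_1}-r_1)$, because there is no preceding link. Its integral over $[t_{i_1-1},t_{i_1}]$ is $\Psi^{\sigma_1(w_1)}(\delta_{i_1})$. The last factor contributes $E(t_{i_{k+1}}-t_{i_k})b_{p_k}$ with no integration. Multiplying everything in order gives the claimed formula.

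\emph{Main obstacle.} The delicate point is the bookkeeping when turning a product of scalars ${\bf 1}^\top M_1 b\cdot{\bf 1}^\top M_2 b\cdots$ into a single matrix product, and attaching each integration variable to the correct block. Each $r_{l+1}$ appears both in the tail $E(r_{l+1}-t_{i_l})$ of factor $l$ and in the head $\Phi(t_{i_{l+1}}-r_{l+1})$ of factor $l+1$. Fubini (all integrands are bounded and integrable) lets us integrate each $r_l$ separately. The $\Delta$-offset must be taken as $t_{i_{l+1}-1}-t_{i_l}$, which matches the definition of $\Delta_{i,j}$. Finally, the convention in \Cref{lem:apx_exp_K} that $\Phi^{w}$ is evaluated at $t-s$ while $\Psi^w$ arises from integrating over $s$ must be checked to agree with recursion (b) rather than (a); the equivalence of (a) and (b) proved there settles this.
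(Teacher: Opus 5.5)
Your proposal is correct and follows essentially the same route as the paper: apply \Cref{lem:apx_exp_K} to each factor, split $E(r_{l+1}-t_{i_l})=E(\Delta_{i_l,i_{l+1}})E(r_{l+1}-t_{i_{l+1}-1})$ by the semigroup property, and regroup $b_{p_l}{\bf 1}^\top=C_{p_l}$ so that the integrals factorize. Each $r_{l+1}$-integral is then $\Phi^{p_l\sigma_{l+1}(w_{l+1})}(\delta_{i_{l+1}})$ by recursion (b), and the $r_1$-integral is $\Psi^{\sigma_1(w_1)}(\delta_{i_1})$; your bookkeeping (including leaving the final factor $E(t_{i_{k+1}}-t_{i_k})b_{p_k}$ unsplit) is, if anything, slightly cleaner than the paper's.
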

\begin{proof}
	Using \Cref{lem:apx_exp_K} we observe that
	\begin{align*}
		\prod_{l=1}^k\dot{\mathcal K}^{w_lp_l,r_{l+1}}_{r_l, t_{i_l}} &= \prod_{l = 1}^k {\bf 1}^{\top} \Phi^{w_l}(t_{i_l} - r_l) E(r_{l+1} - t_{i_{l}})b_{l} \\
		&=  \prod_{l = 1}^k {\bf 1}^{\top} \Phi^{w_l}(t_{i_l} - r_l) E(t_{i_{l+1}-1} - t_{i_{l}}) E(r_{l+1} -t_{i_{l+1}-1})b_l \\
		&=  {\bf 1}^{\top} \Phi^{w_1}(t_{i_1} - r_1) E(t_{i_{2}-1} - t_{i_{1}})\\
		&\qquad\cdot \prod_{l = 2}^k E(r_{l} - t_{i_{l}-1})C_{p_{l-1}} \Phi^{w_l}(t_{i_l} - r_l) E(t_{i_{l+1}-1} - t_{i_{l}}) \\
		&\qquad\cdot  E(r_{k+1}-t_{i_{k+1}-1})b_k.
	\end{align*}
	Hence the integrals in \eqref{eq:apx_mu_tens} factorize and using the recursion (b) in \Cref{lem:apx_exp_K} for $\Phi$ we directly obtain the claimed identity.
	The permutation invariance follows directly from the permutation invariance of $\dot\cK$ established in \Cref{lem:apx_exp_K}.
\end{proof}

When $q=1$, i.e., there is only one scalar kernel $k = k_1$ we can present the weight factors in a simplified form.

\begin{lem}
	\label{lem:apx_exp_kappa}
	Let $k(t,s) = {\bf 1}^{\top} e^{-\Lambda(t-s)} b$ for $(t,s)\in \Delta^2$, $\Lambda \in \RR^{R\times R}$, $b\in \RR^R$, and let $\dot{\kappa}^{n,\tau}_{s,t}$ be as in \eqref{eq:kernel_cor}. Then for all $(s,t,\tau)\in \Delta^3$ and $n\in \NN_{\ge 1}$ it holds
	\begin{align*}
		\dot{\kappa}^{n,\tau}_{s, t} =  {\bf 1}^\top  \phi_{n-1}(t-s) E(\tau - t) b, \qquad          {\kappa}^{n,\tau}_{s, t} =  {\bf 1}^\top \psi_n(t-s) E(\tau - t) b,
	\end{align*}
	where $\phi_n(\delta), \psi_n(\delta) \in \RR^{R\times R}$ $(\delta >0)$ are defined by $\phi_0(\delta) = E(\delta) \in \RR^{R\times R}$ and the system of matrix differential equation
	\begin{equation*}
		\left\{\begin{split}
			       \frac{\dd}{\dd t} \phi_n(t) =&\; \Lambda \phi_n (t) - C \phi_{n-1}(t) \\
			       \frac{\dd}{\dd t} \psi_n(t) =&\; \phi_{n-1} (t)
		\end{split}\right.,\quad  \phi_n(0) =  \psi_n(0) = \mathrm{0}_{R\times R}, \quad C:=b {\bf 1}^\top.
	\end{equation*}
	Furthermore, recalling from \eqref{eq:weights_cor} that
	\begin{equation*}
		\mu^{n_1, \dots, n_k}_{i_1, \dots, i_{k+1}} := \int_{t_{i_1-1}}^{t_{i_1}} \cdots \int_{t_{i_{k}-1}}^{t_{i_{k}}} \prod_{l=1}^k \dot{\kappa}^{n_l,r_{l+1}}_{r_l, t_{i_l}} \dd r_l, \quad r_{k+1} = t_{i_{k+1}},
	\end{equation*}
	it holds
	\begin{align*}
		\mu^{n_1, \dots, n_k}_{i_1, \dots, i_{k+1}} = {\bf 1}^{\top}\psi_{n_1}(\delta_{i_1}) \left(\prod_{l=2}^k E(\Delta_{i_{l-1},i_{l}})\phi_{n_l}(\delta_{i_l})\right)E(t_{i_{k+1}}-t_{i_k})b,
	\end{align*}
	where $\Delta_{i,j} = t_{j-1} - t_i$ $(i < j)$, $\delta_i = t_{i} - t_{i-1}$.
\end{lem}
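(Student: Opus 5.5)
The plan is to obtain the statement as the specialization $q=1$ of \Cref{lem:apx_exp_K} and \Cref{lem:exp_weight_factorization}. With only one letter, every word of length $n$ equals $1^n:=1\cdots 1$. So the matrix families $\Phi^{w}$ and $\Psi^{w}$ collapse to single sequences, and I would set
\[
\phi_n:=\Phi^{1^n},\qquad \psi_n:=\Psi^{1^{n-1}}\quad (n\ge 1),\qquad \phi_0=\Phi^{\varnothing}=E .
\]
Note that $C=b\mathbf 1^\top$ coincides with $C_1$ from \eqref{eq:E_C_def}, and $\dot\kappa^{n}=\dot\cK^{1^n}$, $\kappa^n=\cK^{1^n}$ by \Cref{cor:scalar_expansion} and \Cref{rem:kappa_intro}.

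\emph{Step 1 (representations of $\dot\kappa$ and $\kappa$).} Applying \Cref{lem:apx_exp_K} to the word $1^n$ with $\sigma=\mathrm{id}$ gives directly
\[
\dot\kappa^{n,\tau}_{s,t}=\mathbf 1^\top\Phi^{1^{n-1}}(t-s)E(\tau-t)b=\mathbf 1^\top\phi_{n-1}(t-s)E(\tau-t)b .
\]
Next, $\kappa^{n,\tau}_{s,t}=\int_s^t\dot\kappa^{n,\tau}_{u,t}\,\dd u$ by \eqref{eq:kappa_general}. Here the factor $E(\tau-t)b$ does not depend on $u$, so after the substitution $v=t-u$ we get
\[
\kappa^{n,\tau}_{s,t}=\mathbf 1^\top\Big(\int_0^{t-s}\phi_{n-1}(v)\,\dd v\Big)E(\tau-t)b=\mathbf 1^\top\psi_n(t-s)E(\tau-t)b .
\]
This is exactly the second form in \Cref{lem:apx_exp_K}.

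\emph{Step 2 (the ODE system).} The relation $\psi_n(t)=\int_0^t\phi_{n-1}$ is equivalent to $\psi_n'=\phi_{n-1}$ with $\psi_n(0)=0$. For $\phi_n$ I would use recursion (b) of \Cref{lem:apx_exp_K}:
\[
\phi_n(t)=\int_0^tE(t-u)\,C\,\phi_{n-1}(u)\,\dd u .
\]
This gives $\phi_n(0)=0$ for $n\ge1$. Differentiating under the integral (Leibniz rule, using $\tfrac{\dd}{\dd t}E(t)=-\Lambda E(t)$) gives
\[
\phi_n'(t)=C\phi_{n-1}(t)-\Lambda\phi_n(t).
\]
By uniqueness for linear matrix ODEs, these equations characterize $\phi_n$ and $\psi_n$. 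This step is the one I expect to require care. The derivative computed here has the opposite overall sign to the displayed system in the statement, so I would carefully reconcile the sign convention for $\Lambda$ and $C$ with \eqref{eq:E_C_def}. My suspicion is a sign typo in the display, and the proof should record the version consistent with $E(\delta)=e^{-\Lambda\delta}$.

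\emph{Step 3 (weights $\mu$).} Write each $n_l=|w_l|+1$ with $w_l=1^{n_l-1}$ and $p_l=1$, so that $\mu^{n_1,\dots,n_k}_{i_1,\dots,i_{k+1}}=\cK^{(w_1p_1,\dots,w_kp_k)}_{i_1,\dots,i_{k+1}}$. \Cref{lem:exp_weight_factorization} then yields the product formula with first factor $\Psi^{1^{n_1-1}}(\delta_{i_1})=\psi_{n_1}(\delta_{i_1})$. The subsequent factors are $\Phi^{p_{l-1}w_l}=\Phi^{1^{n_l}}=\phi_{n_l}$, and the final factor is $E(t_{i_{k+1}}-t_{i_k})b$. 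This is precisely the claimed expression; the permutations $\sigma_l$ are irrelevant since all words consist of the single letter $1$.
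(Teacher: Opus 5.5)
Your proposal is correct and is essentially the paper's own argument: specialize \Cref{lem:apx_exp_K} and \Cref{lem:exp_weight_factorization} to $q=1$, identify $\phi_n=\Phi^{1^n}$ and $\psi_n=\Psi^{1^{n-1}}$ (the indexing consistent with $\psi_n'=\phi_{n-1}$; the paper's one-line proof states this index slightly differently), and differentiate recursion (b). Your sign concern is justified. Differentiating (b) gives $\phi_n'=C\phi_{n-1}-\Lambda\phi_n$, which is the only version compatible with $\phi_0=E$ and $E(\delta)=e^{-\Lambda\delta}$ (for instance, $R=1$, $\Lambda=\lambda$, $b=\alpha$ gives $\phi_1(t)=\alpha t e^{-\lambda t}$), so the displayed system in the statement has an overall sign typo.
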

\begin{proof}
	This follows directly from the previous lemmas by setting $\phi_n(\delta) := \Phi^{w}(\delta)$ and $\psi_{n-1}(\delta) := \Psi^{w}(\delta)$ with  $w=1\cdots 1 \in \mathcal{W}^n_1$ and differentiating the integral recursion (b).
\end{proof}

Finally we draw a connection between $\Phi$ and higher order Fr\'echet derivatives of the matrix exponential map, which opens up various possibilities for the numerical evaluation of $\Phi$ and $\Psi$ discussed in the next subsection.

\begin{lem}
	\label{lem:frechet_relation}
	Recall from \Cref{lem:apx_exp_K} that $\Phi^{w}(\delta) \in \RR^{R \times R}$ $(\delta > 0,\, w \in \mathcal{W}_q)$ is the matrix-valued function defined by the convolution recursions (a)--(b) therein. For any word $w = p_1 \cdots p_n \in \mathcal{W}^{n}_{R}$ we have  $$\sum_{\sigma \in \mathcal{S}_n}\Phi^{\sigma(w)}(t) = \frac{\partial^n}{\partial h_1 \cdots \partial h_n}
	\exp\bigg(t\bigg(-\Lambda + \sum_{i=1}^n h_i C_{p_i}\bigg)\bigg)\bigg|_{h_1,\dots,h_n=0}, \quad C_{p} = b_{p}{\bf 1}^\top,
	$$
	where the summation is over all permutations $\sigma\in\mathcal{S}_n$ of $\{1, \cdots, n\}$, and as in \Cref{lem:cK_symmetry} we set $\sigma(p_1 \cdots p_n) := p_{\sigma(1)} \cdots p_{\sigma(n)}$ for any $p_1\cdots p_n \in \mathcal{W}^n_q$.
	In particular, when $q=1$ with $b = b_1$ we have
	$$\phi_{n}(t) = \frac{1}{n!} \left.\frac{\partial^n}{(\partial h)^{n}}
    \exp\Big(t\Big(-\Lambda + h C\Big)\Big)\right|_{h=0}, \quad C = b{\bf 1}^\top.$$
\end{lem}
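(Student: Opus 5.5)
Our approach is to expand the matrix exponential of the perturbed generator via the Dyson (Duhamel) series and then read off the mixed derivative. Set $M(h) := -\Lambda + \sum_{i=1}^n h_i C_{p_i}$ and write $e^{tM(h)}$ as the Dyson series around $e^{-\Lambda t} = E(t)$. Iterating the variation-of-constants formula $e^{t(-\Lambda+B)} = E(t) + \int_0^t e^{u(-\Lambda+B)} B E(t-u)\dd u$ with $B = \sum_i h_i C_{p_i}$ gives an absolutely convergent expansion
\begin{equation*}
e^{tM(h)} = \sum_{k\ge 0} \int_{\Delta^k_{0,t}} E(u_1) B E(u_2-u_1) B \cdots B E(t-u_k)\,\dd u_1\cdots \dd u_k .
\end{equation*}
Convergence is uniform for $h$ in a neighbourhood of $0$, with the $k$-th term bounded by $(t|B|)^k e^{t|\Lambda|}/k!$. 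The $k$-th term is a homogeneous polynomial of degree $k$ in $h$. Hence the mixed derivative $\partial^n/\partial h_1\cdots\partial h_n$ at $h=0$ only sees the term $k=n$, and we may differentiate term by term.

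Next we expand the product $B\cdots B$ in the $k=n$ term multilinearly: $B^{\otimes n}$ places, in positions $1,\dots,n$, factors $h_{j_1}C_{p_{j_1}},\dots,h_{j_n}C_{p_{j_n}}$. The coefficient of the monomial $h_1\cdots h_n$ collects exactly the index tuples $(j_1,\dots,j_n)$ that form a permutation $\sigma$ of $\{1,\dots,n\}$. Taking the mixed derivative of a multilinear monomial in distinct variables yields coefficient $1$. We therefore get
\begin{equation*}
\sum_{\sigma\in\mathcal S_n}\int_{\Delta^n_{0,t}} E(u_1) C_{p_{\sigma(1)}} E(u_2-u_1)\cdots C_{p_{\sigma(n)}} E(t-u_n)\,\dd u .
\end{equation*}

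It remains to identify each summand with $\Phi^{\sigma(w)}(t)$. By induction on $n$ using recursion (a) of \Cref{lem:apx_exp_K}, we show that
\begin{equation*}
\Phi^{p_1\cdots p_n}(t) = \int_{\Delta^n_{0,t}} E(u_1)C_{p_1}E(u_2-u_1)\cdots C_{p_n}E(t-u_n)\,\dd u .
\end{equation*}
The base case is $\Phi^{\varnothing} = E$. For the inductive step, we insert the formula for $\Phi^{p_1\cdots p_{n-1}}(u)$ into $\int_0^t \Phi^{p_1\cdots p_{n-1}}(u) C_{p_n} E(t-u)\,\dd u$ and relabel $u = u_n$. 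This is essentially the iterated-integral representation already noted in the proof of \Cref{lem:apx_exp_K}. Combining the two displays gives the first claim.

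For $q=1$, all $C_{p_i}=C$ coincide. The right side is then $\partial^n_{h_1\cdots h_n} \exp(t(-\Lambda + (h_1+\cdots+h_n)C))$ at $0$, which equals $\partial_h^n \exp(t(-\Lambda+hC))|_{h=0}$ by the chain rule. The left side is $n!\,\Phi^{1\cdots 1}(t) = n!\,\phi_n(t)$, by the identification in \Cref{lem:apx_exp_kappa}. Dividing by $n!$ gives the second claim.

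The main obstacle is justifying term-by-term differentiation of the Dyson series. We handle it by noting that $h\mapsto e^{tM(h)}$ is entire and that the series converges locally uniformly in $h$ as a power series. Alternatively, we can simply truncate: the terms with $k>n$ are $O(|h|^{n+1})$, and the terms with $k<n$ have degree $<n$, so neither contributes to the mixed derivative.
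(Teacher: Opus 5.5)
Your proof is correct and is essentially the paper's argument. The paper iterates the integral formula $\partial_{h_i}e^{tM(h)}=\int_0^t e^{rM(h)}C_{p_i}e^{(t-r)M(h)}\,\dd r$ to write the mixed derivative at $h=0$ as the permutation sum of simplex integrals $\int_{\Delta^n_{0,t}}E(u_1)C_{p_{\sigma(1)}}E(u_2-u_1)\cdots C_{p_{\sigma(n)}}E(t-u_n)\,\dd u$, and then matches each summand with $\Phi^{\sigma(w)}(t)$ through the iterated-integral form from \Cref{lem:apx_exp_K}, exactly as you do. The only difference is that you extract the permutation sum as the multilinear Taylor coefficient of the Dyson series rather than by repeated differentiation under the integral sign; this makes both the origin of the sum over orderings and the justification (which the paper leaves implicit in ``iterating this expression'') somewhat more transparent.
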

\begin{proof}
	Let $p_1\cdots p_n\in\mathcal{W}^n_q$ and set
	 $A(h) = A(h_1, \dots, h_n) := -\Lambda + \sum_{i=1}^n h_i C_{p_i}$. Then it follows from the classical integral representation of first derivative of the exponential map (see e.g. \cite[identity (2)]{najfeld1995derivatives}) that
	\begin{align*}
		\frac{\partial}{\partial h_i}e^{t A(h)} = \int_0^t e^{rA(h)}C_{p_i}e^{(t-r)A(h)}\dd{r}, \qquad h \in \RR^n, \quad t\ge0.
	\end{align*}
	Iterating this expression we arrive at
	\begin{align*}
		\frac{\partial^n}{\partial h_1 \dots \partial h_n}e^{t A(h)} = \sum_{\sigma \in \mathcal{S}_n}\int_{\Delta^n_{0,t}} \prod_{i=1}^n \left(e^{(r_i-r_{i-1})A(h)}C_{p_{\sigma(i)}}\right)e^{(t-r_{n})A(h)}\dd{r_1}\dots \dd{r_n}, \quad r_0 = 0.
	\end{align*}
	Setting $h=(0, \cdots, 0)$ and comparing with \Cref{lem:apx_exp_K} we observe that the integral term on the right-hand side is precisely given by $\Phi^{\sigma(p_1\cdots p_n)}(t)$, which concludes the proof of the first identity.
	The second identity for the $q=1$ case the follows directly by observing that in this case $\Phi^{\sigma(w)} = \Phi^{w} = \phi_{|w|}$ for all words $w\in \mathcal{W}_q$ and permutations $\sigma \in \mathcal{S}_{|w|}$.
\end{proof}

\subsubsection{Numerical evaluation}\label{sec:fssk_weights_numerics}

This subsection addresses the numerical evaluation of $\Phi^{w}(\delta)$ and $\Psi^{w}(\delta)$.
First, \Cref{prop:fssk_symmetry} shows, using the permutation invariance of the weight factors established in \Cref{lem:exp_weight_factorization}, that it suffices to evaluate one representative $w(\ell) = 1^{\ell_1} \cdots q^{\ell_q}$ per multi-index $\ell\in\NN^q$, rather than all words $w\in \mathcal{W}_q$ with $|w|\le N$.
This reduces the number of required evaluations significantly compared to the full set of Fréchet derivatives appearing in \Cref{lem:frechet_relation}.

The connection to Fréchet derivatives of the matrix exponential established in \Cref{lem:frechet_relation} makes it possible to draw on a range of methods from numerical linear algebra, including block embedding techniques \cite{higham2014higher}, complex step methods \cite{almohy2021complex}, and recursions in terms of divided differences \cite{rubensson2024unifying}.
Here we specialize to the algorithm of \cite{schweitzer2023integral}, based on approximating contour integral representations by quadrature.
This is particularly suitable in our setting since it exploits the available structure ($\Lambda$ in real Jordan normal form, rank-one directions $C_p = b_p\mathbf{1}^\top$) and admits a fully parallel evaluation across words.
The starting point is \Cref{lem:laplace_contour_representation}, which provides contour integral representations for $\Phi^{w}$ and $\Psi^{w}$; our proof, unlike \cite[Section~2.1]{schweitzer2023integral}, does not average over permutations and instead uses the complex inversion formula for the Laplace transform.
To this end, we use the following notation from complex analysis.

\begin{nota}
    For a matrix $A\in \mathbb{C}^{n\times n}$ we denote by $\mathrm{spec}(A) \subset \mathbb{C}$ the set of eigenvalues.
    For a contour $\mathfrak{C} \subset \mathbb{C}$ we write $\mathrm{spec}(A) \subset \mathrm{int}(\mathfrak{C})$ if $\mathfrak{C}$ encloses $\mathrm{spec}(A)$.
    Furthermore, for $\alpha > \max\{\Re(\lambda)\;:\;\lambda \in \mathrm{spec}(A)\}$ we call $\mathfrak{C} := \{ z\in \mathbb{C}\;:\; \Re(z) = \alpha\}$ a Bromwich line to the right of $\mathrm{spec}(A)$.
\end{nota}
\begin{rem}
    We assure the reader that these concepts from complex analysis are exclusively needed to state and prove the integral
    representations for $\Phi$ and $\Psi$ below, which are then used for a quadrature on the complex plane.
    For details on contour integrals in the context of Laplace transforms we refer to \cite[Section~3]{schiff1999laplace}.
\end{rem}

\begin{lem}\label{lem:laplace_contour_representation}
Let $w=p_1\cdots p_n\in\cW_q^{n}$, $n\in \NN_{\ge 1}$ and $\delta>0$.
For $\zeta\in\CC\setminus\mathrm{spec}(-\delta\Lambda)$ set
\[
R_\delta(\zeta):=(\zeta I+\delta\Lambda)^{-1},
\qquad
M^{w}_\delta(\zeta):=R_\delta(\zeta)\,C_{p_1}\,R_\delta(\zeta)\cdots C_{p_n}\,R_\delta(\zeta),
\]
where $C_{p} = b_{p}{\bf 1}^\top \in \RR^{R\times R}$ is as in \eqref{eq:E_C_def}, and $I\in \mathbb{R}^{R\times R}$ denotes the identity matrix.
Then, for any Bromwich line $\mathfrak{C}$ to the right of $\mathrm{spec}(-\delta\Lambda)$, or any simple, closed and positively oriented contour $\mathfrak{C}$ with
$\mathrm{spec}(-\delta\Lambda)\subset \mathrm{int}(\mathfrak{C})$, the matrices $\Psi^{w}(\delta),\Phi^{w}(\delta)\in\RR^{R\times R}$ of~\Cref{lem:apx_exp_K} respectively satisfy
\begin{eqnarray}\label{eq:laplace_contour_Phi_ordered}
\Phi^{w}(\delta)&=&\frac{\delta^{n}}{2\pi i}\int_{\mathfrak{C}} e^{\zeta}\,M^{w}_\delta(\zeta)\dd\zeta \\
\label{eq:laplace_contour_Psi_ordered}
\Psi^{w}(\delta)&=&\frac{\delta^{n+1}}{2\pi i}\int_{\mathfrak{C}} \varphi_1(\zeta)\,M^{w}_\delta(\zeta)\dd\zeta,
\end{eqnarray}
where $\varphi_1(\zeta) = \zeta^{-1}(e^{\zeta}-1)$ is defined on $\mathbb{C}$ via analytic continuation.
\end{lem}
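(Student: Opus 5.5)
The plan is to identify the Laplace transform of $\Phi^w$ and then invert it. By \Cref{lem:apx_exp_K}, recursion (a), $\Phi^{p_1\cdots p_n}$ is an iterated convolution on $[0,\infty)$:
\[
\Phi^{w}=E * C_{p_1}E * C_{p_2}E*\cdots * C_{p_n}E .
\]
This follows by induction from (a): $\Phi^{p_1\cdots p_n}=\Phi^{p_1\cdots p_{n-1}}*(C_{p_n}E)$, with matrix products kept in order. The Laplace transform of $E(t)=e^{-\Lambda t}$ is $\widehat E(z)=(zI+\Lambda)^{-1}$ for $\Re z>\max\Re\,\mathrm{spec}(-\Lambda)$. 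The convolution theorem for matrix-valued functions holds as long as the order of factors is preserved. It therefore gives
\[
\widehat{\Phi^w}(z)=(zI+\Lambda)^{-1}C_{p_1}(zI+\Lambda)^{-1}\cdots C_{p_n}(zI+\Lambda)^{-1}
\]
on that half plane. All entries are exponential-polynomials in $t$, so absolute convergence is immediate.

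\textbf{Inversion for $\Phi^w$.} The right-hand side is a matrix of rational functions in $z$ that decays like $|z|^{-(n+1)}$. The complex inversion formula (e.g.\ \cite[Section~3]{schiff1999laplace}) then gives, for any vertical line $\Re z=\alpha$ to the right of $\mathrm{spec}(-\Lambda)$,
\[
\Phi^w(\delta)=\frac1{2\pi i}\int_{\Re z=\alpha}e^{z\delta}\widehat{\Phi^w}(z)\dd z .
\]
Since $\Phi^w$ is continuous on $(0,\infty)$ and the integrand is integrable (decay of order $n+1\ge2$), no principal value is needed.

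Next substitute $\zeta=\delta z$. Then $(zI+\Lambda)^{-1}=\delta(\zeta I+\delta\Lambda)^{-1}=\delta R_\delta(\zeta)$. There are $n+1$ resolvent factors, and $\dd z=\delta^{-1}\dd\zeta$, which produces the prefactor $\delta^{n}$ and the integrand $e^\zeta M^w_\delta(\zeta)$. The image line lies to the right of $\mathrm{spec}(-\delta\Lambda)$, which gives \eqref{eq:laplace_contour_Phi_ordered} for Bromwich lines.

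\textbf{Closed contours.} To pass to a closed contour $\mathfrak C$ enclosing $\mathrm{spec}(-\delta\Lambda)$, close the Bromwich line by a large left semicircle. The integrand $e^\zeta M^w_\delta(\zeta)$ is analytic off the spectrum, and $|e^\zeta|\le e^{\alpha}$ on the left arc while $|M^w_\delta|=O(|\zeta|^{-n-1})$. Hence the arc contribution vanishes (a Jordan-lemma estimate, easy here since $n+1\ge 2$). By Cauchy's theorem the line integral equals the integral over any positively oriented simple closed contour around the spectrum. This also shows independence of the particular line or contour chosen.

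\textbf{The formula for $\Psi^w$.} Since $\Psi^w(\delta)=\int_0^\delta\Phi^w(u)\dd u$, I would insert the contour representation in the unscaled variable $z$, where the contour does not depend on $u$. Fubini then yields
\[
\Psi^w(\delta)=\frac1{2\pi i}\int \frac{e^{z\delta}-1}{z}\,\widehat{\Phi^w}(z)\dd z .
\]
Note that $(e^{z\delta}-1)/z=\delta\varphi_1(\delta z)$ is entire, so the singularity at $z=0$ is removable and the contour may be deformed freely around $\mathrm{spec}$ alone. Rescaling as before gives the extra factor $\delta$ and hence \eqref{eq:laplace_contour_Psi_ordered}. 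For a Bromwich line one can argue directly via the transform $\widehat{\Psi^w}(z)=z^{-1}\widehat{\Phi^w}(z)$. The term $-1/z$ contributes zero on the line, which can be seen by closing to the right, since $M^w_\delta$ is analytic there and decays.

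\textbf{Main obstacle.} The main obstacle is bookkeeping rather than depth: justifying the arc estimates and the use of Fubini, keeping the noncommutative order of the factors $C_{p_i}$ correct in the convolution theorem, and handling the removable singularity of $\varphi_1$ when switching between line and closed contour.
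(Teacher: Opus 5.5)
Your proposal is correct and follows essentially the same route as the paper: the Laplace transform of $\Phi^w$ is identified via recursion (a) and the convolution theorem, followed by Bromwich inversion with the rescaling $\zeta=\delta z$, passage to closed contours via Cauchy's theorem, and Fubini for $\Psi^w$ using $\int_0^\delta e^{zu}\,\dd u=(e^{z\delta}-1)/z$. The only difference is that your explicit arc estimate spells out the contour deformation that the paper simply attributes to the residue theorem.
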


\begin{proof}
Following \cite[Definition~1.10]{schiff1999laplace} we call a function $f:[0,\infty)\to\CC$ of exponential order $\alpha$
if there are $c,t_0\ge0$ such that $|f(t)|\le ce^{\alpha t}$ for all $t\ge t_0$.
We can then see that the components of the matrix-valued function $t\mapsto E(t):=e^{-t\Lambda}$ ($t\ge 0$) are of
exponential order $\alpha_0+\varepsilon$ for any $\varepsilon>0$, where
$\alpha_0 := -\min\{\Re(\lambda):\lambda\in\mathrm{spec}(\Lambda)\}$.
Here $\varepsilon$ accounts for the polynomial prefactor stemming from Jordan blocks; as noted after \Cref{def:finite_state_space_kernels}, $\Lambda$ can be assumed without loss of generality to be in real Jordan normal form, in which case this is immediate, and otherwise follows by passing to Jordan form.

It thus follows by \cite[Theorem~3.1]{schiff1999laplace} that the componentwise Laplace transform
\[
\widehat E(z):=\int_0^\infty e^{-zt}E(t)\,\dd t
\]
is well defined and analytic for all $z\in \CC$ with $\Re(z)>\alpha_0$.
Furthermore, for such $z$ the map $t\mapsto e^{-zt}E(t)$ is differentiable and satisfies
\[
\frac{d}{dt}\big(e^{-zt}E(t)\big)=-e^{-zt}(zI+\Lambda)E(t),
\]
Hence, integrating over $[0,\infty)$ yields
\[
(zI+\Lambda)\widehat E(z)=-\big[e^{-zt}E(t)\big]_{t=0}^{\infty}=I,
\]
and we readily see
\begin{equation}\label{eq:laplace_E_used}
\widehat E(z)=(zI+\Lambda)^{-1},\qquad \Re(z)>\alpha_0.
\end{equation}
Next, using recursion (a) in \Cref{lem:apx_exp_K} we verify inductively that for any word $w\in\mathcal{W}_q$ the components
of $t\mapsto \Phi^w(t)$ are again of exponential order $\alpha_0+\varepsilon$, and thus
\[
\widehat\Phi^w(z):=\int_0^\infty e^{-zt}\Phi^w(t)\,\dd t
\]
exists and is analytic in $\Re(z)>\alpha_0$.
The convolution theorem for Laplace transforms \cite[Theorem~2.39]{schiff1999laplace} applied componentwise then yields,
by the same recursion, that
\[
\widehat\Phi^{p_1\cdots p_n}(z)=\widehat\Phi^{p_1\cdots p_{n-1}}(z)\,C_{p_n}\,\widehat E(z).
\]
Since $\Phi^{\varnothing}(t)=E(t)$, iterating and using \eqref{eq:laplace_E_used} gives
\begin{equation}\label{eq:laplace_Phi_product_used}
\widehat{\Phi}^{p_1\cdots p_n}(z)
=
(zI+\Lambda)^{-1}C_{p_1}(zI+\Lambda)^{-1}\cdots C_{p_n}(zI+\Lambda)^{-1}.
\end{equation}
In particular, we also observe that the estimate $\|\widehat\Phi^{w}(z)\|\le c|z|^{-(n+1)}$ holds for all $z\in\CC$ with $\Re(z)>\alpha_0+1$
and a suitable constant $c>0$.
Now let $\delta>0$ and let $\mathfrak{C}$ be as in the statement of our lemma.
Due to our assumption $\mathrm{spec}(-\delta\Lambda)\subset \mathrm{int}(\mathfrak{C})$ and the fact that
 $\mathrm{spec}(-\Lambda) = \delta^{-1}\,\mathrm{spec}(-\delta\Lambda)$, the scaled contour $\mathfrak{C}^{\prime}:= \delta^{-1}\mathfrak{C}$ is again a Bromwich line to the right of (or a simple closed positively oriented contour enclosing) the set $\mathrm{spec}(-\Lambda)$.
Applying componentwise the Bromwich inversion formula for the Laplace transform \cite[Theorem~4.3]{schiff1999laplace} on the Bromwich line $\mathfrak{C}'$, and deforming to a closed contour using the Cauchy residue theorem \cite[p.~143]{schiff1999laplace}, we obtain
\begin{equation}\label{eq:rep_phi_proof}
\Phi^{w}(\delta)=\frac{1}{2\pi i}\int_{\mathfrak{C}'} e^{z\delta}\,\widehat\Phi^{w}(z)\,\dd z, \qquad \delta > 0.
\end{equation}
The identity \eqref{eq:laplace_contour_Phi_ordered} then follows by the change of variables $\zeta=\delta z$ using $(zI+\Lambda)^{-1}=\delta R_\delta(\zeta)$.

For $\Psi^{w}(\delta)=\int_0^\delta \Phi^{w}(u)\,\dd u$, we insert the representation of $\Phi^w(u)$ and use Fubini,
justified by the exponential order of $\Phi$ discussed above, to obtain
\[
\Psi^{w}(\delta)=\frac{1}{2\pi i}\int_{\mathfrak{C}'}
\left(\int_0^\delta e^{zu}\,\dd u\right)\widehat\Phi^w(z)\,\dd z
=\frac{1}{2\pi i}\int_{\mathfrak{C}'}\frac{e^{z\delta}-1}{z}\,\widehat\Phi^w(z)\,\dd z.
\]
Once again the change of variables $\zeta=\delta z$ leads to \eqref{eq:laplace_contour_Psi_ordered}, and the proof is complete.
\end{proof}

We now restrict to the case where $\mathrm{spec}(\Lambda)\subset[0,\infty)$,
which is the regime relevant for our applications.
Approximating the integrals in \eqref{eq:laplace_contour_Phi_ordered} and \eqref{eq:laplace_contour_Psi_ordered}
by the trapezoidal rule on a suitable parametrization of $\mathfrak{C}$ yields nodes
$\zeta_1,\dots,\zeta_m\in\CC$ and weights $\omega_1,\dots,\omega_m\in\CC$ and $\widetilde{\omega}_1,\dots,\widetilde{\omega}_m\in\CC$ such that
\begin{equation}\label{eq:apx_quad_Phi_Psi}
\Phi^{w}(\delta)\;\approx\;\delta^{n}\sum_{j=1}^{m}\omega_j\,M^{w}_\delta(\zeta_j),
\qquad
\Psi^{w}(\delta)\;\approx\;\delta^{n+1}\sum_{j=1}^{m}\widetilde\omega_j\,M^{w}_\delta(\zeta_j).
\end{equation}

For the exponential function and spectra close to the negative real axis, optimized parabolic or hyperbolic contours were studied in \cite{weideman2006optimizing,weideman2007parabolic,weideman2006talbot}.
For instance, an optimized parabolic contour gives explicit nodes and weights \cite[Equation~(3.1)]{weideman2006talbot}
\[
\zeta_j = m\cdot\bigl(0.1309-0.1194\,\theta_j^{2}+0.25\,i\,\theta_j\bigr),
\qquad
\omega_j = \exp(\zeta_j)\cdot\bigl(0.2388\,i\,\theta_j+0.25\bigr),
\]
where $\theta_j$ are equispaced points in $[-\pi,\pi]$.
For the evaluation of the matrix exponential this quadrature yields an approximation error of the order $\mathcal{O}(2.85^{-m})$.
Moreover, \cite[Proposition~12]{schweitzer2023integral} shows that the corresponding error asymptotics translate to the approximation
of higher-order Fr\'echet derivatives, which transfers to the approximation of $\Phi^{w}(\delta)$ and $\Psi^{w}(\delta)$ in \eqref{eq:apx_quad_Phi_Psi}.
For the approximation of $\Psi$ we use the same nodes. In the quadrature weights, however, we replace $\varphi_1(\zeta)=(e^\zeta-1)/\zeta$ by $e^\zeta/\zeta$, obtained by removing the vanishing $1/\zeta$ contribution; see, e.g., \cite{schmelzer2007evaluating}. Thus we set
\[
\widetilde\omega_j
=
\frac{\exp(\zeta_j)}{\zeta_j}\,
\bigl(0.2388\,i\,\theta_j+0.25\bigr).
\]

Finally, we discuss the simplifications in our setting for the evaluation of $M^w_\delta(\zeta)$.
First note that, since $\Lambda$ is real, we have the conjugate identity
\[
\omega\,M^w_\delta(\zeta)+\overline{\omega}\,M^w_\delta(\overline{\zeta})
=
2\,\Re\big(\omega\,M^w_\delta(\zeta)\big),
\]
Thus it suffices to evaluate the right-hand side for one representative per conjugate pair of quadrature nodes.

Next we exploit the rank-one and Jordan structure (cf.\ the rank-one simplification in \cite[Section~4.3]{schweitzer2023integral}).
To this end, for nodes $\zeta$ on the contour we define the vectors
\[
u_p(\zeta):=R_\delta(\zeta)b_p\in\CC^{R},\qquad r(\zeta)^\top:={\bf 1}^\top R_\delta(\zeta)\in\CC^{1\times R},\qquad \beta_p(\zeta):=r(\zeta)^\top b_p\in\CC.
\]
Then we can express $M^{w}_\delta(\zeta)$ for $w = p_1\cdots p_n$ with $n>1$ without dense matrix--matrix multiplications as follows:
\begin{equation}\label{eq:apx_rankone_eval_Mw}
M^{w}_\delta(\zeta)
=
u_{p_1}(\zeta)r(\zeta)^\top \prod_{k=2}^{n}\beta_{p_k}(\zeta).
\end{equation}
Thus each quadrature node requires only shifted solves with $\zeta I+\delta\Lambda$ to form $u_p(\zeta)$ and $r(\zeta)^\top$.
Since $\Lambda$ is assumed to be in (real) Jordan normal form, the costs of these shifted solves are only of order $R$.

As discussed in \Cref{prop:fssk_symmetry} we only need to compute $\Psi^{w(\ell)}$ and $\Phi^{pw(\ell)}$ for multiindices $\ell\in \NN^q$ where $w(\ell) = 1^{\ell_1} \cdots q^{\ell_q}$.
To this end we see that
\begin{equation}\label{eq:apx_rankone_eval_Mw_multi}
M^{pw(\ell)}_\delta(\zeta)
=
u_{p}(\zeta)r(\zeta)^\top \beta_{1}(\zeta)^{\ell_1} \cdots \beta_{q}(\zeta)^{\ell_q}.
\end{equation}
Moreover, as readily observed in \Cref{thm:algo_multiplicative} we only ever need to evaluate the row vector ${\bf 1}^{\top} \Psi^{w}(\delta)$ and not the full matrix $\Psi^{w}(\delta)$, so we obtain more simply
\[
{\bf 1}^{\top} M^{w(\ell)}_\delta(\zeta)
=
r(\zeta)^\top \beta_{1}(\zeta)^{\ell_1} \cdots \beta_{q}(\zeta)^{\ell_q}.
\]
In particular, the computation can be efficiently parallelized over multi-indices and quadrature nodes.

We summarize the above discussion of numerical evolution 
in \Cref{alg:fssk_weight_quad_eval}.

\begin{algorithm}[h]
\caption{Subroutine \textsc{EvalPhiPsi}: evaluation of normalized coefficients $\delta^{-|\ell|-1}\Phi^{pw(\ell)}(\delta)$ and $\delta^{-|\ell|-1}{\bf 1}^\top\Psi^{w(\ell)}(\delta)$}\label{alg:fssk_weight_quad_eval}
\begin{algorithmic}[1]
\STATE \textbf{Input:} $\delta>0$, $\Lambda\in\RR^{R\times R}$, $b_1,\dots,b_q\in\RR^R$, truncation level $N\in\NN$
\STATE \textbf{Parameter:} $m\in\NN$ according to machine precision; corresponds to $2m$ total nodes
\STATE \textbf{Output:} $\widehat\Phi_{p,\ell}\approx \delta^{-|\ell|-1}\Phi^{pw(\ell)}(\delta)$ for $p=1,\dots,q$, and $\widehat\psi_\ell\approx \delta^{-|\ell|-1}{\bf 1}^\top\Psi^{w(\ell)}(\delta)$, for all $\ell\in\NN^q$ with $|\ell|=\ell_1+\cdots+\ell_q \le N$
\FOR{$j=1,\dots,m$}
    \STATE $\theta_j \gets (2j-1)\pi(2m)^{-1}$ \label{line:evalphipsi_quad_start}
    \STATE $\zeta_j \gets 2m\big(0.1309-0.1194\,\theta_j^2+0.25\,i\,\theta_j\big)$
    \STATE $\omega_j \gets -\exp(\zeta_j)\big(0.2388\,i\,\theta_j+0.25\big)$
    \STATE $\widetilde\omega_j \gets -\exp(\zeta_j)\big(0.2388\,i\,\theta_j+0.25\big)/\zeta_j$
    \STATE Solve $(\zeta_j I+\delta\Lambda)^\top r_j={\bf 1}$ for $r_j\in\CC^R$
    \FOR{$p=1,\dots,q$}
        \STATE Solve $(\zeta_j I+\delta\Lambda)u_{p,j}=b_p$ for $u_{p,j}\in\CC^R$
        \STATE $\beta_{p,j}\gets  r_j^\top b_p\in\CC$ \label{line:evalphipsi_quad_end}
    \ENDFOR
\ENDFOR
\FOR{each $\ell\in\NN^q$ with $|\ell| \le N$}
    \STATE $\widehat\psi_\ell \gets 0 \in \RR^{1\times R}$ \label{line:evalphipsi_accumul_start}
    \FOR{$p=1,\dots,q$}
        \STATE $\widehat\Phi_{p,\ell} \gets 0 \in \RR^{R\times R}$
    \ENDFOR
    \FOR{$j=1,\dots,m$}
        \STATE $\gamma_{\ell,j}\gets \prod_{p=1}^q \beta_{p,j}^{\ell_p}$
        \STATE $\widehat\psi_\ell \gets \widehat\psi_\ell + 2\Re\!\left(\widetilde\omega_j\,\gamma_{\ell,j}\,r_j^\top\right)$
        \FOR{$p=1,\dots,q$}
            \STATE $\widehat\Phi_{p,\ell} \gets \widehat\Phi_{p,\ell} + 2\Re\!\left(\omega_j\,\gamma_{\ell,j}\,u_{p,j}r_j^\top\right)$ \label{line:evalphipsi_accumul_end}
        \ENDFOR
    \ENDFOR
\ENDFOR
\end{algorithmic}
\vspace{1mm}
\par\noindent\raggedright
\emph{Parallelization.} The loops over $j$ and $\ell$ are  independent respectively and can be parallelized. For fixed $j$ or fixed $\ell$, the loop over $p$ is also parallelizable.\\
\emph{Practical note.} For the evaluation of $\pi_{\le N}\VSig{x}{K^{\Lambda}_{A,b}}$ it is only necessary to evaluate ${\bf 1}^\top\Psi^{w(\ell)}$ for $|\ell|\le N-1$ and $\Phi^{pw(\ell)}$ for $|\ell|\le N-2$ (cf.~\Cref{thm:algo_multiplicative}).
\end{algorithm}

\subsection{The recursion}\label{sec:fssk_recursion}
We now demonstrate how the factorization of the weight coefficients in \Cref{lem:exp_weight_factorization} yields a recursion for the Volterra signature of the finite state space kernel $K^{\Lambda}_{A,b}$ along piecewise linear paths, involving only a linear number of recursion steps in the number of partition intervals.
To this end, we introduce the following notation, which makes both the presentation and the \emph{implementation} more convenient:
\begin{nota}\label{not:matrix_ring}
	We consider $\mathfrak{R}_N:= (T^N(\RR^m), \otimes_N)$ as a ring and, over it, the set of matrices $(\mathfrak{R}_N)^{k\times n}$ for $k, n \ge 1$.
	In particular, for two matrices $\mathcal{M} = (\mathcal{M}^{ij})\in (\mathfrak{R}_N)^{k\times n}$ and  $\mathcal{N} = (\mathcal{N}^{ij})\in (\mathfrak{R}_N)^{n\times q}$, the matrix  product is denoted by $\mathcal{M}.\mathcal{N} \in (\mathfrak{R}_N)^{k\times q}$ and is defined componentwise by
	$$\big(\mathcal{M}.\mathcal{N}\big)^{ij} = \sum_{l=1}^n\mathcal{M}^{il}\otimes_N\mathcal{N}^{lj}.$$
	Any linear operation defined on $T^N(\RR^m)$ extends to matrices $(\mathfrak{R}_N)^{k\times n}$ componentwise. This holds, in particular, for multiplication with a tensor $\by\in T^N(\RR^m)$, with
	$\by\otimes \mathcal{M} = (\by \otimes \mathcal{M}^{ij})$ and $\mathcal{M}\otimes\by = (\mathcal{M}^{ij}\otimes \by)$,
	and for the projection $\pi_l$, with $\pi_l \mathcal{M} = (\pi_{l}\mathcal{M}^{ij})$.
\end{nota}

\begin{thm}
	\label{thm:algo_multiplicative}
	Let $K_{A,b}^{\Lambda} \in \Lkernel$ be a kernel satisfying \Cref{def:finite_state_space_kernels}, i.e. of the form \eqref{eq:algo_exponential_kernel} for given data $(\Lambda, A, b)$, and ${x:[0,T]\to \RR^d}$ be continuous piecewise linear path on some grid $t_0 < t_1 < \dots < t_J \le T$.
	We set $E(\delta) = e^{-\Lambda \delta} \in \RR^{R\times R}$ $(\delta >0)$, $\delta_j = t_j - t_{j-1}$ and
	$$y_{j}^p= A_p(x_{t_j}-x_{t_{j-1}}), \qquad j=1, \dots, J, \quad p =1, \dots, q.$$
	Fixed a truncation level $N\in\NN$.
	Then for any $j = 0, \dots, J$ and $\tau \in [t_j, T]$ it holds \begin{equation}\label{eq:FSSK_scheme}
	    \pi_{\le N}\VSig{x}{K_{A,b}^{\Lambda}}^{\tau}_{t_0,t_j} = 1 + \sum_{p=1}^q\bZ^p_j.E(\tau - t_j)b_{p}, 
	\end{equation}
	where  each $\bZ^{j}$ is an element of $(\mathfrak{R}_{N})^{1\times R}$, where the products $.$ in \eqref{eq:FSSK_scheme} are understood in the sense of \Cref{not:matrix_ring}, and where we define the following iteration for all $p\in \mathcal{A}_q$:
	\begin{equation}\label{eq:iteration_Z}
		\left\{\begin{split}
		{\bf Z}^p_0 =&~\bigg. (0,\dots,0) \in \RR^{1\times R} \subset (\mathfrak{R}_N)^{1\times R},\\
		\bigg.{\bf Z}^p_{j} =&~ {\bf Z}^p_{j-1}.E(\delta_j) + \left({\bf 1}^{\top}.\mathcal{F}_{\delta_j}\!\Big(y_j^1, \dots, y_j^q\Big) +\sum_{l=1}^q {\bf Z}^l_{j-1}.{\mathcal{G}}^l_{\delta_j}\!\Big(y_j^1, \dots, y_j^q\Big)\right)\otimes y_j^p.
		\end{split}\right. 
	\end{equation}
	Note that in \eqref{eq:iteration_Z} we have used the following additional notation: for $\delta > 0$, $p_0\in \mathcal{A}_q$ and $y_1, \dots, y_q\in \RR^m$ we define
	\begin{equation}\label{eq:def_GF}
    \begin{split}
		\mathcal{F}_\delta(y_1,\dots, y_q) =&  \sum_{n=1}^{N-1}\frac{1}{\delta^{n+1}}\sum_{p_1\cdots p_n \in \mathcal{W}^n_q} \Psi^{p_1\cdots p_n}(\delta)  y_{p_1} \otimes \cdots \otimes y_{p_n} \quad \in(\mathfrak{R}_{N})^{R\times R},\\
		\mathcal{G}^{p_0}_\delta(y_1,\dots, y_q) =&  \sum_{n=1}^{N-2}\frac{1}{\delta^{n+1}}\sum_{p_1\cdots p_n \in \mathcal{W}^n_q} \Phi^{p_0p_1\cdots p_n}(\delta)  y_{p_1} \otimes \cdots \otimes y_{p_n}\quad \in(\mathfrak{R}_{N})^{R\times R},
	\end{split}
	\end{equation}
	where $\Psi^w(\delta), \Phi^w(\delta) \in \mathbb{R}^{R\times R}$ $(w\in\cW_q)$
	are defined in \Cref{lem:apx_exp_K}
	in terms of $(b,\Lambda)$.
\end{thm}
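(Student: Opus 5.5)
Identifying the state $\bZ^p_j$ explicitly and checking the recursion by induction on $j$ seems the most direct route. The starting point is the weighted-monomial expansion of \Cref{prop:chen_full_breakdown}, used with the factorized weights of \Cref{lem:exp_weight_factorization}. Splitting off the last letter $p_k=p$ of the last word $w_kp_k$, each term of $\VSig{x}{K}^{(n),\tau}_{t_0,t_j}$ has the form ${\bf 1}^\top\Psi^{w_1}(\delta_{i_1})\prod_{l\ge2}E(\Delta_{i_{l-1},i_l})\Phi^{p_{l-1}w_l}(\delta_{i_l})\,E(\tau-t_{i_k})b_p$, multiplied by the corresponding monomial of increments. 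Factoring $E(\tau-t_{i_k})=E(t_j-t_{i_k})E(\tau-t_j)$ shows that the whole expansion equals $1+\sum_p \bZ^p_j.E(\tau-t_j)b_p$. Here $\bZ^p_j\in(\mathfrak{R}_N)^{1\times R}$ is the sum, over all $k$, $i_1<\dots<i_k\le j$ and words, of the row vectors ${\bf 1}^\top\Psi\cdots\Phi\,E(t_j-t_{i_k})$ tensored with $(Av_{i_1})^{w_1p_1}\otimes\cdots\otimes(Av_{i_k})^{w_kp}$. The dependence on $\tau$ is thereby pushed entirely into $E(\tau-t_j)b_p$, which is the form claimed in \eqref{eq:FSSK_scheme}. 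I will take this explicit sum as the definition of $\bZ^p_j$ and show that it satisfies \eqref{eq:iteration_Z}.

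The increments will be converted as in the proof of \Cref{thm:quadratic_aglo}. Since $y^p_i=\delta_iA_pv_i$, the monomial $(Av_i)^{w}$ with $|w|=n$ equals $\delta_i^{-n}$ times the corresponding monomial in the $y$'s. These powers of $\delta_i$ are absorbed into $\Psi^{w}(\delta_i)/\delta_i^{|w|+1}$ and $\Phi^{p_0w}(\delta_i)/\delta_i^{|w|+1}$, with the final letter $y^p_i$ kept outside. This explains the normalizations in $\mathcal{F}_\delta$ and $\mathcal{G}^{p_0}_\delta$ of \eqref{eq:def_GF}.

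For the induction, $\bZ^p_0=0$ because the expansion is empty at $j=0$. For $j\ge1$ the terms of $\bZ^p_j$ split into two groups.
\begin{enumerate}[label=(\roman*)]
\item Terms with $i_k<j$. They coincide with the terms of $\bZ^p_{j-1}$ multiplied on the right by $E(\delta_j)$, using $E(t_j-t_{i_k})=E(t_{j-1}-t_{i_k})E(\delta_j)$. This gives $\bZ^p_{j-1}.E(\delta_j)$.
\item Terms with $i_k=j$. These also split in two.
 \begin{enumerate}[label=(\alph*)]
 \item If $k=1$, the term is ${\bf 1}^\top\Psi^{w_1}(\delta_j)$ tensored with the $y_j$-monomial, followed by $y^p_j$; summed over $w_1$, this is exactly ${\bf 1}^\top.\mathcal{F}_{\delta_j}\otimes y^p_j$.
 \item If $k\ge2$, the prefix $i_1<\dots<i_{k-1}<j$ with last letter $p_{k-1}=l$ reproduces $\bZ^l_{j-1}$. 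Here I use $E(\Delta_{i_{k-1},j})=E(t_{j-1}-t_{i_{k-1}})$. The remaining factor $\Phi^{l w_k}(\delta_j)$, summed over $w_k$, gives $\mathcal{G}^l_{\delta_j}$.
 \end{enumerate}
\end{enumerate}
Summing these contributions yields \eqref{eq:iteration_Z}.

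The range of $n$ in \eqref{eq:def_GF} must match the truncation at level $N$. The main obstacle is the edge case where $w_k$ is empty. In case (a) the factor is $\Psi^{\varnothing}(\delta)/\delta$, and in case (b) it is $\Phi^{l}(\delta)/\delta$. These give the level-zero contributions of $\mathcal{F}$ and $\mathcal{G}^l$, yet the sums in \eqref{eq:def_GF} start at $n=1$. Before fixing the bookkeeping I would check against the $q=1$ exponential example (\Cref{ex:exponential_case}) whether the statement intends the $n=0$ term (for instance $\Psi^{\varnothing}(\delta)/\delta$) to be included. The upper limits are then forced by the truncation: the $n$ letters in $\mathcal{F}$ plus the final $y^p_j$ give $n\le N-1$, and in $\mathcal{G}$ the prefix carries at least one letter, giving $n\le N-2$. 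A second point to check is the index convention in \Cref{lem:exp_weight_factorization}. There, $\Delta_{i,j}=t_{j-1}-t_i$ and the trailing factor is $E(t_{i_{k+1}}-t_{i_k})$, whereas our $\tau$ is a general readout time; the factorization of \Cref{lem:apx_exp_K} shows this is harmless. All tensor operations are compatible with $\pi_{\le N}$ because $\otimes_N$ is a ring product, so the identity holds after truncation.
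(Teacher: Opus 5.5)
Your proposal is correct and is essentially the paper's proof. The paper also sets $v_j=\delta_j^{-1}y_j$ and proves by induction on $j$ an explicit expansion of $\pi_{n+1}\bZ^p_j$ with coefficients ${\bf 1}^\top\Psi^{w_1}\prod E\,\Phi^{p_{l-1}w_l}\,E(t_j-t_{i_k})$, splits the induction step into the three groups $i_k<j$, ($k=1$, $i_1=j$) and ($k\ge2$, $i_k=j$), and then matches the result with \Cref{prop:chen_full_breakdown} and \Cref{lem:exp_weight_factorization}.

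The edge case you flag is real and needs no further checking: the sums in \eqref{eq:def_GF} must start at $n=0$. Otherwise $\pi_1\bZ^p_j\equiv 0$, while the level-one Volterra signature does not vanish. The paper's proof uses these empty-word terms tacitly, and they appear explicitly in \Cref{prop:fssk_symmetry} and \Cref{cor:fsskq1}.
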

\begin{proof}
    We set $v^p_j := (\delta_{j})^{-1}y^p_j$ and verify inductively that $(\bZ^1, \dots, \bZ^q)$ takes the following expanded form:
    \begin{multline}\label{eq:Z_induction}
		\pi_{n+1}\bZ^p_j 
		=
		\;\sum_{0<i_1<\dots< i_k < j+1}
\;\sum_{\substack{p_1,\dots,p_{k-1}\in\mathcal{A}_q \\ w_1,\dots, w_k \in \mathcal{W}_q \\ |w_1| + \cdots + |w_k| = n-k+1}} 
\mathcal{M}^{(w_1p_1, \dots, w_{k-1}p_{k-1}, w_k)}_{i_1, \dots, i_k,j+1} \\
\times (v_{i_1})^{w_1p_1} \otimes \cdots \otimes (v_{i_{k-1}})^{w_{k-1}p_{k-1}} \otimes (v_{i_k})^{w_kp} , 
	\end{multline}
    for all $p \in \mathcal{A}_q$, $n = 0, \dots, N-1$ and $j = 0, \dots, J$,
    where $(v_j)^{p_1 \cdots p_n} := v_j^{p_1}\otimes \cdots \otimes v_j^{p_n}$ and
	\begin{equation}\label{eq:def_cM_fsskproof}
		\mathcal{M}^{(w_1p_1, \dots, w_{k-1}p_{k-1}, w_k)}_{i_1,\dots, i_{k+1}} = {\bf 1}^{\top}\Psi^{w_1}(\delta_{i_1}) \left(\prod_{l=2}^k E(\Delta_{i_{l-1},i_{l}})\Phi^{p_{l-1}w_l}(\delta_{i_l})\right)E(\Delta_{i_{k},i_{k+1}}),
	\end{equation}
    for all $w_1, \dots, w_k \in \cW_q$ and $p_1, \dots, p_{k-1}\in \cA_q$ 
    with $\Delta_{i,j} = t_{j-1} - t_{i}$ $(i < j$).
    Indeed, the claim clearly holds for $j=0$ as the first sum is empty and $\pi_n\bZ^p_0 = 0$ for all $n = 0, \dots, N$ by definition.
    Now for the induction step we consider the three  terms in the iteration step $(j-1) \to j$ in
    \eqref{eq:iteration_Z}.
    First, note that using the semigroup property of $E(\Delta_{i,j})E(\delta_{j}) = E(\Delta_{i,j+1})$ we obtain
    $$\mathcal{M}^{(w_1p_1, \dots, w_{k-1}p_{k-1}, w_k)}_{i_1,\dots, i_k,j}.E(\delta_j) = \mathcal{M}^{(w_1p_1, \dots, w_{k-1}p_{k-1}, w_k)}_{i_1,\dots, i_{k},j+1}.$$
    Thus the term ${\bf Z}^p_{j-1}.E(\delta_{j})$ constitutes all terms in \eqref{eq:Z_induction}  corresponding to indices $(i_1,\dots, i_k)$ where $i_k < j$.
    Next, we note that 
    $$\pi_{n+1}\left( {\bf 1}^{\top}.\mathcal{F}_{\delta_{j}}\!\Big(y_{j}^1, \dots, y_{j}^q\Big)\otimes y_j^p\right) =  \sum_{w \in \cW_q^n}\mathcal{M}^{(w)}_{j,j+1} (v_{j})^{wp},$$
    which hence constitute the terms corresponding to the indices $(i_1, \dots, i_k)$ with $k=1$ and $i_1 = j$.
    Finally in the case where $i_k = j$ and $k>1$ we have  $\Delta_{j, j+1} = 0$ and hence
	\begin{align*}
		\mathcal{M}^{(w_1p_1, \dots, w_{k-1}p_{k-1}, w_k)}_{i_1,\dots, i_{k},j+1} = \mathcal{M}^{(w_1p_1, \dots, w_{k-2}p_{k-2}, w_{k-1})}_{i_1,\dots, i_{k-1},j}\Phi^{p_{k-1}w_k}(\delta_{j}).
	\end{align*}
    Similarly, we factorize 
    $$(v_{i_1})^{w_1p_1} \otimes \cdots \otimes (v_{i_{k-1}})^{w_{k-1}p_{k-1}}\otimes (v_{i_k})^{w_{k}} = \left( (v_{i_1})^{w_1p_1} \otimes \cdots \otimes (v_{i_{k-1}})^{w_{k-1}p_{k-1}} \right)\otimes  (v_{j})^{w_k}.$$
    This, readily allows us to verify that the term
    $$\sum_{l=1}^q {\bf Z}^l_{j-1}.{\mathcal{G}}^l_{\delta_j}\!\Big(y_j^1, \dots, y_j^q\Big)\otimes y_j^p,$$
    constitutes all terms in \eqref{eq:Z_induction} corresponding to indices $(i_1,\dots, i_k)$ where $i_k = j$ and $k>1$.
    Having proved the identity \eqref{eq:Z_induction}, we next recall the expanded form of $\VSig{x}{K^{\Lambda}_{A,b}}$ from \Cref{prop:chen_full_breakdown} in terms of the coefficients $\cK$.
    The explicit form of $\cK$ for kernels $K^{\Lambda}_{A,b}$ satisfying \Cref{def:finite_state_space_kernels} in terms of $\Phi$ and $\Psi$ is provided in \Cref{lem:exp_weight_factorization}.
    Comparing the expression for $\cK$ from \Cref{lem:exp_weight_factorization} with the definition \eqref{eq:def_cM_fsskproof} of $\mathcal{M}$, and using the semigroup property to split the last exponential as $E(t_{l} - t_{i_k}) = E(\Delta_{i_k,j+1})\,E(t_{l} - t_j)$, we obtain for $l \ge j$ ($\tau = t_{l}$) that
    $$\cK^{w_1p_1, \dots, w_kp_k}_{i_1, \dots, i_{k},l} ~=~ \mathcal{M}^{w_1p_1, \dots, w_{k-1}p_{k-1},w_k}_{i_1, \dots, i_k,j+1}E(t_{i_{l}}-t_j)b_{p_k},$$
    from which we readily conclude relation \eqref{eq:FSSK_scheme} between $\bZ$ and $\pi_{\le N} \VSig{x}{K^{\Lambda}_{A,b}}$.
\end{proof}

As several things simplify in the $q=1$ case we add the following corollary, where we will without lost of generality, i.e., up to a linear transform of the path, already assume that $A_1 = \mathrm{I}_{m}$ is the identity matrix.
\begin{cor}\label{cor:fsskq1}
Let $k_{b}^{\Lambda} \in L^{\infty,1}(\Delta^2; \RR)$ be a kernel of the form 
$$k_{b}^{\Lambda}(s,t) = {\bf 1}^\top e^{-\Lambda(t-s)}b, \qquad \Lambda \in \RR^{R\times R}, \quad b\in \RR^{R},$$
and let ${x:[0,T]\to \RR^d}$ be continuous piecewise linear path on some grid $t_0 < t_1 < \dots < t_J \le T$.
	We set $E(\delta) = e^{-\Lambda \delta} \in \RR^{R\times R}$ $(\delta >0)$, $\delta_j = t_j - t_{j-1}$ and fix a truncation level $N\in\NN$.
	Then for any $j = 0, \dots, J$ and $\tau \in [t_j, T]$ it holds $$\pi_{\le N}\VSig{x}{k_{b}^{\Lambda}}^{\tau}_{0,t_j} = 1 + \bZ_j.E(\tau - t_j)b, \qquad .$$
	where $\bZ$ is defined by ${\bf Z}_0 = (0,\dots,0) \in \RR^{1\times R} \subset (\mathfrak{R}_N)^{1\times R}$ and the following iteration:\begin{equation}\label{eq:iteration_Z_q1}
		{\bf Z}_{j} =  {\bf 1}^\top.\sum_{n=1}^{N}\frac{1}{(\delta_j)^{n}} \psi_{n}(\delta_j)  (x_{j}-x_{j-1})^{\otimes n} + {\bf Z}_{j-1}.\sum_{n=0}^{N-1}\frac{1}{(\delta_j)^{n}} \phi_{n}(\delta_j)  (x_j - x_{j-1})^{\otimes n},
	\end{equation}
	for $j=1, \dots, J$ where $\psi_n(\delta), \phi_n(\delta) \in \mathbb{R}^{R\times R}$ $(n \in \NN)$
	are defined in \Cref{lem:apx_exp_kappa}
	in terms of $(b,\Lambda)$.    
\end{cor}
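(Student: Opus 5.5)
The corollary is the specialization of \Cref{thm:algo_multiplicative} to $q=1$, $A_1=\mathrm{I}_m$ and $b_1=b$. I would derive it by translating the recursion \eqref{eq:iteration_Z} into the notation of \Cref{lem:apx_exp_kappa}.

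\textbf{Step 1: identify the weights.} With one letter, every word of length $n$ is $1\cdots 1$, and we have $y_j^1=y_j=x_{t_j}-x_{t_{j-1}}$. Following the proof of \Cref{lem:apx_exp_kappa}, the matrices $\Phi^{1^n}(\delta)$ and $\Psi^{1^{n}}(\delta)$ are identified with $\phi_n(\delta)$ and $\psi_{n+1}(\delta)$ respectively. Recursion (b) of \Cref{lem:apx_exp_K} differentiates into the ODE system stated there, and the two index conventions have to be lined up carefully. Under this identification, $\mathcal{F}_\delta(y)$ becomes $\sum_n \delta^{-(n+1)}\psi_{n+1}(\delta)y^{\otimes n}$, and $\mathcal{G}_\delta(y)$ becomes $\sum_{n}\delta^{-(n+1)}\phi_{n+1}(\delta)y^{\otimes n}$.

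\textbf{Step 2: absorb the trailing factor into $\bZ$.} In \eqref{eq:iteration_Z} the new contribution is multiplied on the right by $\otimes y_j$. Multiplying through, $\mathcal{F}_\delta(y)\otimes y$ becomes $\sum_{n=1}^{N}\delta^{-n}\psi_n(\delta)y^{\otimes n}$. The factor $\delta^{-(n+1)}$ was written with $n+1$ counting the total tensor degree, so after shifting the index the normalization reads $\delta_j^{-n}$ at degree $n$, matching \eqref{eq:iteration_Z_q1}.

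Similarly, the term $\bZ_{j-1}.E(\delta_j)+\bZ_{j-1}.\mathcal{G}_{\delta_j}(y_j)\otimes y_j$ combines into $\bZ_{j-1}.\sum_{n=0}^{N-1}\delta_j^{-n}\phi_n(\delta_j)y_j^{\otimes n}$. Here the $n=0$ term is $\phi_0=E$, which is exactly the semigroup term, and the remaining terms are the $\mathcal{G}$ terms after reindexing. Tensor degrees above $N$ vanish under $\otimes_N$, so the upper summation limits agree after truncation.

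\textbf{Step 3: read off the result.} The identity $\pi_{\le N}\VSig{x}{k}^{\tau}_{0,t_j}=1+\bZ_j.E(\tau-t_j)b$ is then \eqref{eq:FSSK_scheme} with $q=1$. By \Cref{rem:wlog_ID} there is no loss in taking $A_1=\mathrm{I}$.

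\textbf{Main obstacle.} The only delicate point is the bookkeeping of indices and powers of $\delta$: the offset by one between the $\Psi$ and $\psi$ labels and between the $\Phi^{p_0w}$ and $\phi$ labels. I would check this first at the lowest levels, verifying that $\psi_1(\delta)=\int_0^\delta E$ gives the correct level-one term $\kappa^{1,\tau}_{t_{j-1},t_j}$. I would also cross-check the closed form against \Cref{lem:apx_exp_kappa}, namely $\mu=\mathbf{1}^\top\psi_{n_1}\prod_l E\,\phi_{n_l}\cdots$, which confirms that $\psi_n$ carries total degree $n$ and $\phi_n$ carries degree $n$ in its own increment. This is the factorization that the recursion \eqref{eq:iteration_Z_q1} generates inductively, exactly as in the proof of \Cref{thm:algo_multiplicative}.
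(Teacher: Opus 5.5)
Your proposal is correct and follows the paper's route: the corollary is the $q=1$, $A_1=\mathrm{I}$ case of \Cref{thm:algo_multiplicative}, with $\phi_n=\Phi^{1^n}$ and $\psi_{n+1}=\Psi^{1^n}$, the semigroup term $\bZ_{j-1}.E(\delta_j)$ supplying the $\phi_0$ summand, and the trailing $\otimes y_j$ raising the degree by one so that the normalization $\delta_j^{-(n+1)}$ reads $\delta_j^{-n}$ at total degree $n$. Your level-one check is worth carrying out, because it exposes misprints you have to read through. First, the sums in \eqref{eq:def_GF} must start at the empty word $n=0$, as the theorem's proof and \Cref{prop:fssk_symmetry} (with $|\ell|\ge 0$) show; otherwise the $\psi_1$ and $\phi_1$ terms are lost. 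Second, differentiating recursion (b) gives $\phi_n'=-\Lambda\phi_n+C\phi_{n-1}$ with $\psi_n=\Psi^{1^{n-1}}$, so you should define $\phi_n,\psi_n$ through $\Phi,\Psi$ rather than through the sign and index shift printed in \Cref{lem:apx_exp_kappa} and its proof.
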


\subsection{Algorithmic aspects}\label{sec:fssk_algorithmic}

We now turn to the numerical implementation of the recursion derived in \Cref{thm:algo_multiplicative}. The state-space recursion \eqref{eq:iteration_Z} directly translates into an iterative scheme, and the Volterra signature for finite state space kernels is then obtained by a linear readout from this state. We summarize these two steps separately in \Cref{alg:fssk_state_recursion} and \Cref{alg:fssk_vsig_readout}, where the evaluation of $\cF$ and $\cG$ is left as a subroutine in the state update.

To obtain a full realization of \Cref{alg:fssk_state_recursion}, it therefore remains to specify the efficient evaluation of the operators $\cF$ and $\cG$. For the scalar-kernel case $q=1$, i.e., for the recursion in \Cref{cor:fsskq1}, the situation simplifies. In this case, we additionally recover an efficient Horner-type scheme (cf.~\Cref{rem:mult_horner}). We therefore subdivide this discussion into the cases $q>1$ and $q=1$.

\subsubsection{The case $q > 1$}

First, we note that the evaluation of $\cF$ and $\cG$ as defined in \eqref{eq:def_GF} requires the coefficient matrices $\Phi$ and $\Psi$ from \Cref{lem:apx_exp_K}.
The numerical evaluation of these matrices was already treated in \Cref{sec:fssk_weights}, where we proposed the Laplace-quadrature-based algorithm \Cref{alg:fssk_weight_quad_eval}.
This shuffle-recursive structure is directly analogous to the multivariate Horner-type evaluation of ${\bv}\otimes_N\mathcal{E}$ under symmetric weights in \Cref{sec:quad_conv} (cf.\ \Cref{alg:quad_evalVtE}), with the only difference that the coefficients here are $R\times R$ matrices and the propagated quantities take values in $(\mathfrak R_N)^{1\times R}$ and $(\mathfrak R_N)^{R\times R}$.

Once these coefficient matrices are available, additional simplifications arise from the convolution structure of kernels of the form \eqref{eq:algo_exponential_kernel}.
Indeed, the permutation symmetry of the weights (cf.\ \Cref{lem:cK_symmetry,lem:exp_weight_factorization}) yields a shuffle-polynomial representation of $\cF$ and $\cG$, which leads to an efficient recursive evaluation.
This is formalized in \Cref{prop:fssk_symmetry}, and the corresponding algorithm is given in \Cref{alg:fsskFG}.

\begin{algorithm}[h]
\caption{State recursion for finite state space kernels}\label{alg:fssk_state_recursion}
\begin{algorithmic}[1]
\STATE \textbf{Input:} partition $0=t_0<t_1<\cdots<t_J\le T$, piecewise linear path values $\{x_{t_j}\}_{j=0}^J$, kernel data $(\Lambda,\{A_p,b_p\}_{p=1}^q)$, truncation level $N\in\NN$
\STATE \textbf{Output:} states $\{{\bf Z}_j^p\in(\mathfrak R_N)^{1\times R}: j=0,\dots,J,\ p=1,\dots,q\}$

\STATE \textbf{Initialize:} for $p=1,\dots,q$ set ${\bf Z}_0^p \gets (0,\dots,0)\in\RR^{1\times R}\subset(\mathfrak R_N)^{1\times R}$
\FOR{$j=1,\dots,J$}
    \STATE $\delta_j \gets t_j-t_{j-1}$ \label{line:fssk_delta}
    \STATE $E_j \gets e^{-\Lambda\delta_j}\in\RR^{R\times R}$ \hfill \emph{// Jordan form, cf.\ \Cref{lem:prony_implies_real_jordan_scalar}} \label{line:fssk_exp}
    \STATE $(\widehat\psi_j,\widehat\Phi_j) \gets \textsc{EvalPhiPsi}(\delta_j,\Lambda,\{b_p\}_{p=1}^q,N-1)$ \hfill \emph{// cf.\ \Cref{alg:fssk_weight_quad_eval}} \label{line:fssk_coeffs}
    \FOR{$p=1,\dots,q$}
        \STATE $y_j^p \gets A_p(x_{t_j}-x_{t_{j-1}})\in\RR^m$
    \ENDFOR
    \STATE $(\widehat f_j,\widehat{\mathcal G}_j^1,\dots,\widehat{\mathcal G}_j^q)\gets \textsc{EvalFG}(y_j^1,\dots,y_j^q,N,\widehat\psi_j,\widehat\Phi_j)$ \hfill \emph{// cf.\ \Cref{alg:fsskFG}}
    \STATE ${\bf B}_j \gets \widehat f_j \in (\mathfrak R_N)^{1\times R}$
    \FOR{$l=1,\dots,q$}
        \STATE ${\bf B}_j \gets {\bf B}_j + {\bf Z}_{j-1}^l.\widehat{\mathcal G}_j^l$
    \ENDFOR
    \FOR{$p=1,\dots,q$}
        \STATE ${\bf Z}_j^p \gets {\bf Z}_{j-1}^p.E_j + {\bf B}_j\otimes y_j^p$
    \ENDFOR
\ENDFOR
\end{algorithmic}
\vspace{1mm}
\par\noindent\raggedright
\emph{Precomputation.} The computations in lines~\ref{line:fssk_delta}--\ref{line:fssk_coeffs} depend only on the interval length $\delta_j$.
Hence, these computations are reusable across repeated interval lengths.\\
\emph{Parallelization.} The loop over $j$ is sequential, while the loops over $p$ are independent and can be parallelized.
\end{algorithm}

\begin{algorithm}[h]
\caption{Readout of the Volterra signature from a finite state space recursion state}\label{alg:fssk_vsig_readout}
\begin{algorithmic}[1]
\STATE \textbf{Input:} state $\{{\bf Z}^p\}_{p=1}^q$ at time $t$ (starting at $t_0$), time $\tau\ge t$, kernel data $(\Lambda,\{b_p\}_{p=1}^q)$
\STATE \hfill \emph{// state computed by \Cref{alg:fssk_state_recursion}}
\STATE \textbf{Output:} ${\bf v}^{\tau}=\pi_{\le N}\VSig{x}{K_{A,b}^{\Lambda}}_{t_0,t}^{\tau}\in T^N(\RR^m)$
\STATE $E_{t,\tau}\gets e^{-\Lambda(\tau-t)}$ \hfill \emph{// Jordan form, cf.\ \Cref{lem:prony_implies_real_jordan_scalar}}
\FOR{$p=1,\dots,q$}
    \STATE $r_p \gets E_{t,\tau}b_p \in \RR^R$
\ENDFOR
\STATE ${\bf v}^{\tau} \gets 1+\sum_{p=1}^q {\bf Z}^p.r_p$
\end{algorithmic}
\end{algorithm}

\begin{prop}\label{prop:fssk_symmetry}
    We work in the setting of \cref{thm:algo_multiplicative} with a kernel $K_{A,b}^{\Lambda}$ of the form \eqref{eq:algo_exponential_kernel} for given data $(\Lambda, A, b)$.
    For any $\ell\in \NN^q$ we define the ordered word with letter frequencies according to $\ell = (\ell_1, \dots, \ell_q)$ by the concatenation
    \begin{equation*}
        w(\ell) = 1^{\ell_1}\cdots q^{\ell_q} = \prod_{p\in\mathcal{A}_q} \underbrace{p\cdots p}_{\ell_p \text{times}}\in \mathcal{W}_q^{|\ell|}.
    \end{equation*}
    Then, \cref{thm:algo_multiplicative} still holds after replacing the definition of $\mathcal{F}$ and $\mathcal{G}$ in \eqref{eq:def_GF} respectively by
    \begin{equation}\label{eq:GF_shuffle}
    \begin{split}
		{\mathcal{F}}_\delta(y_1,\dots, y_q) =&  \sum_{|\ell| \le N-1}  \frac{1}{\delta^{|\ell|+1}\ell!}{\Psi}^{w(\ell)}(\delta) \, y_1^{\shuffle \ell_1} \shuffle \cdots \shuffle y_q^{\shuffle \ell_q}\quad \in(\mathfrak{R}_{N})^{R\times R},\\
		{\mathcal{G}}^{p}_\delta(y_1,\dots, y_q) =&  \sum_{|\ell| \le N-2}  \frac{1}{\delta^{|\ell|+1}\ell!}{\Phi}^{pw(\ell)}(\delta) \, y_1^{\shuffle \ell_1} \shuffle \cdots \shuffle y_q^{\shuffle \ell_q}\quad \in(\mathfrak{R}_{N})^{R\times R},
	\end{split}
	\end{equation}
    where $\Psi(\delta)$ and $\Phi(\delta)$ are as in \Cref{lem:exp_weight_factorization}.
    
\end{prop}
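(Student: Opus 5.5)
The plan is to show that, for every $\delta>0$ and $y_1,\dots,y_q\in\RR^m$, the new definitions \eqref{eq:GF_shuffle} produce exactly the same elements of $(\mathfrak R_N)^{R\times R}$ as the original definitions \eqref{eq:def_GF}. Once this is established, the recursion \eqref{eq:iteration_Z} is literally unchanged and \Cref{thm:algo_multiplicative} applies verbatim. The argument is the matrix-valued analogue of the proof of \Cref{prop:mathcalE}.

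\textbf{Step 1: permutation invariance of the coefficients.} Fix $\delta$ and a level $n$. I first need that $\Psi^{w}(\delta)$ and $\Phi^{p_0w}(\delta)$ depend on $w=p_1\cdots p_n$ only through its letter-frequency multi-index $\ell$, where $\ell_j$ is the number of occurrences of $j$ in $w$.

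\emph{For $\Psi$.} \Cref{lem:apx_exp_K} states that $\dot\cK^{wp,\tau}_{s,t}={\bf 1}^\top\Psi^{\sigma(w)}(t-s)E(\tau-t)b_p$ for every $\sigma\in\mathcal S_{n}$. A slightly stronger, matrix-level statement can be read off the proof of \Cref{lem:frechet_relation}, or directly from the Laplace representation in \Cref{lem:laplace_contour_representation}. Since $C_p=b_p{\bf 1}^\top$, the rank-one identity \eqref{eq:apx_rankone_eval_Mw} gives
\[
M^{w}_\delta(\zeta)=u_{p_1}(\zeta)\,r(\zeta)^\top\prod_{k=2}^n\beta_{p_k}(\zeta).
\]
Hence $M^{p_0w}_\delta$ is invariant under permutations of $w$, and by \eqref{eq:laplace_contour_Phi_ordered} so is $\Phi^{p_0w}(\delta)$.

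\emph{For $\Psi^w$ itself.} Here the first letter $p_1$ is not fixed. However, in \eqref{eq:FSSK_scheme} and \eqref{eq:iteration_Z} $\mathcal F$ only ever appears through ${\bf 1}^\top.\mathcal F$. Moreover,
\[
{\bf 1}^\top M^{w}_\delta(\zeta)=\beta_{p_1}(\zeta)\,r(\zeta)^\top\prod_{k\ge2}\beta_{p_k}(\zeta)
\]
is fully symmetric in $w$. So I will prove the equality ${\bf 1}^\top.\mathcal F^{\mathrm{old}}={\bf 1}^\top.\mathcal F^{\mathrm{new}}$, which is all the theorem uses, and remark on this restriction explicitly. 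This is the main subtlety: for $\Psi$ the full matrix need not be symmetric, only its ${\bf 1}^\top$-contraction.

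\textbf{Step 2: regrouping by multi-index.} In \eqref{eq:def_GF}, group the words $p_1\cdots p_n$ into the classes $\mathcal W^\ell_q$ from \eqref{eq:def_words_by_ell}. By Step 1, each coefficient may be replaced by its value at the representative word $w(\ell)$. This gives
\[
\sum_{|\ell|=n}\delta^{-n-1}\,\Psi^{w(\ell)}(\delta)\,Y(\ell),
\qquad
Y(\ell)=\sum_{p_1\cdots p_n\in\mathcal W^\ell_q}y_{p_1}\otimes\cdots\otimes y_{p_n},
\]
and the analogous expression with $\Phi^{p_0w(\ell)}(\delta)$. Here I use that scalar matrix entries commute with the tensor-valued factors in $\mathfrak R_N$.

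\textbf{Step 3: shuffle identity.} From the proof of \Cref{prop:mathcalE} we have
\[
Y(\ell)=\frac{1}{\ell!}\,y_1^{\shuffle\ell_1}\shuffle\cdots\shuffle y_q^{\shuffle\ell_q}.
\]
Substituting this into Step 2, and matching the summation ranges ($1\le n\le N-1$ resp.\ $N-2$ versus $|\ell|\le N-1$ resp.\ $N-2$), yields \eqref{eq:GF_shuffle}.

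\textbf{Loose end: the $\ell=0$ term.} The shuffle sums in \eqref{eq:GF_shuffle} include $\ell=0$, while \eqref{eq:def_GF} starts at $n=1$. I will check how this empty-word term fits by comparing with the induction in the proof of \Cref{thm:algo_multiplicative}. The $k=1$ term there with $w_1=\varnothing$ corresponds to ${\bf 1}^\top\Psi^{\varnothing}(\delta)\,\delta^{-1}\otimes y^p$, which is the level-one contribution. This indicates the original sum should effectively start at $n=0$, and consistency of indexing must be verified to confirm the $\ell=0$ terms are accounted for correctly.
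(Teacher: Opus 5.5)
Your Steps 1--3 are correct, but your route differs from the paper's. The paper never compares the new $\mathcal F,\mathcal G$ with \eqref{eq:def_GF}. It uses only the \emph{scalar} permutation invariance of the weights $\cK^{(w_1p_1,\dots,w_kp_k)}$ from \Cref{lem:exp_weight_factorization}, which ultimately comes from \Cref{lem:cK_symmetry}. It multiplies the expansion \eqref{eq:Z_induction} on the right by $E(\tau-t_j)b_{p_k}$ so that only these scalars appear, and regroups by multi-indices via the shuffle identity. It then reruns the induction of \Cref{thm:algo_multiplicative} with \eqref{eq:GF_shuffle}, without giving the details. So it only claims that the readouts agree.

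You instead prove a matrix-level invariance: $\Phi^{p_0w}(\delta)$ is invariant under permutations of $w$, and ${\bf 1}^\top\Psi^{w}(\delta)$ is invariant under all permutations of $w$. This follows from the rank-one structure $C_p=b_p{\bf 1}^\top$, either through the contour representation as you do, or directly by the change of variables $u_1=r_1$, $u_k=r_k-r_{k-1}$ in the simplex integral defining $\Phi$. It gives term-by-term equality of the inputs to \eqref{eq:iteration_Z}, hence identical states $\bZ^p_j$, with no second induction. Your argument is shorter and stronger, but it relies on the rank-one directions; the paper's argument needs only the symmetry available for any convolution kernel. Your remark that $\Psi^w$ itself is not symmetric in its first letter is correct. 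It is also harmless, because $\mathcal F$ only enters as ${\bf 1}^\top.\mathcal F$, and \textsc{EvalFG} only returns ${\bf 1}^\top\mathcal F$.

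The loose end, however, has to be closed rather than left open. Against \eqref{eq:def_GF} as printed, your claimed equality is false: \eqref{eq:GF_shuffle} contains the extra degree-zero terms $\delta^{-1}\Psi^{\varnothing}(\delta)$ and $\delta^{-1}\Phi^{p}(\delta)$.

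It resolves in your favour, because the sums in \eqref{eq:def_GF} must start at $n=0$.
\begin{itemize}
\item The proof of \Cref{thm:algo_multiplicative} uses $\pi_{n+1}({\bf 1}^\top.\mathcal F_{\delta_j}\otimes y^p_j)=\sum_{w\in\cW^n_q}\mathcal M^{(w)}_{j,j+1}(v_j)^{wp}$ for $n=0,\dots,N-1$.
\item In that proof, the terms with $i_k=j$, $k>1$ include $w_k=\varnothing$, which produces $\Phi^{p_{k-1}}(\delta_j)$.
\item More bluntly, with sums starting at $n=1$ the state $\bZ^p_1$ would have no level-one component. But $\pi_1\VSig{x}{K}^{\tau}_{t_0,t_1}=\sum_p{\bf 1}^\top\Psi^{\varnothing}(\delta_1)E(\tau-t_1)b_p\,\delta_1^{-1}y^p_1$, which is nonzero in general.
\item The $n=0$ start is also consistent with \Cref{cor:fsskq1} and with the practical note to \Cref{alg:fssk_weight_quad_eval}.
\end{itemize}
With this corrected indexing, the $\ell=0$ terms of \eqref{eq:GF_shuffle}, where the empty shuffle product is $1$, match the $n=0$ terms exactly, and your proof is complete. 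The paper's proof never meets this issue because it reruns the induction directly with \eqref{eq:GF_shuffle}.
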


\begin{algorithm}[h]
\caption{Subroutine \textsc{EvalFG}: shuffle-recursive evaluation of ${\bf 1}^\top\mathcal F$ and $\mathcal G$}\label{alg:fsskFG}
\begin{algorithmic}[1]
\STATE \textbf{Input:} $y_1,\dots,y_q\in\RR^m$, truncation level $N\in\NN$, normalized coefficients $\{\widehat\psi_\ell\in\RR^{1\times R}:|\ell|\le N-1\}$ and $\{\widehat\Phi_{p,\ell}\in\RR^{R\times R}:p=1,\dots,q,\ |\ell|\le N-2\}$
\STATE \textbf{Output:} $\widehat f\approx {\bf 1}^\top\mathcal F_\delta(y_1,\dots,y_q)\in(\mathfrak R_N)^{1\times R}$ and $\widehat{\mathcal G}^p\approx \mathcal G^p_\delta(y_1,\dots,y_q)\in(\mathfrak R_N)^{R\times R}$ for $p=1,\dots,q$
\FOR{$n=N-1,\dots,0$}
    \FOR{each $\ell\in\NN^q$ with $|\ell|=n$}
        \STATE $F(\ell)\gets \widehat\psi_\ell/\ell!$
        \FOR{$p=1,\dots,q$}
            \STATE $G_p(\ell)\gets
            \begin{cases}
                \widehat\Phi_{p,\ell}/\ell!, & n\le N-2,\\
                0, & n=N-1
            \end{cases}$
        \ENDFOR
        \IF{$n\le N-2$}
            \FOR{$r=1,\dots,q$}
                \STATE $F(\ell)\gets F(\ell)+\big(F(\ell+1_r)\shuffle y_r\big)(\ell_r+1)/(|\ell|+1)$
                \FOR{$p=1,\dots,q$}
                    \STATE  $G_p(\ell)\gets G_p(\ell)+\big(G_p(\ell+1_r)\shuffle y_r\big)(\ell_r+1)/(|\ell|+1)$
                \ENDFOR
            \ENDFOR
        \ENDIF
    \ENDFOR
\ENDFOR
\STATE $\widehat f\gets F(0,\dots,0)$
\FOR{$p=1,\dots,q$}
    \STATE $\widehat{\mathcal G}^p\gets G_p(0,\dots,0)$
\ENDFOR
\end{algorithmic}
\vspace{1mm}
\par\noindent\raggedright
\emph{Parallelization.} The recursion in the total degree $n$ is sequential. For fixed $n$, the updates over all multi-indices $\ell$ with $|\ell|=n$ are independent and can be parallelized. For fixed $\ell$, the loops over $r$ and $p$ are also parallelizable.
\end{algorithm}

\begin{proof}
Continuing in the proof of \Cref{thm:algo_multiplicative} we first note that for any $w_1, \dots, w_k\in \cW_q$ there exist permutations $\sigma_i \in \mathcal{S}_{|w_i|}$ ($i=1, \dots, k$) such that 
$\sigma_i(w_i) = w(\ell^{i})$, where $\ell^{i} = (\ell^i_1, \dots, \ell^i_q) \in \NN^q$ is defined as follows:
writing $w_i = p^{i}_1\cdots p^{i}_{|w_i|}$ with $p^{i}_r\in\cA_q$, we set
$$\ell^{i}_p = \big\vert\{r = 1, \dots, |w_i|\;\vert\; p^{i}_r = p\}\big\vert, \qquad p \in \mathcal{A}_q.$$
Hence, by the permutation invariance of $\cK$ established in \Cref{lem:exp_weight_factorization} we have for such $(w_i, \ell^i)$ and any $p_1, \dots, p_k \in \cA_q$ that 
$$\cK^{w_1p_1, \dots, w_kp_k}_{i_1, \dots, i_{k},l} ~=~ \cK^{\sigma_1(w_1)p_1, \dots, \sigma_k(w_k)p_k}_{i_1, \dots, i_{k},l}  ~=~  \cK^{w(\ell^1)p_1, \dots, w(\ell^{k})p_{k}}_{i_1, \dots, i_{k},l}.$$
In particular, for $\cM$ defined as in \eqref{eq:def_cM_fsskproof} we obtain
$$\cM^{(w_1p_1, \dots, w_{k-1}p_{k-1}, w_k)}.E(\tau - t_j)b_{p_k} ~=~ \cM^{(w(\ell^1)p_1, \dots, w(\ell^{k-1})p_{k-1}, w(\ell^k))}.E(\tau - t_j)b_{p_k}.$$
Consequently, after multiplying \eqref{eq:Z_induction} by $E(\tau - t_j)b_{p_k}$ from the right, we can group the coefficients in terms of $(\ell^1, \dots, \ell^k)$.
Then, with $\cW^{\ell}_q$ defined as in \eqref{eq:def_words_by_ell} for $\ell \in \NN^q$ we have for all $p\in\cA_q$ and $y_1, \dots, y_q \in \RR^m$ that
$$\sum_{p_1\cdots p_{|\ell|}\in \cW_q^{\ell}}y_{p_1} \otimes \cdots \otimes y_{p_{|\ell|}} \otimes y_p
~=~  \left(\frac{1}{\ell!}y_1^{\shuffle \ell_1}\shuffle \cdots \shuffle y_q^{\shuffle \ell_q}\right) \otimes y_p.$$
Indeed, this follows by the same inductive argument as in the proof of \Cref{prop:mathcalE}.
The proof now follows by verifying once again inductively that the resulting expression is the expanded form of $(\bZ^1, \dots, \bZ^q)$, now with $\cF$ and $\cG$ defined as in \eqref{eq:GF_shuffle}.
We omit this verification as it is analogous to the previous induction.
\end{proof}

\subsubsection{The case $q=1$}
Here, there is only a single state vector ${\bf Z}$, and the recursion simplifies substantially, as described in \Cref{cor:fsskq1}. 
In \Cref{alg:fsskq1_state_recursion} we state the corresponding state recursion for this scalar-kernel case. 
We do not repeat the Volterra-signature readout, since it is the same linear readout as in the general case and is already covered by \Cref{alg:fssk_vsig_readout}.

The main difference in the state recursion is the evaluation of the state update. 
In the case $q=1$, this update admits a Horner-type scheme, which significantly improves the computational costs (see \Cref{sec:compcostFSSK}). 
This specialized update is described in \Cref{alg:fsskq1_horner_update}.
This Horner-type update is the direct analogue of the fused Horner evaluation of ${\bv}\otimes_N\mathcal{E}$ in the scalar case from \Cref{sec:quadq1} (cf.\ \Cref{alg:quad_evalVtE_q1}), adapted to the matrix-valued state ${\bf Z}\in(\mathfrak R_N)^{1\times R}$.
Note that the evaluation of the coefficient matrices $\phi_n(\delta)$ and $\psi_n(\delta)$ is already covered by \Cref{alg:fssk_weight_quad_eval} (specialized to $q=1$), which is already sufficiently efficient in this setting, and we do not pursue further improvements here.

\begin{algorithm}[h]
\caption{State recursion for finite state space kernels in the scalar-kernel case $q=1$}\label{alg:fsskq1_state_recursion}
\begin{algorithmic}[1]
\STATE \textbf{Input:} partition $0=t_0<t_1<\cdots<t_J\le T$, piecewise linear path values $\{x_{t_j}\}_{j=0}^J$, kernel data $(\Lambda,b)$, truncation level $N\in\NN$
\STATE \textbf{Output:} states $\{{\bf Z}_j\in(\mathfrak R_N)^{1\times R}: j=0,\dots,J\}$

\STATE \textbf{Initialize:} ${\bf Z}_0 \gets (0,\dots,0)\in\RR^{1\times R}\subset(\mathfrak R_N)^{1\times R}$
\FOR{$j=1,\dots,J$}
    \STATE $\Delta x_j \gets x_{t_j}-x_{t_{j-1}}\in\RR^d$
    \STATE $\delta_j \gets t_j-t_{j-1}$  \label{line:fsskq1_delta}
    \STATE $\widehat\phi_{j,0} \gets e^{-\Lambda\delta_j}\in\RR^{R\times R}$ \hfill \emph{// Jordan form, cf.\ \Cref{lem:prony_implies_real_jordan_scalar}} \label{line:fsskq1_exp}
    \STATE $(\{\widehat\psi_{j,n}\}_{n=1}^{N},\{\widehat\phi_{j,n}\}_{n=1}^{N-1})\gets \textsc{EvalPhiPsi}(\delta_j,\Lambda,\{b\},N-1)$ \hfill \emph{// \Cref{alg:fssk_weight_quad_eval} (*)} \label{line:fsskq1_coeffs}
    \STATE ${\bf Z}_j \gets \textsc{UpdateState}({\bf Z}_{j-1},\Delta x_j,N,\widehat\psi_j,\widehat\phi_j)$ \hfill \emph{// cf.\ \Cref{alg:fsskq1_horner_update}}
\ENDFOR
\end{algorithmic}
\vspace{1mm}
\par\noindent\raggedright
\emph{Precomputation.} The computations in lines~\ref{line:fsskq1_delta}--\ref{line:fsskq1_coeffs} depend only on $\delta_j$.\\
\emph{(*).} \Cref{alg:fssk_weight_quad_eval} returns $\{{\bf 1}^\top \widehat\Psi_{\ell}\,:\,|\ell|\le N-1\}$ and $\{\widehat\Phi_{p,\ell}\;:\; p=1,\dots,q,\ |\ell|\le N-2\}$.
Here $q=1$, so $\ell\in\NN$ and $p=1$, and we identify
$\widehat\psi_{n}:={\bf 1}^\top \widehat\Psi_{n-1}$ and
$\widehat\phi_{j,n}:=\widehat\Phi_{1,n-1}$.
\end{algorithm}

\begin{algorithm}[h]
\caption{Subroutine \textsc{UpdateState}: Horner-type state update in the $q=1$ case}\label{alg:fsskq1_horner_update}
\begin{algorithmic}[1]
\STATE \textbf{Input:} previous state ${\bf Z}\in(\mathfrak R_N)^{1\times R}$, increment $\Delta x\in\RR^d$, truncation level $N\in\NN$, normalized coefficients
$\widehat\psi=\{\widehat\psi_n\in\RR^{1\times R}:n=1,\dots,N\}$ and
$\widehat\phi=\{\widehat\phi_n\in\RR^{R\times R}:n=0,\dots,N-1\}$
\STATE \textbf{Output:} updated state $\widehat{\bf Z}\approx {\bf 1}^\top.\sum_{n=1}^{N}\widehat\psi_n(\Delta x)^{\otimes n} + {\bf Z}.\sum_{n=0}^{N-1}\widehat\phi_n(\Delta x)^{\otimes n}\in(\mathfrak R_N)^{1\times R}$

\STATE \textbf{Convention:} set $\widehat\phi_N\gets 0$
\STATE $\widehat{\bf Z}^{(0)}\gets {\bf Z}^{(0)}.\widehat\phi_0$
\FOR{$n=1,\dots,N$}
    \STATE $U_n\gets \widehat\psi_n$
    \STATE $W_n\gets {\bf Z}^{(0)}.\widehat\phi_n$
    \FOR{$k=1,\dots,n-1$}
        \STATE $U_n\gets U_n\otimes \Delta x$
        \STATE $W_n\gets (W_n\otimes \Delta x)+{\bf Z}^{(k)}.\widehat\phi_{n-k}$
    \ENDFOR
    \STATE $\widehat{\bf Z}^{(n)}\gets (U_n+W_n)\otimes \Delta x$
\ENDFOR
\end{algorithmic}
\vspace{1mm}
\par\noindent\raggedright
\emph{Parallelization.} The loop over $n$ is independent and can be parallelized. For fixed $n$, the recursion over $k$ is sequential.
\end{algorithm}

\section{Numerical methods for the Volterra signature kernel}\label{sec:sig-kernel}

In this section, we turn our attention to numerical aspects of the \emph{Volterra signature kernel} associated with finite state space kernels introduced in Definition~\ref{def:finite_state_space_kernels}. More precisely, we introduce numerical schemes to approximate the inner product
\begin{equation}\label{eq:signature-kernel}
	\kappa_{s,t}(x,w):= \langle \VSig{x}{K}_{0,s},\VSig{w}{K}_{0,t} \rangle \in \mathbb{R}_+, \qquad x,w \in \cC^1([0,T];\RR^d),
\end{equation} where $(s,t)\in [0,T]^2$ and $K$ is a kernel of the form \eqref{eq:algo_exponential_kernel}. The Volterra signature kernel was already introduced in \cite{i_part} for a general class of integrable matrix kernels, and a kernel trick characterizes $\kappa$ as the unique solution to a second-order Volterra integral equation; see \cite[Theorem 3.23]{i_part}. In the special case of finite state space kernels, these integral equations are shown to be equivalent to a system of Goursat PDEs, which we shall briefly recall now.

Consider matrix-valued functions $\bK,\mathbf{\Psi},\mathbf{\Phi}\colon [0,T]^2\to \RR^{R\times R}$ satisfying the coupled PDE system on $[0,T]^2$
\begin{align}
\partial^2_{st} \bK(s,t)
&= -\eta(s,t)\gamma(s,t)+\Lambda \bK(s,t) \Lambda^\top +\partial_s \mathbf{\Psi}(s,t)+\partial_t\mathbf{\Phi}(s,t),
\label{eq:goursat-general-K}
\\
\partial_s \mathbf{\Psi}(s,t)
&=
-\Lambda \mathbf{\Psi}(s,t)+\gamma(s,t)\,\eta(s,t),
\label{eq:goursat-general-psi}
\\
\partial_t \mathbf{\Phi}(s,t)
&=
-\mathbf{\Phi}(s,t)\Lambda^\top+\gamma(s,t)\,\eta(s,t),
\label{eq:goursat-general-phi}
\end{align}
where $\eta(s,t):=1+\mathbf 1^\top \bK(s,t)\mathbf 1$ and with boundary conditions
\begin{equation}
\bK(s,0)=\bK(0,t)=
\mathbf{\Psi}(0,t)= \mathbf{\Phi}(s,0)=0, \qquad (s,t) \in [0,T]^2.
\label{eq:goursat-general-bc}
\end{equation} The matrix $\Lambda\in\RR^{R\times R}$ appears in the kernel \eqref{eq:algo_exponential_kernel}, and $\gamma(s,t)\in \mathbb{R}^{R\times R}$ is given by 
\begin{equation}\label{eq:alpha_gamma_2}
	\gamma(s,t)\in \mathbb{R}^{R \times R}, \qquad \gamma_{ij}(s,t) = \sum_{r,p=1}^q b_{r}^ib_{p}^j \la A_r \dot x_s,A_p\dot{w}_t \ra,
\end{equation}
where $A$, $b$, $R$, and $q$ are as in the finite state space kernel \eqref{eq:algo_exponential_kernel}.

\begin{thm}[Theorem 3.26 in \cite{i_part}]
Let $x,w \in \cC^{1}([0,T];\RR^d)$ and suppose that $K$ is a kernel as in Definition~\ref{def:finite_state_space_kernels}. Then $\kappa=\eta$, where $\eta$ is obtained from the unique solution $(\bK,\mathbf{\Psi},\mathbf{\Phi})$ of \eqref{eq:goursat-general-K}--\eqref{eq:goursat-general-bc}.\footnote{We note that in \cite[Theorem 3.26]{i_part}, the equation \eqref{eq:goursat-general-K} was equivalently written with $\partial_s \mathbf{\Psi}$ and $\partial_t \mathbf{\Phi}$ replaced by the right-hand sides of \eqref{eq:goursat-general-psi} and \eqref{eq:goursat-general-phi}. The representation in the present paper is motivated by the numerical schemes.}
\end{thm}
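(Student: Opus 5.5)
The plan is to prove the identity by a \emph{kernel trick at the state-space level}: lift both Volterra signatures to $R$-dimensional tensor-valued states, and show that suitable inner products of these states solve the Goursat system \eqref{eq:goursat-general-K}--\eqref{eq:goursat-general-bc}. Uniqueness of that system then forces $\kappa=\eta$.

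\textbf{Step 1 (state representation).} For $K=K^{\Lambda}_{A,b}$ and a $\cC^1$ path $x$, I define
\[
Y^x_s:=\int_0^s e^{-\Lambda(s-u)}\sum_{r=1}^q b_r\,\VSig{x}{K}^{u}_{0,u}\otimes A_r\dot x_u\,\dd u\in T((\RR^m))^{R}.
\]
By the fundamental equation \eqref{eq:foundamental_volterra_intro}, together with the semigroup property of $E(\delta)=e^{-\Lambda\delta}$, this gives $\VSig{x}{K}^{\tau}_{0,s}=1+\mathbf 1^\top e^{-\Lambda(\tau-s)}Y^x_s$. On the diagonal this reads $V^x_s:=\VSig{x}{K}^s_{0,s}=1+\mathbf 1^\top Y^x_s$; this is the continuous-time analogue of \eqref{eq:FSSK_scheme}. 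The state $Y^x$ solves the linear equation
\[
\dot Y^x_s=-\Lambda Y^x_s+\sum_r b_r\,V^x_s\otimes A_r\dot x_s .
\]
The same construction applies to $w$.

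\textbf{Step 2 (candidate triple).} I set $\bK(s,t)_{ij}:=\langle (Y^x_s)_i,(Y^w_t)_j\rangle$, so that $\eta=1+\mathbf 1^\top\bK\mathbf 1=\langle V^x_s,V^w_t\rangle=\kappa_{s,t}$. This holds because $Y$ has no level-$0$ component, so the cross terms $\langle 1, Y\rangle$ vanish. I then differentiate in $s$ and in $t$ using Step 1 and the factorization $\langle a\otimes u,b\otimes v\rangle=\langle a,b\rangle\langle u,v\rangle$. This produces linear terms $-\Lambda\bK$ and $-\bK\Lambda^\top$, together with mixed terms of the form $\langle V^x_s\otimes A_r\dot x_s,(Y^w_t)_j\rangle$ and its symmetric counterpart.

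\textbf{Step 3 (the mixed terms).} The mixed terms are nonlocal. Since $Y^w_t$ is itself an integral whose integrand ends in the letter $A_p\dot w_u$, only that last-letter part pairs with $A_r\dot x_s$. This gives
\[
\sum_r b_r\langle V^x_s\otimes A_r\dot x_s,Y^w_t\rangle=\int_0^t(\cdots)\,e^{-\Lambda^\top(t-u)}\,\gamma(s,u)\,\eta(s,u)\,\dd u,
\]
with $\gamma$ as in \eqref{eq:alpha_gamma_2}. I take $\mathbf\Phi$ and $\mathbf\Psi$ to be exactly these one-sided integrals, in $t$ and in $s$ respectively. They then satisfy the linear ODEs \eqref{eq:goursat-general-psi}--\eqref{eq:goursat-general-phi} with the stated boundary values. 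Computing $\partial^2_{st}\bK$ and substituting back should yield \eqref{eq:goursat-general-K}. The term $-\eta\gamma$ compensates for the fact that the local product $\gamma\eta$ is counted both in $\partial_s\mathbf\Psi$ and in $\partial_t\mathbf\Phi$, and the $\Lambda(\cdot)\Lambda^\top$ term comes from the cross derivative of the two exponential factors. The boundary conditions are immediate from $Y^x_0=Y^w_0=0$.

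\textbf{Step 4 (uniqueness) and main obstacle.} Uniqueness of the system: after integrating, \eqref{eq:goursat-general-K}--\eqref{eq:goursat-general-bc} is a linear Volterra system of the second kind in two variables with bounded coefficients. A Picard iteration, with the usual factorial $(st)^n/(n!)^2$ bounds, gives existence and uniqueness, so the solution built in Steps 2--3 is the solution and therefore $\kappa=\eta$.

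I expect the main obstacle to be the bookkeeping in Step 3. One has to arrange signs and transposes (in particular the sign of the $\Lambda\bK\Lambda^\top$ term and the exact form of $\gamma$) so that the computed derivatives match \eqref{eq:goursat-general-K} literally. It may turn out that $\bK$ must be defined through the undifferentiated states $\int_0^s e^{-\Lambda(s-u)}(\cdot)\,\dd u$, or with a shifted normalization. I would first settle this on the scalar case $R=q=1$ and then check the matrix indices.
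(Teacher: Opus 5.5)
Your argument is correct, and the computation you leave as ``should yield'' goes through with your primary definition $\bK_{ij}=\langle (Y^x_s)_i,(Y^w_t)_j\rangle$; no renormalization is needed.

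The state equation gives $\partial_s\bK=-\Lambda\bK+\mathbf\Phi$ and $\partial_t\bK=-\bK\Lambda^\top+\mathbf\Psi$. Here $\mathbf\Phi$ and $\mathbf\Psi$ are exactly \eqref{eq:auxilliary_expl_2} and \eqref{eq:auxilliary_expl_1}; note the integrand order $\gamma(s,u)\eta(s,u)e^{-\Lambda^\top(t-u)}$, with $\gamma$ on the left. Hence $\partial^2_{st}\bK=\Lambda\bK\Lambda^\top-\Lambda\mathbf\Psi+\partial_t\mathbf\Phi$. Substituting $-\Lambda\mathbf\Psi=\partial_s\mathbf\Psi-\gamma\eta$ gives \eqref{eq:goursat-general-K} verbatim.

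The paper itself only cites \cite{i_part}, and its surrounding text indicates a different route there. One first applies the general kernel trick \cite[Theorem 3.23]{i_part}: pairing the diagonal fundamental equation with itself gives the closed scalar equation $\kappa_{s,t}=1+\int_0^s\int_0^t\kappa_{u,v}\langle K(s,u)\dot x_u,K(t,v)\dot w_v\rangle\,\dd u\,\dd v$. For $K=K^\Lambda_{A,b}$ the integrand factorizes as $\mathbf 1^\top e^{-\Lambda(s-u)}\gamma(u,v)e^{-\Lambda^\top(t-v)}\mathbf 1\,\kappa_{u,v}$. One then sets $\bK(s,t)=\int_0^s\int_0^t e^{-\Lambda(s-u)}\gamma(u,v)\kappa_{u,v}e^{-\Lambda^\top(t-v)}\,\dd u\,\dd v$, which is precisely your Gram matrix, and differentiates.

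The two routes trade off as follows:
\begin{itemize}
\item The kernel-trick route works only with scalar and matrix quantities once the scalar equation is available. It obtains uniqueness by reducing any Goursat solution back to that scalar two-parameter Volterra equation, whose unique solvability holds for general kernels.
\item Your route bypasses the general kernel trick. It uses only the state representation (the continuous-time analogue of \Cref{thm:algo_multiplicative}) and the factorization of inner products, and it shows $\bK$ is a Gram matrix of states. In exchange, you must prove uniqueness of the Goursat system directly, as in your Step 4.
\end{itemize}

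Two small points on that step. First, the pure $(st)^n/(n!)^2$ iteration applies only after you eliminate $\mathbf\Psi,\mathbf\Phi$ through their explicit formulas; their one-sided integrals then cost just a factor $T$. Second, differentiating the level series defining $\bK$ termwise needs the factorial bound $|\pi_n V^x_s|\le (Cs)^n/n!$. This bound holds because $K^\Lambda_{A,b}$ is bounded on $\Delta^2$ and $x\in\cC^1$.
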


\begin{rem}
	We emphasize that for $R=q=1$, $\Lambda=0$, $b=1$, and $A=\mathrm{Id}$, the equation \eqref{eq:goursat-general-K} reduces to the Goursat PDE characterizing the classical signature kernel established in \cite{Salvi2021}. The generalization to our framework requires the auxiliary states \eqref{eq:goursat-general-psi}--\eqref{eq:goursat-general-phi}, which enter linearly into the main equation \eqref{eq:goursat-general-K}. In addition, these states are characterized by linear ODEs and therefore admit the explicit representations
	\begin{align}\label{eq:auxilliary_expl_1}
	\mathbf{\Psi}(s,t)
	& =
	\int_0^s e^{-\Lambda(s-u)}\eta(u,t)\gamma(u,t)\,\dd u,
	\\ 
	\mathbf{\Phi}(s,t)
	& =
	\int_0^t \gamma(s,u)\eta(s,u) e^{-\Lambda^\top(t-u)}\,\dd u. \label{eq:auxilliary_expl_2}
	\end{align}
	With these observations in hand, it should not come as a surprise that the numerical schemes introduced in the classical case \cite{Salvi2021} admit straightforward generalizations to our framework, which we illustrate in the next sections.
\end{rem}
\subsection{Static kernel lift}
A key advantage of the classical signature kernel trick
\cite{Kiraly2016,Salvi2021} is that, without losing tractability, one may first
lift the underlying data from the ambient space into an RKHS,
\(x_t \mapsto \phi(x_t)\in \cH\), and then apply the signature kernel transform
in this possibly infinite-dimensional space. The same idea can be incorporated
in our Volterra setting, as we briefly outline now. 

Given a kernel from the class \eqref{eq:algo_exponential_kernel} and a path
$x\in \cC^{0,1}([0,T];\RR^d)$, recall that the corresponding Volterra path reads
\[
\int_s^t K(t,u)\dd x_u
=
\sum_{r=1}^q
\int_s^t
\mathbf{1}^\top e^{-\Lambda(t-u)} b_r \, \dd (A_r x_u),
\qquad
A_r \in \cL(\RR^d;\RR^m).
\]
Thus, the linear maps \(A_1,\dots,A_q\) may be interpreted as preprocessing
channels which transform the input data before the Volterra kernel is applied. In the spirit of the static kernel lift, one may replace these finite-dimensional
preprocessing maps by feature maps taking values in a Hilbert space. More
precisely, let \(A_r:\RR^d\to \cH\) be sufficiently regular feature maps. %
The resulting Volterra signature then takes values in
\(T((\cH))\), so that even truncated signatures are infinite-dimensional whenever
\(\cH\) is infinite-dimensional. The important observation is that the PDE
characterization of the Volterra signature kernel in
\eqref{eq:signature-kernel}--\eqref{eq:goursat-general-psi} remains valid.
Only the coefficient \(\gamma\) has to be replaced by the Hilbert-space inner
product of the lifted increments, namely
\begin{equation}\label{eq:coeff_Hilbi}
    \gamma(s,t)\in \RR^{R\times R},
    \qquad
    \gamma_{ij}(s,t)
    =
    \sum_{r,p=1}^q
    b_r^i b_p^j
    \big\langle
        \dot X_s^r,
        \dot W_t^p
    \big\rangle_{\cH}, \qquad X_s^r =A_r(x_s), \quad  W_t^p = A_p(w_t).
\end{equation}

Hence, provided that the Hilbert-space inner products can be evaluated
efficiently, for instance through a reproducing kernel such as the RBF kernel,
the computation of the Volterra signature kernel remains 
tractable. 

\subsection{A predictor-corrector finite-difference scheme}
We now present our main numerical scheme for solving the PDE system
\eqref{eq:goursat-general-K}-\eqref{eq:goursat-general-psi}. The scheme applies,
in particular, to the general coefficient matrix \eqref{eq:coeff_Hilbi} arising
from a static RKHS lift.

We consider two grids on $[0,S]$ and $[0,T]$ respectively
\[
0=s_0<\cdots<s_{J_s}=S,
\qquad
0=t_0<\cdots<t_{J_t}=T,
\]
and set
\[
\Delta s_i:=s_{i+1}-s_i,
\qquad
\Delta t_j:=t_{j+1}-t_j.
\]
In the sequel, we write
\[
\bK_{i,j}\approx \bK(s_i,t_j),\qquad
\mathbf{\Psi}_{i,j}\approx \mathbf{\Psi}(s_i,t_j),\qquad
\mathbf{\Phi}_{i,j}\approx \mathbf{\Phi}(s_i,t_j),
\]
and set $\eta_{i,j}:=1+\mathbf 1^\top \bK_{i,j}\mathbf 1.$ Moreover, we denote the static kernel induced by the feature maps $A_1,\dots,A_r$ described in the last section, by $k^{r,p}_\mathrm{stat}(x,w):= \la A_r(x),A_p(w)\ra_{\cH}$. The coefficient matrix $\gamma$ is approximated by \[
\gamma_{nm}(s_i,t_j) \approx \gamma^{nm}_{ij}:= b^\top G_{i,j} b, \quad G_{i,j} = (G_{i,j}^{a,b})_{1\leq a,b \leq q}, 
\] 
where 
\begin{align*}
G_{i,j}^{a,b}=k^{a,b}_\mathrm{stat}(x_{s_{i+1}},w_{t_{j+1}})& - k^{a,b}_\mathrm{stat}(x_{s_{i}},w_{t_{j+1}}) - k^{a,b}_\mathrm{stat}(x_{s_{i+1}},w_{t_{j}})+k^{a,b}_\mathrm{stat}(x_{s_{i}},w_{t_{j}}).
\end{align*}
According to the boundary conditions \eqref{eq:goursat-general-bc}, we initialize
\[
\bK_{i,0}=0,
\qquad
\bK_{0,j}=0,
\qquad
\mathbf{\Psi}_{0,j}=0,
\qquad
\mathbf{\Phi}_{i,0}=0,
\qquad
i=0,\dots,J_s,\quad j=0,\dots,J_t,
\]
and hence, in particular, $\eta_{i,0}=\eta_{0,j}=1$. Moreover, we introduce the exponential transport operators
\[
E_i^s:=e^{-\Delta s_i\Lambda},
\qquad
P_i^s:=\Delta s_i\,\varphi_1(-\Delta s_i\Lambda),
\qquad
Q_i^s:=\Delta s_i\,\varphi_2(-\Delta s_i\Lambda),
\]
and
\[
E_j^t:=e^{-\Delta t_j\Lambda^\top},
\qquad
P_j^t:=\Delta t_j\,\varphi_1(-\Delta t_j\Lambda^\top),
\qquad
Q_j^t:=\Delta t_j\,\varphi_2(-\Delta t_j\Lambda^\top),
\]
where $\varphi_1(z)=(e^z-1)/z$ and $
\varphi_2(z)=(e^z-z-1)/z^2.$ On each cell, we first compute a predictor. To this end, we set
\[
\mathbf H^{(0)}_{i,j}
:=
\gamma_{i,j}\,\frac{\eta_{i,j}+\eta_{i,j+1}}{2},
\qquad
\mathbf J^{(0)}_{i,j}
:=
\gamma_{i,j}\,\frac{\eta_{i,j}+\eta_{i+1,j}}{2}.
\]
Using these quantities, we define the predictor values for the auxiliary variables by
\begin{align*}
\mathbf{\Psi}^{p}_{i+1,j+1}
&=
E_i^s\,\mathbf{\Psi}_{i,j+1}
+
P_i^s\,\mathbf H^{(0)}_{i,j},
\\
\mathbf{\Phi}^{p}_{i+1,j+1}
&=
\mathbf{\Phi}_{i+1,j}E_j^t
+
\mathbf J^{(0)}_{i,j}P_j^t.
\end{align*}
With the notation $\cL(M):=\Lambda M \Lambda ^\top$, the predictor for the main variable is then given by
\begin{align*}
\bK^{p}_{i+1,j+1}
&=
\bK_{i+1,j}
+
\bK_{i,j+1}
-
\bK_{i,j} 
+
\frac{\Delta s_i\Delta t_j}{2}
\Bigl(
\cL(\bK_{i+1,j})+\cL(\bK_{i,j+1})
\Bigr)
\\ & \qquad
-
\frac12\,\gamma_{i,j}\bigl(\eta_{i+1,j}+\eta_{i,j+1}\bigr)
+
\bigl(\mathbf{\Psi}^{p}_{i+1,j+1}-\mathbf{\Psi}_{i,j+1}\bigr)
+
\bigl(\mathbf{\Phi}^{p}_{i+1,j+1}-\mathbf{\Phi}_{i+1,j}\bigr).
\end{align*}
Finally, we set $\eta^{p}_{i+1,j+1}
:=
1+\mathbf 1^\top \bK^{p}_{i+1,j+1}\mathbf 1.$ Given the predictors, the corrector step we update the source terms by is
\[
\mathbf H^{(1)}_{i,j}
:=
\gamma_{i,j}\,\frac{\eta_{i+1,j}+\eta^{p}_{i+1,j+1}}{2},
\qquad
\mathbf J^{(1)}_{i,j}
:=
\gamma_{i,j}\,\frac{\eta_{i,j+1}+\eta^{p}_{i+1,j+1}}{2}.
\]
We then define
\begin{align*}
\mathbf{\Psi}_{i+1,j+1}
&=
E_i^s\mathbf{\Psi}_{i,j+1}
+
P_i^s\mathbf H^{(0)}_{i,j}
+
Q_i^s\bigl(\mathbf H^{(1)}_{i,j}-\mathbf H^{(0)}_{i,j}\bigr),
\\
\mathbf{\Phi}_{i+1,j+1}
&=
\mathbf{\Phi}_{i+1,j}E_j^t
+
\mathbf J^{(0)}_{i,j}P_j^t
+
\bigl(\mathbf J^{(1)}_{i,j}-\mathbf J^{(0)}_{i,j}\bigr)Q_j^t.
\end{align*}
Finally, the corrected update for $\bK$ is given by
\begin{align*}
\bK_{i+1,j+1}
&=
\bK_{i+1,j}
+
\bK_{i,j+1}
-
\bK_{i,j}
\\ & \quad 
+
\frac{\Delta s_i\Delta t_j}{4}
\Bigl(
\cL(\bK_{i,j})
+
\cL(\bK_{i+1,j})
+
\cL(\bK_{i,j+1})
+
\cL(\bK^{p}_{i+1,j+1})
\Bigr)
\\
&\quad
-
\frac14\,\gamma_{i,j}
\Bigl(
\eta_{i,j}
+
\eta_{i+1,j}
+
\eta_{i,j+1}
+
\eta^{p}_{i+1,j+1}
\Bigr)
\\
&\quad
+
\bigl(\mathbf{\Psi}_{i+1,j+1}-\mathbf{\Psi}_{i,j+1}\bigr)
+
\bigl(\mathbf{\Phi}_{i+1,j+1}-\mathbf{\Phi}_{i+1,j}\bigr).
\end{align*}
The scalar quantity at the new corner is then updated by $\eta_{i+1,j+1}
=
1+\mathbf 1^\top \bK_{i+1,j+1}\mathbf 1.$ This yields an explicit predictor--corrector scheme for the transformed system
\eqref{eq:goursat-general-K}--\eqref{eq:goursat-general-phi}. More precisely, on each cell
$[s_i,s_{i+1}]\times[t_j,t_{j+1}]$, the values
\[
\bK_{i,j},\quad \bK_{i+1,j},\quad \bK_{i,j+1},\quad
\mathbf{\Psi}_{i,j+1},\quad \mathbf{\Phi}_{i+1,j}
\]
are already known, and the above formulas produce
\[
\bK_{i+1,j+1},\qquad
\mathbf{\Psi}_{i+1,j+1},\qquad
\mathbf{\Phi}_{i+1,j+1},\qquad
\eta_{i+1,j+1}.
\]

\begin{algorithm}[h]
\caption{Predictor--corrector finite-difference scheme}\label{alg:fd_scheme_antidiag_pc}
\begin{algorithmic}[1]
\STATE \textbf{Input:} grids $\{s_i\}_{i=0}^{J_s}$, $\{t_j\}_{j=0}^{J_t}$, cell coefficients $\gamma_{i,j}\in\RR^{R\times R}$, matrix $\Lambda\in\RR^{R\times R}$, and precomputed matrices
\[
E_i^s=e^{-\Delta s_i\Lambda},\quad
P_i^s=\Delta s_i\varphi_1(-\Delta s_i\Lambda),\quad
Q_i^s=\Delta s_i\varphi_2(-\Delta s_i\Lambda),
\]
\[
E_j^t=e^{-\Delta t_j\Lambda^\top},\quad
P_j^t=\Delta t_j\varphi_1(-\Delta t_j\Lambda^\top),\quad
Q_j^t=\Delta t_j\varphi_2(-\Delta t_j\Lambda^\top).
\]
\STATE \textbf{Output:} arrays $\{\bK_{i,j},\mathbf{\Psi}_{i,j},\mathbf{\Phi}_{i,j},\eta_{i,j}\}$ and $\kappa_{J_s,J_t}\approx \eta_{J_s,J_t}$

\STATE \textbf{Initialization:}
\[
\bK_{i,0}=\bK_{0,j}=0,\qquad
\mathbf{\Psi}_{0,j}=0,\qquad
\mathbf{\Phi}_{i,0}=0,\qquad
\eta_{i,0}=\eta_{0,j}=1.
\]

\STATE \textbf{Set} $\cL(\bM):=\Lambda \bM \Lambda^\top$.

\FOR{$m=0,\dots,J_s+J_t-2$}
\STATE \textbf{Parallel for each cell} $(i,j)$ with $i+j=m$, $0\le i<J_s$, $0\le j<J_t$:
\begingroup
\setlength{\baselineskip}{1.3\baselineskip}

\STATE \hspace{1.5em}$\mathbf H^{(0)}_{i,j}\gets 0.5\,\gamma_{i,j}(\eta_{i,j}+\eta_{i,j+1}),\qquad
\mathbf J^{(0)}_{i,j}\gets 0.5\,\gamma_{i,j}(\eta_{i,j}+\eta_{i+1,j})$

\STATE \hspace{1.5em}$\mathbf{\Psi}^{\mathrm{pred}}_{i+1,j+1}\gets E_i^s\mathbf{\Psi}_{i,j+1}+P_i^s\mathbf H^{(0)}_{i,j}$
\STATE \hspace{1.5em}$\mathbf{\Phi}^{\mathrm{pred}}_{i+1,j+1}\gets \mathbf{\Phi}_{i+1,j}E_j^t+\mathbf J^{(0)}_{i,j}P_j^t$

\STATE \hspace{1.5em}$\bK^{\mathrm{pred}}_{i+1,j+1}\gets
\bK_{i+1,j}+\bK_{i,j+1}-\bK_{i,j}$
\STATE \hspace{7.3em}$
+0.5\Delta s_i\Delta t_j\bigl(\cL(\bK_{i+1,j})+\cL(\bK_{i,j+1})\bigr)$
\STATE \hspace{7.3em}$
-0.5\,\gamma_{i,j}(\eta_{i+1,j}+\eta_{i,j+1})$
\STATE \hspace{7.3em}$
+\bigl(\mathbf{\Psi}^{\mathrm{pred}}_{i+1,j+1}-\mathbf{\Psi}_{i,j+1}\bigr)
+\bigl(\mathbf{\Phi}^{\mathrm{pred}}_{i+1,j+1}-\mathbf{\Phi}_{i+1,j}\bigr)$

\STATE \hspace{1.5em}$\eta^{\mathrm{pred}}_{i+1,j+1}\gets 1+\mathbf 1^\top \bK^{\mathrm{pred}}_{i+1,j+1}\mathbf 1$

\STATE \hspace{1.5em}$\mathbf H^{(1)}_{i,j}\gets 0.5\,\gamma_{i,j}(\eta_{i+1,j}+\eta^{\mathrm{pred}}_{i+1,j+1}),\qquad
\mathbf J^{(1)}_{i,j}\gets 0.5\,\gamma_{i,j}(\eta_{i,j+1}+\eta^{\mathrm{pred}}_{i+1,j+1})$

\STATE \hspace{1.5em}$\mathbf{\Psi}_{i+1,j+1}\gets
E_i^s\mathbf{\Psi}_{i,j+1}+P_i^s\mathbf H^{(0)}_{i,j}
+Q_i^s\bigl(\mathbf H^{(1)}_{i,j}-\mathbf H^{(0)}_{i,j}\bigr)$

\STATE \hspace{1.5em}$\mathbf{\Phi}_{i+1,j+1}\gets
\mathbf{\Phi}_{i+1,j}E_j^t+\mathbf J^{(0)}_{i,j}P_j^t
+\bigl(\mathbf J^{(1)}_{i,j}-\mathbf J^{(0)}_{i,j}\bigr)Q_j^t$

\STATE \hspace{1.5em}$\bK_{i+1,j+1}\gets
\bK_{i+1,j}+\bK_{i,j+1}-\bK_{i,j}$
\STATE \hspace{7.3em}$
+0.25\Delta s_i\Delta t_j\bigl(\cL(\bK_{i,j})+\cL(\bK_{i+1,j})+\cL(\bK_{i,j+1})+\cL(\bK^{\mathrm{pred}}_{i+1,j+1})\bigr)$
\STATE \hspace{7.3em}$
-0.25\,\gamma_{i,j}\bigl(\eta_{i,j}+\eta_{i+1,j}+\eta_{i,j+1}+\eta^{\mathrm{pred}}_{i+1,j+1}\bigr)$
\STATE \hspace{7.3em}$
+\bigl(\mathbf{\Psi}_{i+1,j+1}-\mathbf{\Psi}_{i,j+1}\bigr)
+\bigl(\mathbf{\Phi}_{i+1,j+1}-\mathbf{\Phi}_{i+1,j}\bigr)$

\STATE \hspace{1.5em}$\eta_{i+1,j+1}\gets 1+\mathbf 1^\top \bK_{i+1,j+1}\mathbf 1$

\endgroup
\ENDFOR

\STATE \textbf{return} $\{\bK_{i,j},\mathbf{\Psi}_{i,j},\mathbf{\Phi}_{i,j},\eta_{i,j}\}$ and $\kappa_{J_s,J_t}\gets \eta_{J_s,J_t}$
\end{algorithmic}
\end{algorithm}
\begin{rem}[Dyadic refinement]

Suppose that the input paths $x,w$ are piecewise linear on two grids
\[
\cI_0=\{0=s_0<\cdots<s_{J_s}=S\},
\qquad
\cJ_0=\{0=t_0<\cdots<t_{J_t}=T\},
\]
and set $\cP_0:=\cI_0\times \cJ_0$. For any $\lambda\in\NN_0$, we define the dyadic refinement $\cP_\lambda$ by
\[
\cP_\lambda \cap \bigl([s_i,s_{i+1}] \times [t_j,t_{j+1}]\bigr)
:=
\left\{
\bigl(s_i+k2^{-\lambda}\Delta s_i,\; t_j+l2^{-\lambda}\Delta t_j\bigr)
\right\}_{0\leq k,l \leq 2^{\lambda}}.
\]
The predictor-corrector scheme described above can then be applied on $\cP_\lambda$ without changing its local update rule: one simply replaces the mesh widths $\Delta s_i,\Delta t_j$ by the refined step sizes
\[
\Delta s_i^\lambda:=2^{-\lambda}\Delta s_i,
\qquad
\Delta t_j^\lambda:=2^{-\lambda}\Delta t_j,
\]
and performs the same cellwise update for the unknowns $\bK$, $\mathbf{\Psi}$ and $\mathbf{\Phi}$ on the refined mesh.
\end{rem}

\section{Computational cost analysis}\label{sec:cost_analysis}

In this section we analyze the computational costs of the algorithms for Volterra signatures introduced in \Cref{sec:algo_quadratic} and the finite state space kernels in \Cref{sec:fssk_algorithmic}, and finally, the finite-difference scheme presented in  Section~\ref{sec:sig-kernel}. By \emph{computational cost} we mean the number of elementary arithmetic operations (additions and multiplications) required by a dense implementation of the stated recursions, up to constant factors that depend only on fixed model parameters.

To limit the scope, we focus on the asymptotic dependence of these costs on the number of time steps $J$, the truncation level $N$, and, where applicable, the state space dimension $R$. Concretely, we introduce the following definition.

\begin{defn}\label{def:cost_asymptotic_dependence}
We say that the cost of a specific (dense) implementation of an algorithm is \emph{asymptotically in $(J,R,N)$ of order $\Theta(f(J,R,N))$} for some function $f:\NN^3\to(0,\infty)$ if there exist constants $c,C>0$ and $J_0,R_0,N_0\in\NN$ such that the number of elementary arithmetic operations $\mathrm{Cost}(J,R,N)$ satisfies
$$
c\,f(J,R,N) \le \mathrm{Cost}(J,R,N)\le C\,f(J,R,N)
$$
for all $J\ge J_0$, $R\ge R_0$, $N\ge N_0$.
\end{defn}

In general, it is of course not clear a priori that the cost of a given algorithm can be described by this multivariate $\Theta$-convention.
However, in our setting the dependence on $J$ and $R$ is visible directly from explicit prefactors in the stated recursions, so the main discussion below concerns the asymptotic order in the truncation level~$N$. 

\begin{rem}
The natural benchmark for our algorithms is the cost of computing the standard (non-Volterra) truncated signature of a piecewise linear path with $J$ steps and truncation level $N$ in $\RR^{d}$, which is of order $\Theta(J\,d^{N})$. As recalled in the introduction (cf.\ \eqref{eq:sig_chen_comp} and the surrounding discussion), this cost follows from iterating Chen's identity together with a Horner-type evaluation of the tensor exponential on each linear segment. The algorithms developed in this paper for Volterra signatures necessarily carry an additional cost: the general scheme of \Cref{sec:algo_quadratic} incurs a quadratic factor $J^{2}$ instead of $J$, while the finite state space scheme of \Cref{sec:algo_multiplicative} recovers linearity in $J$, but involves a factor $R^{2}$ (where $R$ is the state space dimension).
\end{rem}

We first recap basic cost estimates for tensor operations in \Cref{sec:combinatorial_estimates}, then discuss the computational cost of the general approximation scheme from \Cref{sec:algo_quadratic} in \Cref{sec:cost_quad}. Finally, in \Cref{sec:compcostFSSK} we carry out the analogous cost analysis for the finite state space recursion.

\subsection{Basics}\label{sec:combinatorial_estimates}
Throughout this subsection we assume a dense representation of homogeneous tensors, i.e.\ an element $\bx^{(k)}\in(\RR^m)^{\otimes k}$ is stored by its $m^k$ coefficients in the standard basis.
For simplicity, we will exclude the trivial case $m=1$ throughout as this would sometimes lead to  case distinctions, i.e., we assume $m \in \NN_{\ge 2}$ fixed.
We recap the computational costs of forming (truncated) tensor products and shuffle-by-vector updates (as used in \Cref{alg:quad_evalVtE,alg:fsskFG}). Note that the following two lemmas are elementary and we include their proofs for lack of a proper reference.
For notational convenience, for two functions $f,g:\NN\to(0,\infty)$ we write $$f(N)\sim g(N), \quad \text{if and only if}\quad \lim_{N\to\infty} f(N)^{-1}g(N)\in(0,\infty).$$

\begin{lem}\label{sec:combinatorial_estimates:tensors}
Let $\bx^{(k)}\in(\RR^m)^{\otimes k}$ and $\by^{(\ell)}\in(\RR^m)^{\otimes \ell}$ with $N := k+\ell$. The cost of forming the concatenation product $\bx^{(k)}\otimes \by^{(\ell)}\in(\RR^m)^{\otimes N}$ is of order $\Theta(m^{N})$. 
Moreover, for $\bx,\by\in T^N(\RR^m)$ the costs of forming the truncated tensor product $\bx\otimes_N \by$ is of the order
$\Theta (N m^N).
$
\end{lem}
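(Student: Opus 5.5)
The plan is a direct count of operations in the dense representation, with the lower bound coming from the size of the output.

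\emph{Concatenation product.} In the basis $\{e_w\}$ we have $e_u\otimes e_v=e_{uv}$, so the coefficient of $\bx^{(k)}\otimes\by^{(\ell)}$ at the word $uv$ (with $|u|=k$, $|v|=\ell$) is the single product $x^u y^v$. Since the map $(u,v)\mapsto uv$ is a bijection $\cW_m^k\times\cW_m^\ell\to\cW_m^N$, forming the product costs exactly $m^k m^\ell=m^N$ multiplications and no additions. This gives the upper bound. For the lower bound, each of the $m^N$ output coefficients depends on its own pair of inputs, so at least $m^N$ multiplications are unavoidable in a dense implementation. Hence the cost is $\Theta(m^N)$.

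\emph{Truncated product.} By \Cref{def:extended_TA}, the level-$n$ component of $\bx\otimes_N\by$ is
\[
\sum_{k=0}^n \bx^{(k)}\otimes\by^{(n-k)}, \qquad 0\le n\le N.
\]
This consists of $n+1$ concatenation products, each costing $m^n$ by the first part, plus $n\,m^n$ additions. Summing over levels gives a total of order
\[
\sum_{n=0}^N (2n+1)\,m^n .
\]
The one step needing care is the asymptotics of this sum. I would bound it above by
\[
(2N+1)\sum_{n\le N} m^n \le (2N+1)\,\frac{m^{N+1}}{m-1} \le C N m^N ,
\]
using $m\ge2$ and with $C$ depending only on $m$. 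For the lower bound, the top level alone contributes $(2N+1)m^N \ge N m^N$. Together these give $\Theta(Nm^N)$, with constants depending only on the fixed $m$, as \Cref{def:cost_asymptotic_dependence} permits.
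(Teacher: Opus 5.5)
Your proposal is correct and follows essentially the same route as the paper: form the concatenation product coefficientwise at cost $m^N$, then sum the levelwise costs of order $(n+1)m^n$ over $n\le N$. The only difference is cosmetic. The paper evaluates $\sum_{n=0}^N(n+1)m^n$ in closed form, whereas you bound the sum above by $(2N+1)m^{N+1}/(m-1)$ and below by the top level alone; this is equally valid for fixed $m\ge 2$.
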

\begin{proof}
The tensor $\bx^{(k)}\otimes \by^{(\ell)}\in(\RR^m)^{\otimes(k+\ell)}=(\RR^m)^{\otimes N}$ has $m^{N}$ coefficients in the standard basis and can be formed coefficientwise, hence the cost is of order $\Theta(m^{N})$.
For $\bx,\by\in T^N(\RR^m)$, at level $n$ the truncated product satisfies
$$
\pi_n(\bx\otimes_N \by)=\sum_{r=0}^n \bx^{(r)}\otimes \by^{(n-r)},\qquad n=0,\dots,N.
$$
Each summand is a tensor in $(\RR^m)^{\otimes n}$ and can be formed coefficientwise at cost of order $\Theta(m^n)$, so computing $\pi_n(\bx\otimes_N \by)$ costs of order $\Theta((n+1)m^n)$. Summing over $n=0,\dots,N$ yields
$$
\sum_{n=0}^N (n+1)m^n
= \frac{m\big(1-(N+2)m^{N+1}+(N+1)m^{N+2}\big)}{(1-m)^2} \sim N m^N,
$$
which proves the claim.
\end{proof}

\begin{lem}\label{sec:combinatorial_estimates:shuffle}
Let $\bx^{(k)}\in(\RR^m)^{\otimes k}$ and $y\in\RR^m$ with $N:=k+1$. The cost of forming the shuffle product $\bx^{(k)}\shuffle y\in(\RR^m)^{\otimes N}$ is of order $\Theta(N m^{N})$.
\end{lem}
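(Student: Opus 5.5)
We use the explicit single-letter shuffle formula \eqref{eq:shuffle_single_letter}, extended to $\bx^{(k)}$ by linearity, and count operations coefficientwise, as in the proof of \Cref{sec:combinatorial_estimates:tensors}. Writing $\bx^{(k)}=\sum_{w\in\cW^k_m}x_w e_w$ and $y=\sum_{j}y_je_j$, formula \eqref{eq:shuffle_single_letter} gives
\[
\bx^{(k)}\shuffle y=\sum_{i=0}^{k}\;\sum_{w\in\cW_m^k}\;\sum_{j\in\cA_m} x_w\,y_j\, e_{w_{1}\cdots w_{i}\, j\, w_{i+1}\cdots w_k}.
\]
So the shuffle is the sum of $k+1=N$ tensors $T_i\in(\RR^m)^{\otimes N}$, where $T_i$ is obtained by inserting $y$ in the $i$-th slot of $\bx^{(k)}$.

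\textbf{Upper bound.} Each $T_i$ is a permuted outer product of $\bx^{(k)}$ and $y$. Its coefficients are $(T_i)_{u}=x_{u\setminus u_{i+1}}\,y_{u_{i+1}}$, where $u\setminus u_{i+1}$ is the word $u$ with its $(i+1)$-st letter deleted. Forming $T_i$ therefore costs $m^{N}$ multiplications, exactly as in \Cref{sec:combinatorial_estimates:tensors}. Accumulating the $N$ tensors costs $(N-1)m^N$ additions. The total is at most $2Nm^N$ operations, i.e.\ $O(Nm^N)$.

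\textbf{Lower bound.} The dense formula above uses $Nm^N$ multiplications and $(N-1)m^N$ additions, since every $T_i$ contributes to every output coefficient. For this dense implementation the count is therefore at least $Nm^N$, which gives $\Theta(Nm^N)$ in the sense of \Cref{def:cost_asymptotic_dependence}.

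The main obstacle is interpreting the lower bound. The products $x_wy_j$ do not depend on $i$, so they could be formed once at cost $m^N$. Even then, the output coefficient at $u$ equals $\sum_{i}x_{u\setminus u_{i+1}}y_{u_{i+1}}$, a sum of $N$ terms. For generic inputs these terms are distinct, so with no cancellation assumed the summation alone needs about $(N-1)m^N$ additions, and this still forces the $\Theta(Nm^N)$ order.
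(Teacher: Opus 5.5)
Your proof is correct and follows essentially the paper's argument: both use the single-letter insertion formula \eqref{eq:shuffle_single_letter} and count $N$ multiplications and $N-1$ additions for each of the $m^N$ output coefficients, and your grouping into the $N$ permuted outer products $T_i$ just reorganises that same count. Your closing paragraph on algorithm-independent lower bounds is not needed, since \Cref{def:cost_asymptotic_dependence} measures the cost of a specific dense implementation; as written it is also slightly off, because the $N$ terms need not be distinct (for $u=112$ the coefficient is $2x_{12}y_1+x_{11}y_2$).
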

\begin{proof}
By \eqref{eq:shuffle_single_letter} in \Cref{def:shuffle}, the shuffle product $\bx^{(k)}\shuffle y\in(\RR^m)^{\otimes(k+1)}$ is obtained by inserting $y$ into $\bx^{(k)}$ at all possible positions. Writing $\bx^{(k)}=\sum_{i_1,\dots,i_k=1}^m \bx^{i_1\cdots i_k}\, e_{i_1}\otimes\cdots\otimes e_{i_k}$ and $y=\sum_{j=1}^m y_j e_j$, one has for $i_1,\dots,i_{k+1}\in\{1,\dots,m\}$ that
$$
(\bx^{(k)}\shuffle y)^{i_1\cdots i_{k+1}}
=
\sum_{r=1}^{k+1} \bx^{i_1\cdots i_{r-1}\, i_{r+1}\cdots i_{k+1}}\; y_{i_r}.
$$
Thus, computing each coefficient $(\bx^{(k)}\shuffle y)^{i_1\cdots i_{k+1}}$ requires $k+1=N$ multiplications and $k$ additions. Since there are $m^{k+1}=m^{N}$ coefficients, the total cost is of order $(k+1)m^{k+1}=N m^{N}$, i.e.\ $\Theta(N m^{N})$.
\end{proof}

\subsection{Costs of the general approximative algorithm}\label{sec:cost_quad}

We next summarize the asymptotic computational costs of the general approximative scheme \eqref{eq:higher_order_scheme}-\eqref{eq:coeffcicients_higher_order}, distinguishing between a direct implementation, the shuffle-recursive evaluation under \Cref{def:symmetric_cK}, and the scalar case $q=1$ with a Horner scheme.

\begin{prop}\label{prop:cost_quad}
We work in the setting of \Cref{thm:quadratic_aglo} and consider the computation
of ${\bv}_j \approx \pi_{\le N}\VSig{x}{K}^{t_j}_{0,t_j}$, $j=1,\dots,J$,
for truncation levels $N\in\NN$ via the higher-order scheme
\eqref{eq:higher_order_scheme}--\eqref{eq:coeffcicients_higher_order},
implemented as in \Cref{alg:higher_order_vsig}. Let $K$ be as in
Notation~\ref{not:comp_sec}, with scalar components
$k_1,\dots,k_q\in L^{\infty,1}(\Delta^2;\RR)$ and linear maps
$A_1,\dots,A_q\in\mathcal L(\RR^d;\RR^m)$, and let $x$ be piecewise linear on
a time grid $t_0<t_1<\cdots<t_J$.

Assume that the $\rho$-dependent local kernel coefficient tensors
$(\widehat{\cK}_{i,j}^{\rho})_{1\le i\le j,\rho\in \decoSet}$ appearing in
\Cref{alg:higher_order_vsig} are precomputed
(cf.\ \Cref{sec:kernel_computations}). If $(q,m,d,{\decoNum})$ are fixed
with $m>1$, then the computational costs have the following asymptotic
dependence with respect to $(J,N)$, in the sense of
\Cref{def:cost_asymptotic_dependence}:
\begin{enumerate}[label=(\roman*), itemsep=0.4em]
    \item If each product ${\bv}\otimes_N\mathcal E^\rho$ is evaluated by a
    direct implementation of \eqref{eq:mathcalE}, respectively its
    $\rho$-dependent analogue, then the costs are of order
    \[
        \Theta\big(J^2\,m^N\,(q^N+N)\big).
    \]

    \item\label{itm:shuffle_costs} If \Cref{def:symmetric_cK} holds and each
    product ${\bv}\otimes_N\mathcal E^\rho$ is evaluated via the
    shuffle-recursive scheme \Cref{alg:quad_evalVtE}, then the costs are of
    order
    \[
        \Theta\big(J^2\,N\,m^N\big).
    \]

    \item If $q=1$ and each product ${\bv}\otimes_N\mathcal E^\rho$ is
    evaluated via the Horner scheme \Cref{alg:quad_evalVtE_q1}, then the costs
    are of order
    \[
        \Theta\big(J^2\,m^N\big).
    \]

        \item If \Cref{hyp:fft_convolution} holds and {\bf v} is evaluated by the FFT-accelerated scheme \Cref{alg:fft}, then
    the costs are of order
    \[
        \Theta\bigl(J\log(J)\,N^q\,m^N\bigr).
    \]
\end{enumerate}
\end{prop}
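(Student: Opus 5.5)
The plan is to count operations in \Cref{alg:higher_order_vsig} call by call. The algorithm makes $\Theta(J^2)$ invocations of \textsc{EvalVtE}: $\Theta(|\decoSet|^2 J^2/2)$ from the interpolation loop over $(i,a,b,\rho)$, and $\Theta(|\decoSet| J^2/2)$ from the readout loop over $(j,i,\rho)$. Since $|\decoSet|$ is fixed, these are $\Theta(J^2)$ calls. Everything else (the local interpolation solves, the increments $y_i^p$, the additions of truncated tensors) costs $O(J\,m^N)$ or $O(J^2 m^N)$ and is dominated. So for (i)--(iii) it suffices to show that a single call to \textsc{EvalVtE} costs $\Theta(m^N(q^N+N))$, $\Theta(Nm^N)$ and $\Theta(m^N)$, respectively. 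The upper and lower bounds both follow because the call count is exactly quadratic in $J$.

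For (i), a direct evaluation of \eqref{eq:mathcalE} forms, at level $n$, the $q^n$ monomials $A_{p_1}y\otimes\cdots\otimes A_{p_n}y$. Each costs $\Theta(m^n)$ if built incrementally via \Cref{sec:combinatorial_estimates:tensors}, giving $\sum_n q^n m^n\sim (qm)^N$ for $q\ge 2$. The truncated product $\bv\otimes_N\mathcal E$ then costs $\Theta(Nm^N)$ by \Cref{sec:combinatorial_estimates:tensors}. Together this gives $\Theta(m^N(q^N+N))$, and the formula also covers $q=1$.

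For (ii), I would count \Cref{alg:quad_evalVtE}. At degree $n$ there are $\binom{n+q-1}{q-1}=\Theta(n^{q-1})$ multi-indices $\ell$, and each performs $q^2$ shuffle-by-vector updates producing tensors of level $n+1$. By \Cref{sec:combinatorial_estimates:shuffle} each update costs $\Theta(n\,m^{n+1})$. This is the main obstacle: a naive sum gives $\sum_n n^q m^{n}$, i.e.\ $N^q m^N$, not $Nm^N$. I would try to show that the claimed $\Theta(Nm^N)$ holds because $E_p(\ell)$ at degree $n$ only has to be stored at tensor level $N-1-n$. The degree-$n$ node is the coefficient of the shuffle polynomial applied at the top, so its tensor has level $(N-1)-|\ell|$ in Horner fashion. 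The per-node cost is therefore $\Theta((N-n)m^{N-n})$. Summing over $n$ with weights $n^{q-1}$ gives a geometric series dominated by $n=0,1$, i.e.\ $\Theta(Nm^N)$. The final product with $\bv$ adds $\Theta(Nm^N)$. If this accounting fails, I would fall back to arguing via the compressed symmetric representation of \Cref{rem:mult_horner}.

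For (iii), \Cref{alg:quad_evalVtE_q1} at level $n$ performs $n-1$ steps, the $k$-th costing $\Theta(m^{k+1})$. This gives $\Theta(m^n)$ per level and $\sum_n m^n=\Theta(m^N)$ per call. For (iv), in \Cref{alg:fft}, for each level $n$, each $(a,\rho,r\le n)$, and (for $q>1$) each multi-index with $|\ell|=r-1$, we do an FFT of length $L=\Theta(J)$ on vectors of $m^n$-dimensional tensors. Each such FFT costs $\Theta(J\log J\, m^n)$, and forming $G_i$ adds only $O(Jm^n)$. The number of source terms at level $n$ is $\sum_{r\le n}\Theta(r^{q-1})=\Theta(n^q)$. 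Summing $n^q m^n$ over $n\le N$ gives $\Theta(N^q m^N)$. The interpolation solves and the readout are of lower order, which yields $\Theta(J\log J\,N^q m^N)$.
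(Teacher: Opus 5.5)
Your proposal is correct and follows essentially the same route as the paper. You count the $\Theta(J^2)$ calls to \textsc{EvalVtE} in the two quadratic stages and bound the per-call cost, and you settle (ii) exactly as the paper does: $E_p(\ell)$ with $|\ell|=n$ only carries levels up to $N-1-n$, so the weighted sum $\sum_{n}\binom{n+q-1}{q-1}\sum_{k\le N-2-n}(k+1)m^{k+1}$ is dominated by its $n=0$ term (\Cref{sec:combinatorial_estimates:asymptotics}), and no fallback to symmetric compression is needed. The only slip is in (iv), where the FFT readout of $\bv_j$ is of the same order $J\log(J)\,N^q m^N$ as the coefficient stage rather than of lower order; this leaves the bound unchanged.
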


\begin{rem}
In \ref{itm:shuffle_costs} it is of course not claimed that the costs do not
depend on $q$ or on the number of exponents in $\decoSet$. Rather, for fixed
$(q,{\decoNum})$ these quantities only enter through constant prefactors. The
higher-order interpolation therefore does not change the asymptotic dependence
on $(J,N)$ compared with the left-point scheme.
\end{rem}

\begin{proof}[Proof of Proposition~\ref{prop:cost_quad}]
We first consider the non-FFT implementations. As can be seen from
\Cref{alg:higher_order_vsig}, the higher-order scheme has two quadratic
summation stages. First, for each cell $i$ and interpolation node $\theta_a$,
the value $\widehat{\mathbf F}_i^{\theta_a}$ is obtained by summing all
previous-cell contributions $b<i$ and all exponents $\rho\in\decoSet$. Second,
after the interpolation coefficients $\mathbf C_{i,\rho}$ have been computed,
each diagonal readout value ${\bv}_j$ is obtained by summing the contributions
from $i=1,\dots,j$ and $\rho\in\decoSet$. Since
$|\decoSet|=\decoNum+1$ is fixed, both stages contain $\Theta(J^2)$ calls to
the same local evaluation routines
\[
    \textsc{EvalVtE}
    \bigl(
        \mathbf C_{i,\rho},
        y_i^1,\dots,y_i^q,
        N,
        \cK_{i,j}^{\rho}
    \bigr).
\]
The local interpolation systems are solved componentwise. Since their size is
fixed, this contributes only $\Theta(Jm^N)$ operations for fixed ${\decoNum}$,
and is therefore subleading compared with the quadratic summation stages. The
costs of evaluating $y_i^p=A_p(x_{t_i}-x_{t_{i-1}})$ are of order
$\Theta(J)$ for fixed $(q,m,d)$ and likewise do not affect the leading
asymptotics.

It remains to estimate the cost of one local evaluation
${\bv}\otimes_N\mathcal E^\rho_{s,t}(y_1,\dots,y_q)$.

\begin{enumerate}[label=(\roman*), itemsep=0.6em]
    \item A direct implementation of \eqref{eq:mathcalE}, respectively of the
    $\rho$-dependent local block $\mathcal E^\rho$, requires, for each
    $n=1,\dots,N$, the evaluation and summation of $q^n$ many $n$-fold tensor
    products in $(\RR^m)^{\otimes n}$. Each such tensor product has cost
    $\Theta(m^n)$ in a dense basis, giving a total contribution of order
    $\Theta(q^N m^N)$. Forming the truncated product with ${\bv}$ costs
    $\Theta(Nm^N)$ by \Cref{sec:combinatorial_estimates:tensors}. Thus one
    local evaluation costs $\Theta(m^N(q^N+N))$, and multiplication by the
    $\Theta(J^2)$ local evaluations yields the claimed bound.

    \item Assume \Cref{def:symmetric_cK}. Then the local evaluation
    ${\bv}\otimes_N\mathcal E^\rho$ is computed by the shuffle-recursive scheme
    \Cref{alg:quad_evalVtE}. The exponent $\rho$ changes only the scalar
    coefficient tensor, not the tensor operations. Hence the same cost estimate
    as for the undecorated case applies. Namely, the dominant operations are
    shuffle-by-vector updates of the form $\bx\shuffle y_r$, whose cost is
    $\Theta((k+1)m^{k+1})$ when $\bx\in(\RR^m)^{\otimes k}$; cf.\
    \Cref{sec:combinatorial_estimates:shuffle}. Referring to Step~5 in
    \Cref{alg:quad_evalVtE}, and fixing $p\in\cA_q$, at level $|\ell|=n$ the
    number of multi-indices is
    \[
        \big\vert\{\ell\in\NN^q:\ |\ell|=n\}\big\vert
        =
        {n+q-1\choose q-1}.
    \]
    For each such $\ell$, the recursion performs $q$ shuffle-by-vector updates
    $E_p(\ell+1_r)\shuffle y_r$, $r=1,\dots,q$. Moreover, for $|\ell|=n$ the
    tensors $E_p(\ell+1_r)$ have maximal degree $N-2-n$, so summing over
    degrees gives
    \[
        \sum_{k=0}^{N-2-n} (k+1)m^{k+1}.
    \]
    Hence, summing over $n=0,\dots,N-2$ and over $p\in\cA_q$, we obtain
    \[
        q\sum_{n=0}^{N-2}{n+q-1\choose q-1}\,
        q\sum_{k=0}^{N-2-n} (k+1)m^{k+1},
    \]
    which is of order $\Theta(Nm^N)$ for fixed $(q,m)$ by
    \Cref{sec:combinatorial_estimates:asymptotics}. Forming the final
    truncated tensor product costs another $\Theta(Nm^N)$ by
    \Cref{sec:combinatorial_estimates:tensors}. Therefore one local evaluation
    costs $\Theta(Nm^N)$, and multiplication by $\Theta(J^2)$ yields the stated
    bound.

    \item If $q=1$, the local evaluation is computed via the Horner scheme
    \Cref{alg:quad_evalVtE_q1}. The exponent $\rho$ again changes only the
    scalar coefficients. The Horner recursion computes each tensor level by
    successive concatenations with $y$. A concatenation at tensor degree $n$ has
    cost $\Theta(m^n)$ by \Cref{sec:combinatorial_estimates:tensors}, and the
    levelwise recursion performs $\Theta(n)$ such steps to produce
    $\pi_n({\bv}\otimes_N\mathcal E^\rho)$. Summing over $n=1,\dots,N$ gives
    cost $\Theta(m^N)$ for one local evaluation. Multiplying by $\Theta(J^2)$
    gives the claimed bound.

    \item Under \Cref{hyp:fft_convolution}, the sums over previous cells in
    \Cref{alg:higher_order_vsig} become causal convolutions as described in
    \Cref{sec:fft}. In \Cref{alg:fft}, each such convolution is evaluated by a
    zero-padded FFT and inverse FFT of length $L=\Theta(J)$. By the standard
    FFT complexity \cite{cooley1965algorithm}, this costs
    $\Theta(J\log J)$ operations per scalar coordinate.

    At tensor level $n$, in the scalar case $q=1$, the convolved sequences are
    \[
        G_i^{n,r,\rho}
        =
        \pi_{n-r}\mathbf C_{i,\rho}\otimes y_i^{\otimes r},
        \qquad r=1,\dots,n.
    \]
    For $q>1$, the scalar source $y_i^{\otimes r}$ is replaced by the
    shuffle-polynomial terms $M_\ell(y_i)\otimes y_i^p$ from
    \Cref{sec:quad_conv}, where $p=1,\dots,q$ and
    $\ell\in\NN^q$ satisfies $|\ell|=r-1$. Hence, for fixed local order $r$,
    the number of such terms is
    \[
        q\bigl|\{\ell\in\NN^q:\ |\ell|=r-1\}\bigr|
        =
        q{r+q-2\choose q-1}.
    \]
    Summing over $r=1,\dots,n$ gives
    \[
        q\sum_{r=1}^{n}{r+q-2\choose q-1}
        =
        q{n+q-1\choose q}
        =
        \Theta(n^q),
    \]
    where $q$ is fixed. Each convolved sequence takes values in
    $(\RR^m)^{\otimes n}$ and therefore has $m^n$ dense coordinates. Thus the
    cost at tensor level $n$ is
    $
        \Theta\bigl(J\log(J)\,n^q\,m^n\bigr).
    $
    Summing over $n=1,\dots,N$ and using $m>1$ yields
    \[
        \Theta\bigl(J\log(J)\,N^q\,m^N\bigr).
    \]
    The precomputation of the FFTs of the lag-weight sequences has the same
    polynomial factor in $N$ but no dense tensor-coordinate factor $m^N$, and
    is therefore subleading. The componentwise interpolation solves contribute
    only $\Theta(Jm^N)$ for fixed $\decoNumPlus$ and are also subleading. This
    proves the FFT cost estimate.
\end{enumerate}
The proof is complete.
\end{proof}
The cost estimate in case~(ii) of the proof above relies on the following combinatorial lemma, which establishes the asymptotic order of the double sum arising from the shuffle-recursive scheme.

\begin{lem}\label{sec:combinatorial_estimates:asymptotics}
Fix $q,m\in\NN$ with $m>1$. Then, as $N\to\infty$,
$$
\sum_{n=0}^{N-2}{n+q-1\choose q-1}\sum_{k=0}^{N-2-n} (k+1)m^{k+1}
\;\sim\; N m^{N}.
$$
\end{lem}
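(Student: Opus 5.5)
The plan is to evaluate the inner geometric-type sum in closed form, reduce the double sum to a single series in $n$ with geometric decay, and read off the asymptotics by dominated convergence. Write $S(M):=\sum_{k=0}^{M}(k+1)m^{k+1}$. By the same closed-form computation as in the proof of \Cref{sec:combinatorial_estimates:tensors}, $S(M)=\frac{m}{(m-1)^2}\bigl(1-(M+2)m^{M+1}+(M+1)m^{M+2}\bigr)$. This gives the two-sided bound
\[
(M+1)m^{M+1}\le S(M)\le C_m (M+1)m^{M+1},
\]
and also the precise asymptotics $S(M)\sim \frac{m}{m-1}(M+1)m^{M+1}$.

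Insert $M=N-2-n$ and factor out $Nm^N$:
\[
\frac{1}{Nm^N}\sum_{n=0}^{N-2}\binom{n+q-1}{q-1}S(N-2-n)
=\sum_{n=0}^{N-2}\binom{n+q-1}{q-1}m^{-n-1}\,\frac{N-1-n}{N}\,\frac{S(N-2-n)}{(N-1-n)m^{N-1-n}}.
\]
Each summand is bounded by $C_m\binom{n+q-1}{q-1}m^{-n-1}$. This bound is summable in $n$ because $m>1$ and the binomial coefficient grows only polynomially, at most like $n^{q-1}$.

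For fixed $n$, the factor $\frac{N-1-n}{N}\to1$ and the last quotient tends to $\frac{m}{m-1}$ as $N\to\infty$. Dominated convergence applied to the sum over $n$ therefore gives the limit
\[
\frac{m}{m-1}\sum_{n\ge0}\binom{n+q-1}{q-1}m^{-n-1}=\frac{1}{m-1}\Bigl(1-\frac1m\Bigr)^{-q},
\]
where the last equality uses the negative binomial series. This limit is finite and positive, so the double sum is $\sim Nm^N$.

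The only step that needs care is the domination. The factor $N$ in the normalization is absorbed by the inner sum only because $(N-1-n)/N\le1$; the extra polynomial growth $\binom{n+q-1}{q-1}$ is killed by $m^{-n}$. For the weaker claim of a $\Theta$ bound, the elementary lower bound from the $n=0$ term together with the upper bound above would suffice on its own.
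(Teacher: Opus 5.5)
Your proof is correct, and it is a sharpened version of the paper's argument. The paper uses the same two-sided bound on the inner sum, the same factorization $m^{N-1-n}=m^{N-1}m^{-n}$, and the same negative binomial series $\sum_n\binom{n+q-1}{q-1}m^{-n}=(1-\frac1m)^{-q}$.

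Where you differ is in the final step. You add the exact inner asymptotics $S(M)/((M+1)m^{M+1})\to\frac{m}{m-1}$ and apply dominated convergence over $n$, using the summable majorant $\frac{m}{m-1}\binom{n+q-1}{q-1}m^{-n-1}$. This shows that the normalized ratio converges, and identifies its limit as $\frac{1}{m-1}(1-\frac1m)^{-q}$. The paper stops at the two-sided bounds and the remark that the sum is ``dominated by the $n=0$ term''. That only shows the ratio stays between two positive constants, i.e.\ a $\Theta(Nm^N)$ statement.

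Strictly read, the paper's own definition of $\sim$ requires $\lim f(N)^{-1}g(N)$ to exist in $(0,\infty)$, and that existence is exactly what your argument supplies. The $\Theta$ version is all that the cost estimates in \Cref{prop:cost_quad} and \Cref{prop:cost_fssk} actually use. So the paper's route is shorter and sufficient for its application, while yours is the one that proves the lemma as literally stated and also gives the constant.
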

\begin{proof}
We first verify (by elementary entire series arguments) that for the inner sum we have the two-sided estimate
$$
(N-1-n)m^{N-1-n} \le \sum_{k=0}^{N-2-n} (k+1)m^{k+1}
\;\le\;
\frac{m}{m-1}(N-1-n)m^{N-1-n},
$$
for all $n\in\{0,\dots,N-2\}$. Hence,
\begin{equation}\label{eq:comb_lem_interm}
    \sum_{n=0}^{N-2}{n+q-1\choose q-1}\sum_{k=0}^{N-2-n} (k+1)m^{k+1}
\;\sim\; \sum_{n=0}^{N-2}{n+q-1\choose q-1}(N-1-n)m^{N-1-n}.
\end{equation}
Using $m^{N-1-n}=m^{N-1}m^{-n}$ and the binomial series (\cite[\S 3.6.8]{abramowitz1974handbook})
$$
\sum_{n=0}^\infty {n+q-1\choose q-1} m^{-n}
=
\left(1-\frac{1}{m}\right)^{-q}
<\infty,
$$
we observe that the right-hand side in \eqref{eq:comb_lem_interm} is dominated by the $n=0$ term and hence of the order $(N-1)m^{N-1}\sim N m^{N}$, which proves the claim.
\end{proof}

\subsection{Costs of the finite state space algorithm}\label{sec:compcostFSSK}
We next summarize the asymptotic computational costs of the finite state space recursion from \Cref{thm:algo_multiplicative}, implemented as in \Cref{alg:fssk_state_recursion} together with the readout \Cref{alg:fssk_vsig_readout}. In addition to $(J,N)$ we track the dependence on the state dimension $R$, and we distinguish between the general case and the scalar-kernel case $q=1$ with a Horner-type state update.

To start the computational discussion, we briefly record the costs of computing the coefficient families $(\mathbf{1}^{\top}\Psi,\Phi)$. This step is carried out by the quadrature scheme \Cref{alg:fssk_weight_quad_eval}. 

\begin{prop}\label{prop:cost_fssk_weights}
Fix $\delta>0$ and apply \Cref{alg:fssk_weight_quad_eval} with truncation level $N$ (and a fixed quadrature parameter) to compute the normalized coefficient families $\{\widehat\psi_\ell:|\ell|\le N\}$ and $\{\widehat\Phi_{p,\ell}:p=1,\dots,q,\ |\ell|\le N\}$.
Treating $q$ (and the quadrature parameter) as fixed, the computational costs have the following asymptotic dependence (in the sense of \Cref{def:cost_asymptotic_dependence}) with respect to $(R,N)$:
\begin{enumerate}[label=(\roman*), itemsep=0.4em]
    \item For a generic dense matrix $\Lambda$, if the shifted linear systems in lines~\ref{line:evalphipsi_quad_start}--\ref{line:evalphipsi_quad_end} of \Cref{alg:fssk_weight_quad_eval} are solved using dense linear algebra (e.g.\ LU factorization), then the costs are of order
    $$
        \Theta\big(R^3+R^2N^{q}\big).
    $$
    \item If $\Lambda$ is in real Jordan normal form (cf.\ \Cref{lem:prony_implies_real_jordan_scalar}), so that these shifted solves can be carried out in $\Theta(R^2)$ operations per quadrature node, then the costs are of order
    $$
        \Theta\big(R^2N^{q}\big).
    $$
\end{enumerate}
\end{prop}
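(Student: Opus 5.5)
The plan is to count operations in \Cref{alg:fssk_weight_quad_eval} in two phases, because the algorithm naturally splits that way. The first phase is the node-wise preprocessing in lines~\ref{line:evalphipsi_quad_start}--\ref{line:evalphipsi_quad_end}. The second phase is the accumulation loop over multi-indices in lines~\ref{line:evalphipsi_accumul_start}--\ref{line:evalphipsi_accumul_end}. Since $q$ and the quadrature parameter $m$ are fixed, the number of nodes only contributes a constant factor. I would therefore establish matching upper and lower bounds for each phase separately and then add them.

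\textbf{Preprocessing phase.} For each node $\zeta_j$ the algorithm solves one transposed system for $r_j$ and $q$ systems for $u_{p,j}$, all with the matrix $\zeta_j I+\delta\Lambda$, and then forms $q$ inner products $\beta_{p,j}$ at cost $\Theta(R)$ each.
\begin{itemize}
\item[(i)] For dense $\Lambda$, one LU factorisation per node costs $\Theta(R^3)$. It is reused for the $q+1$ triangular solve pairs, each costing $\Theta(R^2)$. The lower bound $\Theta(R^3)$ is the standard one for dense LU; I take this as given from the stated implementation choice rather than proving a complexity lower bound.
\item[(ii)] If $\Lambda$ is in real Jordan form, then $\zeta_j I+\delta\Lambda$ is block upper bidiagonal with $2\times 2$ blocks. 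Back substitution is then at most $\Theta(R^2)$, and in fact $\Theta(R)$. Either way this phase is dominated by the accumulation phase, so quoting the hypothesis $\Theta(R^2)$ per node is enough.
\end{itemize}
Summing over the $m$ nodes gives $\Theta(R^3)$ in case (i) and $O(R^2)$ in case (ii).

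\textbf{Accumulation phase.} The number of multi-indices with $|\ell|\le N$ is $\binom{N+q}{q}=\Theta(N^q)$ for fixed $q$. For each such $\ell$ and each node $j$, the algorithm does the following:
\begin{itemize}
\item it computes $\gamma_{\ell,j}=\prod_p\beta_{p,j}^{\ell_p}$ at cost $O(N)$ naively, or $O(q)$ if this is done incrementally from $\ell-1_r$;
\item it updates $\widehat\psi_\ell$ by a scaled row vector, at cost $\Theta(R)$;
\item it updates $\widehat\Phi_{p,\ell}$, $p=1,\dots,q$, by a scaled outer product $u_{p,j}r_j^\top$, at cost $\Theta(R^2)$ each.
\end{itemize}
The outer products dominate and cost exactly $\Theta(R^2)$, because all $R^2$ entries must be written. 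This gives both the upper and the lower bound $\Theta(R^2N^q)$ for this phase.

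The main obstacle is the bookkeeping of the $\gamma_{\ell,j}$ term. If it is computed naively in $O(N)$, it adds $O(N^{q+1})$, which is not dominated by $R^2N^q$ uniformly in $(R,N)$. I would avoid this by computing $\gamma_{\ell,j}$ recursively as $\gamma_{\ell,j}=\gamma_{\ell-1_r,j}\beta_{r,j}$, in the same degree-ordered fashion as \Cref{alg:quad_evalVtE}. This costs $O(1)$ per $(\ell,j)$ and is absorbed into the accumulation bound. Alternatively, the $\gamma$ values can be precomputed once for all $\ell$, independently of $R$. Adding the two phases then gives $\Theta(R^3+R^2N^q)$ in case (i) and $\Theta(R^2N^q)$ in case (ii).
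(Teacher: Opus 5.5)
Your proposal is correct and follows the same two-phase count as the paper's proof. The shifted solves at the quadrature nodes contribute $\Theta(R^3)$ with dense LU, or $O(R^2)$ in Jordan form, and the accumulation of the rank-one updates $u_{p,j}r_j^\top$ over the $\binom{N+q}{q}=\Theta(N^q)$ multi-indices contributes $\Theta(R^2N^q)$.

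Your extra care with $\gamma_{\ell,j}$ is a genuine refinement that the paper glosses over: a naive evaluation of $\prod_p\beta_{p,j}^{\ell_p}$ would add $\Theta(N^{q+1})$, which is not uniformly dominated by $R^2N^q$. Note, however, that your ``precompute independently of $R$'' alternative only helps if that precomputation is itself done recursively or from a table of powers.
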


\begin{rem}
Similarly, one may precompute the matrix exponentials $E(\delta)=e^{-\Lambda\delta}$ and the vectors $E(\delta)b_p$.
For a generic dense $\Lambda$, standard dense methods for evaluating $E(\delta)$ have cost $\Theta(R^3)$ per distinct $\delta$ (see, e.g., \cite{moler03}),
whereas under the real Jordan normal form assumption from \Cref{lem:prony_implies_real_jordan_scalar} these evaluations reduce to blockwise scalar formulas and do not affect the leading-order dependence on $(J,R,N)$ in the subsequent recursions.
\end{rem}
\begin{proof}[Proof of Proposition~\ref{prop:cost_fssk_weights}]
Fix $\delta>0$ and a quadrature parameter $m$.
In lines~\ref{line:evalphipsi_quad_start}--\ref{line:evalphipsi_quad_end} of \Cref{alg:fssk_weight_quad_eval}, for each $j=1,\dots,m$ we solve one linear system with
$(\zeta_j I+\delta\Lambda)^\top$ and $q$ further systems with $(\zeta_j I+\delta\Lambda)$.
In a dense implementation with a generic dense $\Lambda$, each such solve costs $\Theta(R^3)$ (e.g.\ via LU), hence (with $q$ and $m$ fixed) this stage contributes $\Theta(R^3)$.
If $\Lambda$ is in real Jordan normal form, the shifted solves can be carried out in $\Theta(R^2)$ per right-hand side, so this contribution reduces accordingly.

Next, for each multi-index $\ell\in\NN^q$ with $|\ell|\le N$, lines~\ref{line:evalphipsi_accumul_start}--\ref{line:evalphipsi_accumul_end} accumulate $\widehat\psi_\ell\in\RR^{1\times R}$ and
$\widehat\Phi_{p,\ell}\in\RR^{R\times R}$ by adding real parts of rank-one updates $u_{p,j}r_j^\top$.
For fixed $(q,m)$, this costs $\Theta(R^2)$ per $\ell$, and thus
$\Theta\!\big(R^2\,|\{\ell\in\NN^q:\ |\ell|\le N\}|\big)=\Theta(R^2N^q)$ in total.

Finally, lines~\ref{line:evalphipsi_accumul_start}--\ref{line:evalphipsi_accumul_end} perform $\Theta(R^2)$ arithmetic operations per multi-index $\ell$ in a dense representation,
hence $\Theta(R^2N^q)$ in total. Together with the $\Theta(R^3)$ (resp.\ $\Theta(R^2)$) shifted solves in lines~\ref{line:evalphipsi_quad_start}--\ref{line:evalphipsi_quad_end}
this yields $\Theta(R^3+R^2N^q)$ (resp.\ $\Theta(R^2N^q)$).
\end{proof}

Given these coefficients, we can now trace the costs of the state recursion and readout.

\begin{prop}\label{prop:cost_fssk}
We work in the setting of \Cref{thm:algo_multiplicative} and consider the computation
${\bf v}_j=\pi_{\le N}\VSig{x}{K_{A,b}^{\Lambda}}^{t_j}_{t_0,t_j}$ ($j=0,\dots,J$)
for a kernel $K_{A,b}^{\Lambda}$ as in \Cref{def:finite_state_space_kernels} with kernel data $(\Lambda,\{A_p,b_p\}_{p=1}^q)$ and a piecewise linear path $x$ on a grid $t_0<\cdots<t_J$.
Assume that for each interval length $\delta_j=t_j-t_{j-1}$ the normalized coefficient families $(E_j,\widehat\psi_j,\widehat\Phi_j)$ required in \Cref{alg:fssk_state_recursion} are precomputed (cf.~\Cref{alg:fssk_weight_quad_eval}, \Cref{prop:cost_fssk_weights}), and that $(q,m,d)$ are fixed with $m>1$.
Then the costs of \Cref{alg:fssk_state_recursion} together with applying the readout \Cref{alg:fssk_vsig_readout} for all $j=0,\dots,J$ have the following asymptotic dependence with respect to $(J,R,N)$ in the sense of \Cref{def:cost_asymptotic_dependence}:
\begin{enumerate}[label=(\roman*), itemsep=0.4em]
    \item If $q>1$ and $(\widehat f_j,\widehat{\mathcal G}_j^1,\dots,\widehat{\mathcal G}_j^q)$ is evaluated via \Cref{alg:fsskFG}, then the costs are of order
    $$
        \Theta\big(J\,R^2\,N\,m^N\big).
    $$
    \item If $q=1$ and the state update is evaluated via the Horner scheme \Cref{alg:fsskq1_horner_update}, then the costs are of order
    $$
        \Theta\big(J\,R^2\,m^N\big).
    $$
\end{enumerate}
\end{prop}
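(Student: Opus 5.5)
The cost is obtained by counting operations per time step $j$ in \Cref{alg:fssk_state_recursion} and then multiplying by $J$. The readout \Cref{alg:fssk_vsig_readout} is added at the end. Since the coefficient families $(E_j,\widehat\psi_j,\widehat\Phi_j)$ are assumed precomputed, each step consists only of tensor-valued matrix operations over the ring $\mathfrak R_N$ of \Cref{not:matrix_ring}. The main tool is the observation that multiplying an element of $(\mathfrak R_N)^{1\times R}$ by a scalar matrix in $\RR^{R\times R}$ costs $\Theta(R^2 m^N)$. By contrast, multiplying it by a tensor-valued matrix in $(\mathfrak R_N)^{R\times R}$ costs $\Theta(R^2 N m^N)$, because each of the $R^2$ entry products is a truncated tensor product, priced by \Cref{sec:combinatorial_estimates:tensors}.

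For case (i), I would split each step into three parts and bound them in order.

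\textbf{(a) Evaluation of $\widehat f_j$ and $\widehat{\mathcal G}_j^p$.} This uses \Cref{alg:fsskFG} and mirrors case \ref{itm:shuffle_costs} of \Cref{prop:cost_quad}. The only difference is that $F(\ell)$ and $G_p(\ell)$ now carry $R$ and $R^2$ scalar coordinates respectively. The shuffle-by-vector updates are therefore applied entrywise, and \Cref{sec:combinatorial_estimates:shuffle} together with \Cref{sec:combinatorial_estimates:asymptotics} gives a cost of $\Theta(R^2 N m^N)$ for fixed $q$.

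\textbf{(b) Products ${\bf Z}^l_{j-1}.\widehat{\mathcal G}^l_j$.} Here the state is a row vector whose $R$ entries are truncated tensors, and it is multiplied by a tensor-valued $R\times R$ matrix. This gives $R^2$ truncated tensor products, each of cost $\Theta(Nm^N)$, so $\Theta(R^2Nm^N)$ per $l$, and summing over the $q$ values of $l$ keeps the same order.

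\textbf{(c) Propagation ${\bf Z}^p_{j-1}.E_j$ and the update ${\bf B}_j\otimes y^p_j$.} The first is a scalar-matrix action costing $\Theta(R^2m^N)$, and the second costs $\Theta(Rm^N)$; both are subleading.

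Summing (a)--(c) over $j$ gives an upper bound of $\Theta(JR^2Nm^N)$. The lower bound comes from step (b) alone, since a dense truncated product genuinely costs $\Theta(Nm^N)$. The readout costs $\Theta(R m^N)$ per $j$ once $E(\tau-t_j)b_p$ is formed, which takes $\Theta(R^2)$, so it is negligible.

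For case (ii), I would analyse the Horner update of \Cref{alg:fsskq1_horner_update} in the same way as case (iii) of \Cref{prop:cost_quad}. At tensor level $n$, the inner loop over $k$ performs a concatenation $W_n\otimes\Delta x$ of an $R$-vector of degree-$(k{+}1)$ tensors, costing $\Theta(Rm^{k+1})$. It also performs a scalar-matrix action ${\bf Z}^{(k)}.\widehat\phi_{n-k}$ on degree-$k$ tensors, costing $\Theta(R^2m^k)$. Summing over $k\le n-1$ gives a geometric series dominated by its top term, so level $n$ costs $\Theta(R^2m^n)$. Summing over $n\le N$ gives $\Theta(R^2m^N)$ per step, and $\Theta(JR^2m^N)$ in total, with the lower bound coming from the top level $n=N$. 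The $U_n$ recursion contributes only $\Theta(Rm^N)$.

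The main obstacle I expect is step (a) in case (i). I need to confirm that carrying $R\times R$ matrix coefficients through the shuffle recursion multiplies the scalar count of \Cref{prop:cost_quad}(ii) by exactly $R^2$ and introduces no additional factor. This requires checking that each $G_p(\ell)$ update is an entrywise shuffle with no matrix product hidden inside, after which \Cref{sec:combinatorial_estimates:asymptotics} applies verbatim.
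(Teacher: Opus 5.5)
Your proposal is correct and follows essentially the same route as the paper. The paper also counts operations per step of \Cref{alg:fssk_state_recursion}: in case (i) both the entrywise shuffle recursion of \Cref{alg:fsskFG} and the products ${\bf Z}^l_{j-1}.\widehat{\mathcal G}^l_j$ cost $\Theta(R^2Nm^N)$, and in case (ii) a levelwise Horner count gives $\Theta(R^2m^N)$ per step. Your costs of $\Theta(R^2m^N)$ for the scalar-matrix action ${\bf Z}^p_{j-1}.E_j$ and $\Theta(Rm^N)$ for ${\bf B}_j\otimes y^p_j$ are sharper than the paper's bounds for these terms, and do not change the final order.
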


\begin{proof}
The recursion in \Cref{alg:fssk_state_recursion} consists of a single outer loop over $j=1,\dots,J$, so the overall cost is the per-step cost multiplied by $J$.
\begin{enumerate}[label=(\roman*), itemsep=0.4em]
\item Assume $q>1$ and $(\widehat f_j,\widehat{\mathcal G}_j^1,\dots,\widehat{\mathcal G}_j^q)$ is evaluated via \Cref{alg:fsskFG}.
The recursion in \Cref{alg:fsskFG} is the same shuffle-recursive evaluation as in the symmetric-weights case of the general approximative algorithm (cf.\ \Cref{alg:quad_evalVtE} and \Cref{prop:cost_quad}\ref{itm:shuffle_costs}), except that the propagated quantities now take values in $(\mathfrak R_N)^{1\times R}$ and $(\mathfrak R_N)^{R\times R}$, and the coefficients $\widehat\psi_\ell$ and $\widehat\Phi_{p,\ell}$ are matrix-valued.
Hence, each shuffle-by-vector update is performed entrywise and incurs an additional factor $R^2$ compared to the scalar case.
Using the shuffle-by-vector cost from \Cref{sec:combinatorial_estimates:shuffle} and the corresponding asymptotic summation from \Cref{sec:combinatorial_estimates:asymptotics}, we obtain a per-step cost of order $\Theta\big(R^2\,N\,m^N\big)$.

Given $(\widehat f_j,\widehat{\mathcal G}_j^1,\dots,\widehat{\mathcal G}_j^q)$, forming
${\bf B}_j=\widehat f_j+\sum_{l=1}^q {\bf Z}_{j-1}^l.\widehat{\mathcal G}_j^l$
involves $q$ matrix products of a $(\mathfrak R_N)^{1\times R}$ row with a $(\mathfrak R_N)^{R\times R}$ matrix.
By definition of the matrix product over $\mathfrak R_N$, each such product consists of $\Theta(R^2)$ scalar operations in $\mathfrak R_N$, and each scalar operation has cost $\Theta(N m^N)$ (cf.\ \Cref{sec:combinatorial_estimates:tensors}).
Thus, forming ${\bf B}_j$ is of order $\Theta\big(R^2\,N\,m^N\big)$.

Finally, each update
${\bf Z}_j^p={\bf Z}_{j-1}^p.E_j+{\bf B}_j\otimes y_j^p$
has the same dominant cost $\Theta(R^2 N m^N)$ from the vector--matrix product ${\bf Z}_{j-1}^p.E_j$ (the concatenation ${\bf B}_j\otimes y_j^p$ contributes only $\Theta(R N m^N)$ and is lower order in $R$).
Since $p=1,\dots,q$ with $q$ fixed, one outer step $j$ is therefore of order $\Theta(R^2 N m^N)$.
Multiplying by the $J$ steps yields overall costs of order $\Theta\big(J R^2 N m^N\big)$.
Applying the readout \Cref{alg:fssk_vsig_readout} for all $j$ contributes only lower-order terms in $(R,N)$ and therefore does not change the asymptotic order.

\item In the scalar-kernel case $q=1$, the state recursion is given by \Cref{alg:fsskq1_state_recursion} with the Horner-type update \Cref{alg:fsskq1_horner_update}.
As in the scalar Horner scheme for the general approximative algorithm (cf.\ \Cref{alg:quad_evalVtE_q1}), the key point is that each level is built by successive concatenations with $\Delta x$ without forming a full tensor polynomial first.
For fixed level $n$, the update performs $\Theta(n)$ concatenations and $\Theta(n)$ multiplications with $R\times R$ coefficient matrices; in a dense tensor basis this costs $\Theta(R^2 m^n)$.
Summing over $n=1,\dots,N$ yields $\Theta(R^2 m^N)$, since $m^n$ is dominated by the top level.
Hence each outer step $j$ is of order $\Theta(R^2 m^N)$ and, over $J$ steps, the overall costs are of order $\Theta\big(J\,R^2\,m^N\big)$.
As above, applying the readout \Cref{alg:fssk_vsig_readout} for all $j$ does not change the asymptotic order.
\end{enumerate}
This concludes the proof of our proposition.
\end{proof}

\subsection{Cost of the Volterra signature kernel algorithm}\label{sec:cost_sig_kernel}

For completeness we also discuss the computational costs of the predictor--corrector scheme
\Cref{alg:fd_scheme_antidiag_pc} for the computation of the Volterra signature kernel
associated with finite state space kernels.

\begin{prop}\label{prop:cost_sig_kernel_fd}
We work in the setting of \Cref{sec:sig-kernel} and consider the computation of
$\kappa_{J_s,J_t}\approx \eta_{J_s,J_t}$ by the finite-difference scheme
\Cref{alg:fd_scheme_antidiag_pc} for piecewise linear paths $x$ and $w$ on grids
$0=s_0<\cdots<s_{J_s}=S$ and $0=t_0<\cdots<t_{J_t}=T$ respectively.
Assume that the cell coefficients $\gamma_{i,j}$ and the transport matrices
\(
(E_i^s,\ P_i^s,\ Q_i^s, E_j^t,\ P_j^t,\ Q_j^t)
\)
are precomputed, and that all matrix-valued quantities are stored in a dense
representation in $\RR^{R\times R}$. Then the computational costs have
asymptotic dependence
\[
    \Theta\bigl(J_sJ_tR^3\bigr)
\]
with respect to $(J_s,J_t,R)$.
\end{prop}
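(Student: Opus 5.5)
The plan is to count arithmetic operations cell by cell in \Cref{alg:fd_scheme_antidiag_pc}, show that each cell costs $\Theta(R^3)$, and multiply by the number of cells. The outer loop over anti-diagonals $m=0,\dots,J_s+J_t-2$ together with the inner loop over cells $(i,j)$ with $i+j=m$ visits every cell of the grid exactly once. The number of cell updates is therefore exactly $J_sJ_t$. Parallelization along anti-diagonals changes wall-clock time but not the operation count, so it plays no role.

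For the upper bound I will list the operations of a single cell update. All the inputs $\gamma_{i,j}$, $E^s_i,P^s_i,Q^s_i,E^t_j,P^t_j,Q^t_j$ are precomputed by assumption. Each cell then performs the following work.
\begin{itemize}
\item A fixed number of scalar-times-matrix products and matrix additions, for example $\gamma_{i,j}(\eta_{i,j}+\eta_{i,j+1})/2$ and the combinations $\bK_{i+1,j}+\bK_{i,j+1}-\bK_{i,j}$. Each costs $\Theta(R^2)$.
\item A fixed number of dense $R\times R$ matrix--matrix products. These are $E_i^s\mathbf{\Psi}_{i,j+1}$, $P_i^s\mathbf H^{(0)}_{i,j}$, $Q_i^s(\mathbf H^{(1)}_{i,j}-\mathbf H^{(0)}_{i,j})$ and their $\mathbf\Phi$ counterparts, together with the evaluations $\cL(\bK)=\Lambda\bK\Lambda^\top$ (two products each, at most six such evaluations counting predictor and corrector). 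Each costs $\Theta(R^3)$ with the standard dense algorithm.
\item Two quadratic forms $\mathbf 1^\top\bK\mathbf 1$, for the predicted and corrected $\eta$. Each costs $\Theta(R^2)$.
\end{itemize}
The initialization of the boundary arrays costs $\Theta((J_s+J_t)R^2)$, which is lower order. Altogether this gives $\mathrm{Cost}\le C\,J_sJ_tR^3$.

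For the lower bound, it suffices that each cell performs at least one genuine dense matrix product, say $E_i^s\mathbf{\Psi}_{i,j+1}$. In the naive dense implementation fixed by the convention of \Cref{def:cost_asymptotic_dependence}, each such product requires at least $R^3$ multiplications, since every one of its $R^2$ entries is an inner product of length $R$. Hence $\mathrm{Cost}\ge J_sJ_tR^3$ for all $J_s,J_t,R\ge1$.

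The only delicate point is to be explicit that the count refers to a dense implementation with the classical matrix-multiplication algorithm. Otherwise Strassen-type methods would falsify the lower bound, so I will state this convention explicitly in the proof. I will also point out that Jordan structure of $\Lambda$ is not exploited here, because $\mathbf\Psi$, $\mathbf\Phi$ and $\bK$ are dense and $E,P,Q$ are treated as dense. With this convention fixed, the two bounds together give the claimed order $\Theta(J_sJ_tR^3)$.
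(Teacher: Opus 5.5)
Your proposal is correct and follows the paper's proof: you count the $J_sJ_t$ cell updates, observe that each involves a fixed number of dense $R\times R$ matrix products (in the $\mathbf{\Psi}$ and $\mathbf{\Phi}$ updates and in $\cL(\bK)=\Lambda\bK\Lambda^\top$) costing $\Theta(R^3)$ while all remaining operations cost $\Theta(R^2)$, and multiply. Your explicit lower-bound argument and your remark that the count presupposes classical (non-Strassen) matrix multiplication spell out what the paper leaves implicit in the phrase ``dense implementation''.
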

\begin{proof}
The scheme in \Cref{alg:fd_scheme_antidiag_pc} updates one cell
$(i,j)$ for each $0\le i<J_s$ and $0\le j<J_t$. Hence the total number of
cell updates is $J_sJ_t$.

It remains to estimate the cost of one such update. The variables
$\bK_{i,j}$, $\mathbf{\Psi}_{i,j}$, $\mathbf{\Phi}_{i,j}$, the coefficients
$\gamma_{i,j}$, and the transport matrices are all dense $R\times R$ matrices.
Thus, matrix additions, scalar multiplications, and the evaluations of
$\eta=1+\mathbf 1^\top \bK\mathbf 1$ cost at most $\Theta(R^2)$ operations.
The dominant operations are the dense matrix--matrix products appearing in the
updates of $\mathbf{\Psi}$ and $\mathbf{\Phi}$, as well as in the applications
of
\(
    \cL(\bM)=\Lambda \bM\Lambda^\top .
\)
Each of these operations costs $\Theta(R^3)$ in a dense implementation.
Since the predictor and corrector stages contain only a fixed number of such
matrix products per cell, independently of $i,j,J_s,J_t$, one cell update costs
$\Theta(R^3)$. Multiplying by the $J_sJ_t$ cells gives the claimed total cost.
\end{proof}
\begin{rem}\label{rem:cost_sig_kernel_jordan}
As in \Cref{lem:prony_implies_real_jordan_scalar}, $-\Lambda$ may be parametrized in
real Jordan normal form, and our implementation applies the corresponding
transport matrices blockwise.
Denoting by $\rho_\Lambda$ the sum of squared Jordan block sizes, one cell update costs $\Theta(R\rho_\Lambda)$ instead of $\Theta(R^3)$.
Consequently, the total cost is
\[
    \Theta\bigl(J_sJ_tR\rho_\Lambda\bigr),
\]
which reduces to $\Theta(J_sJ_tR^2)$ when the Jordan block sizes remain
bounded as $R$ increases, for instance in the diagonal $\Lambda$ case.
\end{rem}

\bibliographystyle{plain}

\appendix
\addtocontents{toc}{\protect\setcounter{tocdepth}{1}}

\section{Numerical Validation}\label{sec:numerical_validation}

The purpose of this section is to provide numerical validation of the algorithms derived in this paper, both in terms of accuracy and computational cost.
We structure the discussion according to \Cref{sec:algo_quadratic,sec:algo_multiplicative,sec:sig-kernel}. Namely, we treat the general approximative algorithm, the exact finite state space algorithm, and the finite state space signature kernel algorithm in individual sections.
Below we detail first two basic components of the  common experimental setup.

\textbf{Sample paths.} In all numerical validations we require sample paths.
Their particular choice is not of importance for the discussion below; the only requirements are that they have non-trivial signature components and are normalized to allow for a consistent error analysis.
For reproducibility, we fix the definition here.

Specifically, we parametrize the derivative of $X:[0,1]\to\mathbb{R}^3$ in spherical coordinates by
\[
\dot X_t
=
\bigl(
\cos\theta(t)\sin\phi(t),
\sin\theta(t)\sin\phi(t),
\cos\phi(t)
\bigr),
\]
where
\begin{equation*}
    \begin{aligned}
        \theta(t)
        &=
        2\pi \nu t
        +
        a_\theta\sin(2\pi t+\varphi_\theta)
        +
        0.35a_\theta\sin(6\pi t+\varphi_{\theta,2}),\\
        \phi(t)
        &=
        \frac{\pi}{2}
        +
        a_\phi\sin(4\pi t+\varphi_\phi)
        +
        0.25a_\phi\sin(8\pi t+\varphi_{\phi,2}).
    \end{aligned}
\end{equation*}
The parameters
\(
(\nu, a_\theta, a_\phi, \varphi_\theta,\varphi_\phi,\varphi_{\theta,2},\varphi_{\phi,2})
\)
are sampled uniformly from
\[
[1.25,3.75]\times[0.2,1.0]\times[0.15,0.65]\times[0,2\pi]^4.
\]
We then construct numerical sample paths by cumulatively summing the sampled derivatives evaluated on a fine grid. This yields paths that are approximately parametrized at \emph{unit speed}. For the evaluation, these paths are subsequently sampled on coarser grids with mesh size $\Delta t$, and the corresponding piecewise linear interpolations are used as reference paths $\{x^{(i)}\}_{i=0}^M$.
Denote also $J = \Delta t^{-1}$.

To illustrate that these sample paths have non-trivial signatures, we record in \Cref{fig:signature-std} the level-wise mean norm and sample standard deviation of the factorially adjusted signature levels.

\begin{figure}[h]
    \centering
    \includegraphics[width=\textwidth]{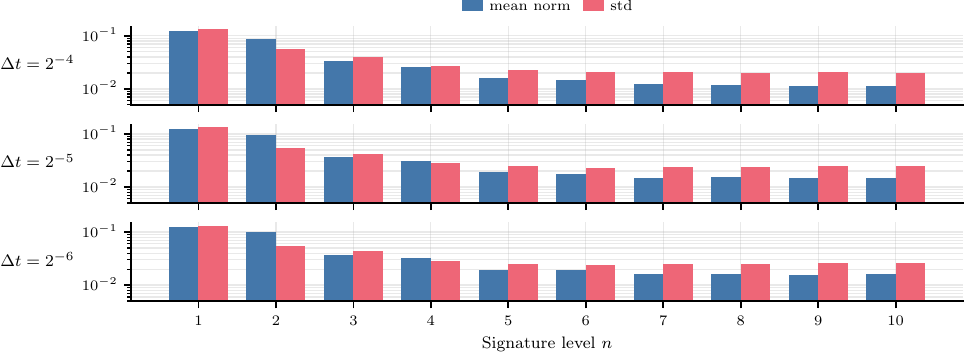}
    \caption{Level-wise sample standard deviation of the factorially adjusted signature levels $n!\,\pi_n\mathrm{Sig}(x^{(i)})$ for the generated sample paths.}
    \label{fig:signature-std}
\end{figure}

\textbf{Computational costs.} To allow for a hardware-independent validation of the computational costs of the proposed algorithms, we use the number of floating-point operations (FLOPs) %
as the main reference quantity.
Since our implementation uses the JAX backend, FLOP counts are obtained from JAX's compiler-level cost analysis. For each benchmarked function, we compile the corresponding JIT-compiled function for the relevant input shapes and query
\[
\texttt{jax.jit(f).lower(*args).compile().cost\_analysis()}.
\]
The resulting FLOP count is a compiler estimate for the optimized computation and excludes Python overhead, compilation time, and data-transfer costs.\footnote{For computations implemented using \texttt{jax.lax.scan}, the reported cost corresponds to the compiled loop body. We therefore manually multiply this count by the number of scan iterations.}
As a secondary, hardware-dependent test, we compare CPU with wall-clock times. This provides some of evaluation of the efficiency of the implementation and of the degree of parallelism achieved in practice on a CPU. All timing experiments were run on an Apple M3 CPU with total of 8 cores and 24GB of memory, using JAX 0.10.0 with the CPU backend.
While the JAX implementation should in principle compile efficiently on GPUs,
we have not tested this setting yet.

The code used for the numerical validations is collected in the
\texttt{notebooks} directory of the accompanying \texttt{tensordev} repository:
\url{https://github.com/hagerpa/tensordev/tree/main/notebooks}.
\subsection{Validation of the general approximative algorithm}

Since the algorithms from \Cref{sec:algo_quadratic} are approximative, we first
compare the error convergence of the different orders of the scheme with a
direct discretization of the fundamental Volterra equation
\[
    \VSig{x}{K}^{t}_{0,t}
    =
    1+
    \int_0^t
    \VSig{x}{K}^{s}_{0,s}
    \otimes K(t,s)\dd x_s .
\]
Treating this as a high-dimensional linear Volterra equation, a standard
reference method is the Adams-type predictor--corrector, or fractional
Adams--Bashforth--Moulton, product-integration scheme
\cite{diethelm2004detailed}. In our notation, write $\Delta x_i:=x_{t_{i+1}}-x_{t_i}$ and $h_i:=t_{i+1}-t_i$
For a scalar kernel $k$, the Adams predictor--corrector scheme is
\[
    \widehat{\bv}_j
    =
    1+
    \sum_{i=0}^{j-1}
    \omega^{\mathrm{E}}_{i,j}
    \bv_i
    \otimes_N
    \Delta x_i,
\]
and
\[
    \bv_j
    =
    1+
    \sum_{i=0}^{j-1}
    \left(
        \omega^{\mathrm{L}}_{i,j}\bv_i
        +
        \omega^{\mathrm{R}}_{i,j}\bv^{[j]}_{i+1}
    \right)
    \otimes_N
    \Delta x_i,
    \qquad
    \bv^{[j]}_{i+1}
    =
    \begin{cases}
        \bv_{i+1}, & i\leq j-2,\\
        \widehat{\bv}_j, & i=j-1.
    \end{cases}
\]
Here
\(    \omega^{\mathrm{E}}_{i,j} =
    \omega^{\mathrm{L}}_{i,j} + \omega^{\mathrm{R}}_{i,j}
\)
and
\[
    \omega^{\mathrm{L}}_{i,j}
    :=
    \frac{1}{h_i}
    \int_{t_i}^{t_{i+1}}
    \frac{t_{i+1}-s}{h_i}k(t_j,s)\dd s,
    \qquad
    \omega^{\mathrm{R}}_{i,j}
    :=
    \frac{1}{h_i}
    \int_{t_i}^{t_{i+1}}
    \frac{s-t_i}{h_i}k(t_j,s)\dd s .
\]
For uniform grids and convolution kernels, the weights depend only on the lag
$j-i$, so that the sums can be accelerated by FFT; see the implementation for
details.

For the convergence experiments we consider the scalar fractional kernel $k_\beta(t,s)
    =
    \frac{(t-s)^{\beta-1}}{\Gamma(\beta)}$, $
    \beta\in(0,1)$.
In view of \Cref{thm:frac_scheme}, and the higher-order scheme in
\Cref{alg:higher_order_vsig}, we use the exponent sets
\[
    \text{order }0:\; \decoSet=\{0\},
    \qquad
    \text{order }1:\; \decoSet=\{0,\beta,1\},\qquad
    \text{order }2:\; \decoSet=\{0,\beta,1,1+\beta,2\}.
\]
Since the computational task is to evaluate the Volterra signature of the
piecewise linearly interpolated input paths accurately, we compare the schemes
under dyadic refinement of these paths. For a dyadic refinement level
$\lambda$, let $x^{(i),\lambda}$ denote the dyadic refinement of the sample path
$x^{(i)}$, and write
\[
    \widehat{\bv}^{\mathrm{scheme},\lambda}(x^{(i)})
    \approx
    \pi_{\leq N}\VSig{x^{(i),\lambda}}{k_\beta}^{1}_{0,1}
\]
for the corresponding numerical approximation. We record the factorially
adjusted level errors
\[
    \delta V^{\mathrm{scheme}}_{\lambda}
    :=
    \max_{i=1, ..., N}\max_{i = 1, \dots, M}
    \left\|
    n!\,\pi_n
    \left(
        \widehat{\bv}^{\mathrm{scheme},\lambda}(x^{(i)})
        -
        \bv^{\mathrm{ref}}(x^{(i)})
    \right)
    \right\|_\infty,
    \qquad
    n=1,\ldots,N,
\]
where, as reference value $\bv^{\mathrm{ref}}(x^{(i)})$, we use a Richardson
extrapolation of the order-two scheme between dyadic refinement levels
$\lambda=4$ and $\lambda=5$, with truncation level $N=6$, applied to $M=16$ paths
discretized with $J=32$ steps.

\Cref{fig:general-vsig-convergence} shows the resulting convergence lines.
The left and right panels correspond to $\beta=0.6$ and $\beta=0.1$,
respectively. We observe that the order-two algorithm strongly outperforms the
predictor--corrector scheme in terms of error convergence.

Since the predictor--corrector scheme has lower cost for a fixed time
discretization\footnote{After all, the predictor corrector scheme does not built tensor powers but only multiplies first level increments.}, we also compare the error--runtime tradeoff in
\Cref{fig:general-vsig-runtime}. This gives the more decisive comparison: to
obtain comparable accuracy, the predictor--corrector scheme requires a much
larger dyadic refinement, and by the time it reaches the accuracy of the
proposed order-two scheme it is substantially more costly. The comparison of
computational times is meaningful in this setting because both algorithms were
implemented using the same JAX backend and tensor operations. Moreover, the
predictor--corrector scheme was evaluated with the FFT acceleration available
on uniform grids.

\begin{figure}[h]
    \centering
    \includegraphics[width=0.49\textwidth]{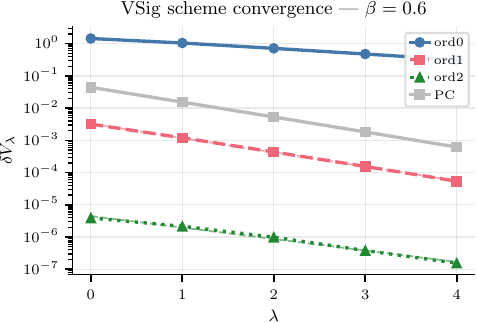}
    \includegraphics[width=0.49\textwidth]{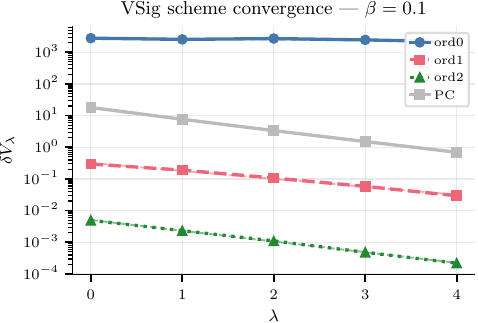}
\caption{Convergence of the general approximative Volterra signature
schemes under dyadic refinement. The plotted quantities are the
factorially adjusted level errors $\delta^{\mathrm{scheme}}_{n,\lambda}$.
Values in parentheses denote the fitted log--log slope of the error against
the dyadic grid size. Left: $\beta=0.6$. Right: $\beta=0.1$.}
    \label{fig:general-vsig-convergence}
\end{figure}

\begin{figure}[h]
    \centering
    \includegraphics[width=0.49\textwidth]{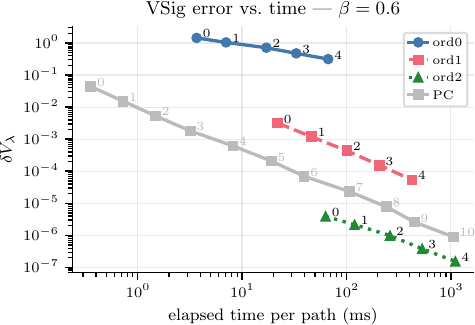}
    \includegraphics[width=0.49\textwidth]{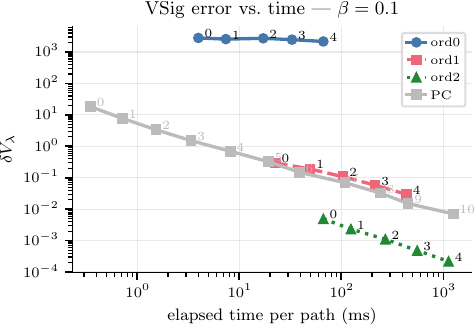}
    \caption{Error--runtime tradeoff for the predictor--corrector reference
    scheme and the proposed higher-order Volterra signature schemes.}
    \label{fig:general-vsig-runtime}
\end{figure}

Finally, we validate the computational scaling predicted by
\Cref{prop:cost_quad} by considering compiler-reported FLOP counts. Recall, that for the
quadratic triangular recursion from \Cref{alg:higher_order_vsig}, the predicted
asymptotic work is
\[
W_{\mathrm{quad}}(J,N)
=
\begin{cases}
J^2m^N, & q=1 \text{ with the Horner scheme of }
\Cref{alg:quad_evalVtE_q1},\\
J^2Nm^N, & q>1 \text{ with the shuffle-recursive scheme of }
\Cref{alg:quad_evalVtE}.
\end{cases}
\]
For the FFT-accelerated implementation from \Cref{alg:fft}, in the
uniform-grid convolutional setting of \Cref{hyp:fft_convolution}, the predicted
asymptotic work is
\[
    W_{\mathrm{FFT}}(J,N,q)
    =
    J\log(J)N^qm^N .
\]
We compile the corresponding implementations using various configurations up
to the benchmark workload filter
$W_{\mathrm{quad}}(J,N,4)\leq W_{\mathrm{quad}}(512,9,4)$, with fixed
$m=d=3$, $\beta=0.6$, and order-two exponent set
$\decoSet=\{0,\beta,1,1+\beta,2\}$. We sample $200$ admissible pairs $(J,N)$
satisfying this constraint and then cross them with all four values
$q\in\{1,2,3,4\}$, yielding $800$ benchmark configurations in total.

\Cref{fig:quad-fft-flop-scaling} shows the resulting FLOP counts. The left
panel reports the quadratic triangular recursion, including both the scalar
Horner branch and the multi-component shuffle-recursive branch. The right panel
reports the FFT-accelerated convolutional implementation. In both cases, the
observed direct proportionality of the FLOP counts with the predicted work supports the
asymptotic cost estimates in \Cref{prop:cost_quad}.
Although the total FLOP counts of the quadratic and FFT algorithms may appear
comparable in this benchmark regime, their ratio depends strongly on $J$ and
$N$. In agreement with the theoretical complexity analysis, numerical
experiments show that the FFT algorithm becomes preferable for large $J$,
whereas the quadratic algorithm is preferable for large $N$.

\begin{figure}[h]
    \centering
    \includegraphics[width=0.48\textwidth]{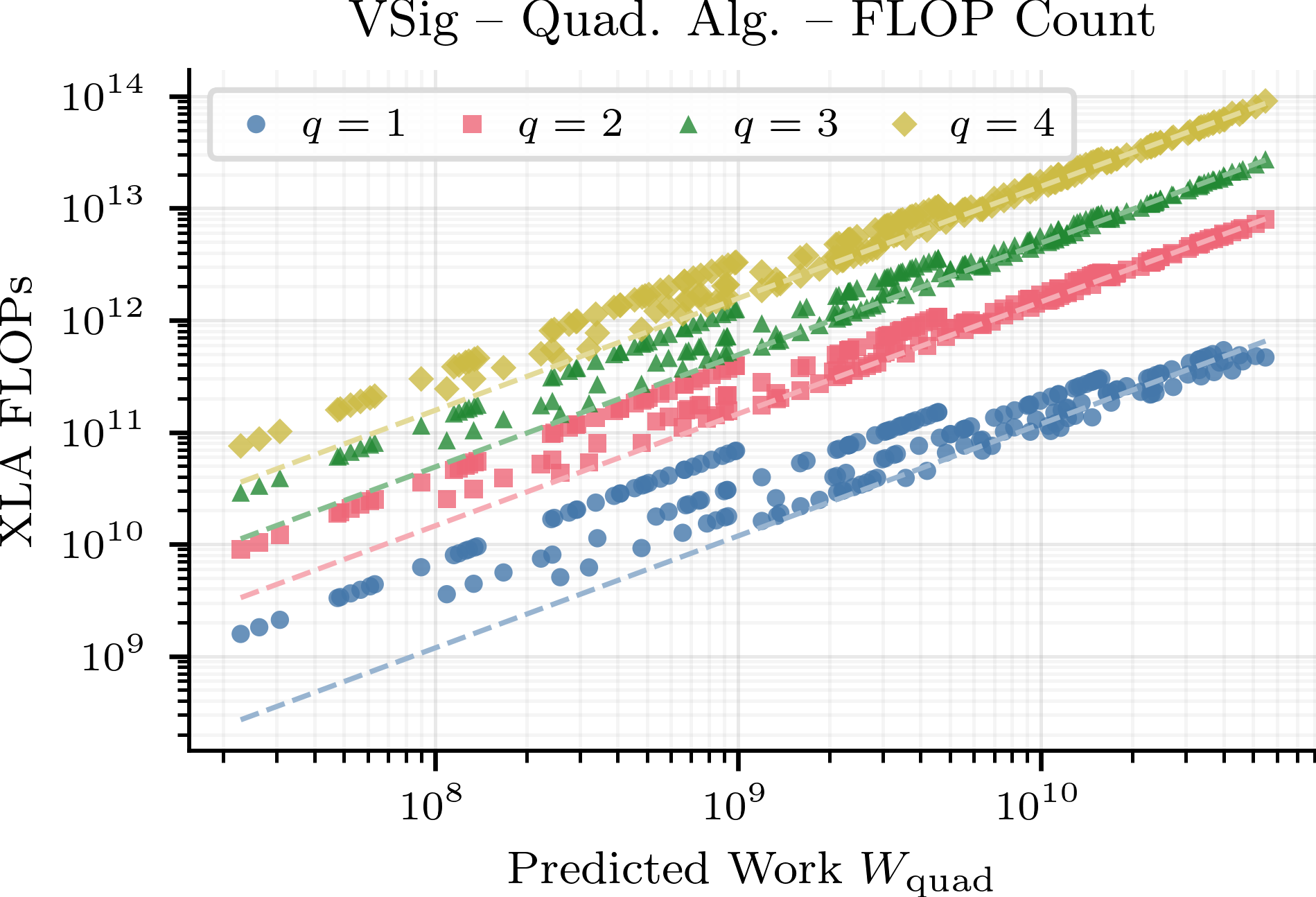}
    \hfill
    \includegraphics[width=0.48\textwidth]{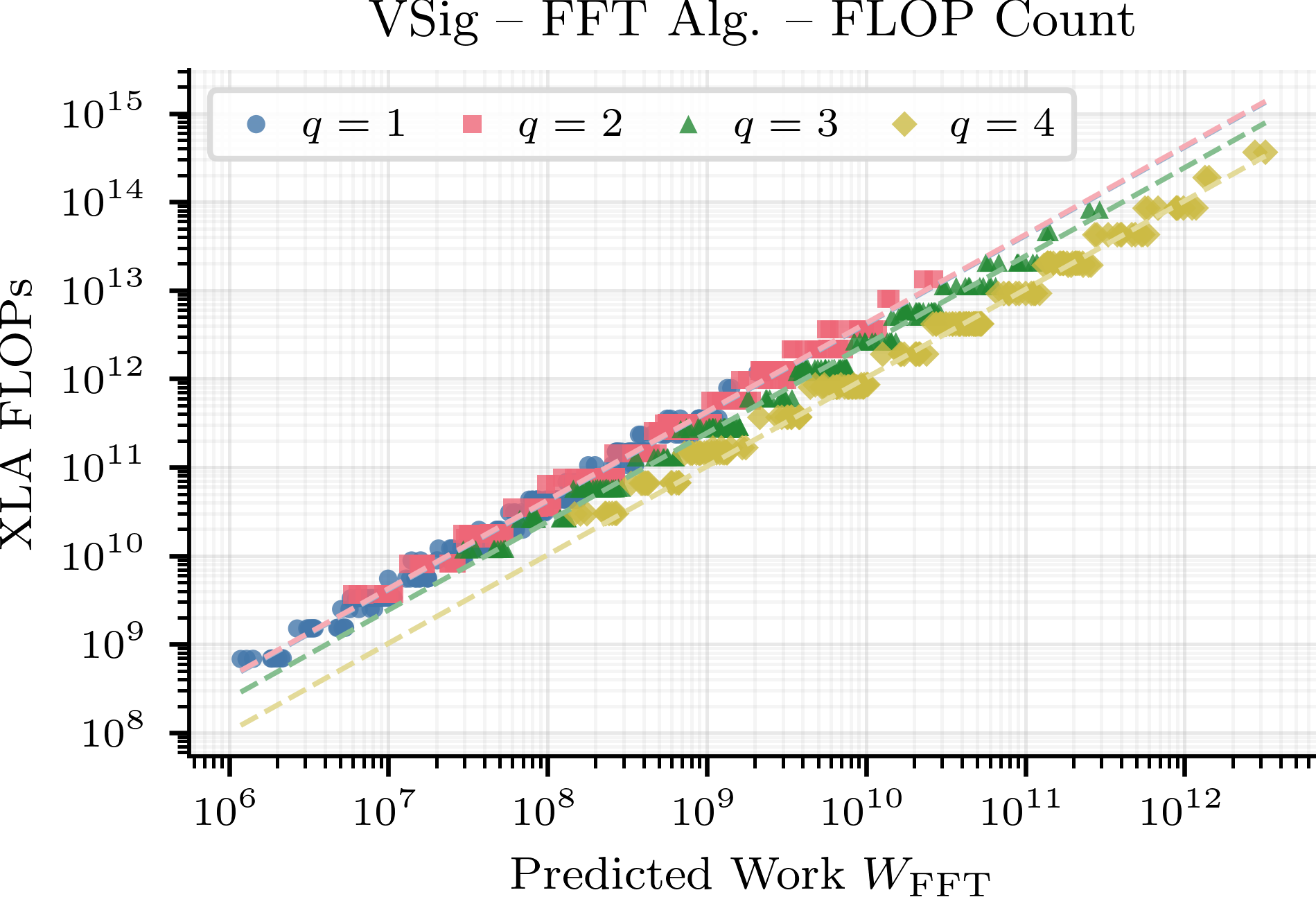}
    \caption{Computational scaling of the general approximative Volterra signature
algorithms. Left: compiler-reported FLOP counts for the quadratic triangular
recursion plotted against $W_{\mathrm{quad}}(J,N,q)$. Right:
compiler-reported FLOP counts for the FFT-accelerated implementation plotted
against $W_{\mathrm{FFT}}(J,N,q)$. Dashed lines indicate per-$q$ unit-slope
intercepts fits against largest $40$ workloads.}
    \label{fig:quad-fft-flop-scaling}
\end{figure}

\subsection{Validation of the finite-state-space algorithm}

Our first sanity check of the implementation was to compare the Volterra signature of a finite-state-space kernel $K^\Lambda_{A,b}$, see \Cref{def:finite_state_space_kernels}, with $\Lambda=0$ to the classical signature of the transformed path
\[
y_{t_j} = \sum_{p=1}^q \left(\sum_{r=1}^R b_{p,r}\right) A_p x_{t_j}, \qquad j=1, \dots, J.
\]
Comparing $\pi_{\leq N}\VSig{K^{\Lambda}_{A,b}}{x}$ with $\pi_{\leq N}\mathrm{Sig}(y)$ resulted in equality up to machine precision in all tested examples, with $N=10$ and various shapes of $A$ and $b$.

Secondly, we verified our implementation of \Cref{alg:fssk_state_recursion,alg:fssk_vsig_readout} against a dyadically refined explicit Euler scheme for the tensor-algebra-valued ODE satisfied by the state lift $\bZ$ of the Volterra signature, see \cite[Proposition 2.42]{i_part}. Each interval $[t_j,t_{j+1}]$ is split into $2^p$ equal subintervals, with both $\Delta t_j$ and $\Delta x_j^{(i)}$ divided by $2^p$. Initializing $\widehat{Z}_{0,0}^{\ell,i}=0\in\mathfrak{R}_N$, we iterate over the refined grid using
\[
\widehat{Z}_{j,k+1}^{\ell,i}
=
\widehat{Z}_{j,k}^{\ell,i}
-
\frac{t_{j+1}-t_j}{2^p}
\sum_{r=1}^R \Lambda_{\ell r}\widehat{Z}_{j,k}^{r,i}
+
\left(1+\sum_{r=1}^R \widehat{Z}_{j,k}^{r,i}\right)
\otimes_N
\left(
\sum_{\alpha=1}^q b_{\alpha,\ell}A_\alpha
\frac{\Delta x_j^{(i)}}{2^p}
\right),
\]
for $i=1,\dots,M$, $j=0,\ldots,J-1$, $k=0,\ldots,2^p-1$, and $\ell=1,\ldots,R$, with $\widehat{Z}^{\ell,i}_{j+1,0}=\widehat{Z}^{\ell,i}_{j,2^p}$. The terminal Volterra signature is read out as
\[
\widehat{V}_{i,p}
=
1+\sum_{\ell=1}^R \widehat{Z}_{J,0}^{\ell,i}.
\]
We compare this with the output of \Cref{alg:fssk_state_recursion,alg:fssk_vsig_readout}, which is exact up to quadrature errors in the Laplace approximation of the coefficients, and record the level-wise factorially rescaled Euler error
\[
\delta V_{n,p}
=
\max_{i=1, \dots, M}
\left\|
n!\,\pi_n
\left(
\widehat{V}_{i,p}
-
\VSig{K^\Lambda_{A,b}}{x^{(i)}}_{0,1}^1
\right)
\right\|_\infty,
\qquad n=1,\ldots,N.
\]
We tested this for several choices of $q$ and $R$ and observed the expected first-order convergence of the Euler scheme under dyadic refinement towards the exact benchmark, uniformly across tensor levels. 
In \Cref{fig:euler-conv-demo} we illustrate this convergence for the two representative setups $(q,R)=(1,1)$ and $(q,R)=(4,3)$. In particular, the results indicate that the Euler scheme requires a rather high degree of dyadic refinement in order to achieve reasonable accuracy in the Volterra signature components.

\begin{figure}[h]
    \centering
    \begin{subfigure}[t]{0.49\textwidth}
        \centering
        \includegraphics[width=\textwidth]{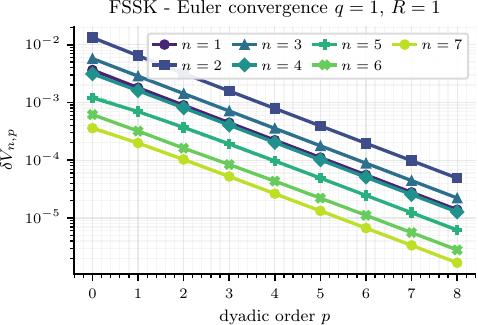}
    \end{subfigure}
    \hfill
    \begin{subfigure}[t]{0.49\textwidth}
        \centering
        \includegraphics[width=\textwidth]{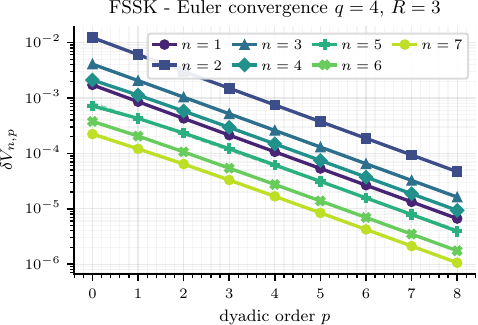}
    \end{subfigure}
    \caption{Convergence of the dyadically refined Euler scheme towards the exact benchmark Volterra signature for two representative setups.}
    \label{fig:euler-conv-demo}
\end{figure}

Finally, we analyze the computational cost of the Volterra signature computation
according to \Cref{alg:fssk_state_recursion,alg:fssk_vsig_readout}. We first
consider the state recursion and readout alone, excluding the precomputation of
the coefficients appearing in these algorithms; we comment on the cost of this coefficient
precomputation separately below. From \Cref{prop:cost_fssk}, we
expect the number of FLOPs to scale as
\[
W_q(J,R,N)
=
\begin{cases}
J\,R^2\,m^N, & q=1,\\
J\,R^2\,N\,m^N, & q>1.
\end{cases}
\]
We compile the implementation using various configurations up to the benchmark workload filter $W_q(J,R,N)\leq W_1(512,3;11)$, with fixed $m=d=3$. We sample $300$ admissible triples $(J,N,R)$ satisfying this constraint and then cross them with all four values of $q$. This yields $1200$ benchmark configurations in total. We then compare the compiler-reported FLOP counts with the predicted work $W_q(J,R,N)$.

As the left panel in \Cref{fig:fssk-flop-scaling} shows, the evaluated FLOP counts plotted against the predicted work lie approximately on straight lines. The dependence on $q$ appears mainly through the intercept, in agreement with the expected scaling. The right panel shows the measured elapsed time for each configuration $(J,R,N,q)$, averaged per path, plotted against the predicted work. As expected, this hardware-dependent quantity is less deterministic, but overall it exhibits a roughly proportional scaling with $W_q(J,R,N)$. The ratio of wall-clock time to CPU time ranges between $0.17$ and $1.00$, with a median of $0.37$, indicating an expected moderate degree of parallelism on the CPU setup used.

The coefficient precomputation was benchmarked separately, both for the dense
and the Jordan-structured implementations, and showed analogous agreement with
the theoretical scaling in \Cref{prop:cost_fssk_weights}. In the present
benchmark regime, it accounted for only $0.01\%$--$1.34\%$ of the total
Volterra signature workload, with the relative contribution increasing with
$q$.

\begin{figure}[h]
    \centering
    \includegraphics[width=0.48\textwidth]{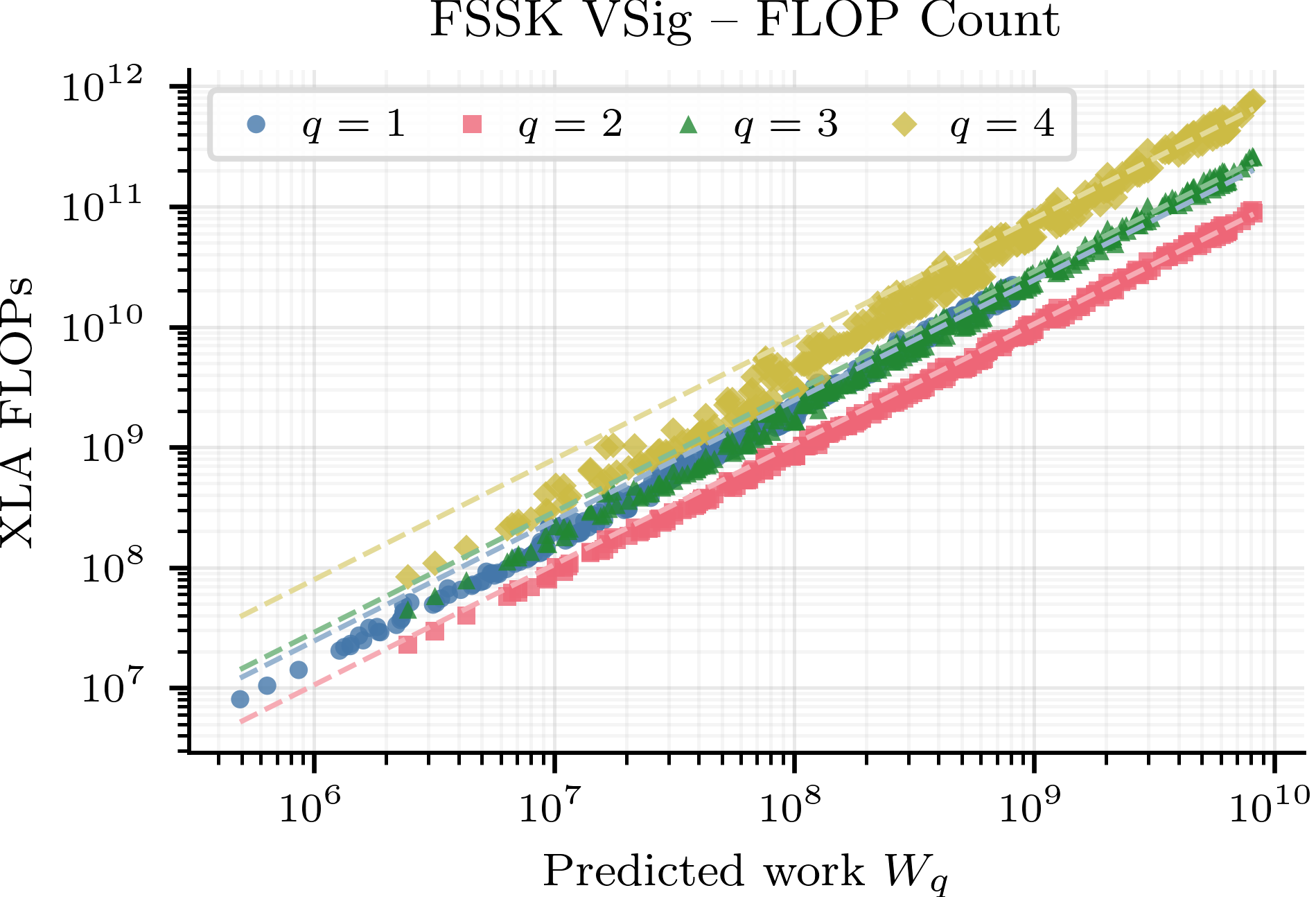}
    \hfill
    \includegraphics[width=0.48\textwidth]{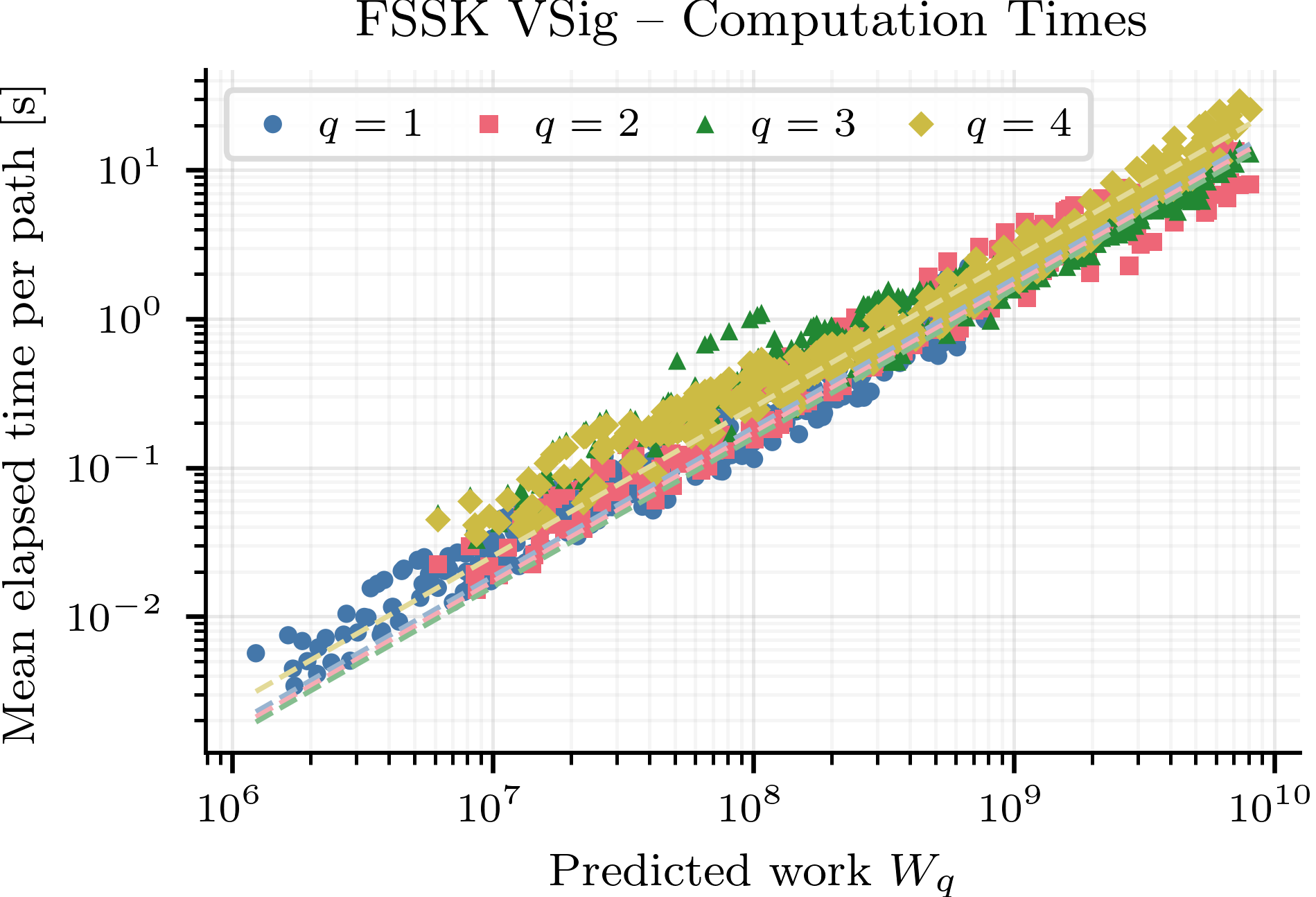}
    \caption{Computational scaling of the finite-state-space Volterra signature computation. Left: compiler-reported FLOP counts against the predicted work $W_q(J,R,N)$. Right: measured wall-clock time per path against $W_q(J,R,N)$. Dashed lines indicate per-$q$ unit-slope
intercepts fits against largest $40$ workloads.}
    \label{fig:fssk-flop-scaling}
\end{figure}

\subsection{Validation of the signature kernel algorithm}

We finally validate the finite-difference scheme from \Cref{alg:fd_scheme_antidiag_pc} for the computation of the Volterra signature kernel associated with finite-state-space kernels. 
We first compare its accuracy against a truncated inner-product reference obtained from the finite-state-space Volterra signature recursion of \Cref{alg:fssk_state_recursion,alg:fssk_vsig_readout}. More precisely, for a truncation level $N$ we set
\[
\kappa^{\mathrm{ref},N}(x,w)
:=
\left\langle
\pi_{\leq N}\VSig{x}{K^\Lambda_{A,b}}_{0,1}^{1},
\pi_{\leq N}\VSig{w}{K^\Lambda_{A,b}}_{0,1}^{1}
\right\rangle.
\]
In the experiment below we take $N=10$. 

We compare the predictor--corrector scheme of \Cref{alg:fd_scheme_antidiag_pc} with two simpler finite-difference schemes. The first is a direct ``\emph{naive}'' discretization of \eqref{eq:goursat-general-K}--\eqref{eq:goursat-general-phi}. With the notation of \Cref{sec:sig-kernel}, it uses
\begin{align*}
\mathbf{\Psi}_{i+1,j+1}
&=
\mathbf{\Psi}_{i,j+1}
-
\Delta s_i\Lambda\mathbf{\Psi}_{i,j+1}
+
\gamma_{i,j}\eta_{i,j+1},
\\
\mathbf{\Phi}_{i+1,j+1}
&=
\mathbf{\Phi}_{i+1,j}
-
\Delta t_j\mathbf{\Phi}_{i+1,j}\Lambda^\top
+
\gamma_{i,j}\eta_{i+1,j},
\\
\bK_{i+1,j+1}
&=
\bK_{i+1,j}
+
\bK_{i,j+1}
-
\bK_{i,j}
+
\frac{\Delta s_i\Delta t_j}{2}
\bigl(
\cL(\bK_{i+1,j})
+
\cL(\bK_{i,j+1})
\bigr)
\\ &\qquad
-
\frac12\gamma_{i,j}
\bigl(
\eta_{i+1,j}
+
\eta_{i,j+1}
\bigr)
+
\bigl(\mathbf{\Psi}_{i+1,j+1}-\mathbf{\Psi}_{i,j+1}\bigr)
+
\bigl(\mathbf{\Phi}_{i+1,j+1}-\mathbf{\Phi}_{i+1,j}\bigr),
\end{align*}
where $\cL(\bM)=\Lambda\bM\Lambda^\top$ and $\eta_{i,j}=1+\mathbf 1^\top\bK_{i,j}\mathbf 1$. 
The second alternative scheme uses \emph{exponential integration} for the auxiliary equations, but keeps the same first-order cell stencil for the main equation. Namely,
\begin{align*}
\mathbf{\Psi}_{i+1,j+1}
&=
E_i^s\mathbf{\Psi}_{i,j+1}
+
P_i^s\gamma_{i,j}\eta_{i,j+1},
\\
\mathbf{\Phi}_{i+1,j+1}
&=
\mathbf{\Phi}_{i+1,j}E_j^t
+
\gamma_{i,j}\eta_{i+1,j}P_j^t,
\\
\bK_{i+1,j+1}
&=
\bK_{i+1,j}
+
\bK_{i,j+1}
-
\bK_{i,j}
+
\frac{\Delta s_i\Delta t_j}{2}
\bigl(\cL(\bK_{i+1,j})+\cL(\bK_{i,j+1})\bigr)
\\ &\qquad
-
\frac12\gamma_{i,j}\bigl(\eta_{i+1,j}+\eta_{i,j+1}\bigr)
+
\bigl(\mathbf{\Psi}_{i+1,j+1}-\mathbf{\Psi}_{i,j+1}\bigr)
+
\bigl(\mathbf{\Phi}_{i+1,j+1}-\mathbf{\Phi}_{i+1,j}\bigr),
\end{align*}
where $(E_i^s,E_j^t,P_i^s,P_j^t)$ are defined as in
\Cref{alg:fd_scheme_antidiag_pc}.

For dyadic refinement level $\lambda$, we denote the resulting terminal kernel value by $\kappa^{\mathrm{scheme}}_\lambda(x,w)$ and record the error
\[
\delta\kappa_\lambda
:=
\max_{i=1, \dots, M}
\left|
\kappa^{\mathrm{scheme}}_\lambda(x^{(i)},w^{(i)})
-
\kappa^{\mathrm{ref},N}(x^{(i)},w^{(i)})
\right|.
\]
The left panel of \Cref{fig:sigkernel-validation} shows the resulting convergence curves. The predictor--corrector scheme converges substantially faster than the two first-order alternatives, which is consistent with the second-order correction built into \Cref{alg:fd_scheme_antidiag_pc}.

We also verify the computational scaling predicted by
\Cref{prop:cost_sig_kernel_fd}. In our implementation,
$-\Lambda$ is parametrized in real Jordan normal form and the maximal Jordan
block size is fixed at two. Hence \Cref{rem:cost_sig_kernel_jordan} predicts a
leading solver cost of order $J^2R^2$ when $J_s=J_t=J$, assuming that the cell
coefficients $\gamma_{i,j}$ are already available.

To demonstrate additionally that the precomputation contributes only marginally to overall costs, the FLOP count reported in
the right panel of \Cref{fig:sigkernel-validation} includes the computation of $\gamma$.
The experiment ranges over $m=d=3$, $q\in\{1,\dots,5\}$,
$R\in\{6,\dots,13\}$, and grid sizes
$J\in\{2^8,2^8+1,\dots,2^{12}\}$, subject to the cutoff
$J^2R^2 \leq 512^2\cdot 10^2.$
The right panel of \Cref{fig:sigkernel-validation} plots
the resulting FLOP counts against the predicted asymptotic work $J^2R^2$. The guide line has
the theoretical unit slope in log--log coordinates, with only the intercept
fitted by least squares. The observed alignment supports the asymptotic
solver-cost estimate from
\Cref{prop:cost_sig_kernel_fd,rem:cost_sig_kernel_jordan}.
The CPU-to-wall-time ratio was consistently around $0.5$, indicating limited
parallel utilization on the tested CPU setup. We expect this ratio to improve
substantially in a GPU setting.

The small regular offsets in the plot are caused by the coefficient
precomputation. In the implementation, forming $\gamma_{i,j}$ has leading
dependence
$
    R^2q + q^2R
$
per grid cell, corresponding to the two matrix products involved in the
coefficient construction.

\begin{figure}[h]
    \centering
    \includegraphics[width=0.48\textwidth]{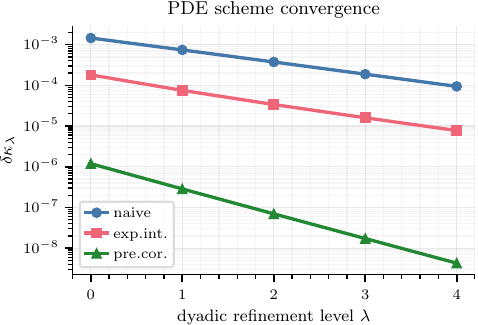}
    \hfill
    \includegraphics[width=0.48\textwidth]{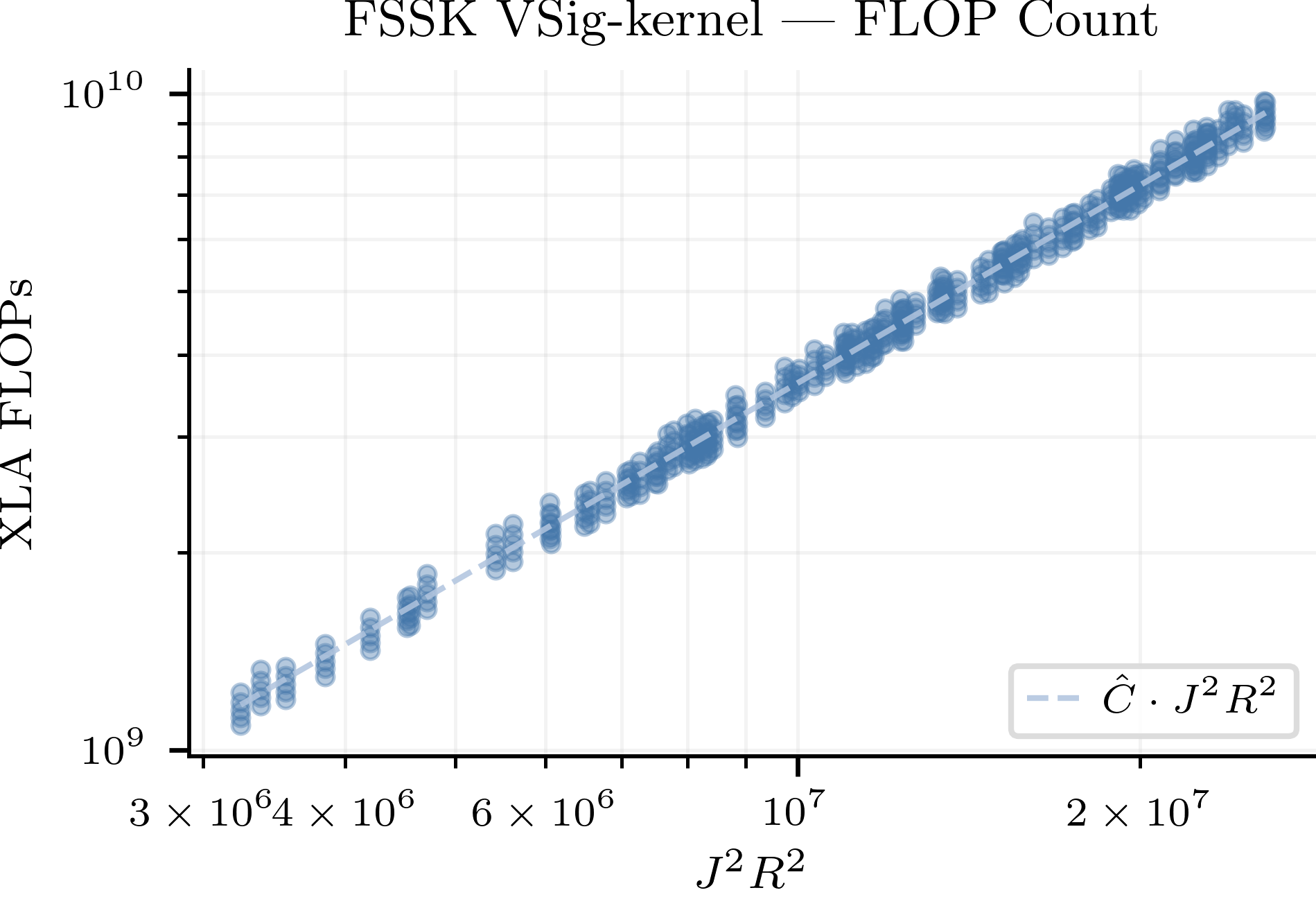}
    \caption{Validation of the Volterra signature kernel algorithm. Left:
    convergence of the naive, exponential integration, and
    predictor--corrector schemes against the truncated inner-product reference
    $\kappa^{\mathrm{ref},N}$. Right: compiler-reported total FLOP counts
    plotted against the leading asymptotic work
    $J^2R^2$.}
    \label{fig:sigkernel-validation}
\end{figure}

\end{document}